\documentclass[a4paper]{article}

\usepackage{amsmath,amssymb,amsthm,amsfonts}   
\numberwithin{equation}{section}   
\usepackage{physics}   
\usepackage[english]{babel}   
\usepackage[margin=3cm]{geometry}   
\usepackage{csquotes}   
\usepackage{mathtools}   
\usepackage{relsize}   
\usepackage{indentfirst}  
\usepackage{thm-restate}   
\usepackage{mathrsfs}   
\usepackage[dvipsnames]{xcolor}   
\usepackage{enumitem}    
\usepackage{caption}    
\usepackage{appendix}   
\usepackage{tabularx}   
\usepackage{xltabular}  
\usepackage{booktabs}   
\allowdisplaybreaks

\usepackage[dvipsnames]{xcolor}     
\usepackage{hyperref}	            
\newcommand\myshade{85}         
\colorlet{mylinkcolor}{violet}  
\colorlet{mycitecolor}{YellowOrange}    
\colorlet{myurlcolor}{Aquamarine}       
\hypersetup{            
    linkcolor  = blue!\myshade!black,      
    citecolor  = mycitecolor!\myshade!black,   
    urlcolor   = myurlcolor!\myshade!black, 
    colorlinks = true,      
    hypertexnames=false,
}               
\usepackage{cleveref}           
\usepackage[                                
style = alphabetic,                         
maxbibnames=9,maxcitenames=9,               
doi=true,isbn=true,giveninits=true,         
backend=biber,                              
url=false,arxiv=true]{biblatex}             
\theoremstyle{plain}   
 
\theoremstyle{plain}   
\newtheorem{assumption}{Assumption}     
\crefname{assumption}{assumption}{assumptions}

\newtheorem{lemma}{Lemma}[section]   
\crefname{lemma}{lemma}{lemmas}
\newtheorem{prop}[lemma]{Proposition}   
\crefname{prop}{proposition}{propositions}  
\newtheorem{cor}[lemma]{Corollary}   
\newtheorem{remark}[lemma]{Remark}   
\newtheorem{dfn}{Definition}    
\usepackage{fancyhdr}   
\usepackage{xparse} 
\ExplSyntaxOn       
\NewDocumentCommand{\definealphabet}{mmmm}{
  \int_step_inline:nnn { `#3 } { `#4 }{
    \cs_new_protected:cpx { #1 \char_generate:nn { ##1 }{ 11 } }{
      \exp_not:N #2 { \char_generate:nn { ##1 } { 11 } }}}}              
\ExplSyntaxOff      
\definealphabet{mb}{\mathbb}{A}{Z}      
\definealphabet{mc}{\mathcal}{A}{Z}     
\definealphabet{scr}{\mathscr}{A}{Z}    
\definealphabet{mf}{\mathfrak}{A}{Z}    

\DeclareMathOperator{\states}{\mbS_d}   
\DeclareMathOperator{\mapspace}{\mcL(\mbM_d)}   
\DeclareMathOperator{\dummyborel}{\mathscr{B}} 
\newcommand\borel[1]{\dummyborel\left(#1\right)}
\DeclareMathOperator{\pr}{\mathsf{pr}}           
\newcommand{\adj}{^\dagger}    
\DeclareMathOperator{\matrices}{\mbM_d} 
\DeclarePairedDelimiterX{\inner}[2]{\langle}{\rangle}{#1| #2} 
\DeclareMathOperator*{\esssup}{ess\,sup}    
\newcommand{\s}{\rho_{\mathsf{ss}}}
\newcommand{\dee}{\mathrm{d}}
\DeclareMathOperator{\Var}{Var}
\renewcommand{\tr}[1]{\operatorname{Tr}\left(#1\right)} 
\newcommand{\oln}{\overline}                        
\DeclareMathOperator{\bQ}{\oln{\mbQ}}               
\newcommand{\proj}{{\mathlarger{\boldsymbol{\cdot}}}} 
\let\oldker\ker         
\renewcommand{\ker}[1]{\oldker\left(#1\right)}  
\renewcommand{\tilde}{\widetilde}       

\title{Limit Theorems for Disordered Quantum Trajectories}
\usepackage{authblk}

\author[1]{Yeor Hafouta\thanks{yeor.hafouta@mail.huji.ac.il}}
\affil[1]{Ben-Gurion University of the Negev and the University of Florida}
\author[2]{Lubashan Pathirana\thanks{lpk@math.ku.dk}}
\affil[2]{Department of Mathematical Sciences and QMATH, University of Copenhagen, Denmark.}
\date{}

\begin{document}
\pagenumbering{arabic}
\lhead{\thepage}
\maketitle
\vspace{-1cm}


\begin{abstract}

    We study discrete-time quantum trajectories of a finite-dimensional system in a disordered environment, where the instrument applied at each step is determined by an invertible measure-preserving dynamical system. 
    For general quantum instruments on a standard Borel outcome space, we construct the quenched probability law on sequences of measurement outcomes for a fixed realization of the disorder and a given initial state.
    Under a quenched Doeblin condition with a uniform deterministic minorization constant and an environment-dependent minorizing probability measure, we establish the existence and uniqueness of an equivariant family of random posterior laws and identify its barycenter as the unique dynamically stationary state for the associated non-selective channel cocycle.
    If the environment is ergodic, we prove a pathwise ergodic theorem valid for every measurable choice of initial state: for almost every realization of the disorder, time averages of posterior states converge almost surely under the corresponding quenched law to the disorder average of the unique dynamically stationary state.
    For an ergodic environment, we establish quenched variance asymptotics and, when the asymptotic variance is positive, a central limit theorem and standardized Berry–Esseen bounds for additive functionals generated by bounded measurable real-valued functions of the environment and posterior state.
    These results hold for every measurable initial state and almost every disorder realization. 
    We then show that the stationary annealed joint process of instruments and posterior states inherits the \(\alpha\)-mixing of the instrument process, up to an exponentially decaying term.
    Finally, we provide classes of examples involving perfect and imperfect measurements, including models with continuous or finite outcome spaces, that satisfy the standing Doeblin condition.

\end{abstract}




\section{Introduction}
\label{section:intro}


Quantum trajectories describe the evolution of a quantum system conditioned on the outcomes of successive measurements.
These measurements are described by a quantum instrument \cite{Davies_Lewis_1970,Ozawa_1984}.
For a system on \(\mbC^d\), a quantum instrument is a countably additive measure \(A\mapsto\mfI(A)\) taking values in completely positive maps, such that the total map \(\Phi=\mfI(\Xi)\) is completely positive and trace-preserving (CPTP).
Here, \(\Xi\) is the outcome space.
Writing \(\mfI(\dee a)=\mcT_a\,\nu(\dee a)\) for a density representation of the instrument, the outcome of a measurement in state \(\rho\) has distribution \(\tr{\mcT_a(\rho)}\,\nu(\dee a)\).
Conditioned on the observed outcome \(a\), the posterior state is obtained through the update
\[
    \rho\longmapsto
    \frac{\mcT_a(\rho)}{\tr{\mcT_a(\rho)}},
\]
whenever the denominator is positive.
This update is defined for almost every outcome under the measurement distribution.
Such trajectories arise, for example, when a system interacts successively with probes that are measured after each interaction \cite{AP06,BJM14}.
The cavity-QED experiments of Haroche's group demonstrated how repeated measurements can be used to follow the collapse of a field state \cite{Gue+07} and to prepare and stabilize photon-number states through feedback \cite{Sayrin_2011}; see also \cite{HR06}.
This formalism gives a classical probabilistic description of the conditional state evolution, including measurement backaction, that is, the change in the system's state induced by measurement \cite{Dav76,Gis84,Car93,Hol01,BGM04,BG09,WM09}.

Quantum trajectories are studied in both discrete and continuous time. 
In discrete time, the state is updated after each successive measurement. 
In continuous time, the conditional evolution is described by stochastic master equations driven by diffusive or jump noise, corresponding to continuous monitoring.
The two descriptions are related through scaling limits of repeated-interaction and repeated-measurement models \cite{AP06,Pel08,BBB13,NP09}.
In this paper, we study discrete-time trajectories generated by quantum instruments, allowing both discrete and continuous outcome spaces.

Another distinction in the study of quantum trajectories is between perfect and imperfect measurements.
A measurement is called \emph{perfect} when, for \(\nu\)-almost every outcome \(a\), the corresponding map admits a single-Kraus representation \(\mcT_a(\rho)=V_a\rho V_a^\dagger\).
In the \emph{imperfect-measurement} setting, we allow general completely positive maps,
\[
    \mcT_a(\rho)
    =\sum_{j=1}^{r(a)}V_{a,j}\rho V_{a,j}^\dagger.
\]
Here \(j\) labels unobserved alternatives contributing to the same recorded outcome \(a\).
This occurs, for example, with mixed probe preparations or detectors that do not fully distinguish the microscopic outcomes \cite{tristant_2025}.
Perfect measurements preserve the purity of a pure initial state, but need not cause a mixed state to purify \cite{MK06}. 
Our results cover both perfect and imperfect measurements.

The long-time behaviour of quantum trajectories has been studied through both the measurement record and the posterior-state process. 
K{\"u}mmerer and Maassen established ergodic results for quantum counting records \cite{kummerer2003ergodic} and for time averages of posterior states \cite{KM04}. 
For repeated perfect measurements, their work on purification identified the role of dark subspaces in preventing the approach to pure states \cite{MK06}.
For homogeneous perfect measurements, purification and ergodicity of the non-selective channel lead to uniqueness of the invariant measure and geometric convergence in Wasserstein distance, with averaging over a possible period \cite{BFPP19}.
Laws of large numbers and central limit theorems for additive functionals of the posterior-state process were subsequently developed in \cite{BFP23}, while spectral methods yielded Berry--Esseen bounds and restricted large-deviation principles \cite{BHP25}.
The measurement record gives rise to a related set of statistical questions. A central limit theorem for empirical outcome frequencies is obtained in \cite{Attal_2014}.
Fluctuations of the record have also been studied through entropy production and its large deviations \cite{BJPP18}, with explicit models exhibiting non-Gaussian fluctuations \cite{Benoist_2021}.
For perfect measurements, invariant measures have been classified through dark subspaces and the dynamics within them under an irreducibility assumption \cite{BPS24}.
A related question is the long-time selection of invariant sectors, studied for general quantum instruments in \cite{BGP25}.
For imperfect measurements, asymptotic stability and uniqueness of the invariant measure have been established under irreducibility and contractivity assumptions \cite{tristant_2025}.

Related questions arise for non-selective evolution, described by products of quantum channels.
Ergodicity and mixing for a fixed channel have been studied in \cite{Bur+13}, while random repeated-interaction models have been investigated in \cite{BJM08,NP12}.
More generally, limit theorems for products of random quantum operations in ergodic environments were established in \cite{PS23}.

The present work concerns discrete-time quantum trajectories in a disordered environment, with an emphasis on fluctuations of additive functionals of the posterior-state process.
Previous work on disordered trajectories addresses asymptotic purification \cite{traj}, ergodic properties \cite{qtlln}, and central limit theorems for the measurement record \cite{luba_clt}.


\subsection{Disordered quantum trajectories}
\label{subsection:disordered-quantum-trajectories}


We work throughout with a finite-dimensional quantum system on \(\mbC^d\), with $d\ge 2$. 
We write
\[
    \matrices := \mbM_d(\mbC),
    \qquad
    \states := \left\{\rho\in\matrices : \rho\ge 0,\ \tr{\rho}=1\right\}.
\]
Let \((\Omega,\mcF,\pr)\) be a probability space describing the \emph{disorder}, and let \(\theta:\Omega\to\Omega\) be an invertible measure-preserving transformation.
We refer to \((\Omega,\mcF,\pr,\theta)\) as the \emph{base dynamical system}.
A point \(\omega\in\Omega\) represents a realization of the disorder, and the instrument applied at step \(n\ge1\) is indexed by \(\theta^{n-1}\omega\).

Let \((\Xi,\mcX)\) be a standard Borel space, interpreted as the measurement-outcome space. 
We encode the disorder-dependent measurement by a family of instruments.
Fix an orthonormal basis \((e_i)_{i=1}^d\) of \(\mbC^d\), and let \(E_{ij} = \ket{e_i}\bra{e_j}\in\matrices\) be the associated matrix units. 
We use the Choi--Jamio\l kowski isomorphism
\[
    J:\mcL(\matrices)\to \matrices\otimes\matrices \cong \mbM_{d^2}(\mbC),
    \qquad
    J(\Psi):=\sum_{i,j=1}^d \Psi(E_{ij})\otimes E_{ij},
\]
so that \(\Psi\) is completely positive if and only if \(J(\Psi)\ge 0\) \cite{Jamio_kowski_1972,Choi_1975,Watrous_2018}.

Denote by \(\mathsf{CP}(\matrices)\) the set of completely positive linear maps on \(\matrices\). 
For each \(\omega\in\Omega\), let
\[
    \mfI_\omega:\mcX\to \mathsf{CP}(\matrices)
\]
be a one-step instrument on \((\Xi,\mcX)\) in the sense of \cite{Davies_Lewis_1970,Ozawa_1984}, meaning that \(A\mapsto \mfI_\omega(A)\) is countably additive, each \(\mfI_\omega(A)\) is completely positive and trace-nonincreasing, and the total map
\[
    \Phi_\omega:=\mfI_\omega(\Xi)
\]
is trace-preserving. Thus \(\Phi_\omega\) is the associated non-selective channel. 
We assume throughout that, for every \(A\in\mcX\), the map
\[
    \omega\longmapsto J\left(\mfI_\omega(A)\right)
\]
is \(\mcF\)-measurable.

\paragraph{Measurable density realization.}
    For the purposes of the present paper, the primitive object is the instrument family \(\omega\mapsto \mfI_\omega\). 
    The quenched law of the observed record is canonically determined by this family alone, whereas a pathwise posterior update requires a measurable density representation. 
    A key technical fact, proved later in \Cref{prop:instrument-density-representation}, is that after modifying the family on a \(\pr\)-null set if necessary, one may choose a measurable kernel
    \[
        \omega\longmapsto \nu_\omega\in\mcP(\Xi)
    \]
    and a jointly measurable map
    \[
        (\omega,x)\longmapsto \mcT_{x;\omega}\in\mathsf{CP}(\matrices)
    \]
    such that
    \[
        \mfI_\omega(A)(X)
        =
        \int_A \mcT_{x;\omega}(X)\,\nu_\omega(\dee x)
    \]
    for every \(\omega\in\Omega\), every \(A\in\mcX\), and every \(X\in\matrices\).
    Conversely, any such measurable pair \((\nu_\omega,\mcT_{x;\omega})\) with trace-preserving total map
    \[
        X\mapsto \int_\Xi \mcT_{x;\omega}(X)\,\nu_\omega(\dee x)
    \]
    defines a random Davies--Lewis instrument by
    \[
        \mfI_\omega(A)(X):=\int_A \mcT_{x;\omega}(X)\,\nu_\omega(\dee x).
    \]
    Thus, after imposing the natural measurability and normalization assumptions, the instrument-level and density-level formulations are equivalent.
    This realization is auxiliary rather than part of the primitive data, but once fixed it provides the pathwise selective updates used below.
    As explained in \Cref{rem:current-instrument-measurable-realization}, it may moreover be chosen measurably with respect to the current one-step instrument. Consequently, the kernels \(\Gamma_\omega\) and \(K_\omega\) defined below can be taken to depend only on the current one-step instrument, in the sense of \Cref{def:depends-only-current-instrument}.

\paragraph{Selective update and one-step kernels.}
    Fix once and for all a reference state \(\rho_\ast\in\states\). 
    For \(\rho\in\states\), \(x\in\Xi\), and \(\omega\in\Omega\), define the selective update
    \[
        \mcT_{x;\omega}\proj \rho
        :=
        \begin{cases}
            \dfrac{\mcT_{x;\omega}(\rho)}{\tr{\mcT_{x;\omega}(\rho)}},
            & \tr{\mcT_{x;\omega}(\rho)}>0,\\[1.2ex]
            \rho_\ast,
            & \tr{\mcT_{x;\omega}(\rho)}=0.
        \end{cases}
    \]
    The corresponding one-step outcome and posterior-state kernels are
    \begin{equation}
    \label{eq:outcomes-kernel}
        \Gamma_\omega(\rho,A)
        :=
        \tr{\mfI_\omega(A)(\rho)}
        =
        \int_A \tr{\mcT_{x;\omega}(\rho)}\,\nu_\omega(\dee x),
        \qquad
        \rho\in\states,\quad A\in\mcX,
    \end{equation}
    and
    \begin{equation}
    \label{eq:transition_kernel}
        K_\omega(\rho,B)
        :=
        \int_\Xi
        \mathbf 1_B\!\left(\mcT_{x;\omega}\proj\rho\right)\,
        \tr{\mcT_{x;\omega}(\rho)}\,
        \nu_\omega(\dee x),
        \qquad
        \rho\in\states,\quad B\in\borel{\states}.
    \end{equation}
    By \Cref{prop:projective-action-is-measurable,prop:Gamma-K-are-kernels}, the map
    \[
        (\omega,x,\rho)\longmapsto \mcT_{x;\omega}\proj\rho
    \]
    is measurable, \(\Gamma_\omega\) is a probability kernel from  \((\states,\borel{\states})\) to \((\Xi,\mcX)\), \(K_\omega\) is a probability kernel on \((\states,\borel{\states})\), and for every \(A\in\mcX\) and \(B\in\borel{\states}\) the maps
    \[
        (\omega,\rho)\longmapsto \Gamma_\omega(\rho,A),
        \qquad
        (\omega,\rho)\longmapsto K_\omega(\rho,B)
    \]
    are measurable.

\paragraph{Quenched law on outcomes and posterior states.}
    Let \(\vartheta:\Omega\to\states\) be a measurable random initial state. 
    For each fixed \(\omega\in\Omega\), the quenched law of the observed outcome record is the unique probability measure
    \[
        \mbQ_{\vartheta;\omega}\in\mcP(\Xi^{\mbN})
    \]
    whose finite-dimensional marginals are given by
    \[
        \mbQ_{\vartheta;\omega}
        \left(
            A_1\times\cdots\times A_n\times \Xi\times\Xi\times\cdots
        \right)
        :=
        \tr{
            \left(
                \mfI_{\theta^{n-1}\omega}(A_n)
                \circ\cdots\circ
                \mfI_\omega(A_1)
            \right)\left(\vartheta(\omega)\right)
        },
        \qquad
        A_1,\dots,A_n\in\mcX.
    \]
    Thus, the quenched law of the record is canonically determined by the instrument family alone; see \Cref{prop:instrument-cylinder-formula,prop:construction-of-mbQ}. 
    Moreover, \(\omega\mapsto \mbQ_{\vartheta;\omega}\) is a measurable probability kernel from \((\Omega,\mcF)\) to \((\Xi^{\mbN},\mcX^{\otimes\mbN})\); see \Cref{prop:mbQ-kernel-measurable}.

\medskip
\paragraph{Posterior-state chain.}
    Once a measurable density realization has been fixed, the same quenched law admits the usual recursive description of the posterior states. Writing \(\bar x=(x_1,x_2,\dots)\in\Xi^{\mbN}\), define
    \[
        \rho_0^{\omega,\vartheta}(\bar x):=\vartheta(\omega),
    \]
    and, for \(n\ge0\),
    \[
        \rho_{n+1}^{\omega,\vartheta}(\bar x)
        :=
        \mcT_{x_{n+1};\theta^n\omega}\proj
        \rho_n^{\omega,\vartheta}(\bar x).
    \]
    The resulting maps
    \[
        \rho_n^{\omega,\vartheta}:\Xi^{\mbN}\to\states
    \]
    are measurable and depend only on the first \(n\) coordinates; see \Cref{prop:recursive-posterior-measurable}. Under \(\mbQ_{\vartheta;\omega}\), the posterior-state process \(\left(\rho_n^{\omega,\vartheta}\right)_{n\ge0}\) is a time-inhomogeneous Markov chain on \(\states\) with successive transition kernels
    \[
        K_\omega,\ K_{\theta\omega},\ K_{\theta^2\omega},\ \dots;
    \]
    see \Cref{prop:posterior-markov-under-mbQ}. 

\paragraph{Annealed law.}
    The annealed law of the disorder and the observed record is
    \[
        \bQ_\vartheta(\dee\omega,\dee\bar x)
        :=
        \pr(\dee\omega)\,\mbQ_{\vartheta;\omega}(\dee\bar x),
    \]
    a probability measure on \(\Omega\times\Xi^{\mbN}\).

\paragraph{Finite-outcome special case.}
    The general framework contains the usual finite-outcome repeated-measurement models as a special case. 
    If \(\Xi=\mcA\) is a non-empty finite alphabet and \(\mcX=2^{\mcA}\), then the instrument is determined by its singleton branches
    \[
        \mcT_{a;\omega}:=\mfI_\omega(\{a\}),
        \qquad a\in\mcA,
    \]
    and
    \[
        \mfI_\omega(A)=\sum_{a\in A}\mcT_{a;\omega},
        \qquad
        \Phi_\omega=\sum_{a\in\mcA}\mcT_{a;\omega}.
    \]
    Thus, in the finite-outcome case, the general density representation reduces to the elementary atomic decomposition of the instrument. 
    The corresponding cylinder probabilities take the familiar form
    \[
        \mbQ_{\vartheta;\omega}
        \left(
            \{a_1\}\times\cdots\times\{a_n\}\times \mcA\times\mcA\times\cdots
        \right)
        =
        \tr{
            \left(
                \mcT_{a_n;\theta^{n-1}\omega}
                \circ\cdots\circ
                \mcT_{a_1;\omega}
            \right)\left(\vartheta(\omega)\right)
        }.
    \]
    The model is called \emph{perfect} when each visible branch is implemented by a single Kraus operator,
    \[
        \mcT_{a;\omega}(\rho)=V_{a;\omega}\rho V_{a;\omega}\adj,
    \]
    and \emph{imperfect} when the visible branches are allowed to be arbitrary, completely positive trace-nonincreasing maps; equivalently, by Kraus' theorem,
    \[
        \mcT_{a;\omega}(\rho)
        =
        \sum_{j=1}^{d^2}
        V_{a,j;\omega}\rho V_{a,j;\omega}\adj,
    \]
    after padding by zero Kraus operators if necessary.


\subsection{Assumptions and Main Results}
\label{subsection:assumptions_results}


We continue to work with the objects introduced in \Cref{subsection:disordered-quantum-trajectories}. 
In particular, we fix once and for all a measurable density realization
\[
    (\omega,x)\longmapsto \mcT_{x;\omega},
    \qquad
    \omega\longmapsto \nu_\omega,
\]
together with the associated one-step outcome kernels \(\Gamma_\omega\), posterior-state kernels \(K_\omega\), quenched laws \(\mbQ_{\vartheta;\omega}\), and posterior-state processes \((\rho_n^{\omega,\vartheta})_{n\ge0}\). 
The later sections provide the detailed well-definedness and measurability proofs for these constructions.

For \(n\ge1\), define the \(n\)-step non-selective channel and the \(n\)-step posterior-state kernel by
\[
    \Phi_\omega^{(n)}
    :=
    \Phi_{\theta^{n-1}\omega}\circ\cdots\circ\Phi_\omega,
    \qquad
    K_\omega^{(n)}
    :=
    K_{\theta^{n-1}\omega}\circ\cdots\circ K_\omega,
\]
with the conventions
\[
    \Phi_\omega^{(0)}:=\mbI_{\matrices},
    \qquad
    K_\omega^{(0)}(\rho,\,\cdot\,):=\delta_\rho.
\]

Above, for probability kernels $M$ from $(S,\mcS)$ to $(T,\mcT)$ and $N$ from $(T,\mcT)$ to $(U,\mcU)$, we use the convention
\[
    (N\circ M)(s,C)=(MN)(s,C)
    :=\int_T N(t,C)\,M(s,\dee t),
    \qquad s\in S,\quad C\in\mcU.
\]

\begin{assumption}
\label{assumption_1}
    Throughout, the environment \((\Omega,\mcF,\pr,\theta)\) is invertible,  measurable, and \(\pr\)-preserving.
\end{assumption}

Our basic hypothesis is a uniform Doeblin minorization for the quenched posterior-state dynamics.

\begin{assumption}
\label{assumption_doeblin}
    There exist an integer \(L\ge1\), a constant \(\varepsilon\in(0,1)\), and a measurable family
    \[
        \omega\longmapsto \lambda_\omega\in\mcP(\states)
    \]
    such that for \(\pr\)-almost every \(\omega\),
    \[
        K_\omega^{(L)}(\rho,B)
        \ge
        \varepsilon\,\lambda_{\theta^L\omega}(B),
        \qquad
        \rho\in\states,\quad B\in\borel{\states}.
    \]
\end{assumption}

Equivalently, for \(\pr\)-almost every \(\omega\), one may write
\[
    K_\omega^{(L)}(\rho,\,\cdot\,)
    =
    \varepsilon\,\lambda_{\theta^L\omega}(\,\cdot\,)
    +
    (1-\varepsilon)\,R_\omega(\rho,\,\cdot\,),
\]
for a measurable family \(R_\omega\) of probability kernels on \((\states,\borel{\states})\). \Cref{Section:examples} contains several concrete classes of examples satisfying \Cref{assumption_doeblin}.

\begin{remark}
    Under additional mixing assumptions on the instrument process, one can also consider non-uniform versions of \Cref{assumption_doeblin}, in which the block length and minorization constant are allowed to depend on the environment, as in \cite{HW25,YH26}.
\end{remark}

\begin{remark}
\label{rem:doeblin-scope}
    For perfect instruments, \Cref{assumption_doeblin} implies finite-time entry into the pure-state set, a stronger conclusion than asymptotic purification.
    Let \(\mcP=\{\rho\in\states:\operatorname{rank}\rho=1\}\).
    Starting from a pure initial state, the posterior remains pure.
    Indeed, for a pure state $\rho = \ket{\psi}\bra{\psi}$, the posterior state after observing an outcome $a$ is 
    \[
        \frac{\mcT_{a;\omega}(\rho)}
        {\tr{\mcT_{a;\omega}(\rho)}}
        =
        \frac{\ket{V_{a;\omega}\psi}\bra{V_{a;\omega}\psi}}
        {\norm{V_{a;\omega}\psi}_{\mbC^d}^2}
    \]
    which has rank one, and hence a pure state. Thus, a trajectory started in a pure state remains pure almost surely. 
    Now, applying the minorization to pure initial state $\rho\in\mcP$  therefore gives, for almost every $\omega$,
    \[
        0 = K_\omega^{(L)}(\rho,\mcP^c)
        \ge \varepsilon \lambda_{\theta^L\omega}(\mcP^c)
    \]
    Therefore, $\lambda_{\theta^L\omega}(\mcP) = 1$ for $\pr$-almost every $\omega$.
    Applying the minorization once again, we get that for any initial state $\vartheta\in\states$, that
    \[
        K_\omega^{(L)}(\vartheta,\mcP)
        \ge\varepsilon \lambda_{\theta^L\omega}(\mcP)=\varepsilon.
    \]
    For any measurable initial state \(\vartheta\), let \(\tau^{\omega,\vartheta} =\inf\{n\geq0:\rho_n^{\omega,\vartheta}\in\mcP\}\).
    On a common $\theta$-invariant set of full $\pr$-measure, the Markov property (\Cref{prop:posterior-markov-under-mbQ}) and preservation of purity yield
    \[
        \mbQ_{\vartheta;\omega}
        \{\tau^{\omega,\vartheta}>kL\}
        \leq(1-\varepsilon)^k,
        \qquad k\geq0.
    \]
    Since \(\{\tau^{\omega,\vartheta}=\infty\}=\bigcap_{k\ge1}\{\tau^{\omega,\vartheta}>kL\}\),
    letting \(k\to\infty\) gives
    \[
        \mbQ_{\vartheta;\omega}
        \{\tau^{\omega,\vartheta}<\infty\}=1
        \qquad\text{for \(\pr\)-almost every \(\omega\)}.
    \]

    Thus, for \(\pr\)-almost every environment \(\omega\), the trajectory reaches the pure-state set in finite time \(\mbQ_{\vartheta;\omega}\)-almost surely and remains there.
    For imperfect instruments, purification is not required: for example, a single visible replacement branch \(A\mapsto\operatorname{Tr}(A)\eta\), with \(\eta\) mixed, gives \(K(\rho,\,\cdot\,)=\delta_\eta\); see \Cref{subsec:examples-preparation}.
\end{remark}

A random quantum state is a measurable map \(\rho:\Omega\to\states\). 
We call such a map \(\s\) a \emph{dynamically stationary state} for the non-selective evolution if
\[
    \Phi_\omega\left(\s(\omega)\right)
    =
    \s(\theta\omega)
    \qquad
    \text{for \(\pr\)-almost every }\omega.
\]
In the homogeneous case, when the environment consists of a single point, \(\Phi_\omega\equiv\Phi\), this reduces to the usual invariance relation \(\Phi(\s)=\s\). 
In the present finite-dimensional setting, such states always exist via the vector-valued ergodic theorem of Beck--Schwartz \cite{Beck_1957}.

Our first theorem establishes the existence and uniqueness of an \emph{equivariant family of posterior laws}. 
It proves uniform geometric convergence to this family when the dynamics is started in the remote past. 

\begin{dfn}
    A measurable family \(\omega\longmapsto \mu_\omega\in\mcP(\states)\)  is called \emph{equivariant} for \(K\) if
    \[
        (K_\omega)^*\mu_\omega=\mu_{\theta\omega}, \qquad \text{for \(\pr\)-almost every \(\omega\).}
    \]
\end{dfn}

\begin{restatable}[]{thm}{rhosexists}
\label{thm:s_exists}
    Under \Cref{assumption_1,assumption_doeblin}, for the kernel $K$ (in \cref{eq:transition_kernel}) there exists a measurable equivariant family
    \[
        \omega\longmapsto \mu_\omega\in\mcP(\states),
    \]
    unique up to \(\pr\)-almost sure equality. 
    Its barycenter
    \[
        \s(\omega)
        :=
        \int_{\states}\eta\,\mu_\omega(\dee\eta)
    \]
    is the \(\pr\)-almost surely unique dynamically stationary state. 
    Moreover, for every measurable family \(\omega\longmapsto \zeta_\omega\in\mcP(\states)\)  and every \(n\ge0\),
    \[
        \esssup_{\omega}\,
        \norm{
            \zeta_{\theta^{-n}\omega}K_{\theta^{-n}\omega}^{(n)}
            -
            \mu_\omega
        }_{\mathrm{TV}}
        \le
        (1-\varepsilon)^{\lfloor n/L\rfloor}
        \operatorname*{ess\,sup}_{\omega}
        \norm{\zeta_\omega-\mu_\omega}_{\mathrm{TV}}.
    \]
    In particular,
    \[
        \esssup_{\omega}\,
        \sup_{\rho\in\states}
        \norm{
            \Phi_{\theta^{-n}\omega}^{(n)}(\rho)-\s(\omega)
        }_1
        \le
        2(1-\varepsilon)^{\lfloor n/L\rfloor}.
    \]
\end{restatable}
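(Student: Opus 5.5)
The proof runs a pullback (remote-past) Doeblin contraction argument for the random kernels \(K_\omega\), with the stationary state obtained by barycenters. The first step is the standard one-block contraction. By \Cref{assumption_doeblin}, for \(\pr\)-a.e.\ \(\omega\) we have the splitting \(K_\omega^{(L)}=\varepsilon\lambda_{\theta^L\omega}+(1-\varepsilon)R_\omega\). Hence for any two probability measures \(\zeta,\zeta'\) on \(\states\),
\[
\norm{\zeta K^{(L)}_\omega-\zeta' K^{(L)}_\omega}_{\mathrm{TV}}\le(1-\varepsilon)\norm{\zeta-\zeta'}_{\mathrm{TV}},
\]
because the \(\lambda\)-part cancels on the signed measure \(\zeta-\zeta'\), which has zero total mass. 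Each single \(K_\omega\) is a Markov kernel and so is a TV-contraction. Writing \(n=kL+r\) and using the cocycle identity \(K^{(n)}_\omega=K^{(r)}_{\theta^{kL}\omega}\circ K^{(kL)}_\omega\), we get the factor \((1-\varepsilon)^{\lfloor n/L\rfloor}\) for every \(\omega\) in a \(\theta\)-invariant full-measure set \(\Omega_0=\bigcap_{j\in\mbZ}\theta^{j}(\text{good set})\). We work on \(\Omega_0\) throughout.

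The second step is existence via pullback. Fix any measurable \(\zeta\), for instance \(\zeta_\omega=\delta_{\rho_\ast}\), and set \(\mu^{(n)}_\omega:=\zeta_{\theta^{-n}\omega}K^{(n)}_{\theta^{-n}\omega}\). We have
\[
\mu^{(n+m)}_\omega=\bigl(\zeta_{\theta^{-n-m}\omega}K^{(m)}_{\theta^{-n-m}\omega}\bigr)K^{(n)}_{\theta^{-n}\omega}.
\]
So \(\mu^{(n+m)}_\omega\) and \(\mu^{(n)}_\omega\) are images of two probability measures under \(K^{(n)}_{\theta^{-n}\omega}\), and step one gives
\[
\norm{\mu^{(n+m)}_\omega-\mu^{(n)}_\omega}_{\mathrm{TV}}\le 2(1-\varepsilon)^{\lfloor n/L\rfloor},
\]
uniformly in \(\omega\). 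The sequence is therefore Cauchy in total variation, and since \(\mcP(\states)\) is complete for TV, it has a limit \(\mu_\omega\). Measurability of \(\omega\mapsto\mu_\omega(B)\) follows as a pointwise limit of measurable maps, using the joint measurability of \((\omega,\rho)\mapsto K_\omega(\rho,B)\) from \Cref{prop:Gamma-K-are-kernels} and Fubini-type arguments for compositions of measurable kernels. Equivariance follows from \(\mu^{(n)}_\omega K_\omega=\mu^{(n+1)}_{\theta\omega}\) together with TV-continuity of \(K_\omega\). I expect this measurability and null-set bookkeeping, keeping everything on a \(\theta\)-invariant full set, to be the main technical obstacle; the analysis itself is routine.

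The third step gives uniqueness and the rate. Equivariance yields \(\mu_\omega=\mu_{\theta^{-n}\omega}K^{(n)}_{\theta^{-n}\omega}\). For an arbitrary measurable \(\zeta\), step one then gives
\[
\norm{\zeta_{\theta^{-n}\omega}K^{(n)}_{\theta^{-n}\omega}-\mu_\omega}_{\mathrm{TV}}\le(1-\varepsilon)^{\lfloor n/L\rfloor}\norm{\zeta_{\theta^{-n}\omega}-\mu_{\theta^{-n}\omega}}_{\mathrm{TV}}.
\]
Taking the essential supremum and using that \(\theta\) preserves \(\pr\), so that \(\operatorname*{ess\,sup}\) is \(\theta\)-invariant, gives the displayed bound. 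If \(\mu'\) is another equivariant family, take \(\zeta=\mu'\). The left side then equals \(\norm{\mu'_\omega-\mu_\omega}_{\mathrm{TV}}\) for every \(n\), and since \(\operatorname*{ess\,sup}\le 2\) it tends to zero, so \(\mu'=\mu\) almost surely.

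The final step handles barycenters. The key identity is
\[
\int \eta\, (\zeta K_\omega)(\dee\eta)=\Phi_\omega\Bigl(\int\eta\,\zeta(\dee\eta)\Bigr).
\]
This holds because \(\int \mcT_{x;\omega}\proj\rho\,\tr{\mcT_{x;\omega}(\rho)}\,\nu_\omega(\dee x)=\int\mcT_{x;\omega}(\rho)\,\nu_\omega(\dee x)=\Phi_\omega(\rho)\), the null-trace branch contributing zero weight. Linearity of \(\Phi_\omega\) and Fubini then extend it to general \(\zeta\). Applied to \(\mu\), the identity shows that \(\s\) is dynamically stationary. For the rate, take \(\zeta_\omega=\delta_\rho\). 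The barycenter map is \(1\)-Lipschitz from TV (total mass norm) into the trace norm, since \(\norm{\eta}_1=1\) on \(\states\). Hence
\[
\norm{\Phi^{(n)}_{\theta^{-n}\omega}(\rho)-\s(\omega)}_1\le\norm{\delta_\rho K^{(n)}_{\theta^{-n}\omega}-\mu_\omega}_{\mathrm{TV}}\le 2(1-\varepsilon)^{\lfloor n/L\rfloor},
\]
using the TV norm normalized so that the diameter is \(2\); with the other normalization the constant adjusts accordingly. For uniqueness of the stationary state, any dynamically stationary \(\rho'\) satisfies \(\rho'(\omega)=\Phi^{(n)}_{\theta^{-n}\omega}(\rho'(\theta^{-n}\omega))\), which lies within \(2(1-\varepsilon)^{\lfloor n/L\rfloor}\) of \(\s(\omega)\) for every \(n\). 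Hence \(\rho'=\s\) almost surely.
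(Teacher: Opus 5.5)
Your argument is correct and is essentially the paper's: both use the $L$-block Doeblin contraction, the barycenter identity $b(\alpha K_\omega)=\Phi_\omega(b(\alpha))$ and the $1$-Lipschitz barycenter bound. The paper obtains existence and uniqueness from Banach's fixed-point theorem for $T^L$, with $(T\zeta)_\omega=\zeta_{\theta^{-1}\omega}K_{\theta^{-1}\omega}$, on the complete space of measurable families modulo null sets with metric $\operatorname*{ess\,sup}_\omega\norm{\zeta_\omega-\widetilde\zeta_\omega}_{\mathrm{TV}}$, whereas you build $\mu_\omega$ directly as a pointwise TV-Cauchy limit of pullbacks on a $\theta$-invariant full-measure set. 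Your version has two small advantages: it avoids proving completeness of that quotient space, and it gives the channel bound for all $\rho$ and all $\omega$ in the invariant set at once, so the paper's countable-dense-subset step for passing to $\sup_{\rho}$ is unnecessary.
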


Our next result is the disordered counterpart of the pathwise ergodic theorem for quantum trajectories; see \cite{KM04} in the homogeneous case.

\begin{restatable}[Pathwise Ergodic Theorem]{thm}{pathbirkhoffergodic}
\label{thm:birkhoff_ergodic}
    Make \Cref{assumption_1,assumption_doeblin}, and assume moreover that \((\Omega,\mcF,\pr,\theta)\) is ergodic. 
    Let \(\s\) be the unique dynamically stationary state from \Cref{thm:s_exists}, and set
    \[
        \bar\s
        :=
        \int_\Omega \s(\eta)\,\pr(\dee\eta).
    \]
    Then for every measurable initial state \(\vartheta:\Omega\to\states\) there exists a full-measure set \(\Omega_0\subseteq\Omega\) such that for every \(\omega\in\Omega_0\),
    \[
        \lim_{N\to\infty}
        \norm{
            \frac1N\sum_{n=1}^N \rho_n^{\omega,\vartheta}(\bar x)-\bar\s
        }_1
        =0
    \]
    for \(\mbQ_{\vartheta;\omega}\)-almost every \(\bar x\in\Xi^{\mbN}\).
\end{restatable}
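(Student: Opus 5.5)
We reduce the pathwise statement to an ergodic theorem for the \emph{skew-product} Markov chain and then remove the dependence on the initial state using the Doeblin coupling from \Cref{thm:s_exists}.

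\textbf{Step 1: the skew-product chain.} Consider the Markov kernel on \(\Omega\times\states\),
\[
    \mathsf P\big((\omega,\rho),\dee\omega'\,\dee\rho'\big)=\delta_{\theta\omega}(\dee\omega')\,K_\omega(\rho,\dee\rho').
\]
By equivariance, the measure \(m(\dee\omega,\dee\rho):=\pr(\dee\omega)\,\mu_\omega(\dee\rho)\) is \(\mathsf P\)-invariant.

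\textbf{Step 2: ergodicity of \(m\).} We claim that \(m\) is ergodic for \(\mathsf P\). Let \(h\) be a bounded \(\mathsf P\)-harmonic function, i.e.\ \(h(\omega,\rho)=\int h(\theta\omega,\rho')K_\omega(\rho,\dee\rho')\) for \(m\)-a.e.\ point. Iterating \(L k\) steps gives
\[
    h(\omega,\rho)=\int h(\theta^{kL}\omega,\cdot)\,\dee\big(\delta_\rho K^{(kL)}_\omega\big).
\]
By the contraction in \Cref{thm:s_exists}, for any two \(\rho,\rho'\),
\[
    \norm{\delta_\rho K^{(kL)}_\omega-\delta_{\rho'}K^{(kL)}_\omega}_{\mathrm{TV}}\le 2(1-\varepsilon)^k .
\]
Hence \(h(\omega,\cdot)\) is constant \(\mu_\omega\)-a.e.; in fact it is constant for every \(\rho\), after averaging. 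Write this constant as \(g(\omega)\). Then \(g(\omega)=g(\theta\omega)\), so ergodicity of \(\theta\) forces \(g\) to be constant.

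\textbf{Step 3: pathwise convergence under the stationary start.} Let \(\mathbb P_m\) be the path measure on \((\Omega\times\states)^{\mbN}\) started from \(m\). By Birkhoff's theorem for stationary ergodic Markov chains, applied to the bounded coordinate function \(f(\omega,\rho)=\rho\) componentwise in the finite-dimensional space \(\matrices\),
\[
    \frac1N\sum_{n}\rho_n\;\longrightarrow\;\int\!\!\int\eta\,\mu_\omega(\dee\eta)\,\pr(\dee\omega)=\bar\s\qquad\mathbb P_m\text{-a.s.}
\]
By disintegration, this means that for \(\pr\)-a.e.\ \(\omega\) and \(\mu_\omega\)-a.e.\ \(\rho_0\), convergence holds almost surely for the quenched chain with kernels \(K_\omega,K_{\theta\omega},\dots\). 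Using \Cref{prop:posterior-markov-under-mbQ}, the quenched chain started at \(\rho_0\) is exactly the posterior chain with law \(\mbQ\) pushed forward.

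\textbf{Step 4: removing the dependence on the initial state.} This is the main obstacle, since \(\vartheta(\omega)\) may be \(\mu_\omega\)-null. The convergence event \(C\) is shift-invariant in the path: it does not depend on finitely many initial coordinates. Define
\[
    u_\omega(\rho):=\text{probability of } C \text{ for the quenched chain started at }\rho \text{ under environment }\omega .
\]
Since \(C\) is tail/shift-invariant, the Markov property gives
\[
    u_\omega(\rho)=\int u_{\theta^{kL}\omega}\,\dee\big(\delta_\rho K^{(kL)}_\omega\big).
\]
Step 3 gives \(u_\omega=1\) \(\mu_\omega\)-a.e. for a.e.\ \(\omega\), and by equivariance also \(u_{\theta^{kL}\omega}=1\) \(\mu_{\theta^{kL}\omega}\)-a.e. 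Comparing with the right-hand side started from \(\mu_\omega\), i.e.\ \(\mu_\omega K^{(kL)}_\omega=\mu_{\theta^{kL}\omega}\), we get
\[
    |u_\omega(\rho)-1|\le\norm{\delta_\rho K^{(kL)}_\omega-\mu_{\theta^{kL}\omega}}_{\mathrm{TV}}\le 2(1-\varepsilon)^k .
\]
The second inequality is the forward version of the contraction estimate. It is proved exactly as the backward one in \Cref{thm:s_exists}: apply the Doeblin splitting block by block, and invoke the minorization on the full-measure \(\theta\)-invariant set. Letting \(k\to\infty\) gives \(u_\omega\equiv1\) for every \(\rho\). Taking \(\rho=\vartheta(\omega)\) yields the claim on a full-measure \(\Omega_0\); measurability of \(u\) comes from \Cref{prop:Gamma-K-are-kernels}.
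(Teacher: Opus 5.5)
Your proof is correct, but it takes a genuinely different route from the paper.

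\textbf{The paper's route.} The paper never passes to the skew product or to the ergodic theory of Markov chains. It fixes $\omega$ and argues directly under $\mbQ_{\vartheta;\omega}$. Using \Cref{lem:conditional-mean-posterior}, $\mbE[\rho_{n+\ell}\mid\mcG_n]=\Phi^{(\ell)}_{\theta^n\omega}(\rho_n)$, it splits the block averages $\frac1m\sum_{\ell=1}^m\rho_{n+\ell}$ into two parts:
\begin{enumerate}
\item a martingale-difference part, controlled along residue classes mod $m$ by the strong law for bounded martingale differences;
\item a predictable part, which by the channel-level forgetting of \Cref{cor:doeblin-forgetting} lies within $\frac1m\sum_{\ell\le m}r_\ell$ of $\frac1m\sum_{\ell}\s(\theta^{n+\ell}\omega)$.
\end{enumerate}
The scalar Birkhoff theorem on $(\Omega,\theta)$ then finishes. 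That argument uses only the fact that the posterior update is linear in mean, plus uniform forgetting of the channel cocycle $\Phi^{(n)}$. So it never needs the equivariant family $\mu_\omega$, and it would survive under a purely channel-level mixing hypothesis, in the spirit of K\"ummerer--Maassen.

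\textbf{What your route costs and buys.} You use the posterior-level Doeblin contraction twice: once to kill bounded $\mathsf P$-harmonic functions, and once to upgrade ``$u_\omega=1$ $\mu_\omega$-a.e.'' to ``$u_\omega\equiv1$'' via shift invariance of the convergence event. In exchange you get more. Running Steps~3--4 with an arbitrary bounded measurable $f(\omega,\rho)$ in place of $\rho$ gives $\frac1N\sum_n f(\theta^n\omega,\rho_n)\to\int_\Omega\int_{\states}f(\omega',\eta)\,\mu_{\omega'}(\dee\eta)\,\pr(\dee\omega')$ almost surely, for every initial state. The paper's argument, as written, does not cover this, because its predictable part is computed through the channel and so handles only the linear observable $\rho\mapsto\rho$.

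\textbf{Details left implicit (all routine).}
\begin{itemize}
\item The identity $h=\mathsf P^{kL}h$ and the constancy of $h(\omega,\cdot)$ hold only $\mu_\omega$-a.e., which is enough. Your remark about constancy ``for every $\rho$'' is unnecessary.
\item The invariance $g(\omega)=g(\theta\omega)$ uses $\mu_\omega K_\omega=\mu_{\theta\omega}$: this makes the exceptional $\mu_{\theta\omega}$-null set $K_\omega(\rho,\cdot)$-null for $\mu_\omega$-a.e.\ $\rho$.
\item The link between shift-ergodicity of $\mbP_m$ and triviality of bounded harmonic functions needs only Ionescu--Tulcea path measures and L\'evy's martingale convergence. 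No standard-Borel hypothesis on $\Omega$ is required.
\item Joint measurability of $(\omega,\rho)\mapsto u_\omega(\rho)$ comes from the parameterized Ionescu--Tulcea theorem on $\Omega\times\states$, as in \Cref{prop:construction-of-bbK}, not from \Cref{prop:Gamma-K-are-kernels} alone.
\item The forward contraction you invoke is exactly \eqref{eq:quenched-forgetting-composition}, which the paper proves on a $\theta$-invariant full-measure set.
\end{itemize}
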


\paragraph{The posterior-state path law.}
    Consider the measurable state-path space
    \[
        \left(
            \states^{\mbN_0},
            \borel{\states}^{\otimes\mbN_0}
        \right),
    \]
    with coordinate maps
    \[
        e_n:\states^{\mbN_0}\longrightarrow\states,
        \qquad
        e_n((\rho_j)_{j\ge0}):=\rho_n,
        \qquad n\in\mbN_0.
    \]
    For a measurable initial state \(\vartheta:\Omega\to\states\), let \(\mbK_{\vartheta;\omega}\) denote the unique Markov law on this space with initial distribution \(\delta_{\vartheta(\omega)}\) and successive transition kernels
    \[
        K_\omega,\ K_{\theta\omega},\
        K_{\theta^2\omega},\dots.
    \]
    The existence of this law and its measurable dependence on \(\omega\) are established in \Cref{prop:construction-of-bbK}.
    The following corollary expresses \Cref{thm:birkhoff_ergodic} on this state-path space.

\begin{cor}[Pathwise ergodic theorem under \(\mbK_{\vartheta;\omega}\)]
\label{cor:pathwise-ergodic-under-mbK}
    Make \Cref{assumption_1,assumption_doeblin}, and assume moreover that
    \((\Omega,\mcF,\pr,\theta)\) is ergodic.
    Let \(\s\) be the unique dynamically stationary state from \Cref{thm:s_exists}, and set
    \[
        \bar\s
        :=
        \int_\Omega \s(\eta)\,\pr(\dee\eta).
    \]
    Then for every measurable initial state \(\vartheta:\Omega\to\states\) there exists a full-measure set
    \(\Omega_0\subseteq\Omega\) such that for every \(\omega\in\Omega_0\),
    \[
        \lim_{N\to\infty}
        \norm{
            \frac1N\sum_{n=1}^N e_n(\rho)-\bar\s
            }_1
        =0
    \]
    for \(\mbK_{\vartheta;\omega}\)-almost every
    \(\rho=(\rho_0,\rho_1,\rho_2,\dots)\in\states^{\mbN_0}\).
\end{cor}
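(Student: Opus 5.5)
The plan is to transfer \Cref{thm:birkhoff_ergodic} from the outcome space \(\Xi^{\mbN}\) to the state-path space by a pushforward argument. For each \(\omega\), define the measurable map
\[
    \Pi_\omega:\Xi^{\mbN}\to\states^{\mbN_0},
    \qquad
    \Pi_\omega(\bar x):=\left(\rho_n^{\omega,\vartheta}(\bar x)\right)_{n\ge0}.
\]
It is measurable because each coordinate \(\rho_n^{\omega,\vartheta}\) is measurable by \Cref{prop:recursive-posterior-measurable}, and the product \(\sigma\)-algebra is generated by the coordinates. The key identity we aim for is
\[
    (\Pi_\omega)_*\mbQ_{\vartheta;\omega}=\mbK_{\vartheta;\omega}
\]
for every \(\omega\) in a full-measure set \(\Omega_1\), and possibly for every \(\omega\).

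To prove this identity, we use \Cref{prop:posterior-markov-under-mbQ}. Under \(\mbQ_{\vartheta;\omega}\), the process \((\rho_n^{\omega,\vartheta})_{n\ge0}\) is Markov with initial law \(\delta_{\vartheta(\omega)}\) and successive kernels \(K_\omega,K_{\theta\omega},\dots\). Hence the finite-dimensional distributions of the pushforward coincide with those of \(\mbK_{\vartheta;\omega}\): both equal the iterated-kernel expression
\[
    \int \mathbf 1_{B_0}(\rho_0)\,\delta_{\vartheta(\omega)}(\dee\rho_0)\int \mathbf 1_{B_1}(\rho_1)\,K_\omega(\rho_0,\dee\rho_1)\cdots
\]
on cylinders \(B_0\times\cdots\times B_n\times\states\times\cdots\). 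Cylinders form a \(\pi\)-system generating \(\borel{\states}^{\otimes\mbN_0}\). By the \(\pi\)-\(\lambda\) theorem, together with the uniqueness of \(\mbK_{\vartheta;\omega}\) from \Cref{prop:construction-of-bbK}, the two measures agree. If \Cref{prop:posterior-markov-under-mbQ} holds only for \(\omega\) in some full-measure set, we restrict to that set and call it \(\Omega_1\).

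Next, consider the event
\[
    G:=\Big\{\rho\in\states^{\mbN_0}:\ \lim_{N\to\infty}\Big\|\tfrac1N\sum_{n=1}^N e_n(\rho)-\bar\s\Big\|_1=0\Big\}.
\]
This set is measurable. The partial averages are measurable functions of finitely many coordinates, and the limit condition can be written as a countable intersection and union: for all rational \(\delta>0\) there exists \(N_0\) such that for all \(N\ge N_0\) the norm is at most \(\delta\). By construction, \(\Pi_\omega^{-1}(G)\) is exactly the set of \(\bar x\) on which the convergence in \Cref{thm:birkhoff_ergodic} holds.

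Let \(\Omega_0'\) be the full-measure set furnished by \Cref{thm:birkhoff_ergodic}, and set \(\Omega_0:=\Omega_0'\cap\Omega_1\). For \(\omega\in\Omega_0\) we then have
\[
    \mbK_{\vartheta;\omega}(G)=\mbQ_{\vartheta;\omega}\left(\Pi_\omega^{-1}(G)\right)=1,
\]
which is the claim.

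The only step needing care is the identification of the pushforward law with \(\mbK_{\vartheta;\omega}\). In particular, one must check that the Markov property from \Cref{prop:posterior-markov-under-mbQ} is stated with respect to the natural filtration of the states, or a larger one, so that the finite-dimensional distributions factor as claimed. If it is only stated with respect to the outcome filtration \(\sigma(x_1,\dots,x_n)\), the factorization still follows, since \(\rho_n^{\omega,\vartheta}\) is measurable with respect to that filtration.
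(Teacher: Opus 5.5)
Your proposal is correct and takes essentially the same route as the paper: push $\mbQ_{\vartheta;\omega}$ forward under the posterior-state map, identify the image with $\mbK_{\vartheta;\omega}$, and apply \Cref{thm:birkhoff_ergodic} to the measurable convergence event. The only difference is that you re-derive the pushforward identity from \Cref{prop:posterior-markov-under-mbQ} and the uniqueness in \Cref{prop:construction-of-bbK}, whereas the paper simply cites \Cref{prop:bbK-is-pushforward-of-mbQ}, whose proof is that same argument.
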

\begin{proof}
    Fix a measurable initial state \(\vartheta\).
    Choose a common full-measure set \(\Omega_0\), allowed to depend on \(\vartheta\), on which \Cref{thm:birkhoff_ergodic,prop:bbK-is-pushforward-of-mbQ} both apply.
    For \(\omega\in\Omega_0\), define
    \[
        R_{\omega,\vartheta}:\Xi^{\mbN}
        \longrightarrow\states^{\mbN_0},
        \qquad
        R_{\omega,\vartheta}(\bar x)
        :=\left(\rho_n^{\omega,\vartheta}(\bar x)\right)_{n\ge0}.
    \]
    This map is measurable because each of its coordinates is measurable.
    By \Cref{prop:bbK-is-pushforward-of-mbQ},
    \[
        \mbK_{\vartheta;\omega}
        =\mbQ_{\vartheta;\omega}\circ R_{\omega,\vartheta}^{-1}.
    \]
    The event
    \[
        A:=
        \left\{
            \rho\in\states^{\mbN_0}:
            \lim_{N\to\infty}
            \norm{
                \frac1N\sum_{n=1}^N e_n(\rho)-\bar\s
                }_1=0
        \right\}
    \]
    is measurable, since its defining averages are measurable and convergence to zero is a countable combination of measurable conditions.
    Since \(e_n\circ R_{\omega,\vartheta}=\rho_n^{\omega,\vartheta}\), \Cref{thm:birkhoff_ergodic} gives
    \[
        \mbK_{\vartheta;\omega}(A)
        =\mbQ_{\vartheta;\omega}
          \left(R_{\omega,\vartheta}^{-1}(A)\right)
        =1
    \]
    for every \(\omega\in\Omega_0\).
\end{proof}

In addition to the pathwise ergodic theorem, we obtain quenched fluctuation results for additive functionals generated by bounded measurable functions of the environment and posterior state: the existence of a deterministic asymptotic variance, a quenched central limit theorem, a standardized Berry--Esseen bound with random variance normalization, and, under a stronger mixing assumption on the instrument process, a Berry--Esseen bound with deterministic normalization \(\Sigma\sqrt N\).

For a measurable initial state \(\vartheta:\Omega\to\states\) and a bounded measurable function \(f:\Omega\times\states\to\mbR\), write \(\rho_n=e_n\) for the coordinate process and define
\[
    S_N^\omega f
    :=
    \sum_{n=0}^{N-1} f\left(\theta^n\omega,\rho_n\right),
\]
and set
\[
    \bar S_N^\omega f
    :=
    S_N^\omega f-\mbE_{\mbK_{\vartheta;\omega}}\left[S_N^\omega f\right].
\]
Our next theorem gives the quenched variance asymptotics, the central limit theorem, and the standardized Berry--Esseen bound for these additive functionals.

\begin{restatable}[Quenched standardized Berry--Esseen theorem for additive functionals]{thm}{quenchedcltposterior}
\label{thm:quenched_clt_posterior}
    Let \(f:\Omega\times\states\to\mbR\) be a bounded measurable function.
    Make \Cref{assumption_1,assumption_doeblin} and assume that \(\theta\) is ergodic.
    Then there exists a constant \(\Sigma\ge0\), independent of \(\omega\) and of the initial state, such that for every measurable initial state \(\vartheta:\Omega\to\states\) the following holds for $\pr$-almost every $\omega$. 
    \[
        \lim_{N\to\infty}
        \frac1N
        \Var_{\mbK_{\vartheta;\omega}}\left(S_N^\omega f\right)
        =
        \Sigma^2.
    \]
    If moreover \(\Sigma>0\), then for \(\pr\)-almost every \(\omega\) and every
    \(t\in\mbR\),
    \[
        \lim_{N\to\infty}
        \mbK_{\vartheta;\omega}\left\{
            \frac{\bar S_N^\omega f}{\sqrt N}\le t\Sigma
        \right\}
        =
        \Phi(t)
        :=
        \frac1{\sqrt{2\pi}}
        \int_{-\infty}^t e^{-x^2/2}\,\dee x.
    \]
    Moreover, when \(\Sigma>0\),
    \[
        \sup_{t\in\mbR}
        \left|
            \mbK_{\vartheta;\omega}\left\{
                \frac{\bar S_N^\omega f}
                {\norm{\bar S_N^\omega f}_{L^2(\mbK_{\vartheta;\omega})}}
                \le t
            \right\}
            -
            \Phi(t)
        \right|
        =
        O(N^{-1/2})
    \]
    for \(\pr\)-almost every \(\omega\).
\end{restatable}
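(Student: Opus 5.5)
The plan is to treat \((\rho_n)\) under \(\mbK_{\vartheta;\omega}\) as a uniformly Doeblin, time-inhomogeneous Markov chain driven by the ergodic environment. I would run the standard martingale-approximation (Gordin) argument for the variance and the CLT, and a sequential Berry--Esseen theorem for \(L\)-blocks to obtain the rate. The basic input is the uniform contraction behind \Cref{thm:s_exists}. By \Cref{assumption_doeblin}, for a.e. \(\omega\), every \(n\) and all probability measures \(\zeta,\zeta'\),
\[
    \norm{\zeta K_\omega^{(n)}-\zeta' K_\omega^{(n)}}_{\mathrm{TV}}\le 2(1-\varepsilon)^{\lfloor n/L\rfloor}.
\]
Consequently, for bounded \(g\), \(\operatorname{osc}(K_\omega^{(n)}g)\le 2(1-\varepsilon)^{\lfloor n/L\rfloor}\norm{g}_\infty\). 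This yields exponential decay of correlations,
\[
    \left|\operatorname{Cov}_{\mbK_{\vartheta;\omega}}\big(f(\theta^j\omega,\rho_j),f(\theta^k\omega,\rho_k)\big)\right|\le C\norm{f}_\infty^2(1-\varepsilon)^{\lfloor |k-j|/L\rfloor},
\]
uniformly in \(\omega\), \(j\), \(k\) and the initial state.

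For the variance, I would first pass to the stationary version. I replace \(\delta_{\vartheta(\omega)}\) by \(\mu_\omega\), the equivariant family of \Cref{thm:s_exists}, and define centred observables \(\tilde f_\omega:=f(\omega,\cdot)-\mu_\omega(f(\omega,\cdot))\). By the uniform TV convergence in \Cref{thm:s_exists} and the correlation bound, the two variances differ by \(O(1)\), uniformly in \(\omega\). Next I would build the corrector
\[
    h_\omega:=\sum_{k\ge0}K_\omega^{(k)}\tilde f_{\theta^k\omega},
\]
which is bounded and measurable and solves \(h_\omega=\tilde f_\omega+K_\omega h_{\theta\omega}\). Then
\[
    \bar S_N^\omega f = M_N^\omega + O(1),\qquad M_N^\omega=\sum_{n=1}^{N}\big(h_{\theta^n\omega}(\rho_n)-K_{\theta^{n-1}\omega}h_{\theta^n\omega}(\rho_{n-1})\big),
\]
a martingale with bounded increments. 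In the stationary version, the conditional variance of the \(n\)-th increment is \(\psi(\theta^{n-1}\omega,\rho_{n-1})\) for a bounded measurable \(\psi\). Its quenched expectation is therefore \(\int\psi(\theta^{n-1}\omega,\eta)\,\mu_{\theta^{n-1}\omega}(\dee\eta)=:\Psi(\theta^{n-1}\omega)\). Birkhoff's theorem then gives \(\frac1N\Var(S_N^\omega f)\to\Sigma^2:=\int\Psi\,\dee\pr\) for a.e. \(\omega\). This limit is deterministic and, by the \(O(1)\) comparison, independent of \(\vartheta\).

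For the CLT, I would apply the martingale CLT (McLeish/Brown) under \(\mbK_{\vartheta;\omega}\). The Lindeberg condition is immediate because the increments are bounded. It remains to show \(\frac1N\sum_n\psi(\theta^{n-1}\omega,\rho_{n-1})\to\Sigma^2\) in \(\mbK_{\vartheta;\omega}\)-probability. Its mean converges by the previous step. Its variance is \(O(N)\) by the same exponential decorrelation bound applied to \(\psi\), so the normalized quantity has variance \(O(1/N)\). This settles the CLT.

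For the Berry--Esseen bound with random normalization \(\norm{\bar S_N^\omega f}_{L^2}\), I expect the main obstacle to be getting \(O(N^{-1/2})\) rather than the weaker martingale rate \(N^{-1/4}\). My first attempt would avoid martingales. I would group time into blocks of length \(L\) and use a sequential (non-stationary) Berry--Esseen theorem for uniformly Doeblin Markov chains, or for \(\psi\)-mixing arrays of bounded variables whose variance grows linearly. Dobrushin's contraction coefficient for \(K_\omega^{(L)}\) is at most \(1-\varepsilon\) uniformly, and \(\Var\ge cN\) for large \(N\) since \(\Sigma>0\). These are precisely the hypotheses of Sunklodas-type or Dobrushin--Sethuraman--Varadhan Berry--Esseen bounds for inhomogeneous Markov chains with uniform contraction, which give a rate \(C/\sqrt{\Var}\) with \(C\) depending only on \(\varepsilon\), \(L\) and \(\norm f_\infty\). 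If no such theorem is available to quote, I would prove it with the complex perturbation approach: twisted operators \(K_\omega^{(L),t}g=K_\omega^{(L)}(e^{itf}g)\) on bounded functions, and a uniform sequential spectral-gap argument via Hilbert-metric cone contraction to control characteristic functions for \(|t|\le\delta\sqrt N\). Esseen's smoothing inequality would then finish the proof. The \(\omega\)-dependence enters only through \(\Var\sim N\Sigma^2\), which holds for a.e. \(\omega\).
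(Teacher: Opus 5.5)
Your proposal is correct. For the variance and the CLT it takes a genuinely different route from the paper.

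\textbf{How the paper proceeds.} The paper never builds a corrector. It applies the sequential perturbation theorem \cite[Theorem~D.2]{DH_PTRF} to the twisted kernels \(K_{\omega,z}g=K_\omega(e^{zf_{\theta\omega}}g)\). It identifies a measurable leading eigenvalue \(\lambda_\omega(z)\) through a ratio limit and writes \(\mbE_{\mbK_{\vartheta;\omega}}[e^{zS_N^\omega f}]=\Lambda_{\omega,N}(z)F_{\omega,N}(z)\), with \(F_{\omega,N}\) analytic and uniformly bounded. Differentiating twice gives \(\Var_{\mbK_{\vartheta;\omega}}(S_N^\omega f)=\sum_{j<N}\pi''_{\theta^j\omega}(0)+O(1)\), where \(\pi_\omega=\log\lambda_\omega\), and Birkhoff's theorem finishes. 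The CLT is not proved separately. It is read off from the standardized Berry--Esseen bound, quoted from \cite[Theorem~2.2(i)]{HW25} via \Cref{lem:HW-hypotheses-posterior}, together with \(\sigma_N/\sqrt N\to\Sigma\).

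\textbf{What your route buys.} Your Gordin decomposition is more elementary and self-contained. It gives the explicit formula \(\Sigma^2=\int_\Omega\int_{\states}\bigl(K_\omega(h_{\theta\omega}^2)-(K_\omega h_{\theta\omega})^2\bigr)\,\dee\mu_\omega\,\dee\pr\). It also yields the CLT without any Berry--Esseen input. With slightly more bookkeeping you get the exact analogue of the paper's identity: the cross term \(\mbE[M_N^\omega\,h_{\theta^N\omega}(\rho_N)]\) is \(O(1)\) by the forgetting estimate. Hence \(\Var(S_N^\omega f)=\sum_{n<N}\Psi(\theta^n\omega)+O(1)\), uniformly in the initial state.

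\textbf{What the paper's route buys.} The spectral route gives the analytic family \(\lambda_\omega(z)\). Its finite-ratio approximations show that \(\pi''_\omega(0)\) is exponentially well approximated by functions of finitely many instruments. The paper reuses this for the variance rate in \Cref{thm:quenched_berry_esseen_posterior}. It is also exactly the machinery your perturbative Berry--Esseen fallback would require.

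\textbf{Berry--Esseen step.} Here your plan coincides in substance with the paper. Three corrections:
\begin{itemize}
\item Dobrushin's theorem, and Sethuraman--Varadhan's martingale proof of it, are CLTs without rates. The result that delivers an \(O(\sigma_N^{-1})\) standardized error under a uniform \(L\)-step minorization is \cite[Theorem~2.2(i)]{HW25}, with exponential \(\phi\)-mixing supplied by \cite[Lemma~2.4]{HW25}.
\item A Doeblin lower bound gives \(\phi\)-mixing but not, in general, \(\psi\)-mixing, so the \(\psi\)-mixing alternative cannot be verified from \Cref{assumption_doeblin}.
\item In the perturbative fallback, the twist must be inserted at every step and then composed, \(K^{(L)}_{\omega,z}=K_{\theta^{L-1}\omega,z}\circ\cdots\circ K_{\omega,z}\). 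The operator \(K_\omega^{(L)}(e^{itf}\,\cdot\,)\) sees \(f\) only once per block.
\end{itemize}
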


\paragraph{Instrument dependence and quenched Berry--Esseen bounds.}

To replace the random standard-deviation normalization in the Berry--Esseen bound of \Cref{thm:quenched_clt_posterior} by the deterministic asymptotic normalization \(\Sigma\sqrt N\), we impose an additional mixing assumption on the instrument process.

Let \(\mcG_k^{\mathrm{inst}}\) denote the \(\sigma\)-algebra generated by the current abstract instrument at time \(k\), equivalently by the maps
\[
    \omega\longmapsto J\!\left(\mfI_{\theta^k\omega}(A)\right),
    \qquad A\in\mcX;
\]
see \Cref{subsection:canonical-coding-instrument-process}. 
Define the past and future \(\sigma\)-algebras generated by the instrument process by
\[
    \mcF_{k}^{\mathrm{inst},-}
    :=
    \sigma\!\left(\mcG_j^{\mathrm{inst}}:j\le k\right),
    \qquad
    \mcF_{k}^{\mathrm{inst},+}
    :=
    \sigma\!\left(\mcG_j^{\mathrm{inst}}:j\ge k\right).
\]
We then define the corresponding \(\phi\)-mixing coefficients by
\[
    \phi^{\mathrm{inst}}(n)
    :=
    \sup_{k\in\mbZ}
    \phi_{\pr}\!\left(
        \mcF_{k}^{\mathrm{inst},-},
        \mcF_{k+n}^{\mathrm{inst},+}
    \right),
    \qquad n\ge1.
\]
We also say that \(f_\omega\) or \(K_\omega\) depends only on the current one-step instrument if it is measurable with respect to \(\mcG_0^{\mathrm{inst}}\), in the precise sense of \Cref{def:depends-only-current-instrument}.

\begin{assumption}
\label{assumption_mix}
    For the quantitative quenched fluctuation results, assume that
    \[
        \sum_{n=1}^{\infty}\phi^{\mathrm{inst}}(n)<\infty.
    \]
\end{assumption}

\begin{remark}
\label{rem:instrument-factor-ergodicity}
Let
\[
    \mcG^{\mathrm{inst}}
    :=\sigma\!\left(\mcG_j^{\mathrm{inst}}:j\in\mbZ\right)
\]
be the sigma-algebra generated by the entire instrument process.
This sigma-algebra is invariant under \(\theta\) and \(\theta^{-1}\).
Under \Cref{assumption_1,assumption_mix}, we have \(\phi^{\mathrm{inst}}(n)\to0\), and hence
\[
    \pr\!\left(A\cap\theta^{-n}B\right)
    \longrightarrow \pr(A)\pr(B),
    \qquad A,B\in\mcG^{\mathrm{inst}}.
\]
Indeed, this follows first for events depending on finite instrument blocks from the definition of \(\phi^{\mathrm{inst}}\), and then for all events in \(\mcG^{\mathrm{inst}}\) by approximation in probability.
Consequently, \(\left(\Omega,\mcG^{\mathrm{inst}},\pr|_{\mcG^{\mathrm{inst}}},\theta \right)\)
is mixing and therefore ergodic.
This does not necessarily imply ergodicity of the full environment \((\Omega,\mcF,\pr,\theta)\).
\end{remark}

\begin{restatable}[Quenched Berry--Esseen theorem]{thm}{quenchedberryesseenposterior}
 \label{thm:quenched_berry_esseen_posterior}
Make \Cref{assumption_1,assumption_doeblin,assumption_mix}.
Let \(f:\Omega\times\states\to\mbR\) be bounded and measurable.
Suppose that \(K_\omega\) and \(f_\omega\) depend only on the current one-step instrument in the sense of \Cref{def:depends-only-current-instrument}.
Then there exists a deterministic constant \(\Sigma\ge0\) such that, for every measurable initial state \(\vartheta:\Omega\to\states\), there is a measurable set \(\Omega_0\subseteq\Omega\) with \(\pr(\Omega_0)=1\) on which
\[
    \lim_{N\to\infty}
    \frac1N
    \operatorname{Var}_{\mbK_{\vartheta;\omega}}
    \left(S_N^\omega f\right)
    =\Sigma^2.
\]
If \(\Sigma>0\), then for every \(\omega\in\Omega_0\),
\[
    \sup_{t\in\mbR}
    \left|
        \mbK_{\vartheta;\omega}
        \left\{
            \frac{\bar S_N^\omega f}{\Sigma\sqrt N}\le t
        \right\}
        -\Phi(t)
    \right|
    =
    O\!\left(N^{-1/2}(\log N)^{3/2}\right)
    \qquad (N\to\infty),
\]
where \(\Phi\) denotes the standard normal distribution function.
Whenever \Cref{thm:quenched_clt_posterior} applies, this is the same \(\Sigma\) as in that theorem for the same function $f$.
\end{restatable}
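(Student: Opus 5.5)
The plan is to reduce the statement to a Berry--Esseen theorem for sums of a uniformly bounded, exponentially decorrelating random field, and to handle the deterministic normalization by controlling how fast the quenched variance approaches $N\Sigma^2$.

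\emph{Step 1 (martingale/coboundary decomposition).} Using the Doeblin minorization and the contraction in \Cref{thm:s_exists}, define the centered function $\tilde f_\omega(\rho):=f_\omega(\rho)-\int f_\omega\,\dee\mu_\omega$. Then solve the twisted Poisson equation
\[
    h_\omega:=\sum_{k\ge0}K_\omega^{(k)}\tilde f_{\theta^k\omega}.
\]
This series converges uniformly in $(\omega,\rho)$ because of the geometric rate $(1-\varepsilon)^{\lfloor k/L\rfloor}$, and it gives $\tilde f_\omega=h_\omega-K_\omega h_{\theta\omega}$. This yields $S_N^\omega f=M_N^\omega+(\text{bounded})+\sum_{n<N}\int f_{\theta^n\omega}\,\dee\mu_{\theta^n\omega}+O(1)$, where $M_N^\omega$ is a martingale under $\mbK_{\vartheta;\omega}$ with bounded increments $\xi_{n}=h_{\theta^{n}\omega}(\rho_{n})-K_{\theta^{n-1}\omega}h_{\theta^n\omega}(\rho_{n-1})$. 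The deterministic drift and the effect of the initial state both disappear after centering, up to $O(1)$ errors that come from the exponential forgetting in \Cref{thm:s_exists}.

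\emph{Step 2 (variance with a rate).} We have $\Var(S_N^\omega f)=\sum_{n}\mbE[\xi_n^2]+O(\sqrt{\cdot})$ cross terms. Here $\mbE[\xi_n^2]$ equals $g(\theta^n\omega)$ up to an error $C(1-\varepsilon)^{n/L}$, where
\[
    g(\omega)=\int \bigl(K_{\theta^{-1}\omega}h_\omega^2-(K_{\theta^{-1}\omega}h_\omega)^2\bigr)\,\dee\mu_{\theta^{-1}\omega}.
\]
Because $K_\omega$ and $f_\omega$ depend only on the current instrument, $h_\omega$ and $\mu_\omega$ are measurable with respect to the instrument $\sigma$-algebras. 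Truncating the series for $h$ and the backward limit for $\mu$ at depth $m\approx C\log N$ makes $g$ approximable within $N^{-2}$ by a function of $2m$ consecutive instruments. Set $\Sigma^2=\int g\,\dee\pr$; if \Cref{thm:quenched_clt_posterior} applies, this agrees with its constant by uniqueness of limits. Under \Cref{assumption_mix} the summable $\phi$-mixing gives a law of the iterated logarithm / almost-sure rate for Birkhoff sums of such block functions, for instance via a Borel--Cantelli argument on fourth moments of blocks of length $\log N$. The result is
\[
    \Bigl|\sum_{n<N}g(\theta^n\omega)-N\Sigma^2\Bigr|=O\bigl(\sqrt N(\log N)^{3/2}\bigr)\quad\text{a.s.}
\]
I expect this to be the main obstacle: it requires a quenched almost-sure rate under $\phi$-mixing only of the instrument factor (not of the full environment), with the $\log N$ block length from the truncation feeding into the exponent $3/2$.

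\emph{Step 3 (Berry--Esseen).} Apply a martingale Berry--Esseen bound of Haeusler/Bolthausen type to $M_N^\omega$. Its error is controlled by $N^{-1/2}$ plus $\|\sum\mbE[\xi_n^2\mid\mcF_{n-1}]/(N\Sigma^2)-1\|_\infty^{1/2}$-type terms. The conditional-variance fluctuation within a fixed environment is handled by the Doeblin coupling, which gives exponentially decaying correlations of $\mbE[\xi_n^2\mid\mcF_{n-1}]$ and hence an $O(N^{-1/2})$ contribution in $L^p$. The environment fluctuation is handled by Step 2. To pass from the martingale to $\bar S_N^\omega f/(\Sigma\sqrt N)$, shift by the bounded $O(N^{-1/2})$ remainder, using the Lipschitz property of $\Phi$. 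Collecting the terms gives $O(N^{-1/2}(\log N)^{3/2})$ on a full-measure set $\Omega_0$, and this set can be chosen for each measurable initial state $\vartheta$.
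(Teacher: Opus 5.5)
Your Steps~1 and~2 are sound in outline. The Poisson-equation decomposition is correct, and $\mbE[\xi_n^2]=g(\theta^n\omega)+O(\delta^n)$ is right. Step~2 plays the role of the paper's \Cref{lem:quenched-variance-rate}. The paper writes $\Var_{\mbK_{\vartheta;\omega}}(S_N^\omega f)=\sum_{j<N}q(\theta^j\omega)+O(1)$ using the spectral data, then controls the Birkhoff sum with a Gordin martingale in the instrument filtration and Azuma--Hoeffding. Your truncation/fourth-moment route is a plausible substitute.

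The genuine gap is Step~3: a general martingale Berry--Esseen bound applied to $M_N^\omega$ cannot give an error of order $N^{-1/2}$ up to logarithms. Write
\[
    V_N^2=\sum_{n\le N}\mbE[\xi_n^2\mid\mcF_{n-1}]=\sum_{n\le N}G_{n-1}(\rho_{n-1}),
    \qquad
    G_{n-1}=K_{\theta^{n-1}\omega}h_{\theta^n\omega}^2-\bigl(K_{\theta^{n-1}\omega}h_{\theta^n\omega}\bigr)^2 .
\]
This is itself a random additive functional of the posterior chain.
\begin{itemize}
    \item In the $L^\infty$ form you quote, the error contains $\norm{V_N^2/(N\Sigma^2)-1}_\infty^{1/2}$. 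The essential supremum over paths need not tend to zero at all: paths along which the chain lingers where $G$ exceeds its $\mu$-average have positive, if exponentially small, probability. Even a bound $O(N^{-1/2})$ would give only $N^{-1/4}$ after the square root.
    \item In Haeusler's $L^p$ form the error is of order $\bigl(N^{1-p}+\mbE|V_N^2/(N\Sigma^2)-1|^p\bigr)^{1/(2p+1)}$. Your correct estimate $\norm{V_N^2/N-\mbE V_N^2/N}_{L^p}=O(N^{-1/2})$ then yields at best $N^{-p/(2(2p+1))}$, which never beats $N^{-1/4}$. Mourrat showed these exponents are sharp for general martingales, so no bookkeeping rescues the rate.
    \item The same square-root loss occurs if the deterministic mismatch $\Var/(N\Sigma^2)-1$ from Step~2 is fed into that term.
\end{itemize}

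What is missing is a Berry--Esseen estimate of optimal order $\sigma_N^{-1}$ for the standardized sum of the inhomogeneous Doeblin chain itself. This must use the Markov structure, not only the martingale part.
\begin{itemize}
    \item The paper takes this from \cite[Theorem~2.2(i)]{HW25}. Its hypotheses are verified in \Cref{lem:HW-hypotheses-posterior}: bounded summands and exponential $\phi$-mixing of the quenched chain, uniformly in $\omega$.
    \item A complex-perturbation argument using the twisted kernels $K_{\omega,z}$ from the proof of \Cref{thm:quenched_clt_posterior} would be an alternative.
\end{itemize}
After that, only a deterministic change of normalization remains: comparing $\Phi(t\Sigma\sqrt N/\sigma_N)$ with $\Phi(t)$, controlled by the variance rate (the paper's \Cref{lem:change-of-normalization}). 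Since $\sup_t|\Phi(at)-\Phi(t)|\le C|a-1|$ for $a$ near $1$, your Step~2 rate $O\bigl(\sqrt N(\log N)^{3/2}\bigr)$ would already suffice for the stated bound once Step~3 is replaced in this way.
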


\paragraph{Annealed coding and inherited mixing.}
    After the quenched results, we turn to the equilibrium annealed law, which averages over both the environment and the measurement outcomes.
    The instrument coding places the joint instrument--posterior process in a stationary framework, allowing us to relate its temporal dependence to that of the instrument environment.

    For the annealed results, we also use the canonical instrument coding from \Cref{subsection:canonical-coding-instrument-process}.
    With the generating algebra \((C_m)_{m\in\mbN}\) fixed there, write
    \[
        \mathbf I_\omega
        :=
        \left(J(\mfI_\omega(C_m))\right)_{m\in\mbN}\in \mathrm{Instr},
        \qquad
        \widehat\Omega:=\mathrm{Instr}^{\mbZ}.
    \]
    Let \(\widehat\theta:\widehat\Omega\to\widehat\Omega\) be the left shift, let \(\pi_k:\widehat\Omega\to\mathsf{Instr}\) denote the \(k\)-th coordinate map, and let
    \[
        \iota:\Omega\to\widehat\Omega,
        \qquad
        \iota(\omega):=\left(\mathbf I_{\theta^n\omega}\right)_{n\in\mbZ},
    \]
    be the coding map.
    
    Let \(\mcY:=\states^{\mbZ}\), and let \(\sigma:\mcY\to\mcY\) be the left shift. Define
    \[
        Z_n:\Omega\times\mcY\to\Omega\times\mcY,
        \qquad
        Z_n(\omega,y):=(\theta^n\omega,\sigma^n y)
        =
        (\theta\otimes\sigma)^n(\omega,y),
        \qquad n\ge0.
    \]
    For \(j\in\mbZ\), let
    \[
        \varrho_j:\mcY\to\states,
        \qquad
        \varrho_j(y):=y_j,
    \]
    denote the \(j\)-th coordinate map.

Under the standing null-set convention of \Cref{section:prelim}, we may assume that the equivariant family \(\omega\mapsto\mu_\omega\in\mcP(\states)\) from \Cref{thm:s_exists} is defined for every \(\omega\in\Omega\) and satisfies
\[
    \mu_\omega K_\omega = \mu_{\theta\omega}
    \qquad\text{for every }\omega\in\Omega.
\]
For \(n\ge0\), let
\[
    \widetilde{\mbN}_n:=\{-n,-n+1,\dots\},
\]
and let \(\bar\kappa_{\omega,n}\) be the probability measure on \(\states^{\widetilde{\mbN}_n}\) obtained by starting, at time \(-n\), with initial law \(\mu_{\theta^{-n}\omega}\) and evolving forward with the successive kernels
\[
    K_{\theta^j\omega},
    \qquad j\ge -n.
\]
Since \(\mu_{\theta^j\omega}K_{\theta^j\omega}=\mu_{\theta^{j+1}\omega}\), the family \((\bar\kappa_{\omega,n})_{n\ge0}\) is projectively consistent. Hence, by the Kolmogorov extension theorem, there exists a unique probability measure
\[
    \bar\kappa_\omega\in\mcP(\mcY)
\]
whose restriction to the coordinates \(\widetilde{\mbN}_n\) is \(\bar\kappa_{\omega,n}\) for every \(n\ge0\).

Define the annealed state-path measure \(\bar\kappa\in\mcP(\Omega\times\mcY)\) by
\[
    \bar\kappa(d\omega,dy):=\pr(d\omega)\,\bar\kappa_\omega(dy).
\]
Then \(\bar\kappa\) is \(\theta\otimes\sigma\)-invariant.

Now define
\[
    \Pi:\Omega\times\mcY\to \widehat\Omega\times\mcY,
    \qquad
    \Pi(\omega,y):=(\iota(\omega),y),
\]
and let
\begin{equation}
\label{eq:kappa_from_kappa_bar}
    \kappa:=\bar\kappa\circ\Pi^{-1}\in\mcP(\widehat\Omega\times\mcY).
\end{equation}
Since \(\iota\circ\theta=\widehat\theta\circ\iota\), it follows that \(\kappa\) is \(\widehat\theta\otimes\sigma\)-invariant. 
Thus, for every bounded measurable \(f:\widehat\Omega\times\mcY\to\mbR\), one may consider the partial sums
\[
    S_n f
    :=
    \sum_{j=0}^{n-1}
    f\circ(\widehat\theta\otimes\sigma)^j.
\]
Equivalently, on \((\Omega\times\mcY,\bar\kappa)\),
\[
    S_n f(\omega,y)
    =
    \sum_{j=0}^{n-1}
    f\left(\Pi(Z_j(\omega,y))\right).
\]

To state the annealed mixing result, we define, for two sub-\(\sigma\)-algebras \(\mcG,\mcH\subseteq\mcF\),
\[
    \alpha_{\pr}(\mcG,\mcH)
    :=
    \sup_{A\in\mcG,\;B\in\mcH}
    \left|
        \pr(A\cap B)-\pr(A)\pr(B)
    \right|.
\]
We also set
\[
    \alpha^{\mathrm{inst}}(n)
    :=
    \sup_{k\in\mbZ}
    \alpha_{\pr}\!\left(
        \mcF_k^{\mathrm{inst},-},
        \mcF_{k+n}^{\mathrm{inst},+}
    \right),
    \qquad n\ge0,
\]
where \(\mcF_k^{\mathrm{inst},-}\) and \(\mcF_k^{\mathrm{inst},+}\) are the past and future \(\sigma\)-algebras generated by the instrument process from \Cref{subsection:canonical-coding-instrument-process}.

For integers \(k\le \ell\), let \(\mcH_{k,\ell}\) denote the \(\sigma\)-algebra on \(\widehat\Omega\times\mcY\) generated by the coordinate maps
\[
    \pi_j:\widehat\Omega\to\mathrm{Instr},
    \qquad
    \varrho_j:\mcY\to\states,
    \qquad
    k\le j\le \ell.
\]
Equivalently,
\[
    \mcH_{k,\ell}
    =
    \sigma\left(\pi_j,\varrho_j:\,k\le j\le \ell\right).
\]
We further write
\[
    \mcH_{-\infty,k}
    :=
    \sigma\left(\pi_j,\varrho_j:\,j\le k\right),
    \qquad
    \mcH_{k,\infty}
    :=
    \sigma\left(\pi_j,\varrho_j:\,j\ge k\right).
\]
The corresponding annealed \(\alpha\)-mixing profile is
\[
    \alpha_\kappa(m)
    :=
    \sup_{k\in\mbZ}
    \alpha_\kappa\!\left(
        \mcH_{-\infty,k},
        \mcH_{k+m,\infty}
    \right),
    \qquad m\ge1.
\]

\begin{restatable}[Inherited annealed \(\alpha\)-mixing]{thm}{annealedalphamixing} 
\label{thm:annealed_alpha_mixing}
    Make  \Cref{assumption_1,assumption_doeblin}, and suppose that $K_\omega$ depends only on the current one-step instrument in the sense of \Cref{def:depends-only-current-instrument}.
    There exist constants \(\zeta\in(0,1)\) and \(C_1>0\) such that for every \(m\ge1\),
    \[
        \alpha_\kappa(m)
        \le
        C_1\zeta^m
        +
        \alpha^{\mathrm{inst}}\!\left(\lfloor m/3\rfloor\right).
    \]
\end{restatable}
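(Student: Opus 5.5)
Fix $k\in\mbZ$, $m\ge1$, $A\in\mcH_{-\infty,k}$, $B\in\mcH_{k+m,\infty}$. By $\widehat\theta\otimes\sigma$-invariance of $\kappa$ it suffices to take $k=0$. The plan is to pull everything back to $(\Omega\times\mcY,\bar\kappa)$ through $\Pi$. Since $K_\omega$ depends only on the current one-step instrument, all the kernels $K_{\theta^j\omega}$ are functions of $\iota(\omega)$. The equivariant family $\mu_\omega$ is a TV limit of $\zeta K^{(n)}_{\theta^{-n}\omega}$ by \Cref{thm:s_exists}, with $\zeta$ a fixed deterministic measure. Hence $\mu_\omega$ may be taken $\mcF_{-1}^{\mathrm{inst},-}$-measurable. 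Consequently $\kappa$ disintegrates as $\kappa(d\hat\omega,dy)=\widehat{\pr}(d\hat\omega)\,\bar\kappa_{\hat\omega}(dy)$, where $\widehat\pr=\pr\circ\iota^{-1}$ and $\bar\kappa_{\hat\omega}$ is the Markov law built from $\mu$ and $K$ expressed as functions of instrument coordinates.

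Split the gap $m$ into three pieces of length $\approx m/3$. Write $B$ through the future coordinates. By the Markov property at time $p:=\lfloor m/3\rfloor$ (conditionally on $\hat\omega$), we get
\[
\kappa(A\cap B)=\int \widehat\pr(d\hat\omega)\int \mathbf 1_A\, h_{\hat\omega}(y_p)\,\bar\kappa_{\hat\omega}(dy),
\]
where $h_{\hat\omega}(\rho)$ is the conditional probability of the $y$-section of $B$ given $y_{p}=\rho$. The next step is to replace the law of $y_{m}$ given $y_p$ by $\mu_{\hat\theta^{m}\hat\omega}$. Doeblin contraction over the block $[p,m]$ of length $\ge m/3$ gives a TV error $\le(1-\varepsilon)^{\lfloor (m-p)/L\rfloor}$ uniformly in the starting state. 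This is the $C_1\zeta^m$ term with $\zeta=(1-\varepsilon)^{1/(3L)}$, since $B$ depends on $y_{\ge m}$ only through $y_m$ and the future kernels. Similarly, $\mu_{\theta^m\omega}$ is approximated within $(1-\varepsilon)^{\lfloor(m-2p)/L\rfloor}$ by $\delta_{\rho_*}K^{(m-2p)}_{\theta^{2p}\omega}$, which is measurable with respect to $\mcF^{\mathrm{inst},+}_{2p}$. After these replacements, $\kappa(A\cap B)$ equals, up to $C\zeta^m$,
\[
\int \widehat\pr(d\hat\omega)\,F(\hat\omega)\,G(\hat\omega),
\]
with $F(\hat\omega)=\bar\kappa_{\hat\omega}(A_{\hat\omega})$ being $\mcF_0^{\mathrm{inst},-}$-measurable and $G(\hat\omega)$ (the probability of the $B$-section under the approximated stationary start) being $\mcF^{\mathrm{inst},+}_{2p}$-measurable. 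The same replacement applies to $\kappa(A)\kappa(B)=\int F\,d\widehat\pr\int G'\,d\widehat\pr$, with $G'$ approximating $G$ within the same error.

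Finally, for $[0,1]$-valued $F,G$ measurable with respect to $\sigma$-algebras at distance $2p\ge\lfloor m/3\rfloor$, we need $|\mbE FG-\mbE F\,\mbE G|\le\alpha^{\mathrm{inst}}$. The standard bound for bounded functions has a factor $4$. To keep constant $1$ as stated, I would use the layer-cake representation $F=\int_0^1\mathbf 1_{F>s}ds$, $G=\int_0^1\mathbf 1_{G>t}dt$, which yields $|\mathrm{Cov}(F,G)|\le\alpha$ exactly. The gap between $\mcF^{\mathrm{inst},-}_0$ and $\mcF^{\mathrm{inst},+}_{2p}$ is $2p\ge\lfloor m/3\rfloor$, and $\alpha^{\mathrm{inst}}$ is monotone, so the final bound follows.

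The main obstacle is the measurability bookkeeping. One must show that the $A$-section probability depends only on past instruments; this needs $\mu_\omega$ to be past-instrument measurable, which comes from the pullback limit in \Cref{thm:s_exists}. One must also check that $A$, which involves both $\pi_j$ and $\varrho_j$ for $j\le 0$, is handled by conditioning on the full instrument sequence. Since $\bar\kappa_{\hat\omega}$ restricted to times $\le0$ depends only on $(\pi_j)_{j\le -1}$, $F$ is indeed past-measurable. The $L$-block Doeblin contraction itself is routine given \Cref{assumption_doeblin} at the shifted environments.
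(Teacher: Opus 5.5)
Your proposal is correct and follows essentially the paper's route. Quenched Doeblin forgetting reduces $\kappa(A\cap B)$ to $\mbE_{\pr}[p_Ap_B]$ up to an exponentially small error, the stationary law is replaced by $\delta_{\rho_*}K^{(r)}$-type approximations that depend only on a block of instruments, and $\alpha^{\mathrm{inst}}$ is then applied, with the same rate $\zeta=(1-\varepsilon)^{1/(3L)}$. The differences are minor: you use exact past-instrument measurability of $\mu_\omega$, so $p_A$ needs no approximation and you can take $A\in\mcH_{-\infty,0}$, $B\in\mcH_{m,\infty}$ directly, whereas the paper approximates both $p_A$ and $p_B$ by finite-block conditional expectations and finishes with a monotone-class argument; and your layer-cake step makes explicit why the coefficient of $\alpha^{\mathrm{inst}}$ is $1$, which the paper leaves implicit.
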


To interpret \Cref{thm:annealed_alpha_mixing}, note that the coordinate process
\[
    z_j:\widehat\Omega\times\mcY\to \mathrm{Instr}\times\states,
    \qquad
    z_j(\widehat\omega,y):=(\pi_j(\widehat\omega),\varrho_j(y)),
    \qquad j\in\mbZ,
\]
is stationary under \(\kappa\). Thus \Cref{thm:annealed_alpha_mixing} shows that the two-sided annealed process \((z_j)_{j\in\mbZ}\) inherits the \(\alpha\)-mixing of the instrument environment, up to an additive exponentially decaying term.

Accordingly, for bounded measurable \(f:\widehat\Omega\times\mcY\to\mbR\), one may apply the results of \cite{KV,HK,YHMD} to the partial sums \(S_nf\), under decay assumptions on the local approximation coefficients
\[
    \beta_{f,p}(r)
    :=
    \norm{
        f-\mbE_\kappa\!\left[f\,\middle|\,\mcH_{-r,r}\right]
    }_{L^p(\kappa)},
    \qquad r\to\infty,
\]
together with suitable decay of \(\alpha^{\mathrm{inst}}(n)\) as \(n\to\infty\).
The same framework also yields limit theorems for nonconventional sums of the form
\[
    \sum_{j=1}^n
    F\circ(\widehat\theta\otimes\sigma)^{q_1(j)}
    \times\cdots\times
    F\circ(\widehat\theta\otimes\sigma)^{q_\ell(j)},
\]
or, equivalently, for functions of the tuple
\[
    \left(
        \Pi(Z_{q_1(j)}),\dots,\Pi(Z_{q_\ell(j)})
    \right),
\]
where \(q_1,\dots,q_\ell\) are integer-valued polynomials and \(F\) is sufficiently regular.

\paragraph{Markovian coded instruments.}
    Suppose that the coded instrument process \(\left(\mathbf I_{\theta^n\omega}\right)_{n\in\mbZ}
    \) is a Markov chain under \(\pr\), and assume in addition that the posterior kernel \(\omega\mapsto K_\omega\) depends only on the current one-step instrument in the sense of \Cref{def:depends-only-current-instrument}. 
    Then the annealed coordinate process
    \[
        z_j=(\pi_j,\varrho_j),\qquad j\in\mbZ,
    \]
    is a Markov chain on \(\mathrm{Instr}\times\states\) under \(\kappa\).
    If moreover the coded instrument chain satisfies a Doeblin minorization on \(\mathrm{Instr}\), and if the minorizing family \(\omega\mapsto\lambda_\omega\) from \Cref{assumption_doeblin} also depends only on the current one-step instrument, then the chain \((z_j)\) satisfies a Doeblin minorization on \(\mathrm{Instr}\times\states\).
    In particular, \((z_j)\) is exponentially fast \(\phi\)-mixing, and one may then invoke \cite{HafMS1,HafMS2} to obtain Berry--Esseen bounds and local central limit theorems for suitable functions of the annealed process.

\subsection{Implications in the homogeneous deterministic case}


To consider the homogeneous deterministic case, where the same instrument is applied at each step, one can take \(\Omega=\{\omega_0\}\), \(\pr=\delta_{\omega_0}\), and $\theta$ to be the identity map on $\Omega$.
Here the quenched and annealed posterior laws coincide, and \(\alpha^{\mathrm{inst}}(n)=\phi^{\mathrm{inst}}(n)=0\).
Under \Cref{assumption_doeblin}, the posterior kernel \(K\) has a unique invariant law \(\mu\) and attracts every initial law uniformly in total variation at a geometric rate, as in the classical
Doeblin theorem \cite{RR04}.
It is also \(\lambda\)-irreducible: every measurable set \(B\) with \(\lambda(B)>0\) is accessible from every initial state, since
\[
    K^L(\rho,B)\ge\varepsilon\lambda(B)>0
    \qquad\text{for every }\rho\in\states.
\]

The non-selective channel \(\Phi=\mfI(\Xi)\) has the unique invariant state
\[
    \s=\int_{\states}\eta\,\mu(\dee\eta),
    \qquad
    \sup_{\rho\in\states}\norm{\Phi^n(\rho)-\s}_1
    \le 2(1-\varepsilon)^{\lfloor n/L\rfloor}.
\]
Thus \(\Phi\) is mixing in the sense that its iterates converge in trace norm to the same state \(\s\) for every initial state \cite{Bur+13}.
Here \emph{mixing} refers to convergence of the channel iterates, whereas \Cref{assumption_mix} concerns the decay of statistical dependence in the instrument process. 
In particular, \(\Phi\) satisfies \((\Phi\text{-}\mathrm{Erg})\), the condition of uniqueness of the invariant density matrix used in \cite{BFPP19}.
It need not be irreducible: in dimension \(d>1\), the replacement channel \(\Phi(X)=\tr{X}p\), with \(p\) pure, preserves the proper subspace \(\operatorname{ran}p\) and admits an instrument with posterior kernel \(K(\rho,\,\cdot\,)=\delta_p\).

The identification of an invariant posterior law's barycenter with the invariant channel state is already known for homogeneous perfect measurements \cite{BFPP19} and for imperfect measurements \cite{tristant_2025}.
For perfect instruments, \Cref{rem:doeblin-scope} gives the stronger conclusion that the trajectory reaches the pure-state set in finite time almost surely.
For imperfect instruments, the minorization need not imply even asymptotic purification, as illustrated by replacement with a fixed mixed state.


\section{Notation and Preliminaries}
\label{section:prelim}


This section records the measurable-space, dynamical, and operator-theoretic conventions used throughout the paper. 
It then passes from the abstract instrument family to a measurable density realization and develops the three further constructions used throughout the paper: the quenched outcome law, the canonical coding of the instrument process, and the posterior-state processes in their outcome-path and state-path forms.


\subsection{General conventions}
\label{subsection:general-conventions}



\subsubsection*{Measurable spaces, products, and kernels}


For a measurable space \((S,\mcS)\), we write \(\mcP(S)\) for the set of probability measures on \((S,\mcS)\). If \((S,\mcS)\) and \((T,\mcT)\) are measurable spaces, a map \(f:S\to T\) is called \((\mcS,\mcT)\)-measurable if
\[
    f^{-1}(B)\in\mcS
    \qquad
    \text{for every }B\in\mcT.
\]
When the \(\sigma\)-algebras are clear from the context, we simply say that \(f\) is measurable.

If \((S,\mcS)\) is a measurable space and \(n\in\mbN\), we write \(\mcS^{\otimes n}\) for the \(n\)-fold product \(\sigma\)-algebra on \(S^n\), and \(\mcS^{\otimes\mbN}\) for the \(\sigma\)-algebra on \(S^{\mbN}\) generated by cylinder sets. We also use the convention
\[
    S^0:=\{\ast\},
    \qquad
    \mcS^{\otimes 0}:=\{\varnothing,\{\ast\}\}.
\]
If \(S\) is a topological space, we write \(\borel{S}\) for its Borel \(\sigma\)-algebra. A measurable space is called \emph{standard Borel} if it is measurably isomorphic to the Borel space of a Polish space. 
Finite and countable products of standard Borel spaces are again standard Borel. 
In particular, since \((\Xi,\mcX)\) is standard Borel, so are \((\Xi^n,\mcX^{\otimes n})\) for \(n\in\mbN\) and \((\Xi^{\mbN},\mcX^{\otimes\mbN})\). 
Whenever \((S,\mcS)\) is standard Borel, we equip \(\mcP(S)\) with the Borel \(\sigma\)-algebra generated by the evaluation maps \(\mcP(S)\ni\mu\longmapsto\mu(A)\) for $A\in \mcS$; this makes \(\mcP(S)\) itself a standard Borel space.

Given measurable spaces \((S,\mcS)\) and \((T,\mcT)\), a map
\[
    M:S\times\mcT\to[0,\infty]
\]
is called a \emph{kernel from \((S,\mcS)\) to \((T,\mcT)\)} if, for each \(s\in S\), the set function \(B\mapsto M(s,B)\) is a measure on \((T,\mcT)\), and for each \(B\in\mcT\), the map \(s\mapsto M(s,B)\) is \(\mcS\)-measurable. 
If each \(M(s,\,\cdot\,)\) is a finite measure, we call \(M\) a \emph{finite positive kernel}; if each \(M(s,\,\cdot\,)\) is a probability measure, we call \(M\) a \emph{probability kernel}.

If \(M\) is a kernel from \((S,\mcS)\) to \((T,\mcT)\) and \(N\) is a kernel from \((T,\mcT)\) to \((U,\mcU)\), their composition is the kernel \(MN = N \circ M\) from \((S,\mcS)\) to \((U,\mcU)\) defined by
\[
    (MN)(s,C)
    :=
    \int_T N(t,C)\,M(s,\dee t),
    \qquad
    s\in S,\quad C\in\mcU.
\]
If \(\mu\in\mcP(S)\) and \(M\) is a probability kernel from \((S,\mcS)\) to \((T,\mcT)\), we write
\[
    (\mu M)(B)
    :=
    \int_S M(s,B)\,\mu(\dee s),
    \qquad
    B\in\mcT.
\]
In particular, if \(\zeta\in\mcP(\states)\) and \(K\) is a probability kernel on \((\states,\borel{\states})\), we write
\[
    (\zeta K)(B)
    :=
    \int_{\states} K(\rho,B)\,\zeta(\dee\rho),
    \qquad
    B\in\borel{\states},
\]
and, when \(K=K_\omega\), we also write
\[
    K_\omega^*\zeta:=\zeta K_\omega.
\]

\smallskip

For a measurable space \(E\), let \(B_b(E)\) denote the Banach space of bounded measurable complex-valued functions on \(E\), equipped with the supremum norm
\[
    \norm{g}_\infty
    :=
    \sup_{x\in E}\left|g(x)\right|.
\]
In particular, we use this convention when \(E=\states\) or \(E=\Omega\times\states\), equipped with the respective Borel and product \(\sigma\)-algebras. 
For measurable spaces \(E,F\) and a bounded linear operator \(T:B_b(E)\to B_b(F)\), we use the induced operator norm
\[
    \norm{T}_{\infty\to\infty}
    :=
    \sup_{\substack{
        g\in B_b(E)\\
        \norm{g}_\infty\le1
    }}
    \norm{Tg}_\infty.
\]
For a bounded linear functional \(\ell\in B_b(E)^*\), we use the dual norm
\[
    \norm{\ell}_{B_b(E)^*}
    :=
    \sup_{\substack{
        g\in B_b(E)\\
        \norm{g}_\infty\le1
    }}
    \left|\ell(g)\right|.
\]

\smallskip

For \(\zeta_1,\zeta_2\in\mcP(\states)\), we use the total variation norm
\[
    \norm{\zeta_1-\zeta_2}_{\mathrm{TV}}
    :=
    \sup_{\norm{f}_\infty\le1}
    \left|
        \int_{\states} f(\rho)\,(\zeta_1-\zeta_2)(\dee\rho)
    \right|.
\]
Equivalently, for probability measures,
\[
    \norm{\zeta_1-\zeta_2}_{\mathrm{TV}}
    =
    2\sup_{B\in\borel{\states}}
    |\zeta_1(B)-\zeta_2(B)|.
\]


\subsubsection*{Mixing coefficients}


Let \((X,\mcA,\lambda)\) be a probability space, and let \(\mcU,\mcV\subseteq\mcA\) be sub-\(\sigma\)-algebras. 
We define
\[
    \phi_\lambda(\mcU,\mcV)
    :=
    \sup\left\{
        |\lambda(B\mid A)-\lambda(B)| :
        A\in\mcU,\ B\in\mcV,\ \lambda(A)>0
    \right\},
\]
where
\[
    \lambda(B\mid A):=\frac{\lambda(A\cap B)}{\lambda(A)}
    \qquad
    \text{whenever }\lambda(A)>0.
\]

Now let \((\mcG_j)_{j\in\mbZ}\) be a family of sub-\(\sigma\)-algebras of \(\mcA\). 
For \(k\in\mbZ\) and \(n\ge1\), set
\[
    \mcG_{-\infty}^k
    :=
    \sigma\!\left(\bigcup_{j\le k}\mcG_j\right),
    \qquad
    \mcG_{k+n}^{\infty}
    :=
    \sigma\!\left(\bigcup_{j\ge k+n}\mcG_j\right).
\]
The associated \(\phi\)-mixing profile is
\[
    \phi_{\lambda,\mcG}(n)
    :=
    \sup_{k\in\mbZ}
    \phi_\lambda(\mcG_{-\infty}^k,\mcG_{k+n}^{\infty}).
\]
When \(\mcG_j=\sigma(Z_j)\) for a two-sided process \(Z=(Z_j)_{j\in\mbZ}\), we also write \(\phi_\lambda^Z(n)\) for these coefficients. 
When the underlying probability measure \(\lambda\) is clear from the context, we omit the subscript \(\lambda\).


\subsubsection*{The base dynamical system}


A map \(\theta:\Omega\to\Omega\) is called \emph{\(\pr\)-preserving} if it is \((\mcF,\mcF)\)-measurable and
\[
    \pr(\theta^{-1}A)=\pr(A)
    \qquad
    \text{for every }A\in\mcF.
\]
It is called \emph{invertible} if it is bijective and \(\theta^{-1}:\Omega\to\Omega\) is also \((\mcF,\mcF)\)-measurable. In that case \(\theta^{-1}\) is automatically \(\pr\)-preserving as well. 
We say that \((\Omega,\mcF,\pr,\theta)\) is \emph{ergodic} if every \(\theta\)-invariant set \(A\in\mcF\) satisfies \(\pr(A)\in\{0,1\}\). 
Equivalently, \((\Omega,\mcF,\pr,\theta)\) is ergodic if every essentially \(\theta\)-invariant measurable set \(E\in\mcF\), that is, every \(E\in\mcF\) such that \(\pr(E\triangle\theta^{-1}(E))=0\), satisfies \(\pr(E)\in\{0,1\}\). 
See \cite{erg_walter} for other equivalent characterizations.


\subsubsection*{Matrices and superoperators}


Recall that
\[
    \matrices := \mbM_d(\mbC),
    \qquad
    \states := \left\{\rho\in\matrices : \rho\ge 0,\ \tr{\rho}=1\right\}.
\]
We equip \(\states\) with the topology induced by the trace norm and write \(\borel{\states}\) for the corresponding Borel \(\sigma\)-algebra. 
Since \(\matrices\) is finite-dimensional, all norms on \(\matrices\) are equivalent, and \(\states\) is a compact Polish space.

We write \(\mapspace:=\mcL(\matrices)\) for the space of linear maps from \(\matrices\) to itself. 
For \(X\in\matrices\), we denote by \(X\adj\) its conjugate transpose. For \(1\le p<\infty\), the Schatten \(p\)-norm is defined by
\[
    \norm{X}_p
    :=
    \left(\tr{|X|^p}\right)^{1/p},
    \qquad
    |X|:=(X\adj X)^{1/2},
\]
and \(\norm{X}_\infty\) denotes the operator norm. In particular,
\[
    \norm{X}_1=\tr{|X|}
\]
is the trace norm. We shall use the duality between the Schatten \(1\)- and \(\infty\)-norms:
\[
    \norm{X}_1
    =
    \sup_{\norm{H}_\infty\le1}
    \left|\tr{H\adj X}\right|.
\]
If \(X=X\adj\), this is equivalently
\[
    \norm{X}_1
    =
    \sup_{\substack{\norm{H}_\infty\le1\\ H=H\adj}}
    \left|\tr{HX}\right|.
\]
See \cite{Watrous_2018} for details.

For \(\Psi\in\mapspace\), we write
\[
    \norm{\Psi}_{1\to1}
    :=
    \sup_{X\neq0}\frac{\norm{\Psi(X)}_1}{\norm{X}_1}.
\]
We also write
\[
    \mathsf{CP}(\matrices)
    :=
    \left\{
        \Psi\in\mapspace :
        \Psi \text{ is completely positive}
    \right\},
\]
\[
    \mathsf{CP}_{\le}(\matrices)
    :=
    \left\{
        \Psi\in\mapspace :
        \Psi \text{ is completely positive and trace-nonincreasing}
    \right\},
\]
and
\[
    \mathsf{CPTP}(\matrices)
    :=
    \left\{
        \Psi\in\mapspace :
        \Psi \text{ is completely positive and trace-preserving}
    \right\}.
\]
Since \(\mapspace\) is finite-dimensional, we equip it with its Borel \(\sigma\)-algebra, and each of the subsets above with the induced subspace \(\sigma\)-algebra.

Fix an orthonormal basis \((e_i)_{i=1}^d\) of \(\mbC^d\), and let \(E_{ij}\in\matrices\) be the associated matrix units. 
The Choi--Jamio\l kowski map is
\[
    J:\mapspace\to \matrices\otimes\matrices \cong \mbM_{d^2}(\mbC),
    \qquad
    J(\Psi):=\sum_{i,j=1}^d \Psi(E_{ij})\otimes E_{ij}.
\]
Then \(\Psi\in\mathsf{CP}(\matrices)\) if and only if \(J(\Psi)\ge0\). Moreover, every \(\Psi\in\mathsf{CP}(\matrices)\) admits a Kraus decomposition with at most \(d^2\) Kraus operators.

For \(n\ge1\), the \(n\)-step non-selective channel cocycle is
\[
    \Phi_\omega^{(n)}
    :=
    \Phi_{\theta^{n-1}\omega}\circ\cdots\circ\Phi_\omega,
\]
with the convention \(\Phi_\omega^{(0)}:=\mathrm{Id}_{\matrices}\). We refer to \cite{Watrous_2018} for background on complete positivity and quantum channels.


\subsection{Measurable realization of the instrument family}
\label{subsection:instrument-realization}


We now pass from the abstract instrument family
\(
    \omega\mapsto \mfI_\omega
\)
to a measurable density realization. The first step is a measurable Radon--Nikodym theorem for finite kernels on a standard Borel target space, which we use to extract measurable scalar densities from the Choi-valued instrument measure.

We need the following result, which is a direct corollary of the kernel-density theorem (see \cite[Theorem~1.28]{Kallenberg_2017} or \cite[Chapter~V-Theorem~4.44]{Cinlar_2011}), specialized to finite kernels on a standard Borel target space.

\begin{lemma}[Measurable Radon--Nikodym density for kernels]
\label{lem:scalar-kernel-rn}
    Let \((\Omega,\mcF,\pr)\) be a probability space, let \((\Xi,\mcX)\) be a standard Borel space, and let \(\omega\mapsto\mu_\omega\) and \(\omega\mapsto\nu_\omega\) be finite positive kernels from \((\Omega,\mcF)\) to \((\Xi,\mcX)\).
    Assume that
    \[
        \mu_\omega\ll \nu_\omega
        \qquad
        \text{for \(\pr\)-almost every }\omega\in\Omega.
    \]
    Then there exist a jointly measurable map \(f:\Omega\times\Xi\to[0,\infty)\) and a measurable set \(\Omega_0\in\mcF\) with \(\pr(\Omega_0)=1\) such that for every \(\omega\in\Omega_0\) and every \(A\in\mcX\),
    \begin{equation}
    \label{eq:kernel_R-N}
        \mu_\omega(A)
        =
        \int_A f(\omega,x)\,\nu_\omega(\dee x).
    \end{equation}
    Moreover, for every \(\omega\in\Omega_0\), if \(g:\Omega\times\Xi\to[0,\infty)\) is another jointly measurable map that satisfies the condition in \Cref{eq:kernel_R-N} for all $A\in \mcX$, then \(f(\omega,\,\cdot\,)=g(\omega,\,\cdot\,)\)  $\nu_\omega$-almost everywhere.
\end{lemma}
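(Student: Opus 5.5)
The plan is to reduce to the classical measurable Radon--Nikodym theorem for kernels on a product space, via a standard auxiliary construction. First I would pass to a countable generating algebra. Since \((\Xi,\mcX)\) is standard Borel, \(\mcX\) is countably generated; fix a countable algebra \((C_m)_{m\in\mbN}\) generating \(\mcX\). Then \(\{\omega:\mu_\omega\ll\nu_\omega\}\) need not be obviously measurable, so I would simply choose \(\Omega_0'\in\mcF\) of full measure contained in the set where absolute continuity holds. This is possible if the exceptional set is contained in a \(\pr\)-null measurable set, which is what ``almost every'' means here. On \(\Omega\setminus\Omega_0'\) I would replace \(\mu_\omega\) by \(0\); this keeps both kernels measurable and makes \(\mu_\omega\ll\nu_\omega\) hold for every \(\omega\).

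For the core step, the approach is to build the density by martingale differentiation along finite partitions, which gives joint measurability directly. Refine the generating algebra into an increasing sequence of finite partitions \(\mcP_n\) of \(\Xi\) with \(\sigma(\bigcup_n\mcP_n)=\mcX\). Define
\[
    f_n(\omega,x):=\sum_{P\in\mcP_n}\mathbf 1_P(x)\,\frac{\mu_\omega(P)}{\nu_\omega(P)},
\]
with the convention \(0/0:=0\). Each \(f_n\) is jointly measurable because \(\omega\mapsto\mu_\omega(P)\) and \(\omega\mapsto\nu_\omega(P)\) are measurable. For fixed \(\omega\), and with \(\nu_\omega\) normalized when \(\nu_\omega(\Xi)>0\), the sequence \((f_n(\omega,\cdot))_n\) is a nonnegative martingale under \(\nu_\omega\) with respect to \(\sigma(\mcP_n)\). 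Since \(\mu_\omega\ll\nu_\omega\) and \(\mu_\omega\) is finite, the martingale is uniformly integrable. It therefore converges \(\nu_\omega\)-a.e. and in \(L^1(\nu_\omega)\) to \(d\mu_\omega/d\nu_\omega\); this is Doob's proof of Radon--Nikodym.

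Next set
\[
    f(\omega,x):=\limsup_{n} f_n(\omega,x)
\]
where this is finite, and \(0\) elsewhere. Then \(f\) is jointly measurable and \([0,\infty)\)-valued. For each \(\omega\in\Omega_0:=\Omega_0'\), the \(L^1\) convergence gives \(\mu_\omega(A)=\int_A f(\omega,x)\,\nu_\omega(\dee x)\) for \(A\in\bigcup_n\sigma(\mcP_n)\). A monotone class or \(\pi\)--\(\lambda\) argument then extends this to all \(A\in\mcX\), since both sides are finite measures agreeing on a generating \(\pi\)-system that contains \(\Xi\). Alternatively, one may simply invoke \cite[Theorem~1.28]{Kallenberg_2017}, which packages exactly this construction.

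Uniqueness is fiberwise. For fixed \(\omega\in\Omega_0\), the functions \(f(\omega,\cdot)\) and \(g(\omega,\cdot)\) are both \(\mcX\)-measurable, since sections of jointly measurable maps are measurable. Both are densities of the finite measure \(\mu_\omega\) with respect to \(\nu_\omega\), so the classical uniqueness part of Radon--Nikodym gives equality \(\nu_\omega\)-a.e. The main obstacle is the joint measurability of the density, and the martingale-along-partitions construction resolves it. It depends crucially on the countable generation of \(\mcX\), which is where standard Borelness is used; the handling of the null set of failure is routine.
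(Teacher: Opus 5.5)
Your proof is correct, but it proves the kernel Radon--Nikodym theorem directly instead of citing it as the paper does.

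\textbf{The paper's route.} It applies \cite[Theorem~1.28]{Kallenberg_2017} to obtain a jointly measurable \([0,\infty]\)-valued \(f_0\) with \(\mu_\omega=f_0(\omega,\cdot)\,\nu_\omega\) on \(\{f_0(\omega,\cdot)<\infty\}\). The remaining part of \(\mu_\omega\), on \(\{f_0(\omega,\cdot)=\infty\}\), is singular to \(\nu_\omega\). It takes \(\Omega_0=\{\omega:\mu_\omega\ll\nu_\omega\}\), which is measurable by the same theorem, and notes that the singular part vanishes there. Finally it truncates to \(f:=f_0\mathbf 1_{\{f_0<\infty\}}\).

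\textbf{Your route.} You first zero out \(\mu_\omega\) off a measurable full-measure set. The meaning of ``almost every'' supplies such a set, so absolute continuity then holds for every \(\omega\), and you never need measurability of \(\{\mu_\omega\ll\nu_\omega\}\). You then run Doob's martingale argument along a refining sequence of finite partitions \(\mcP_n\).
\begin{itemize}
\item Joint measurability of each \(f_n\) is immediate, and it passes to the truncated \(\limsup\).
\item Uniform integrability follows from \(\mu_\omega\ll\nu_\omega\), finiteness of \(\mu_\omega\), and \(\{f_n>K\}\in\sigma(\mcP_n)\). This gives \(L^1\) convergence and hence the identity on the generating algebra.
\item The \(\pi\)--\(\lambda\) theorem extends the identity to all of \(\mcX\).
\item The degenerate fibers with \(\nu_\omega=0\) are handled by your \(0/0:=0\) convention.
\end{itemize}

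\textbf{Comparison.} Your argument is self-contained and uses only countable generation of \(\mcX\), not full standard Borelness. The paper's argument is a short citation whose source covers \(\sigma\)-finite kernels and the full Lebesgue decomposition, which is more than this lemma needs. The uniqueness part is the same classical fiberwise argument in both.
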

\begin{proof}
    Since \((\Xi,\mcX)\) is standard Borel, it is a Borel space in the sense of \cite{Kallenberg_2017}. 
    Since the kernels \(\mu\) and \(\nu\) are finite, they are in particular \(\sigma\)-finite. 
    Hence Theorem 1.28 of \cite{Kallenberg_2017} applies and yields an $(\mcF\otimes \mcX)$-measurable function 
    \[
        f_0:\Omega\times\Xi\to[0,\infty]
    \]
    such that, for each \(\omega\in\Omega\) and every $A \in \mcX$, $\mu_\omega =f_0(\omega,\,\cdot\,)\,\nu_\omega$
    on the set $\{x:f_0(\omega,x)<\infty\}$, i.e., 
    \[
        \mu_\omega
        \left(
            A \cap \{x : f_0(\omega, x) < \infty\}
        \right)
        =
        \int_{A \cap \{x : f_0(\omega,x)<\infty\}} f_0(\omega, x) \, \nu_\omega(\dee x).
    \]
    Furthermore, the restriction of \(\mu_\omega\) to \( \{x\in\Xi:f_0(\omega,x)=\infty\}\) is singular with respect to \(\nu_\omega\).
    By the same theorem, the set
    \[
        \Omega_0
        :=
        \{\omega\in\Omega:\mu_\omega\ll\nu_\omega\}
    \]
    is $\mcF$-measurable. 
    By assumption, \(\pr(\Omega_0)=1\).
    
    Fix \(\omega\in\Omega_0\). Since \(\mu_\omega\ll\nu_\omega\), the restriction of \(\mu_\omega\) to \(\{f_0(\omega,\,\cdot\,)=\infty\}\) is also absolutely continuous with respect to \(\nu_\omega\). 
    But that restriction is singular with respect to \(\nu_\omega\) by Theorem 1.28 of \cite{Kallenberg_2017}. Hence, it must be the zero measure. 
    Therefore, for every \(A\in\mcX\),
    \[
        \mu_\omega(A)
        =
        \int_{A\cap\{f_0(\omega,\,\cdot\,)<\infty\}}
        f_0(\omega,x)\,\nu_\omega(\dee x).
    \]
    
    Define
    \[
        f(\omega,x):=
        \begin{cases}
            f_0(\omega,x), & f_0(\omega,x) < \infty ,\\
            0, & f_0(\omega,x) = \infty.
        \end{cases}
    \]
    Then \(f:\Omega\times\Xi\to[0,\infty)\) is jointly measurable, and for every \(\omega\in\Omega_0\) and every \(A\in\mcX\),
    \[
        \mu_\omega(A)
        =
        \int_A f(\omega,x)\,\nu_\omega(\dee x).
    \]
    For uniqueness, fix \(\omega\in\Omega_0\), and suppose that \(g:\Omega\times\Xi\to[0,\infty)\) is another jointly measurable map such that
    \[
        \mu_\omega(A)
        =
        \int_A g(\omega,x)\,\nu_\omega(\dee x)
        \qquad
        \text{for all }A\in\mcX.
    \]
    Then both \(f(\omega,\,\cdot\,)\) and \(g(\omega,\,\cdot\,)\) are Radon--Nikodym derivatives of \(\mu_\omega\) with respect to \(\nu_\omega\).
    By the usual uniqueness part of the Radon--Nikodym theorem, \(f(\omega,\,\cdot\,)=g(\omega,\,\cdot\,)\) \(\nu_\omega\)-almost everywhere.
\end{proof}

We now apply \Cref{lem:scalar-kernel-rn} to the Choi-valued measure associated with the instrument family. 
This yields the measurable density realization that will be fixed throughout the paper.

\smallskip

The following proposition may be viewed as a measurable-parametric refinement of a standard fact in finite-dimensional quantum measurement theory. 
For a single instrument, representations by a scalar control measure together with a completely positive density are classical; see Holevo's Radon--Nikodym theorem for quantum instruments \cite{Holevo1998RN} and, in the finite-dimensional language of Choi measures, \cite[Proposition~2]{ChiribellaDArianoPerinotti2009}. 
The point of the present statement is that, for a measurable family \((\mfI_\omega)_{\omega\in\Omega}\), one can choose these densities jointly measurably in \((\omega,x)\). 
The proof below is essentially the finite-dimensional Choi-matrix argument underlying \cite[Proposition~2]{ChiribellaDArianoPerinotti2009}, upgraded to the kernel setting via Lemma~\ref{lem:scalar-kernel-rn}. 
For general background on completely positive instruments and their dilation theory for continuous outcome spaces, see also Ozawa's foundational papers \cite{Ozawa_1984,Ozawa1985Posteriori}.

\begin{prop}
\label{prop:instrument-density-representation}
    Let \((\Xi,\mcX)\) be a standard Borel space.
    For each \(\omega\in\Omega\), let
    \[
        \mfI_\omega:\mcX\to\mathsf{CP}(\matrices)
    \]
    be an instrument such that \(\mfI_\omega(\Xi)\) is trace-preserving.
    Assume that for every \(A\in\mcX\), the map
    \[
        \omega\longmapsto J\left(\mfI_\omega(A)\right)
    \]
    is \(\mcF\)-measurable.
    Then, after possibly modifying the instrument family on a \(\pr\)-null set, there exist
    \begin{enumerate}
        \item a measurable kernel \(\omega\mapsto \nu_\omega\in\mcP(\Xi)\), and
        \item a jointly measurable map
        \[
            (\omega,x)\longmapsto \mcT_{x;\omega}\in\mathsf{CP}(\matrices),
        \]
    \end{enumerate}
    such that for every \(\omega\in\Omega\), every \(A\in\mcX\), and every
    \(X\in\matrices\),
    \[
        \mfI_\omega(A)(X)
        =
        \int_A \mcT_{x;\omega}(X)\,\nu_\omega(\dee x).
    \]
    For the fixed control kernel \(\omega\mapsto\nu_\omega\), the density \((\omega,x)\mapsto\mcT_{x;\omega}\) is unique up to \(\nu_\omega\)-almost everywhere equality in \(x\), for each fixed \(\omega\).
\end{prop}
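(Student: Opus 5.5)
The plan is to pass to Choi matrices and apply \Cref{lem:scalar-kernel-rn} entrywise. For each \(\omega\) and \(A\in\mcX\), set \(M_\omega(A):=J(\mfI_\omega(A))\in\mbM_{d^2}(\mbC)\). This is a positive-semidefinite-matrix-valued measure, countably additive by linearity of \(J\) and countable additivity of \(\mfI_\omega\). As the control measure I would take the normalized trace
\[
    \nu_\omega(A):=\frac1d\,\tr{M_\omega(A)}.
\]
Because \(\Phi_\omega=\mfI_\omega(\Xi)\) is trace-preserving, \(\operatorname{Tr}_1 J(\Phi_\omega)=\mbI_d\), and hence \(\tr{M_\omega(\Xi)}=d\). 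So \(\nu_\omega\in\mcP(\Xi)\), and \(\omega\mapsto\nu_\omega(A)\) is measurable for each \(A\) by the standing Choi-measurability assumption. Thus \(\nu\) is a probability kernel.

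Next I need densities for the matrix entries. For each pair of indices \((k,l)\) I would decompose the complex measure \(A\mapsto M_\omega(A)_{kl}\) into four finite positive kernels. For the diagonal entries this is direct: \(M_\omega(A)_{kk}\ge0\). For the off-diagonal entries I would use polarization: the real and imaginary parts are combinations of \(v^\dagger M_\omega(A)v\) with \(v\in\{e_k\pm e_l, e_k\pm ie_l\}\), and each of these is a finite positive kernel. Every such kernel \(\mu\) satisfies \(\mu_\omega(A)\le \norm{v}^2\tr{M_\omega(A)}\) by positivity, so \(\mu_\omega\ll\nu_\omega\) for every \(\omega\). \Cref{lem:scalar-kernel-rn} then gives jointly measurable densities, and intersecting the finitely many resulting full-measure sets yields a set \(\Omega_0\). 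Recombining these densities, I obtain a jointly measurable matrix function \(D(\omega,x)\) with \(M_\omega(A)=\int_A D(\omega,x)\,\nu_\omega(\dee x)\) for \(\omega\in\Omega_0\).

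The main obstacle is that \(D(\omega,x)\) need not be positive semidefinite everywhere. I would address it in two steps.
\begin{enumerate}
    \item For each vector \(v\) in a countable dense subset of \(\mbC^{d^2}\), positivity of \(M_\omega(A)\) for all \(A\) forces \(v^\dagger D(\omega,x)v\ge0\) for \(\nu_\omega\)-a.e.\ \(x\), and the same argument gives hermiticity a.e. Since the set \(N:=\{(\omega,x): D(\omega,x)\not\ge0\}\) is jointly measurable (positivity is a closed condition on the entries), redefining \(D:=0\) on \(N\) keeps joint measurability and changes nothing \(\nu_\omega\)-a.e. for \(\omega\in\Omega_0\).
    \item I then define \(\mcT_{x;\omega}:=J^{-1}(D(\omega,x))\). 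This is completely positive by the Choi criterion, and \((\omega,x)\mapsto \mcT_{x;\omega}\) is jointly measurable because \(J^{-1}\) is linear, hence continuous.
\end{enumerate}
Applying \(J^{-1}\) inside the Bochner-type integral (everything is finite-dimensional) gives \(\mfI_\omega(A)(X)=\int_A\mcT_{x;\omega}(X)\,\nu_\omega(\dee x)\) for \(\omega\in\Omega_0\).

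On the null set \(\Omega\setminus\Omega_0\) (which is measurable), I would replace \(\mfI_\omega\) by a fixed trivial instrument, for instance \(\mfI_\omega(A):=\delta_{x_0}(A)\,\mathrm{Id}\), with \(\nu_\omega:=\delta_{x_0}\) and \(\mcT_{x;\omega}:=\mathrm{Id}\). This preserves measurability and makes the identity hold for every \(\omega\).

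Finally, uniqueness. Suppose \(\mcT'\) is another density for the same control kernel. Then for fixed \(\omega\), \(J(\mcT_{\cdot;\omega})\) and \(J(\mcT'_{\cdot;\omega})\) are both entrywise Radon--Nikodym derivatives of \(M_\omega\) with respect to \(\nu_\omega\). They therefore agree \(\nu_\omega\)-a.e., by the uniqueness part of \Cref{lem:scalar-kernel-rn} (or classical Radon--Nikodym uniqueness) applied to each of the finitely many entries.
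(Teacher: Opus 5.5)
Your proposal is correct and follows essentially the same route as the paper. The paper also uses the trace-normalized control kernel $\nu_\omega(A)=\frac1d\tr{\mfI_\omega(A)(I)}$, which equals yours since $\tr{J(\Psi)}=\tr{\Psi(I)}$. It likewise reduces to positive scalar kernels by polarization and \Cref{lem:scalar-kernel-rn}, zeroes the density on a jointly measurable fiberwise-null set to get pointwise positivity, applies $J^{-1}$, inserts a trivial instrument on the exceptional null set, and proves uniqueness entrywise. The remaining differences are cosmetic. You obtain domination from $v\adj Mv\le\norm{v}^2\tr{M}$ instead of a Kraus argument, and you apply the lemma to finitely many vectors and zero out the intrinsic non-positive-semidefinite set, whereas the paper applies the lemma to every vector of a countable dense set and zeroes out where $u^*z\,u$ differs from the nonnegative densities $g_u$.
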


 In the proposition above, we note that the uniqueness statement compares densities representing the same instrument family with respect to the same control kernel.
 
\begin{proof}
    Set \(n:=d^2\), and for \(\omega\in\Omega\) and \(A\in\mcX\), define \(Z_\omega(A):=J\left(\mfI_\omega(A)\right)\in \mbM_n(\mbC)\).
    Since \(\mfI_\omega(A)\) is completely positive, the Choi criterion gives
    \[
        Z_\omega(A)\ge 0
        \qquad
        \text{for all }\omega\in\Omega,\ A\in\mcX.
    \]
    Because \(J\) is linear, for each fixed \(\omega\) the map \(A\longmapsto Z_\omega(A)\) is a countably additive \(\mbM_n(\mbC)\)-valued measure on \((\Xi,\mcX)\).
    By assumption, for every \(A\in\mcX\), the map \(\omega\longmapsto Z_\omega(A)\) is \(\mcF\)-measurable.

    We first construct a scalar control kernel. Define
    \[
        \ell:\mbM_n(\mbC)\to\mbC,
        \qquad
        \ell(M):=\frac1d\,\tr{J^{-1}(M)(I)}.
    \]
    Since \(J^{-1}\) is linear and continuous, \(\ell\) is a continuous linear functional.
    For \(\omega\in\Omega\) and \(A\in\mcX\), set
    \[
        \nu_\omega(A)
        :=
        \ell\left(Z_\omega(A)\right)
        =
        \frac1d\,\tr{\mfI_\omega(A)(I)}.
    \]
    Hence, for every \(A\in\mcX\), the map \(\omega\longmapsto \nu_\omega(A)\) is \(\mcF\)-measurable.    

    For each fixed \(\omega\), the set function \(A\mapsto \nu_\omega(A)\) is a finite positive measure on \((\Xi,\mcX)\). 
    Indeed, since \(\mfI_\omega(A)\) is completely positive, one has \(\mfI_\omega(A)(I)\ge0\), hence \(\nu_\omega(A)\ge0\), and countable additivity follows from that of \(A\mapsto \mfI_\omega(A)\).
    Moreover, since \(\mfI_\omega(\Xi)\) is trace-preserving,
    \[
        \nu_\omega(\Xi)
        =
        \frac1d\,\tr{\mfI_\omega(\Xi)(I)}
        =
        \frac1d\,\tr{I}
        =
        1.
    \]
    Thus \(A\mapsto \nu_\omega(A)\) is a probability measure for every \(\omega\).
    We claim that for fixed $A\in \mcX$ the map $\omega\mapsto\nu_\omega(A)$ is $\mcF$-measurable and thus $\nu$ is a probability kernel from \((\Omega,\mcF)\) to \((\Xi, \mcX)\). 
    Fix \(A\in\mcX\). 
    By assumption, the map \(\omega\mapsto J(\mfI_\omega(A))\) is \(\mcF\)-measurable, and since \(J^{-1}\) is continuous, \(\omega\longmapsto \mfI_\omega(A)\) is also \(\mcF\)-measurable
    as an \(\mcL(\matrices)\)-valued map. 
    Since
    \[
    \Psi\longmapsto \frac1d\,\tr{\Psi(I)}
    \]
    is a continuous linear functional on \(\mcL(\matrices)\), it follows that
    \[
    \omega\longmapsto \nu_\omega(A)
    =
    \frac1d\,\tr{\mfI_\omega(A)(I)}
    \]
    is \(\mcF\)-measurable. 
    Therefore \(\omega\mapsto \nu_\omega\) is a measurable probability kernel.
    It is also worth noting that the map \(\omega\mapsto \nu_\omega(A)\) is \(\mcG_0^{\mathrm{inst}}\)-measurable, so the kernel \(\omega\mapsto \nu_\omega\) depends only on the current one-step instrument.

    We next show that \(\nu_\omega\) dominates the instrument. 
    Fix \(\omega\in\Omega\) and \(A\in\mcX\), and suppose that \(\nu_\omega(A)=0\).
    Then \(0=\tr{\mfI_\omega(A)(I)}\).
    Since \(\mfI_\omega(A)(I)\ge0\), it follows that \(\mfI_\omega(A)(I)=0\).
    Because \(\matrices\) is finite-dimensional, \(\mfI_\omega(A)\) admits  a finite Kraus decomposition,
    \[
        \mfI_\omega(A)(X)=\sum_{r=1}^m V_r X V_r\adj
        \qquad (X\in\matrices),
    \]
    for some \(V_1,\dots,V_m\in\matrices\). 
    Hence
    \[
        0
        =
        \mfI_\omega(A)(I)
        =
        \sum_{r=1}^m V_rV_r\adj.
    \]
    Each summand \(V_rV_r\adj\) is positive semidefinite, so each one must be zero.
    Thus \(V_r=0\) for every \(r\), and therefore \(\mfI_\omega(A)=0\).
    Consequently,
    \[
        Z_\omega(A)=J\left(\mfI_\omega(A)\right)=0.
    \]
    
    Let \(D:=(\mbQ+i\mbQ)^n\subseteq\mbC^n\).
    Then \(D\) is countable and dense in \(\mbC^n\). For each \(u\in D\), define
    \[
        q_u(\omega,A):=u^*Z_\omega(A)u,
        \qquad
        \omega\in\Omega,\ A\in\mcX.
    \]
    Here $u^*$ is the conjugate transpose of $u$. 
    Since \(Z_\omega(A)\ge 0\), for each fixed \(\omega\) the set function \(A\mapsto q_u(\omega,A)\) is a finite positive measure. 
    Since \(\omega\mapsto Z_\omega(A)\) is measurable for every \(A\in\mcX\), the map \(q_u\) is a finite positive kernel from \((\Omega,\mcF)\) to \((\Xi,\mcX)\).
    In fact, for each \(A\in\mcX\), the map \(\omega\mapsto q_u(\omega,A)\) is \(\mcG_0^{\mathrm{inst}}\)-measurable, so \(q_u\) depends only on the current one-step instrument.
    By the domination established above,
    \[
        q_u(\omega,\,\cdot\,)\ll \nu_\omega
        \qquad
        \text{for every }\omega\in\Omega.
    \]
    Therefore, by \Cref{lem:scalar-kernel-rn}, for each \(u\in D\) there exist a jointly measurable function
    \[
        g_u:\Omega\times\Xi\to[0,\infty)
    \]
    and a measurable set \(\Omega_u\in\mcF\) with \(\pr(\Omega_u)=1\) such that for every \(\omega\in\Omega_u\) and every \(A\in\mcX\),
    \[
        q_u(\omega,A)
        =
        \int_A g_u(\omega,x)\,\nu_\omega(\dee x).
    \]
    Set
    \[
        \Omega_*:=\bigcap_{u\in D}\Omega_u.
    \]
    Since \(D\) is countable, \(\Omega_*\in\mcF\) and \(\pr(\Omega_*)=1\).

    Let \(e_1,\dots,e_n\) be the standard basis of \(\mbC^n\). 
    Define jointly measurable functions \(z^\circ_{ij}:\Omega\times\Xi\to\mbC\) by
    \[
        z^\circ_{ii}(\omega,x):=g_{e_i}(\omega,x),
    \]
    and, for \(i\neq j\),
    \[
        z^\circ_{ij}(\omega,x)
        :=
        \frac14\left(
            g_{e_i+e_j}(\omega,x)
            -
            g_{e_i-e_j}(\omega,x)
            -
            i\,g_{e_i+i e_j}(\omega,x)
            +
            i\,g_{e_i-i e_j}(\omega,x)
        \right).
    \]
    Here we note that the vectors \(e_i\), \(e_i\pm e_j\), and \(e_i\pm i e_j\) all belong to \(D\).
    Now set
    \[
        z^\circ(\omega,x):=\left(z^\circ_{ij}(\omega,x)\right)_{i,j=1}^n
        \in \mbM_n(\mbC).
    \]
    We claim that for every \(\omega\in\Omega_*\) and every \(A\in\mcX\),
    \[
        Z_\omega(A)
        =
        \int_A z^\circ(\omega,x)\,\nu_\omega(\dee x),
    \]
    where the integral is taken entrywise (equivalently, as a Bochner integral in the finite-dimensional space \(\mbM_n(\mbC)\)). 
    For \(i=j\), this is immediate from the definition of \(z^\circ_{ii}\). For \(i\neq j\), the complex polarization identity gives
    \[
        (Z_\omega(A))_{ij}
        =
        \frac14\left(
            q_{e_i+e_j}(\omega,A)
            -
            q_{e_i-e_j}(\omega,A)
            -
            i\,q_{e_i+i e_j}(\omega,A)
            +
            i\,q_{e_i-i e_j}(\omega,A)
        \right).
    \]
    Since \(\omega\in\Omega_*\), each \(q_u(\omega,A)\) is represented by \(g_u(\omega,\,\cdot\,)\), and therefore
    \[
        (Z_\omega(A))_{ij}
        =
        \int_A z^\circ_{ij}(\omega,x)\,\nu_\omega(\dee x).
    \]
    This proves the claim.

    We next modify \(z^\circ\) on a jointly measurable fiberwise \(\nu_\omega\)-null set so that it becomes pointwise positive semidefinite on \(\Omega_*\times\Xi\).
    Fix \(u\in D\), and define
    \[
        h^\circ_u(\omega,x):=u^*z^\circ(\omega,x)u.
    \]
    Since each entry of \(z^\circ\) is a finite linear combination of the functions \(g_v\), the map \(h^\circ_u\) is jointly measurable; moreover, for each fixed \(\omega\in\Omega_*\), the function \(h^\circ_u(\omega,\,\cdot\,)\) is \(\nu_\omega\)-integrable. 
    For every \(\omega\in\Omega_*\) and every \(A\in\mcX\),
    \begin{align*}
        \int_A h^\circ_u(\omega,x)\,\nu_\omega(\dee x)
        &=
        u^*\left(\int_A z^\circ(\omega,x)\,\nu_\omega(\dee x)\right)u \\
        &=
        u^*Z_\omega(A)u \\
        &=
        q_u(\omega,A) \\
        &=
        \int_A g_u(\omega,x)\,\nu_\omega(\dee x).
    \end{align*}
    Fix \(\omega\in\Omega_*\). 
    Then the complex-valued integrable function
    \[
        f_{\omega,u}:=h^\circ_u(\omega,\,\cdot\,)-g_u(\omega,\,\cdot\,)
    \]
    satisfies
    \[
        \int_A f_{\omega,u}\,\nu_\omega=0
        \qquad
        \text{for all }A\in\mcX.
    \]
    Hence the same is true for \(\Re f_{\omega,u}\) and \(\Im f_{\omega,u}\). 
    By the usual uniqueness statement for real-valued integrable densities on the measure space \((\Xi,\mcX,\nu_\omega)\), both \(\Re f_{\omega,u}\) and \(\Im f_{\omega,u}\) vanish \(\nu_\omega\)-almost everywhere. 
    Therefore
    \[
        h^\circ_u(\omega,\,\cdot\,)=g_u(\omega,\,\cdot\,)
        \qquad
        \nu_\omega\text{-almost everywhere}
    \]
    for every \(\omega\in\Omega_*\).

    Define
    \[
        B_u
        :=
        \left\{(\omega,x)\in\Omega_*\times\Xi:
        h^\circ_u(\omega,x)\neq g_u(\omega,x)\right\}.
    \]
    Then \(B_u\in\mcF\otimes\mcX\), and for every \(\omega\in\Omega_*\),  \(\nu_\omega\left((B_u)_\omega\right)=0\).
    Since \(D\) is countable, the set
    \[
        B:=\bigcup_{u\in D} B_u
    \]
    belongs to \(\mcF\otimes\mcX\), and for every \(\omega\in\Omega_*\), we have \(\nu_\omega(B_\omega)=0\), where $B_\omega$ is the $\omega$-section of $B$, i.e., $B_\omega = \{x\in\Xi \ : \ (\omega,x) \in B\}$, and similarly for $B_u$.

    Now redefine \(z^\circ\) on \(B\) by setting
    \[
        z^\circ(\omega,x):=0
        \qquad
        \text{for }(\omega,x)\in B.
    \]
    The resulting map is still jointly measurable; for notational simplicity we continue to denote it by \(z^\circ\). Since \(B_\omega\) is \(\nu_\omega\)-null  for every \(\omega\in\Omega_*\), the identity
    \[
        Z_\omega(A)
        =
        \int_A z^\circ(\omega,x)\,\nu_\omega(\dee x)
    \]
    remains valid for every \(\omega\in\Omega_*\) and every \(A\in\mcX\). 
    We now show that
    \[
        z^\circ(\omega,x)\ge 0
        \qquad
        \text{for all }(\omega,x)\in\Omega_*\times\Xi.
    \]
    Fix \((\omega,x)\in\Omega_*\times\Xi\). 
    If \((\omega,x)\in B\), then \(z^\circ(\omega,x)=0\), so the conclusion is immediate.
    If \((\omega,x)\notin B\), then for every \(u\in D\),
    \[
        u^*z^\circ(\omega,x)u
        =
        g_u(\omega,x)
        \ge 0.
    \]
    Thus, in either case,
    \[
        u^*z^\circ(\omega,x)u\ge 0
        \qquad
        \text{for every }u\in D.
    \]
    Now fix \((\omega,x)\in\Omega_*\times\Xi\). 
    The map \(u\mapsto u^*z^\circ(\omega,x)u\) is continuous on \(\mbC^n\), and \(D\) is dense in \(\mbC^n\). 
    Since \([0,\infty)\subseteq\mbC\) is closed, it follows that
    \[
        u^*z^\circ(\omega,x)u\ge 0
        \qquad
        \text{for all }u\in\mbC^n.
    \]
    Hence \(z^\circ(\omega,x)\) is a positive semidefinite matrix in $\mbM_n(\mbC)$.

    Define, for \((\omega,x)\in\Omega_*\times\Xi\),
    \[
        \mcT^\circ_{x;\omega}:=J^{-1}\left(z^\circ(\omega,x)\right).
    \]
    Since \(J^{-1}:\mbM_n(\mbC)\to\mcL(\matrices)\) is linear and continuous, the map
    \[
        (\omega,x)\longmapsto \mcT^\circ_{x;\omega}
    \]
    is jointly measurable on \(\Omega_*\times\Xi\). 
    Because \(z^\circ(\omega,x)\ge0\), each \(\mcT^\circ_{x;\omega}\) is completely positive via the Choi--Jamio\l kowski isomorphism. 
    
    Fix \(\omega\in\Omega_*\). 
    Each entry of \(z^\circ(\omega,\,\cdot\,)\) is \(\nu_\omega\)-integrable, hence \(z^\circ(\omega,\,\cdot\,)\) is Bochner  integrable as an \(\mbM_n(\mbC)\)-valued function. 
    Since \(J^{-1}\) is continuous and \(\mcL(\matrices)\) is finite-dimensional, \(\mcT^\circ_{\,\cdot\,;\omega}=J^{-1}\circ z^\circ(\omega,\,\cdot\,)\) is Bochner integrable as an \(\mcL(\matrices)\)-valued function. 
    Therefore, for every \(A\in\mcX\),
    \begin{align*}
        J\left(\int_A \mcT^\circ_{x;\omega}\,\nu_\omega(\dee x)\right)
        =
        \int_A J(\mcT^\circ_{x;\omega})\,\nu_\omega(\dee x) 
        =
        \int_A z^\circ(\omega,x)\,\nu_\omega(\dee x) 
        =
        Z_\omega(A) 
        =
        J\left(\mfI_\omega(A)\right).
    \end{align*}
    By injectivity of \(J\),
    \[
        \mfI_\omega(A)
        =
        \int_A \mcT^\circ_{x;\omega}\,\nu_\omega(\dee x)
        \qquad
        \text{in }\mcL(\matrices),
    \]
    for every \(\omega\in\Omega_*\) and every \(A\in\mcX\). Equivalently, for every \(X\in\matrices\),
    \[
        \mfI_\omega(A)(X)
        =
        \int_A \mcT^\circ_{x;\omega}(X)\,\nu_\omega(\dee x).
    \]
    We have therefore obtained the desired representation on the full-measure set \(\Omega_*\). 
    To obtain a statement valid for every \(\omega\in\Omega\), we now  modify the family on \(\Omega\setminus\Omega_*\).

    First note that we must necessarily have that \(\Xi\neq\varnothing\): otherwise \(\Xi=\varnothing\), so \(\mfI_\omega(\Xi)=\mfI_\omega(\varnothing)=0\), which cannot be  trace-preserving. 
    Fix \(x_*\in\Xi\). Define a new family \((\mfI'_\omega)_{\omega\in\Omega}\) by
    \[
        \mfI'_\omega(A)
        :=
        \begin{cases}
            \mfI_\omega(A), & \omega\in\Omega_*,\\[0.5ex]
            \mathbf 1_A(x_*)\,\operatorname{id}_{\matrices},
            & \omega\in\Omega\setminus\Omega_*,
        \end{cases}
        \qquad A\in\mcX.
    \]
    For \(\omega\in\Omega\setminus\Omega_*\), the map \(A\mapsto \mathbf 1_A(x_*)\,\operatorname{id}_{\matrices}\) is an instrument, and \(\mfI'_\omega(\Xi)=\operatorname{id}_{\matrices}\) is trace-preserving.

    Next define a kernel \((\nu'_\omega)_{\omega\in\Omega}\) by
    \[
        \nu'_\omega
        :=
        \begin{cases}
            \nu_\omega, & \omega\in\Omega_*,\\[0.5ex]
            \delta_{x_*}, & \omega\in\Omega\setminus\Omega_*,
        \end{cases}
    \]
    and define
    \[
        \mcT_{x;\omega}
        :=
        \begin{cases}
            \mcT^\circ_{x;\omega}, & \omega\in\Omega_*,\\[0.5ex]
            \operatorname{id}_{\matrices}, & \omega\in\Omega\setminus\Omega_*,
        \end{cases}
        \qquad (\omega,x)\in\Omega\times\Xi.
    \]
    Since \(\Omega_*\in\mcF\), the map \(\omega\mapsto \nu'_\omega\) is still a measurable probability kernel, and \((\omega,x)\mapsto \mcT_{x;\omega}\) is still jointly measurable. Moreover, for \(\omega\in\Omega\setminus\Omega_*\) and \(A\in\mcX\),
    \[
        \int_A \mcT_{x;\omega}\,\nu'_\omega(\dee x)
        =
        \int_A \operatorname{id}_{\matrices}\,\delta_{x_*}(\dee x)
        =
        \mathbf 1_A(x_*)\,\operatorname{id}_{\matrices}
        =
        \mfI'_\omega(A).
    \]
    Thus
    \[
        \mfI'_\omega(A)
        =
        \int_A \mcT_{x;\omega}\,\nu'_\omega(\dee x)
        \qquad
        \text{for every }\omega\in\Omega,\ A\in\mcX.
    \]
    
    Since the proposition allows modification of the original family on a \(\pr\)-null set, we now replace \((\mfI_\omega)_{\omega\in\Omega}\) by \((\mfI'_\omega)_{\omega\in\Omega}\), and we write again  \(\nu_\omega\) for \(\nu'_\omega\). 
    Define, on all of \(\Omega\times\Xi\),
    \[
        z(\omega,x):=J(\mcT_{x;\omega}).
    \]
    Then \(z\) is jointly measurable, \(z(\omega,x)\ge0\) for all \((\omega,x)\in\Omega\times\Xi\), and for every \(\omega\in\Omega\) and every \(A\in\mcX\),
    \[
        J\left(\mfI_\omega(A)\right)
        =
        \int_A z(\omega,x)\,\nu_\omega(\dee x).
    \]
    Equivalently,
    \[
        \mfI_\omega(A)(X)
        =
        \int_A \mcT_{x;\omega}(X)\,\nu_\omega(\dee x)
        \qquad
        \text{for every }\omega\in\Omega,\ A\in\mcX,\ X\in\matrices.
    \]

    Finally, we prove the uniqueness statement for this kernel. 
    Fix \(\omega\in\Omega\), and let \(x\mapsto \widetilde{\mcT}_x\in\mathsf{CP}(\matrices)\) be an \(\mcX\)-measurable map such that
    \[
        \mfI_\omega(A)
        =
        \int_A \widetilde{\mcT}_x\,\nu_\omega(\dee x)
        \qquad
        \text{for every }A\in\mcX.
    \]
    Set
    \[
        \widetilde z(x):=J(\widetilde{\mcT}_x),
        \qquad x\in\Xi.
    \]
    Then \(\widetilde z:\Xi\to\mbM_n(\mbC)\) is \(\mcX\)-measurable, and each entry  of \(\widetilde z\) is \(\nu_\omega\)-integrable. 
    For every \(A\in\mcX\),
    \[
        \int_A z(\omega,x)\,\nu_\omega(\dee x)
        =
        J\left(\mfI_\omega(A)\right)
        =
        \int_A \widetilde z(x)\,\nu_\omega(\dee x).
    \]
    Hence, for each \(1\le i,j\le n\),
    \[
        \int_A \left(z_{ij}(\omega,x)-\widetilde z_{ij}(x)\right)\,
        \nu_\omega(\dee x)
        =
        0
        \qquad
        \text{for all }A\in\mcX.
    \]
    Applying the usual uniqueness statement for real-valued integrable densities to the real and imaginary parts, we obtain
    \[
        z_{ij}(\omega,\,\cdot\,)=\widetilde z_{ij}(\,\cdot\,)
        \qquad
        \nu_\omega\text{-almost everywhere}
    \]
    for each \(i,j\). Since there are only finitely many entries, it follows that
    \[
        z(\omega,\,\cdot\,)=\widetilde z(\,\cdot\,)
        \qquad
        \nu_\omega\text{-almost everywhere}.
    \]
    Applying \(J^{-1}\), we conclude that
    \[
        \mcT_{x;\omega}=\widetilde{\mcT}_x
        \qquad
        \nu_\omega\text{-almost everywhere in }x.
    \]
    This proves the claimed uniqueness.
\end{proof}

We also record the converse of \Cref{prop:instrument-density-representation}.
Suppose that \(\omega\mapsto\nu_\omega\in\mcP(\Xi)\) is a measurable probability kernel and that \((\omega,x)\mapsto\mcT_{x;\omega}\in\mathsf{CP}(\matrices)\) is jointly measurable.
Assume that there is a measurable set \(\Omega_0\subseteq\Omega\), with \(\pr(\Omega_0)=1\), such that, for every \(\omega\in\Omega_0\), the integrals
\[
    \Phi_\omega(X)
    :=\int_\Xi\mcT_{x;\omega}(X)\,\nu_\omega(\dee x),
    \qquad X\in\matrices,
\]
are well defined and yield a trace-preserving linear map.
For \(\omega\in\Omega_0\), define
\[
    \mfI_\omega(A)(X)
    :=\int_A\mcT_{x;\omega}(X)\,\nu_\omega(\dee x),
    \qquad A\in\mcX,\quad X\in\matrices.
\]
Then \(\mfI_\omega\) is a Davies--Lewis instrument for every \(\omega\in\Omega_0\): complete positivity follows by integrating positive Choi matrices, countable additivity follows from the integral, and trace-nonincreasingness follows from positivity and the trace-preserving total map.
For every \(A\in\mcX\), the identity
\[
    J(\mfI_\omega(A))
    =\int_A J(\mcT_{x;\omega})\,\nu_\omega(\dee x),
    \qquad\omega\in\Omega_0,
\]
shows that \(\omega\mapsto J(\mfI_\omega(A))\) is measurable on \(\Omega_0\), equipped with the restriction of \(\mcF\).
Thus the given density representation defines a normalized Davies--Lewis instrument for \(\pr\)-almost every environment.

The following is a useful observation. 

\begin{remark}
\label{rem:current-instrument-measurable-realization}
    Applying \Cref{prop:instrument-density-representation} with \((\Omega,\mcG_0^{\mathrm{inst}})\) in place of \((\Omega,\mcF)\), and modifying the family on a \(\pr\)-null set if necessary, we may and do fix a
    \(\mcG_0^{\mathrm{inst}}\)-measurable probability kernel
    \[
        \omega\longmapsto \nu_\omega\in\mcP(\Xi)
    \]
    and a \((\mcG_0^{\mathrm{inst}}\otimes\mcX)\)-measurable map
    \[
        (\omega,x)\longmapsto \mcT_{x;\omega}\in\mathsf{CP}(\matrices)
    \]
    such that
    \[
        \mfI_\omega(A)(X)
        =
        \int_A \mcT_{x;\omega}(X)\,\nu_\omega(\dee x)
    \]
    for every \(\omega\in\Omega\), every \(A\in\mcX\), and every \(X\in\matrices\).
    This density representation is auxiliary rather than part of the primitive data, but we fix one such \(\mcG_0^{\mathrm{inst}}\)-measurable realization once and for all for the pathwise constructions below.
\end{remark}

Having obtained a measurable density realization, we next record that the associated selective update is measurable as a function of the disorder, the outcome, and the prior state.

\begin{prop}
\label{prop:projective-action-is-measurable}
    Let \(\rho_\ast\in\states\) be the reference state fixed above.
    Assume that
    \[
        (\omega,x)\longmapsto \mcT_{x;\omega}
    \]
    is measurable from \((\Omega\times\Xi,\mcF\otimes\mcX)\) into \(\mathsf{CP}(\matrices)\), where \(\mathsf{CP}(\matrices)\) is equipped  with the Borel \(\sigma\)-algebra induced by the finite-dimensional vector  space \(\mcL(\matrices)\).
    Then the map
    \[
        (\omega,x,\rho)\longmapsto \mcT_{x;\omega}\proj\rho
    \]
    from \((\Omega\times\Xi\times\states,\mcF\otimes\mcX\otimes\borel{\states})\) to \((\states,\borel{\states})\) is measurable.
\end{prop}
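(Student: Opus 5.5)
The plan is to write the selective update as a composition of measurable maps and then split the domain along the measurable event where the trace vanishes. Consider first the evaluation map
\[
    E:\mathsf{CP}(\matrices)\times\states\to\matrices,
    \qquad
    E(\Psi,\rho):=\Psi(\rho).
\]
It is bilinear, hence continuous, on the finite-dimensional space \(\mcL(\matrices)\times\matrices\), and therefore Borel measurable. The coordinate maps \((\omega,x,\rho)\mapsto(\omega,x)\) and \((\omega,x,\rho)\mapsto\rho\) are measurable for the product \(\sigma\)-algebra \(\mcF\otimes\mcX\otimes\borel{\states}\). By hypothesis, the map \((\omega,x)\mapsto\mcT_{x;\omega}\) is measurable. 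It follows that
\[
    G(\omega,x,\rho):=\left(\mcT_{x;\omega},\rho\right)
\]
is measurable into \(\mcL(\matrices)\times\states\). Here we use that the Borel \(\sigma\)-algebra of a product of second-countable spaces equals the product of the Borel \(\sigma\)-algebras. Consequently \(F:=E\circ G\), that is \(F(\omega,x,\rho)=\mcT_{x;\omega}(\rho)\in\matrices\), is measurable.

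Next, the scalar map \(t(\omega,x,\rho):=\tr{F(\omega,x,\rho)}\) is measurable, since the trace is continuous. The set
\[
    P:=\{(\omega,x,\rho): t(\omega,x,\rho)>0\}
\]
therefore lies in \(\mcF\otimes\mcX\otimes\borel{\states}\). On \(P\) the value \(F/t\) is a positive matrix of unit trace, because \(\mcT_{x;\omega}\) is completely positive and hence positive. The map \((M,s)\mapsto M/s\) is continuous on \(\matrices\times(0,\infty)\). So the restriction of the update to \(P\), equipped with the trace \(\sigma\)-algebra, is measurable into \(\states\).

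Finally, fix \(B\in\borel{\states}\). The preimage of \(B\) under the update is
\[
    \left(P\cap (F/t)^{-1}(B)\right)\ \cup\ \left(P^c\cap\{\rho_\ast\in B\}\right),
\]
and the second piece is either \(P^c\) or \(\varnothing\). Both pieces are measurable, which gives the claim.

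The only point needing care is the identification of \(\borel{\mcL(\matrices)\times\states}\) with \(\borel{\mcL(\matrices)}\otimes\borel{\states}\). This is what makes the composition of the jointly measurable density with the continuous evaluation legitimate. It holds because both factors are separable metric spaces, namely finite-dimensional spaces and their subsets. Everything else is routine composition and a measurable case split.
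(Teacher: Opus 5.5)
Your proof is correct and takes essentially the paper's route: you get measurability of $(\omega,x,\rho)\mapsto\mcT_{x;\omega}(\rho)$ by composing the jointly measurable density with the continuous evaluation map, and you treat the trace-zero case separately. The only cosmetic difference is that the paper packages the case split into a Borel normalization map $N$ on the positive cone, continuous on the relatively open set $\{\tr{A}>0\}$ and constant on its complement $\{0\}$, whereas you split the domain directly along the measurable set $P$.
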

\begin{proof}
    Write
    \[
        \mathrm{ev}:\mcL(\matrices)\times\matrices\to\matrices,
        \qquad
        \mathrm{ev}(\Phi,\rho):=\Phi(\rho).
    \]
    Since \(\matrices\) is finite-dimensional, \(\mcL(\matrices)\) is also finite-dimensional, and \(\mathrm{ev}\) is bilinear, the map \(\mathrm{ev}\) is continuous.

    Define
    \[
        F:\Omega\times\Xi\times\states\to\matrices,
        \qquad
        F(\omega,x,\rho):=\mcT_{x;\omega}(\rho).
    \]
    The map
    \[
        (\omega,x,\rho)\longmapsto (\mcT_{x;\omega},\rho)
    \]
    from \(\Omega\times\Xi\times\states\) into \(\mcL(\matrices)\times\matrices\) is measurable, and therefore
    \[
        F=\mathrm{ev}\circ\left((\omega,x,\rho)\mapsto(\mcT_{x;\omega},\rho)\right)
    \]
    is measurable.

    Since each \(\mcT_{x;\omega}\) is completely positive and \(\rho\ge 0\), the matrix \(F(\omega,x,\rho)\) is positive semidefinite for every \((\omega,x,\rho)\). Thus \(F\) takes values in
    \[
        \matrices_+
        :=
        \{A\in\matrices:A\ge 0\},
    \]
    equipped with the subspace Borel \(\sigma\)-algebra inherited from \(\matrices\).

    The trace functional
    \[
        t:\matrices_+\to\mbR_+,
        \qquad
        t(A):=\tr{A},
    \]
    is continuous. 
    Hence
    \[
        E
        :=
        \{(\omega,x,\rho):\tr{\mcT_{x;\omega}(\rho)}>0\}
        =
        (t\circ F)^{-1}\left((0,\infty)\right)
    \]
    is measurable.

    Now define
    \[
        N:\matrices_+\to\states,
        \qquad
        N(A)
        :=
        \begin{cases}
            \dfrac{A}{\tr{A}}, & \tr{A}>0,\\[1.2ex]
            \rho_\ast, & \tr{A}=0.
        \end{cases}
    \]
    We claim that \(N\) is Borel measurable. Indeed, the set
    \[
        U:=\{A\in\matrices_+:\tr{A}>0\}
    \]
    is relatively open in \(\matrices_+\), and on \(U\) the map \(A\mapsto A/\tr{A}\) is continuous.
    On the complement
    \[
        \{A\in\matrices_+:\tr{A}=0\},
    \]
    we necessarily have \(A=0\), since a positive semidefinite matrix with zero trace must vanish. Hence \(N\) is constant on this set.
    Therefore \(N\) is Borel measurable.

    Finally,
    \[
        \mcT_{x;\omega}\proj\rho
        =
        N\left(F(\omega,x,\rho)\right),
    \]
    so the map \((\omega,x,\rho)\mapsto \mcT_{x;\omega}\proj\rho\) is measurable as a composition of measurable maps.
\end{proof}

We can now define the one-step outcome and posterior-state kernels associated with the chosen density realization and verify that they are genuine measurable probability kernels.

\begin{prop}
\label{prop:Gamma-K-are-kernels}
    Under the assumptions of \Cref{prop:instrument-density-representation}, fix the density representation  \((\nu_\omega,\mcT_{x;\omega})\), and let \(\rho_\ast\in\states\) be the reference state fixed above.
    For \(\omega\in\Omega\), \(\rho\in\states\), \(A\in\mcX\), and \(B\in\borel{\states}\), define
    \[
        \Gamma_\omega(\rho,A)
        :=
        \tr{\mfI_\omega(A)(\rho)}
        =
        \int_A \tr{\mcT_{x;\omega}(\rho)}\,\nu_\omega(\dee x),
    \]
    and
    \[
        K_\omega(\rho,B)
        :=
        \int_\Xi
        1_B\left(\mcT_{x;\omega}\proj\rho\right)\,
        \tr{\mcT_{x;\omega}(\rho)}\,
        \nu_\omega(\dee x).
    \]
    Then:
    \begin{enumerate}
        \item \(\Gamma_\omega\) is a probability kernel from
        \((\states,\borel{\states})\) to \((\Xi,\mcX)\);
        \item \(K_\omega\) is a probability kernel on \((\states,\borel{\states})\);
        \item for every \(A\in\mcX\) and \(B\in\borel{\states}\), the maps
        \[
            (\omega,\rho)\longmapsto \Gamma_\omega(\rho,A),
            \qquad
            (\omega,\rho)\longmapsto K_\omega(\rho,B)
        \]
        are \((\mcF\otimes\borel{\states},\borel{[0,1]})\)-measurable.
    \end{enumerate}
\end{prop}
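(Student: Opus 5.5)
The plan is to reduce everything to measurability of integrals of jointly measurable nonnegative functions against the measurable kernel \(\omega\mapsto\nu_\omega\). Define
\[
    h(\omega,x,\rho):=\tr{\mcT_{x;\omega}(\rho)}.
\]
As in the proof of \Cref{prop:projective-action-is-measurable}, this is the composition of the continuous trace with the measurable map \(F(\omega,x,\rho)=\mcT_{x;\omega}(\rho)\). Hence \(h\) is \(\mcF\otimes\mcX\otimes\borel{\states}\)-measurable and nonnegative.

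For \(B\in\borel{\states}\), the integrand of \(K_\omega\) is
\[
    g_B(\omega,x,\rho):=\mathbf 1_B\!\left(\mcT_{x;\omega}\proj\rho\right)h(\omega,x,\rho).
\]
By \Cref{prop:projective-action-is-measurable}, \(g_B\) is jointly measurable. Likewise, \(\mathbf 1_A(x)\,h(\omega,x,\rho)\) is jointly measurable for \(A\in\mcX\).

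The key general fact is the following. If \(\omega\mapsto\nu_\omega\) is a probability kernel from \(\Omega\) to \(\Xi\), then \((\omega,\rho)\mapsto\nu_\omega\otimes\delta_\rho\), or more precisely the kernel from \(\Omega\times\states\) to \(\Xi\) given by \((\omega,\rho)\mapsto\nu_\omega\), is still a probability kernel. For any nonnegative \(\mcF\otimes\mcX\otimes\borel{\states}\)-measurable \(g\), the map
\[
    (\omega,\rho)\mapsto\int_\Xi g(\omega,x,\rho)\,\nu_\omega(\dee x)
\]
is then measurable. I would prove this by the standard monotone class argument. For rectangles \(g=\mathbf 1_{C\times A\times D}\), the integral equals \(\mathbf 1_C(\omega)\mathbf 1_D(\rho)\nu_\omega(A)\), which is measurable. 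The class of sets \(E\) for which \((\omega,\rho)\mapsto\nu_\omega(E_{\omega,\rho})\) is measurable is a Dynkin system containing the \(\pi\)-system of rectangles. Passing to simple functions and monotone limits then gives the general case. This yields item (3) for both kernels, and it also gives measurability in \(\rho\) for fixed \(\omega\).

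Next I would check the measure properties. For fixed \((\omega,\rho)\), countable additivity of \(A\mapsto\Gamma_\omega(\rho,A)\) and of \(B\mapsto K_\omega(\rho,B)\) follows from monotone convergence, since the indicators of a disjoint union sum pointwise.

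The consistency of the two expressions for \(\Gamma_\omega\) follows from \Cref{prop:instrument-density-representation}. Trace commutes with the Bochner integral, so
\[
    \tr{\mfI_\omega(A)(\rho)}=\int_A\tr{\mcT_{x;\omega}(\rho)}\,\nu_\omega(\dee x).
\]

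Normalization comes from trace preservation: \(\Gamma_\omega(\rho,\Xi)=\tr{\Phi_\omega(\rho)}=\tr{\rho}=1\). For \(K\), the integrand at \(B=\states\) is just \(h\), so \(K_\omega(\rho,\states)=\Gamma_\omega(\rho,\Xi)=1\). Values in \([0,1]\) follow from nonnegativity and monotonicity.

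The only step needing care is the parametrized Fubini/monotone class step, since \(\nu_\omega\) does not depend on \(\rho\). This is handled by viewing \(\nu\) as a kernel on \(\Omega\times\states\), which is trivially measurable. The rest is routine.
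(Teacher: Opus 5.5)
Your proposal is correct and follows essentially the same route as the paper: you integrate the jointly measurable integrands \(\mathbf 1_A(x)\,\tr{\mcT_{x;\omega}(\rho)}\) and \(\mathbf 1_B(\mcT_{x;\omega}\proj\rho)\,\tr{\mcT_{x;\omega}(\rho)}\) against the kernel \((\omega,\rho)\mapsto\nu_\omega\), and take normalization from trace preservation. The paper simply cites the kernel-integration theorem where you sketch its monotone-class proof. The only other difference is cosmetic: the paper gets the measure property of \(K_\omega(\rho,\cdot)\) by writing it as the pushforward of \(\Gamma_\omega(\rho,\cdot)\) under \(x\mapsto\mcT_{x;\omega}\proj\rho\), whereas you use monotone convergence together with \(K_\omega(\rho,\states)=\Gamma_\omega(\rho,\Xi)\).
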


\begin{proof}
    We first consider \(\Gamma\).
    Fix \(\omega\in\Omega\) and \(\rho\in\states\).
    Since \(x\mapsto \mcT_{x;\omega}\) is measurable,  \((\Phi,\eta)\mapsto \Phi(\eta)\) is continuous on \(\mcL(\matrices)\times\matrices\), and the trace is continuous on  \(\matrices\), the map \(x\mapsto \tr{\mcT_{x;\omega}(\rho)}\) is \(\mcX\)-measurable.
    It is nonnegative because each \(\mcT_{x;\omega}\) is completely positive.
    Hence \(A\mapsto \Gamma_\omega(\rho,A)\) is a measure on \((\Xi,\mcX)\).

    Since \(\mfI_\omega(\Xi)\) is trace-preserving,
    \[
        \Gamma_\omega(\rho,\Xi)
        =
        \tr{\mfI_\omega(\Xi)(\rho)}
        =
        \tr{\rho}
        =
        1.
    \]
    Thus \(\Gamma_\omega(\rho,\,\cdot\,)\) is a probability measure.

    Now fix \(A\in\mcX\). Define
    \[
        \widetilde\nu_{(\omega,\rho)}:=\nu_\omega,
        \qquad
        (\omega,\rho)\in\Omega\times\states,
    \]
    which is a measurable kernel from \((\Omega\times\states,\mcF\otimes\borel{\states})\) to \((\Xi,\mcX)\).
    Also define
    \[
        g_A(\omega,\rho,x)
        :=
        1_A(x)\,\tr{\mcT_{x;\omega}(\rho)},
        \qquad
        (\omega,\rho,x)\in\Omega\times\states\times\Xi.
    \]
    Since \((\omega,x,\rho)\mapsto \mcT_{x;\omega}(\rho)\) is measurable and the  trace is continuous, \(g_A\) is nonnegative and \((\mcF\otimes\borel{\states}\otimes\mcX)\)-measurable.
    Therefore, by the standard measurability theorem for integration against a measurable kernel,
    \[
        (\omega,\rho)\longmapsto
        \int_\Xi g_A(\omega,\rho,x)\,\widetilde\nu_{(\omega,\rho)}(\dee x)
    \]
    is \((\mcF\otimes\borel{\states})\)-measurable.
    Since \(\widetilde\nu_{(\omega,\rho)}=\nu_\omega\), this is exactly
    \[
        (\omega,\rho)\longmapsto \Gamma_\omega(\rho,A).
    \]
    This proves that \(\Gamma\) is a probability kernel and proves part (3) for
    \(\Gamma\).

    We next consider \(K\).
    Fix \(\omega\in\Omega\) and \(\rho\in\states\), and define
    \[
        F_{\omega,\rho}(x):=\mcT_{x;\omega}\proj\rho,
        \qquad x\in\Xi.
    \]
    By \Cref{prop:projective-action-is-measurable}, the map \(x\mapsto F_{\omega,\rho}(x)\) is measurable from \((\Xi,\mcX)\) to  \((\states,\borel{\states})\).
    Therefore, for every \(B\in\borel{\states}\),
    \[
        F_{\omega,\rho}^{-1}(B)\in\mcX.
    \]
    Using the definition of \(\Gamma_\omega\), we may write
    \[
        K_\omega(\rho,B)
        =
        \int_\Xi 1_B\left(F_{\omega,\rho}(x)\right)\,
        \Gamma_\omega(\rho,\dee x)
        =
        \Gamma_\omega\left(\rho,F_{\omega,\rho}^{-1}(B)\right).
    \]
    Thus \(K_\omega(\rho,\,\cdot\,)\) is the pushforward of the probability measure \(\Gamma_\omega(\rho,\,\cdot\,)\) under the measurable map \(F_{\omega,\rho}\).
    It follows that \(K_\omega(\rho,\,\cdot\,)\) is a probability measure on \((\states,\borel{\states})\).

    Now fix \(B\in\borel{\states}\), and define
    \[
        h_B(\omega,\rho,x)
        :=
        1_B\left(\mcT_{x;\omega}\proj\rho\right)\,
        \tr{\mcT_{x;\omega}(\rho)},
        \qquad
        (\omega,\rho,x)\in\Omega\times\states\times\Xi.
    \]
    By \Cref{prop:projective-action-is-measurable}, the map \((\omega,\rho,x)\mapsto \mcT_{x;\omega}\proj\rho\)  is measurable, and as above \((\omega,\rho,x)\mapsto \tr{\mcT_{x;\omega}(\rho)}\) is also measurable.
    Hence \(h_B\) is nonnegative and \((\mcF\otimes\borel{\states}\otimes\mcX)\)-measurable.
    Applying the kernel-integration theorem again, we obtain that
    \[
        (\omega,\rho)\longmapsto
        \int_\Xi h_B(\omega,\rho,x)\,\widetilde\nu_{(\omega,\rho)}(\dee x)
    \]
    is \((\mcF\otimes\borel{\states})\)-measurable.
    Since \(\widetilde\nu_{(\omega,\rho)}=\nu_\omega\), this is exactly \((\omega,\rho)\mapsto K_\omega(\rho,B)\).
    This proves that \(K\) is a probability kernel and proves part (3) for \(K\).
\end{proof}

For later use, we write
\[
    K_\omega^{(n)}
    :=
    K_{\theta^{n-1}\omega}\circ\cdots\circ K_\omega,
    \qquad n\ge1,
\]
with the convention
\[
    K_\omega^{(0)}(\rho,\,\cdot\,):=\delta_\rho.
\]

\paragraph{Null-set convention.}
    We keep fixed the instrument family and the current-instrument-measurable density realization chosen in \Cref{rem:current-instrument-measurable-realization}.
    The associated one-step probability kernels are defined for every \(\omega\in\Omega\).
    Whenever an almost-sure assertion holds on a measurable set \(\Omega_0\subseteq\Omega\) with \(\pr(\Omega_0)=1\), we may restrict it to
    \[
        \widetilde\Omega_0
        :=\bigcap_{n\in\mbZ}\theta^{-n}\Omega_0.
    \]
    Since \(\theta\) is invertible and measure-preserving, \(\widetilde\Omega_0\) is measurable, \(\theta\)-invariant, and has probability one.
    Passing to this set leaves the fixed instrument family, density realization, and one-step kernels unchanged.
    

\subsection{Construction of the quenched outcome law}
\label{subsection:construction-quenched-law}


We now construct the quenched law of the observed outcome record. 
We begin with the canonical cylinder formula determined directly by the instrument family.
After that, once the measurable density realization from \Cref{subsection:instrument-realization} has been fixed, we recover the same law recursively from the one-step outcome kernels induced by the posterior updates.
We conclude by verifying that the resulting quenched law depends measurably on the disorder realization.

\begin{prop}
\label{prop:instrument-cylinder-formula}
    Let \(\vartheta:\Omega\to\states\) be a measurable random initial state.
    For each fixed \(\omega\in\Omega\), each \(n\ge1\), and each \(A_1,\dots,A_n\in\mcX\), define
    \[
        \mu_{\vartheta;\omega}^{(n)}(A_1\times\cdots\times A_n)
        :=
        \tr{
            \left(
                \mfI_{\theta^{n-1}\omega}(A_n)
                \circ\cdots\circ
                \mfI_\omega(A_1)
            \right)\left(\vartheta(\omega)\right)
        }.
    \]
    Then these cylinder values are consistent, in the sense that for every \(n\ge1\) and every \(A_1,\dots,A_n\in\mcX\),
    \[
        \mu_{\vartheta;\omega}^{(n+1)}
        (A_1\times\cdots\times A_n\times\Xi)
        =
        \mu_{\vartheta;\omega}^{(n)}(A_1\times\cdots\times A_n).
    \]
\end{prop}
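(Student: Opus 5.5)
The consistency follows from a single identity: the instrument applied at the last step, evaluated on the whole outcome space, is trace-preserving. Fix \(\omega\in\Omega\), \(n\ge1\), and \(A_1,\dots,A_n\in\mcX\), and abbreviate
\[
    Y:=\left(\mfI_{\theta^{n-1}\omega}(A_n)\circ\cdots\circ\mfI_\omega(A_1)\right)\left(\vartheta(\omega)\right)\in\matrices .
\]
By the definition of the cylinder values with \(A_{n+1}=\Xi\), composition of linear maps gives
\[
    \mu_{\vartheta;\omega}^{(n+1)}(A_1\times\cdots\times A_n\times\Xi)
    =\tr{\mfI_{\theta^{n}\omega}(\Xi)(Y)}
    =\tr{\Phi_{\theta^n\omega}(Y)} .
\]

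The main step is to apply trace preservation of \(\Phi_{\theta^n\omega}=\mfI_{\theta^n\omega}(\Xi)\). This is part of the standing definition of an instrument, and it holds for every \(\omega\), including after the null-set modification of \Cref{prop:instrument-density-representation}, where the modified branch \(\mathbf 1_A(x_*)\operatorname{id}_{\matrices}\) is also trace-preserving. Since trace preservation of a linear map on \(\matrices\) means \(\tr{\Psi(X)}=\tr{X}\) for all \(X\in\matrices\), no positivity or normalization of \(Y\) is needed. This yields
\[
    \tr{\Phi_{\theta^n\omega}(Y)}=\tr{Y}=\mu_{\vartheta;\omega}^{(n)}(A_1\times\cdots\times A_n),
\]
which is the claimed consistency.

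I do not expect a genuine obstacle. The only points to watch are the ordering convention for composition, namely that the \((n+1)\)-st instrument is indexed by \(\theta^{n}\omega\) and applied last, and that the argument is pointwise in \(\omega\), so no measurability is involved. For completeness I would also record that the cylinder values are nonnegative, since a composition of CP maps applied to \(\vartheta(\omega)\ge0\) is positive, and that they equal \(1\) when all \(A_j=\Xi\), by iterating the same identity. These facts are what the subsequent extension construction in \Cref{prop:construction-of-mbQ} will use, but the statement as posed only requires the displayed identity.
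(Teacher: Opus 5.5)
Your proposal is correct and follows the paper's proof essentially verbatim. The paper also sets $Y$ equal to the $n$-fold composition applied to $\vartheta(\omega)$, rewrites the $(n+1)$-st cylinder value as $\tr{\Phi_{\theta^n\omega}(Y)}$, and concludes from trace preservation of $\Phi_{\theta^n\omega}=\mfI_{\theta^n\omega}(\Xi)$.
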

\begin{proof}
    Fix \(\omega\in\Omega\), \(n\ge1\), and \(A_1,\dots,A_n\in\mcX\), and set
    \[
        Y
        :=
        \left(
            \mfI_{\theta^{n-1}\omega}(A_n)
            \circ\cdots\circ
            \mfI_\omega(A_1)
        \right)\left(\vartheta(\omega)\right).
    \]
    Then
    \begin{align*}
        \mu_{\vartheta;\omega}^{(n+1)}
        (A_1\times\cdots\times A_n\times\Xi)
        =
        \tr{
            \left(
                \mfI_{\theta^n\omega}(\Xi)
                \circ
                \mfI_{\theta^{n-1}\omega}(A_n)
                \circ\cdots\circ
                \mfI_\omega(A_1)
            \right)\left(\vartheta(\omega)\right)
        }
        =
        \tr{\Phi_{\theta^n\omega}(Y)}.
    \end{align*}
    Since \(\Phi_{\theta^n\omega}=\mfI_{\theta^n\omega}(\Xi)\) is trace-preserving, \(\tr{\Phi_{\theta^n\omega}(Y)}=\tr{Y}\). 
    Therefore
    \[
        \mu_{\vartheta;\omega}^{(n+1)}
        (A_1\times\cdots\times A_n\times\Xi)
        =
        \mu_{\vartheta;\omega}^{(n)}(A_1\times\cdots\times A_n).
    \]
\end{proof}

We next describe the same quenched law recursively, in terms of the selective updates arising from the chosen measurable density realization.

\begin{prop}
\label{prop:recursive-posterior-measurable}
    Let \(\vartheta:\Omega\to\states\) be measurable, and fix the density representation from \Cref{prop:instrument-density-representation}.
    Use the convention \(\Xi^0:=\{\ast\}\), and define recursively
    \[
        \widehat\rho_0^{\omega,\vartheta}(\ast):=\vartheta(\omega),
    \]
    and, for \(n\ge0\),
    \[
        \widehat\rho_{n+1}^{\omega,\vartheta}(x_1,\dots,x_{n+1})
        :=
        \mcT_{x_{n+1};\theta^n\omega}\proj
        \widehat\rho_n^{\omega,\vartheta}(x_1,\dots,x_n).
    \]
    Then the following hold.
    \begin{enumerate}
        \item
        For every \(n\in\mbN_0\), the map
        \[
            (\omega,z)\longmapsto \widehat\rho_n^{\omega,\vartheta}(z)
        \]
        from \(\Omega\times\Xi^n\) to \(\states\) is
        \((\mcF\otimes\mcX^{\otimes n},\borel{\states})\)-measurable.
    
        \item
        Consequently, for every \(n\in\mbN_0\), the map
        \[
            \rho_n^{\omega,\vartheta}:\Xi^{\mbN}\to\states,
            \qquad
            \rho_n^{\omega,\vartheta}(\bar x)
            :=
            \widehat\rho_n^{\omega,\vartheta}(x_1,\dots,x_n),
        \]
        is measurable and depends only on the first \(n\) coordinates.
    
        \item
        For every \(n\in\mbN_0\) and every \(A\in\mcX\), the map
        \[
            (\omega,z)\longmapsto
            \kappa_{n,\vartheta}^\omega(z,A)
            :=
            \Gamma_{\theta^n\omega}\left(\widehat\rho_n^{\omega,\vartheta}(z),A\right)
        \]
        is \((\mcF\otimes\mcX^{\otimes n},\borel{[0,1]})\)-measurable.
    
        \item
        For every \(\omega\in\Omega\) and every \(n\in\mbN_0\), the map
        \[
            \kappa_{n,\vartheta}^\omega(z,A)
            :=
            \Gamma_{\theta^n\omega}\left(\widehat\rho_n^{\omega,\vartheta}(z),A\right),
            \qquad
            z\in\Xi^n,\ A\in\mcX,
        \]
        defines a probability kernel from \((\Xi^n,\mcX^{\otimes n})\) to
        \((\Xi,\mcX)\).
    \end{enumerate}
\end{prop}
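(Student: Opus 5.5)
The plan is to prove item (1) by induction on \(n\), and then deduce (2)–(4) from (1) together with \Cref{prop:projective-action-is-measurable} and \Cref{prop:Gamma-K-are-kernels}.

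\emph{Base case.} For \(n=0\) we identify \(\Omega\times\Xi^0\) with \(\Omega\). The map \((\omega,\ast)\mapsto\vartheta(\omega)\) is then measurable because \(\vartheta\) is.

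\emph{Inductive step.} Assume \((\omega,z)\mapsto\widehat\rho_n^{\omega,\vartheta}(z)\) is \((\mcF\otimes\mcX^{\otimes n})\)-measurable. Consider the map
\[
    G_n:\Omega\times\Xi^{n}\times\Xi\to\Omega\times\Xi\times\states,
    \qquad
    G_n(\omega,z,x):=\left(\theta^n\omega,\,x,\,\widehat\rho_n^{\omega,\vartheta}(z)\right).
\]
Each coordinate of \(G_n\) is measurable: the first because \(\theta^n\) is \((\mcF,\mcF)\)-measurable, the second as a coordinate projection, and the third by the induction hypothesis composed with the projection forgetting \(x\). Hence \(G_n\) is measurable for \(\mcF\otimes\mcX^{\otimes(n+1)}\), using \(\mcX^{\otimes n}\otimes\mcX=\mcX^{\otimes(n+1)}\). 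By definition, \(\widehat\rho_{n+1}^{\omega,\vartheta}(z,x)\) is the composition of \(G_n\) with \((\omega',x,\rho)\mapsto\mcT_{x;\omega'}\proj\rho\). That second map is measurable by \Cref{prop:projective-action-is-measurable}, so the composition is measurable. This completes the induction.

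\emph{Item (2).} Let \(p_n:\Xi^{\mbN}\to\Xi^n\) be the projection onto the first \(n\) coordinates; it is measurable for the cylinder \(\sigma\)-algebra. For fixed \(\omega\), the section \(z\mapsto\widehat\rho_n^{\omega,\vartheta}(z)\) is measurable, since sections of jointly measurable maps are measurable. Then \(\rho_n^{\omega,\vartheta}=\widehat\rho_n^{\omega,\vartheta}\circ p_n\) is measurable and depends only on the first \(n\) coordinates.

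\emph{Item (3).} Write
\[
    \kappa_{n,\vartheta}^\omega(z,A)=H_A\circ\left(\theta^n\omega,\,\widehat\rho_n^{\omega,\vartheta}(z)\right),
    \qquad
    H_A(\omega',\rho):=\Gamma_{\omega'}(\rho,A).
\]
By \Cref{prop:Gamma-K-are-kernels}(3), \(H_A\) is \((\mcF\otimes\borel{\states})\)-measurable. The inner map \((\omega,z)\mapsto(\theta^n\omega,\widehat\rho_n^{\omega,\vartheta}(z))\) is measurable by (1) and the measurability of \(\theta^n\). Hence the composition is measurable.

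\emph{Item (4).} Fix \(\omega\). For each \(z\), the set function \(A\mapsto\kappa_{n,\vartheta}^\omega(z,A)\) is the probability measure \(\Gamma_{\theta^n\omega}(\widehat\rho_n^{\omega,\vartheta}(z),\,\cdot\,)\), by \Cref{prop:Gamma-K-are-kernels}(1). For each \(A\), the map \(z\mapsto\kappa_{n,\vartheta}^\omega(z,A)\) is the \(\omega\)-section of the jointly measurable map from (3), so it is \(\mcX^{\otimes n}\)-measurable. Thus \(\kappa_{n,\vartheta}^\omega\) is a probability kernel.

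The only nontrivial point is the joint measurability in (1), in particular that the time-shifted environment \(\theta^n\omega\) enters measurably. This is handled by composing with the measurable map \(\theta^n\) and then invoking the joint measurability in \Cref{prop:projective-action-is-measurable}; beyond that the argument is bookkeeping of product \(\sigma\)-algebras.
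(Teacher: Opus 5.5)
Your proposal is correct and follows essentially the same route as the paper. The paper also proves (1) by induction, composing the measurable map \((\omega,x_1,\dots,x_{n+1})\mapsto(\theta^n\omega,x_{n+1},\widehat\rho_n^{\omega,\vartheta}(x_1,\dots,x_n))\) with the jointly measurable selective update of \Cref{prop:projective-action-is-measurable}, and it obtains (2)--(4) from (1) and \Cref{prop:Gamma-K-are-kernels} in the same way you do.
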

\begin{proof}
    We prove the claims in order.
    For (1), we argue by induction on \(n\). For \(n=0\), the map \((\omega,\ast)\mapsto \widehat\rho_0^{\omega,\vartheta}(\ast) = \vartheta(\omega)\) is measurable by assumption.
    Assume now that \((\omega,z)\mapsto \widehat\rho_n^{\omega,\vartheta}(z)\) is measurable on \(\Omega\times\Xi^n\). 
    Consider the measurable map
    \[
        \Omega\times\Xi^{n+1}\to\Omega\times\Xi\times\states,
        \qquad
        (\omega,x_1,\dots,x_{n+1})
        \longmapsto
        \left(
            \theta^n\omega,\,
            x_{n+1},\,
            \widehat\rho_n^{\omega,\vartheta}(x_1,\dots,x_n)
        \right).
    \]
    By \Cref{prop:projective-action-is-measurable}, the map \((\eta,x,\rho)\mapsto \mcT_{x;\eta}\proj\rho\) is measurable from \(\Omega\times\Xi\times\states\) to \(\states\). 
    Hence
    \[
        (\omega,x_1,\dots,x_{n+1})
        \longmapsto
        \mcT_{x_{n+1};\theta^n\omega}\proj
        \widehat\rho_n^{\omega,\vartheta}(x_1,\dots,x_n)
        =
        \widehat\rho_{n+1}^{\omega,\vartheta}(x_1,\dots,x_{n+1})
    \]
    is measurable, completing the induction.

    Statement (2) follows by composing  \(\widehat\rho_n^{\omega,\vartheta}\) with the projection  \(\Xi^{\mbN}\to\Xi^n\), \((x_1,x_2,\dots)\mapsto(x_1,\dots,x_n)\).

    For (3), fix \(n\in\mbN_0\) and \(A\in\mcX\). 
    By
    \Cref{prop:Gamma-K-are-kernels}, the map \((\eta,\rho)\mapsto \Gamma_\eta(\rho,A)\) is measurable on \(\Omega\times\states\). 
    Since \((\omega,z)\mapsto
        \left(
            \theta^n\omega,\,
            \widehat\rho_n^{\omega,\vartheta}(z)
        \right)
    \)
    is measurable by (1), the composition
    \[
        (\omega,z)\longmapsto
        \Gamma_{\theta^n\omega}\left(\widehat\rho_n^{\omega,\vartheta}(z),A\right)
        =
        \kappa_{n,\vartheta}^\omega(z,A)
    \]
    is measurable.

    For (4), fix \(\omega\in\Omega\) and \(n\in\mbN_0\). For each \(z\in\Xi^n\), the set function
    \[
        A\longmapsto \kappa_{n,\vartheta}^\omega(z,A)
        =
        \Gamma_{\theta^n\omega}\left(\widehat\rho_n^{\omega,\vartheta}(z),A\right)
    \]
    is a probability measure on \((\Xi,\mcX)\) by  \Cref{prop:Gamma-K-are-kernels}. For each \(A\in\mcX\), the map \(z\mapsto \kappa_{n,\vartheta}^\omega(z,A)\) is measurable by restricting the jointly measurable map from (3). 
    Hence  \(\kappa_{n,\vartheta}^\omega\) is a probability kernel from \((\Xi^n,\mcX^{\otimes n})\) to \((\Xi,\mcX)\).
\end{proof}

The kernels \(\kappa_{n,\vartheta}^\omega\) furnish the recursive construction of the quenched law on outcome paths. 
The next proposition shows that this construction agrees with the canonical cylinder formula from \Cref{prop:instrument-cylinder-formula}.

\begin{prop}
\label{prop:construction-of-mbQ}
    Let \(\vartheta:\Omega\to\states\) be measurable, and fix the density representation from \Cref{prop:instrument-density-representation}.
    Then for each \(\omega\in\Omega\), there exists a unique probability measure
    \[
        \mbQ_{\vartheta;\omega}\in\mcP(\Xi^{\mbN})
    \]
    such that, if \(X_n:\Xi^{\mbN}\to\Xi\) denotes the \(n\)-th coordinate map, then for every \(n\in\mbN_0\) and every \(A\in\mcX\),
    \[
        \mbQ_{\vartheta;\omega}\left(X_{n+1}\in A\mid X_1,\dots,X_n\right)
        =
        \kappa_{n,\vartheta}^\omega(X_1,\dots,X_n;A)
        \qquad
        \mbQ_{\vartheta;\omega}\text{-a.s.}
    \]
    Moreover, for every \(n\ge1\) and every \(A_1,\dots,A_n\in\mcX\),
    \[
        \mbQ_{\vartheta;\omega}
        \left(
            A_1\times\cdots\times A_n\times \Xi\times\Xi\times\cdots
        \right)
        =
        \tr{
            \left(
                \mfI_{\theta^{n-1}\omega}(A_n)
                \circ\cdots\circ
                \mfI_\omega(A_1)
            \right)\left(\vartheta(\omega)\right)
        }.
    \]
    In particular, the finite-dimensional distributions of \(\mbQ_{\vartheta;\omega}\) are given by the canonical cylinder formula of \Cref{prop:instrument-cylinder-formula}.
\end{prop}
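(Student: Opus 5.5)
We construct $\mbQ_{\vartheta;\omega}$ for fixed $\omega$ by the Ionescu-Tulcea theorem, applied to the sequence of probability kernels $\kappa_{n,\vartheta}^\omega$ from $(\Xi^n,\mcX^{\otimes n})$ to $(\Xi,\mcX)$ given by \Cref{prop:recursive-posterior-measurable}(4). The initial marginal comes from the kernel with $n=0$, namely $\kappa_{0,\vartheta}^\omega(\ast,\cdot)=\Gamma_\omega(\vartheta(\omega),\cdot)$. Ionescu-Tulcea then yields a unique probability measure on $(\Xi^{\mbN},\mcX^{\otimes\mbN})$. Its $n$-dimensional marginals are the iterated kernel products
\[
    \mu^{(n)}(C)=\int\cdots\int \mathbf 1_C(x_1,\dots,x_n)\,\kappa_{n-1,\vartheta}^\omega(x_{1:n-1},\dee x_n)\cdots\kappa_{0,\vartheta}^\omega(\ast,\dee x_1),
\]
and its one-step conditional laws are the kernels $\kappa_{n,\vartheta}^\omega$. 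No topology on $\Xi$ is needed for this, since Ionescu-Tulcea works for arbitrary measurable spaces. The conditional-probability identity then holds by construction: for $C\in\mcX^{\otimes n}$ we have $\mbQ(\{X_{1:n}\in C\}\cap\{X_{n+1}\in A\})=\int_C\kappa_{n,\vartheta}^\omega(z,A)\,\mu^{(n)}(\dee z)$. Uniqueness follows because the conditional laws determine all finite-dimensional marginals inductively, and cylinder sets form a $\pi$-system generating $\mcX^{\otimes\mbN}$.

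The substantive step is to identify the marginals with the instrument formula. I would prove, by induction on $n$, the stronger unnormalized identity
\[
    \int_{A_1\times\cdots\times A_n} \widehat\rho_n^{\omega,\vartheta}(z)\,\mu^{(n)}(\dee z)
    =\left(\mfI_{\theta^{n-1}\omega}(A_n)\circ\cdots\circ\mfI_\omega(A_1)\right)(\vartheta(\omega)).
\]
This is a matrix-valued (Bochner, entrywise) integral. Taking the trace gives the cylinder formula, because each $\widehat\rho_n$ has unit trace. The base case $n=0$ is trivial. For the inductive step, write
\[
    \mu^{(n+1)}(\dee z,\dee x)=\mu^{(n)}(\dee z)\,\tr{\mcT_{x;\theta^n\omega}(\widehat\rho_n(z))}\,\nu_{\theta^n\omega}(\dee x).
\]
Multiplying the normalized update $\widehat\rho_{n+1}(z,x)=\mcT_{x;\theta^n\omega}\proj\widehat\rho_n(z)$ by this trace weight recovers exactly $\mcT_{x;\theta^n\omega}(\widehat\rho_n(z))$. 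On the set where the trace vanishes, the weight is $0$ and $\mcT(\widehat\rho_n(z))=0$ because it is a positive matrix of zero trace, so the reference state $\rho_\ast$ contributes nothing. Integrating over $x\in A_{n+1}$ then gives $\mfI_{\theta^n\omega}(A_{n+1})(\widehat\rho_n(z))$, using the density representation of \Cref{prop:instrument-density-representation}. Since $\mfI_{\theta^n\omega}(A_{n+1})$ is linear and finite-dimensional, it commutes with the $z$-integral, and the inductive hypothesis closes the step. Fubini for kernels is justified because all integrands are bounded and jointly measurable by \Cref{prop:recursive-posterior-measurable}(1) and \Cref{prop:projective-action-is-measurable}.

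The main obstacle is this inductive bookkeeping: the zero-trace convention, and measurability and integrability of the matrix-valued integrands. Everything is bounded by $\norm{\mcT_{x;\eta}}$ times a state, and entrywise $\nu$-integrability of $\mcT_{\cdot;\eta}$ is already part of \Cref{prop:instrument-density-representation}, so I expect this to be routine once it is written entrywise. The last sentence of the statement then follows by comparing with \Cref{prop:instrument-cylinder-formula}; consistency there is not needed separately, since Ionescu-Tulcea already provides a consistent family.
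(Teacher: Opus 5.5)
Your proof is correct. Its first half, namely Ionescu--Tulcea applied to the kernels $\kappa^\omega_{n,\vartheta}$ with uniqueness from the inductive determination of the marginals, coincides with the paper's.

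The identification of the marginals is organized differently.

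\begin{itemize}
\item The paper works pathwise. It introduces the unnormalized branch states $\widehat\sigma_n^{\omega,\vartheta}(x_1,\dots,x_n)=(\mcT_{x_n;\theta^{n-1}\omega}\circ\cdots\circ\mcT_{x_1;\omega})(\vartheta(\omega))$ and shows by induction that $\widehat\sigma_n$ equals the product of the one-step traces times $\widehat\rho_n$. This rewrites the Ionescu--Tulcea iterated integral as the integral of $\tr{(\mcT_{x_n;\theta^{n-1}\omega}\circ\cdots\circ\mcT_{x_1;\omega})(\vartheta(\omega))}$ against $\nu_\omega\otimes\cdots\otimes\nu_{\theta^{n-1}\omega}$. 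Only then does it apply the density representation and Fubini to collapse this into the composed instrument.
\item You instead induct directly on the integrated, matrix-valued quantity $\int_{A_1\times\cdots\times A_n}\widehat\rho_n\,\dee\mu^{(n)}$. You peel off one instrument per step using linearity of $\mfI_{\theta^n\omega}(A_{n+1})$.
\end{itemize}

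Both arguments rest on the same normalization identity $(\mcT\proj\rho)\,\tr{\mcT(\rho)}=\mcT(\rho)$, including the zero-trace case.

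What each route buys:
\begin{itemize}
\item The paper's route produces an explicit joint density of $(X_1,\dots,X_n)$ with respect to the product reference measure. This is the form used later, for instance in the general-outcome version of the common posterior criterion.
\item Your route avoids the product measure and the multi-fold Fubini on $\Xi^n$. It also proves a stronger matrix-valued statement, $\mbE_{\mbQ_{\vartheta;\omega}}\bigl[\rho_n^{\omega,\vartheta}\,\mathbf 1_{A_1\times\cdots\times A_n}(X_1,\dots,X_n)\bigr]=(\mfI_{\theta^{n-1}\omega}(A_n)\circ\cdots\circ\mfI_\omega(A_1))(\vartheta(\omega))$. Taking $A_j=\Xi$ already gives $\mbE_{\mbQ_{\vartheta;\omega}}[\rho_n^{\omega,\vartheta}]=\Phi^{(n)}_\omega(\vartheta(\omega))$.
\end{itemize}

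One small correction to your justification: the inner integrand $x\mapsto\mcT_{x;\theta^n\omega}(\widehat\rho_n(z))$ need not be bounded, so ``all integrands are bounded'' is not quite right. Its integrability is immediate from positivity, though: its trace norm equals its trace, which integrates against $\nu_{\theta^n\omega}$ over $A_{n+1}$ to at most $1$.
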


\begin{proof}
    Fix \(\omega\in\Omega\). By \Cref{prop:recursive-posterior-measurable}, for  every \(n\in\mbN_0\), the map \(\kappa_{n,\vartheta}^\omega\) is a probability kernel from \((\Xi^n,\mcX^{\otimes n})\) to \((\Xi,\mcX)\).
    Therefore, by the Ionescu--Tulcea theorem (see \cite[Chapter~V]{neveu1965mathematical}), there exists a unique  probability measure \(\mbQ_{\vartheta;\omega}\in\mcP(\Xi^{\mbN})\) such that, for every \(n\in\mbN_0\) and every \(A\in\mcX\),
    \[
        \mbQ_{\vartheta;\omega}\left(X_{n+1}\in A\mid X_1,\dots,X_n\right)
        =
        \kappa_{n,\vartheta}^\omega(X_1,\dots,X_n;A)
        \qquad
        \mbQ_{\vartheta;\omega}\text{-a.s.}
    \]
    Equivalently, for every \(n\ge1\) and every \(A_1,\dots,A_n\in\mcX\),
    \begin{align*}
        &\mbQ_{\vartheta;\omega}
        \left(
            A_1\times\cdots\times A_n\times\Xi\times\Xi\times\cdots
        \right)
        \\
        &\qquad=
        \int_{A_1}\kappa_{0,\vartheta}^\omega(\ast,\dee x_1)
        \int_{A_2}\kappa_{1,\vartheta}^\omega(x_1,\dee x_2)\cdots
        \int_{A_n}\kappa_{n-1,\vartheta}^\omega(x_1,\dots,x_{n-1};\dee x_n).
    \end{align*}

    To identify these marginals with the canonical cylinder formula, define
    recursively the unnormalized branch maps by \(\widehat\sigma_0^{\omega,\vartheta}:=\vartheta(\omega)\), and, for \(n\ge0\),
    \[
        \widehat\sigma_{n+1}^{\omega,\vartheta}(x_1,\dots,x_{n+1})
        :=
        \mcT_{x_{n+1};\theta^n\omega}
        \left(
            \widehat\sigma_n^{\omega,\vartheta}(x_1,\dots,x_n)
        \right).
    \]
    Then
    \[
        \widehat\sigma_n^{\omega,\vartheta}(x_1,\dots,x_n)
        =
        \left(
            \mcT_{x_n;\theta^{n-1}\omega}
            \circ\cdots\circ
            \mcT_{x_1;\omega}
        \right)\left(\vartheta(\omega)\right).
    \]

    A straightforward induction shows that
    \[
        \widehat\sigma_n^{\omega,\vartheta}(x_1,\dots,x_n)
        =
        \left(
            \prod_{j=1}^n
            \tr{
                \mcT_{x_j;\theta^{j-1}\omega}
                \left(
                    \widehat\rho_{j-1}^{\omega,\vartheta}(x_1,\dots,x_{j-1})
                \right)
            }
        \right)
        \widehat\rho_n^{\omega,\vartheta}(x_1,\dots,x_n),
    \]
    where the empty product is interpreted as \(1\). 
    Taking traces gives
    \[
        \prod_{j=1}^n
        \tr{
            \mcT_{x_j;\theta^{j-1}\omega}
            \left(
                \widehat\rho_{j-1}^{\omega,\vartheta}(x_1,\dots,x_{j-1})
            \right)
        }
        =
        \tr{
            \left(
                \mcT_{x_n;\theta^{n-1}\omega}
                \circ\cdots\circ
                \mcT_{x_1;\omega}
            \right)\left(\vartheta(\omega)\right)
        }.
    \]
    Substituting this identity into the Ionescu--Tulcea expansion yields
    \begin{align*}
        &\mbQ_{\vartheta;\omega}
        \left(
            A_1\times\cdots\times A_n\times\Xi\times\Xi\times\cdots
        \right)
        \\
        &\qquad=
        \int_{A_1}\cdots\int_{A_n}
        \tr{
            \left(
                \mcT_{x_n;\theta^{n-1}\omega}
                \circ\cdots\circ
                \mcT_{x_1;\omega}
            \right)\left(\vartheta(\omega)\right)
        }
        \,\nu_\omega(\dee x_1)\cdots \nu_{\theta^{n-1}\omega}(\dee x_n).
    \end{align*}
    Finally, repeated application of
    \[
        \mfI_\eta(A)(X)
        =
        \int_A \mcT_{x;\eta}(X)\,\nu_\eta(\dee x)
    \]
    together with linearity and Fubini's theorem in the finite-dimensional space \(\matrices\) shows that the right-hand side equals
    \[
        \tr{
            \left(
                \mfI_{\theta^{n-1}\omega}(A_n)
                \circ\cdots\circ
                \mfI_\omega(A_1)
            \right)\left(\vartheta(\omega)\right)
        }.
    \]
    This proves the claimed cylinder formula. 
    Uniqueness of  \(\mbQ_{\vartheta;\omega}\) is already part of the Ionescu--Tulcea construction.
\end{proof}

It remains to verify that the quenched law varies measurably with the disorder realization.

\begin{prop}
\label{prop:mbQ-kernel-measurable}
    Let \(\vartheta:\Omega\to\states\) be measurable.
    Then \(\omega\mapsto \mbQ_{\vartheta;\omega}\) is a measurable probability kernel from \((\Omega,\mcF)\) to \((\Xi^{\mbN},\mcX^{\otimes\mbN})\).
    Equivalently, since \((\Xi^{\mbN},\mcX^{\otimes\mbN})\) is standard Borel, the map \(\omega\mapsto \mbQ_{\vartheta;\omega}\) is measurable as a map into \(\left(\mcP(\Xi^{\mbN}),\borel{\mcP(\Xi^{\mbN})}\right)\).
\end{prop}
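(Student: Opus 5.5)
The plan is a monotone class argument on the cylinder $\pi$-system. Let
\[
    \mcD:=\left\{C\in\mcX^{\otimes\mbN}:\ \omega\mapsto\mbQ_{\vartheta;\omega}(C)\text{ is }\mcF\text{-measurable}\right\}.
\]
I would first show that $\mcD$ contains every measurable cylinder $C=A_1\times\cdots\times A_n\times\Xi\times\cdots$. By \Cref{prop:construction-of-mbQ}, the value $\mbQ_{\vartheta;\omega}(C)$ is given by the canonical cylinder formula. It is then the composition of
\[
    \omega\mapsto\left(\mfI_\omega(A_1),\dots,\mfI_{\theta^{n-1}\omega}(A_n),\vartheta(\omega)\right),
\]
which is measurable into $\mcL(\matrices)^n\times\matrices$, with the continuous map $(\Psi_1,\dots,\Psi_n,\rho)\mapsto\tr{\Psi_n\circ\cdots\circ\Psi_1(\rho)}$. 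Each coordinate $\omega\mapsto\mfI_{\theta^{k}\omega}(A_k)$ is measurable, since $\omega\mapsto J(\mfI_\omega(A_k))$ is measurable by hypothesis, $J^{-1}$ is continuous, and $\theta^k$ is $(\mcF,\mcF)$-measurable. The map $\vartheta$ is measurable by assumption. The only point to check is the modification on a null set in \Cref{prop:instrument-density-representation}. The replaced family is $\mathbf 1_A(x_*)\operatorname{id}$ on $\Omega\setminus\Omega_*$ with $\Omega_*\in\mcF$, so measurability of $\omega\mapsto J(\mfI_\omega(A))$ survives.

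Next I would verify that $\mcD$ is a Dynkin system. It contains $\Xi^{\mbN}$, where the map is the constant $1$. It is closed under proper differences, since differences of measurable functions are measurable. It is closed under increasing unions, since each $\mbQ_{\vartheta;\omega}$ is a measure, so the map for the union is a pointwise limit of measurable maps. The cylinders form a $\pi$-system generating $\mcX^{\otimes\mbN}$, so Dynkin's $\pi$-$\lambda$ theorem gives $\mcD=\mcX^{\otimes\mbN}$. Each $\mbQ_{\vartheta;\omega}$ is a probability measure by \Cref{prop:construction-of-mbQ}, so $\omega\mapsto\mbQ_{\vartheta;\omega}$ is a probability kernel.

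For the equivalent formulation, the $\sigma$-algebra on $\mcP(\Xi^{\mbN})$ fixed in \Cref{subsection:general-conventions} is generated by the evaluations $\mu\mapsto\mu(C)$. A map into $\mcP(\Xi^{\mbN})$ is therefore measurable if and only if all its compositions with the evaluations are measurable, which is exactly the kernel property. I do not expect a real obstacle here. The only delicate point is the measurability of the first step, namely that the cylinder values are measurable in $\omega$. An alternative first step would use the Ionescu--Tulcea expansion together with the joint measurability from \Cref{prop:recursive-posterior-measurable}(3) and iterated kernel integration. The direct cylinder formula avoids that bookkeeping.
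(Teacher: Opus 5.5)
Your argument is correct, and it takes a different route from the paper.

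\textbf{The paper's route.} The paper never uses the cylinder formula for this step. It invokes the parameterized Ionescu--Tulcea theorem with $E_0=\Omega$. The initial kernel is $P_1^0(\omega,A)=\Gamma_\omega(\vartheta(\omega),A)$, and the subsequent kernels are $\kappa_{t,\vartheta}^\omega((x_1,\dots,x_t),A)$. Their joint measurability in $(\omega,x_1,\dots,x_t)$ is exactly \Cref{prop:recursive-posterior-measurable}(3). The theorem then returns a probability kernel $\omega\mapsto\mbQ_{\vartheta;\omega}$ directly, together with measurability of $\omega\mapsto\int Y\,\dee\mbQ_{\vartheta;\omega}$ for every positive measurable $Y$.

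\textbf{Your route and what it buys.} You replace this with an elementary $\pi$--$\lambda$ argument driven by the closed-form instrument cylinder values of \Cref{prop:construction-of-mbQ}, which hold for every $\omega$. Given that proposition, you only need the primitive hypothesis that $\omega\mapsto J(\mfI_\omega(A))$ is measurable, and you correctly checked that the null-set modification preserves it. You do not need measurability of the selective update or of $\Gamma$. This makes transparent that measurability of the record law is an instrument-level fact, consistent with the paper's view that the density realization is auxiliary.

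\textbf{What the paper's route buys.} It requires no closed form for the finite-dimensional marginals. That is why the same template also handles the state-path law $\mbK_{\zeta;\omega}$ in \Cref{prop:construction-of-bbK}. There the marginals are iterated kernel integrals rather than traces of composed instruments, and your approach would have to establish their measurability in $\omega$ by hand.

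\textbf{Minor index slip.} The $k$-th factor in the cylinder formula is $\mfI_{\theta^{k-1}\omega}(A_k)$, not $\mfI_{\theta^{k}\omega}(A_k)$. This does not affect the argument.
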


\begin{proof}
    We apply the parameterized Ionescu--Tulcea theorem \cite[Corollary~2, Section~V.1]{neveu1965mathematical} with \(E_0:=\Omega\), \(\mcF_0:=\mcF\), and, for \(t\ge1\),  \(E_t:=\Xi\), \(\mcF_t:=\mcX\).
    For \(t=0\), define the transition kernel from \(E_0\) to \(E_1\) by
    \[
        P_1^{0}(\omega,A):=\kappa_{0,\vartheta}^\omega(\ast,A)
        =\Gamma_\omega(\vartheta(\omega),A).
    \]
    For \(t\ge1\), define the transition kernel from \(E_0\times\cdots\times E_t=\Omega\times\Xi^t\) to \(E_{t+1}=\Xi\) by
    \[
        P_{t+1}^{0,\dots,t}(\omega,x_1,\dots,x_t;A)
        :=
        \kappa_{t,\vartheta}^\omega\left((x_1,\dots,x_t),A\right).
    \]
    By \Cref{prop:recursive-posterior-measurable}, these kernels are jointly  measurable in \((\omega,x_1,\dots,x_t)\). 
    Therefore, the parameterized  Ionescu--Tulcea theorem yields a probability kernel \(\omega\mapsto \mbQ_{\vartheta;\omega}\) from \((\Omega,\mcF)\) to \((\Xi^{\mbN},\mcX^{\otimes\mbN})\), whose finite-dimensional distributions are determined by the kernels \(P_1^0,P_2^{0,1},\dots\). 
    Moreover, it yields the measurability of
    \[
        \omega\longmapsto \int_{\Xi^{\mbN}} Y(\bar x)\,\mbQ_{\vartheta;\omega}(\dee\bar x)
    \]
    for every positive \(\mcX^{\otimes\mbN}\)-measurable function \(Y\) on \(\Xi^{\mbN}\). Taking \(Y=1_E\) shows that, for every \(E\in\mcX^{\otimes\mbN}\), the map \(\omega\mapsto \mbQ_{\vartheta;\omega}(E)\) is measurable. 
    Thus \(\omega\mapsto\mbQ_{\vartheta;\omega}\) is a measurable probability kernel.
\end{proof}


\subsection{Canonical coding by the instrument process}
\label{subsection:canonical-coding-instrument-process}


To compare our formulation with the canonical shift-space framework of \cite{HW25,YH26}, we now code the environment by the bi-infinite sequence of abstract one-step instruments. Since the measurable density realization from \Cref{prop:instrument-density-representation} is auxiliary, the coding is performed at the level of the instrument family \(\omega\mapsto \mfI_\omega\).

Because \((\Xi,\mcX)\) is standard Borel, the \(\sigma\)-algebra \(\mcX\) is countably generated. Fix once and for all a countable algebra
\[
    \mcC=\{C_m:m\in\mbN\}\subseteq\mcX
\]
such that \(\sigma(\mcC)=\mcX\). 
We use this fixed generating algebra to encode each one-step instrument by the countable family of its Choi matrices on \(\mcC\).
Define
\[
    \mathsf{Instr}
    :=
    \left(\mbM_{d^2}(\mbC)\right)^{\mbN},
    \qquad
    \mcJ
    :=
    \borel{\mbM_{d^2}(\mbC)}^{\otimes\mbN}.
\]
For each \(\omega\in\Omega\), define the current-instrument code
\[
    \mathbf I_\omega
    :=
    \left(
        J(\mfI_\omega(C_m))
    \right)_{m\in\mbN}
    \in \mathsf{Instr}.
\]
By assumption, each coordinate of the map \(\omega\mapsto \mathbf I_\omega\) is measurable; hence
\(\omega\longmapsto \mathbf I_\omega\) is \((\mcF,\mcJ)\)-measurable.

Moreover, \(\mathbf I_\omega\) determines the full instrument \(\mfI_\omega\). 
Indeed, if \(\mathbf I_\omega=\mathbf I_{\omega'}\), then
\[
    J(\mfI_\omega(C_m))
    =
    J(\mfI_{\omega'}(C_m))
    \qquad
    \text{for every }m\in\mbN.
\]
Since \(J\) is injective, the two \(\mcL(\matrices)\)-valued measures \(A\mapsto \mfI_\omega(A)\) and \(A\mapsto \mfI_{\omega'}(A)\) agree on the generating algebra \(\mcC\). 
By the uniqueness theorem for finite complex measures applied entrywise, they therefore agree on all of \(\mcX\).

For \(k\in\mbZ\), define the current-instrument \(\sigma\)-algebra by
\[
    \mcG_k^{\mathrm{inst}}
    :=
    \sigma\!\left(
        \omega\longmapsto \mathbf I_{\theta^k\omega}
    \right).
\]
Equivalently,
\[
    \mcG_k^{\mathrm{inst}}
    =
    \sigma\left(
        \omega\longmapsto J\left(\mfI_{\theta^k\omega}(A)\right)
        : A\in\mcX
    \right),
\]
and it is enough to use the generating algebra \(\mcC\) in place of \(\mcX\).

Define also the past and future \(\sigma\)-algebras generated by the instrument
process:
\[
    \mcF_k^{\mathrm{inst},-}
    :=
    \sigma\left(\mcG_j^{\mathrm{inst}}:j\le k\right),
    \qquad
    \mcF_k^{\mathrm{inst},+}
    :=
    \sigma\left(\mcG_j^{\mathrm{inst}}:j\ge k\right),
    \qquad
    k\in\mbZ,
\]
and let
\[
    \mcG^{\mathrm{inst}}
    :=
    \sigma\left(\mcG_j^{\mathrm{inst}}:j\in\mbZ\right)
\]
be the $\sigma$-algebra generated by the entire instrument process.
\(\phi\)-mixing profile is
\[
    \phi^{\mathrm{inst}}(n)
    :=
    \sup_{k\in\mbZ}
    \phi_{\pr}\!\left(
        \mcF_k^{\mathrm{inst},-},
        \mcF_{k+n}^{\mathrm{inst},+}
    \right),
    \qquad
    n\ge1.
\]

This coding allows us to pass from the original base system to a canonical bi-infinite shift space carrying exactly the same instrument process.
Set
\[
    \widehat\Omega
    :=
    \mathsf{Instr}^{\mbZ},
    \qquad
    \widehat{\mcF}
    :=
    \mcJ^{\otimes\mbZ},
\]
and let
\[
    \widehat\theta:\widehat\Omega\to\widehat\Omega
\]
be the left shift,
\[
    \widehat\theta\left((\hat\omega_n)_{n\in\mbZ}\right)
    :=
    (\hat\omega_{n+1})_{n\in\mbZ}.
\]
Write
\[
    \pi_k:\widehat\Omega\to\mathsf{Instr},
    \qquad
    \pi_k\left((\hat\omega_n)_{n\in\mbZ}\right):=\hat\omega_k,
    \qquad
    k\in\mbZ,
\]
for the coordinate maps. 
Define the coding map
\[
    \iota:\Omega\to\widehat\Omega,
    \qquad
    \iota(\omega)
    :=
    \left(\mathbf I_{\theta^n\omega}\right)_{n\in\mbZ},
\]
and set
\[
    \widehat\pr
    :=
    \pr\circ\iota^{-1}.
\]

\begin{prop}
\label{prop:instrument-coding-factor}
    The coding map \(\iota\) is \((\mcF,\widehat{\mcF})\)-measurable and intertwines the dynamics in the sense that \(\iota\circ\theta = \widehat\theta\circ\iota\).
    Moreover, for every \(k\in\mbZ\), we have that  \(\iota^{-1}\left(\sigma(\pi_k)\right) = \mcG_k^{\mathrm{inst}}\),
    and
    \[
        \iota^{-1}\!\left(
            \sigma(\pi_j:j\le k)
        \right)
        =
        \mcF_k^{\mathrm{inst},-},
        \qquad
        \iota^{-1}\!\left(
            \sigma(\pi_j:j\ge k)
        \right)
        =
        \mcF_k^{\mathrm{inst},+}.
    \]
    In particular, \(\iota^{-1}(\widehat{\mcF})= \mcG^{\mathrm{inst}}\).
    If
    \[
        \widehat\phi(n)
        :=
        \sup_{k\in\mbZ}
        \phi_{\widehat\pr}\!\left(
            \sigma(\pi_j:j\le k),
            \sigma(\pi_j:j\ge k+n)
        \right),
        \qquad
        n\ge1,
    \]
    denotes the \(\phi\)-mixing profile of the canonical coordinate process on \((\widehat\Omega,\widehat{\mcF},\widehat\pr)\), then
    \[
        \widehat\phi(n)=\phi^{\mathrm{inst}}(n)
        \qquad
        \text{for every }n\ge1.
    \]
\end{prop}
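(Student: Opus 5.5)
The plan is to verify each claim directly from the definitions, treating $\widehat\Omega$ as a product space whose $\sigma$-algebra is generated by the coordinates $\pi_k$.

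\textbf{Measurability and intertwining.} Since $\widehat{\mcF}=\sigma(\pi_k:k\in\mbZ)$, it suffices to check that each map $\pi_k\circ\iota$ is measurable. Here $\pi_k\circ\iota=\mathbf I_{\theta^k\,\cdot\,}$, which is the composition of the measurable map $\theta^k$ with the $(\mcF,\mcJ)$-measurable code $\omega\mapsto\mathbf I_\omega$. Invertibility of $\theta$ makes $\theta^k$ measurable for negative $k$ as well. The intertwining relation $\iota\circ\theta=\widehat\theta\circ\iota$ is a coordinatewise identity:
\[
\pi_n(\iota(\theta\omega))=\mathbf I_{\theta^{n+1}\omega}=\pi_{n+1}(\iota(\omega))=\pi_n(\widehat\theta\iota(\omega)).
\]

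\textbf{Pulling back $\sigma$-algebras.} We use the standard fact that for any family of maps, $\iota^{-1}\sigma(\pi_j:j\in S)=\sigma(\pi_j\circ\iota:j\in S)$. This holds because the collection $\{E:\iota^{-1}(E)\in\sigma(\pi_j\circ\iota:j\in S)\}$ is a $\sigma$-algebra containing the generators, and conversely each $\sigma(\pi_j\circ\iota)$ consists of preimages. With $S=\{k\}$ this gives $\iota^{-1}\sigma(\pi_k)=\sigma(\mathbf I_{\theta^k\cdot})=\mcG_k^{\mathrm{inst}}$. With $S=\{j\le k\}$, $\{j\ge k\}$ and $\mbZ$ it gives $\sigma(\mcG_j^{\mathrm{inst}}:j\in S)$, which is respectively $\mcF_k^{\mathrm{inst},-}$, $\mcF_k^{\mathrm{inst},+}$ and $\mcG^{\mathrm{inst}}$.

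\textbf{Equality of mixing coefficients.} This is the only step with some content, and it is a transfer of measure. For $A'\in\sigma(\pi_j:j\le k)$ and $B'\in\sigma(\pi_j:j\ge k+n)$, set $A=\iota^{-1}A'$ and $B=\iota^{-1}B'$. Then $\widehat\pr(A')=\pr(A)$ and $\widehat\pr(A'\cap B')=\pr(A\cap B)$ by the definition of $\widehat\pr$ as a pushforward. Hence the conditional quantities $|\widehat\pr(B'\mid A')-\widehat\pr(B')|$ and $|\pr(B\mid A)-\pr(B)|$ coincide, and the positivity condition $\widehat\pr(A')>0$ holds exactly when $\pr(A)>0$.

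By the previous step, as $(A',B')$ ranges over these two $\sigma$-algebras, the pairs $(A,B)$ range over all of $\mcF_k^{\mathrm{inst},-}\times\mcF_{k+n}^{\mathrm{inst},+}$. The point to check is surjectivity onto the pulled-back $\sigma$-algebra, i.e.\ that every set in $\mcF_k^{\mathrm{inst},-}$ is of the form $\iota^{-1}A'$ with $A'$ in the corresponding coordinate $\sigma$-algebra. This holds because the preimage of a $\sigma$-algebra is itself a $\sigma$-algebra, so it equals the generated one, not merely contains it.

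The two suprema therefore agree for each $k$. Taking the supremum over $k$ gives $\widehat\phi(n)=\phi^{\mathrm{inst}}(n)$. No step is a genuine obstacle; the only care needed is this equality, rather than inclusion, of pulled-back $\sigma$-algebras.
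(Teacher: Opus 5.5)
Your proposal is correct and follows essentially the same route as the paper: coordinatewise measurability and intertwining, pulling back generated $\sigma$-algebras to identify $\mcG_k^{\mathrm{inst}}$, $\mcF_k^{\mathrm{inst},\pm}$ and $\mcG^{\mathrm{inst}}$, and then transferring the $\phi$-coefficients through the pushforward $\widehat\pr=\pr\circ\iota^{-1}$. You make explicit the equality $\iota^{-1}\sigma(\pi_j:j\in S)=\sigma(\pi_j\circ\iota:j\in S)$, not merely an inclusion, which the paper uses implicitly when it says ``taking joins''; this equality is exactly what makes the two suprema range over the same events.
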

\begin{proof}
    Since each coordinate map \(\omega\mapsto \mathbf I_{\theta^n\omega}\) is \((\mcF,\mcJ)\)-measurable, the map \(\iota\) is \((\mcF,\widehat{\mcF})\)-measurable. 
    The identity \(\iota\circ\theta=\widehat\theta\circ\iota\) is immediate from the definition.
    Next,
    \[
        \pi_k(\iota(\omega))
        =
        \mbI_{\theta^k\omega},
    \]
    hence
    \[
        \iota^{-1}\left(\sigma(\pi_k)\right)
        =
        \sigma(\mathbf I_{\theta^k (\,\cdot\,)})
        =
        \mcG_k^{\mathrm{inst}}.
    \]
    Taking joins over \(j\le k\) and \(j\ge k\) gives the identities for \(\mcF_k^{\mathrm{inst},-}\) and \(\mcF_k^{\mathrm{inst},+}\), and the identity \(\iota^{-1}(\widehat{\mcF})=\mcG^{\mathrm{inst}}\) follows by taking the join over all \(k\in\mbZ\).
    Finally, because \(\widehat\pr=\pr\circ\iota^{-1}\), the pullback identities  above imply that for every \(k\in\mbZ\),
    \[
        \phi_{\widehat\pr}\!\left(
            \sigma(\pi_j:j\le k),
            \sigma(\pi_j:j\ge k+n)
        \right)
        =
        \phi_{\pr}\!\left(
            \mcF_k^{\mathrm{inst},-},
            \mcF_{k+n}^{\mathrm{inst},+}
        \right).
    \]
    Taking the supremum over \(k\) yields \(\widehat\phi(n)=\phi^{\mathrm{inst}}(n)\).
\end{proof}

We next note that $\widehat\theta$ is an ergodic $\widehat\pr$-preserving transformation whenever $\theta$ is an ergodic $\pr$-preserving transformation. 

\begin{prop}
\label{prop:coded-shift-ergodic}
    Assume that \(\theta\) is measure-preserving on \((\Omega,\mcF,\pr)\). 
    If \((\Omega,\mcF,\pr,\theta)\) is ergodic, then \((\widehat\Omega,\widehat{\mcF},\widehat\pr,\widehat\theta)\) is ergodic. 
    More generally, it is enough that \(\theta\) be ergodic  on the factor \((\Omega,\mcG^{\mathrm{inst}},\pr)\).
\end{prop}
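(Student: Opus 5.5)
The argument pulls invariant sets back through the coding map \(\iota\) and uses the intertwining relation from \Cref{prop:instrument-coding-factor}. First, \(\widehat\theta\) preserves \(\widehat\pr\). For \(\widehat A\in\widehat{\mcF}\), the identity \(\iota\circ\theta=\widehat\theta\circ\iota\) gives \(\iota^{-1}(\widehat\theta^{-1}\widehat A)=\theta^{-1}(\iota^{-1}\widehat A)\). Hence
\[
    \widehat\pr(\widehat\theta^{-1}\widehat A)
    =\pr\!\left(\theta^{-1}\iota^{-1}\widehat A\right)
    =\pr(\iota^{-1}\widehat A)
    =\widehat\pr(\widehat A).
\]
The left shift on \(\mathsf{Instr}^{\mbZ}\) is a measurable bijection with measurable inverse, because each coordinate of \(\widehat\theta^{\pm1}\) is a coordinate projection. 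So \(\widehat\theta\) is an invertible \(\widehat\pr\)-preserving transformation.

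For ergodicity, take \(\widehat A\in\widehat{\mcF}\) with \(\widehat\theta^{-1}\widehat A=\widehat A\), and set \(A:=\iota^{-1}\widehat A\). By the same identity, \(\theta^{-1}A=\iota^{-1}(\widehat\theta^{-1}\widehat A)=A\), so \(A\) is strictly \(\theta\)-invariant. By \Cref{prop:instrument-coding-factor}, \(A\) also lies in \(\mcG^{\mathrm{inst}}=\iota^{-1}(\widehat{\mcF})\). If \(\theta\) is ergodic on \((\Omega,\mcF,\pr)\), then \(\pr(A)\in\{0,1\}\), and therefore \(\widehat\pr(\widehat A)=\pr(A)\in\{0,1\}\).

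The weaker hypothesis is that \(\theta\) is ergodic on the factor \((\Omega,\mcG^{\mathrm{inst}},\pr|_{\mcG^{\mathrm{inst}}})\). This means every \(\theta\)-invariant set in \(\mcG^{\mathrm{inst}}\) has probability \(0\) or \(1\). We first check that \(\theta\) maps \(\mcG^{\mathrm{inst}}\) to itself, via \(\theta^{-1}\mcG_j^{\mathrm{inst}}=\mcG_{j+1}^{\mathrm{inst}}\). This makes the factor a genuine measure-preserving system. The set \(A\) constructed above lies in \(\mcG^{\mathrm{inst}}\) and is invariant, so the same conclusion holds. If one prefers the essentially-invariant characterization of ergodicity, the identical computation shows \(\pr(A\triangle\theta^{-1}A)=\widehat\pr(\widehat A\triangle\widehat\theta^{-1}\widehat A)=0\).

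No step here is a real obstacle. The only point needing care is that invariance transfers exactly, not just up to null sets. This holds because \(\iota\) intertwines the maps pointwise, and preimages commute with set operations.
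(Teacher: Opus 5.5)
Your proof is correct and follows the paper's argument: you pull an invariant set back through \(\iota\), use \(\iota\circ\theta=\widehat\theta\circ\iota\) and \(\iota^{-1}(\widehat{\mcF})=\mcG^{\mathrm{inst}}\), and transfer probabilities via \(\widehat\pr=\pr\circ\iota^{-1}\). The differences are minor. The paper works directly with essentially invariant sets, whereas you use strictly invariant ones, which suffices under the paper's definition of ergodicity. You also verify two things the paper leaves implicit: that \(\widehat\theta\) preserves \(\widehat\pr\), and that \(\theta^{-1}\mcG^{\mathrm{inst}}=\mcG^{\mathrm{inst}}\), so the factor is a genuine measure-preserving system.
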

\begin{proof}
    Let \(A\in\widehat{\mcF}\) be an essentially $\widehat\theta$-invariant set, i.e., \(\widehat\theta^{-1}A=A\) \(\widehat\pr\)-a.s.
    Then, since \(\iota\circ\theta=\widehat\theta\circ\iota\), we have
    \[
        \theta^{-1}\iota^{-1}(A)
        =
        \iota^{-1}(\widehat\theta^{-1}A)
        =
        \iota^{-1}(A)
        \qquad
        \pr\text{-a.s.}
    \]
    Moreover, by \Cref{prop:instrument-coding-factor}, \(\iota^{-1}(A)\in\iota^{-1}(\widehat{\mcF})  =\mcG^{\mathrm{inst}}\subseteq\mcF\).
    If \(\theta\) is ergodic on \((\Omega,\mcF,\pr)\), or just on \((\Omega,\mcG^{\mathrm{inst}},\pr)\), it follows that
    \[
        \pr(\iota^{-1}(A))\in\{0,1\}.
    \]
    Since \(\widehat\pr=\pr\circ\iota^{-1}\),
    \[
        \widehat\pr(A)=\pr(\iota^{-1}(A))\in\{0,1\}.
    \]
    Thus \(\widehat\theta\) is ergodic.
\end{proof}

We now formalize the notion of dependence on the current one-step instrument used in \Cref{section:intro}.

\begin{dfn}
\label{def:depends-only-current-instrument}
    \begin{enumerate}
        \item
            Let \(f:\Omega\times\states\to\mbR\) be bounded and measurable. 
            We say that \(f\) \emph{depends only on the current one-step instrument} if \(f\) is
            \(\mcG_0^{\mathrm{inst}}\otimes\borel{\states}
            \)-measurable.

        \item
            Let \((E,\mcE)\) be a standard Borel space, and let  \(\omega\longmapsto L_\omega\) be a measurable family of probability kernels from \((\states,\borel{\states})\) to \((E,\mcE)\).
            We say that \(\omega\mapsto L_\omega\) \emph{depends only on the current one-step instrument} if, for every \(\rho\in\states\) and every \(C\in\mcE\), the map
            \((\omega,\rho)\mapsto L_\omega(\rho,C)\)  is \(\mcG_0^{\mathrm{inst}}\otimes\borel{\states}\)-measurable.
    
        \item
            Let \(\omega\longmapsto \lambda_\omega\in\mcP(\states)\) be a measurable family of probability measures. 
            We say that \(\omega\mapsto\lambda_\omega\) \emph{depends only on the current one-step  instrument} if the map \(\omega\mapsto \lambda_\omega\) is \((\mcG_0^{\mathrm{inst}},\borel{\mcP(\states)})\)-measurable.
    \end{enumerate}
\end{dfn}

\begin{remark}
\label{rem:current-instrument-dependence}
    The definition above is made at the level of the abstract instrument family \(\omega\mapsto\mfI_\omega\).
    In particular, the statement that a quantity depends only on the current one-step instrument means precisely that it is measurable with respect to the factor generated by the current instrument code \(\mathbf I_\omega\).
    Because the outcome kernel \(\Gamma_\omega\) is canonically determined by \(\mfI_\omega\), its current-instrument dependence is automatic.
    The same is true for the posterior-state kernel \(K_\omega\): although the density \(x\mapsto \mcT_{x;\omega}\) is fixed only up to \(\nu_\omega\)-almost everywhere equality, the integral formula defining \(K_\omega\) is independent of that choice, and hence \(K_\omega\) is canonically determined by \(\mfI_\omega\).
    By contrast, the density representative \((\nu_\omega,\mcT_{x;\omega})\) itself is auxiliary and not canonical as a pointwise object.
\end{remark}

\begin{remark}
    In the finite-outcome case \(\Xi=\mcA\) and \(\mcX=2^{\mcA}\), one may take  \(\mcC=2^{\mcA}\). 
    Then the current instrument code \(\mathbf I_\omega\) is equivalent to the finite family of singleton branches
    \[
        \left(
            \mfI_\omega(\{a\})
        \right)_{a\in\mcA},
    \]
    or, after fixing a Kraus representation, to the usual family of one-step branch maps.
\end{remark}

\begin{prop}
\label{prop:descent-to-coded-shift}
    Assume that \Cref{assumption_doeblin} holds, with constants \(L\in\mbN\), \(\varepsilon>0\), and minorizing family \(\omega\mapsto\lambda_\omega\in\mcP(\states)\). 
    Assume moreover that the minorizing family \(\omega\mapsto\lambda_\omega\) and the bounded measurable function \(f:\Omega\times\states\to\mbR\) depend only on the current one-step instrument in the sense of \Cref{def:depends-only-current-instrument}.
    Let \(K_\omega\) be the canonical posterior-state kernel associated with the current instrument \(\mfI_\omega\). 
    Then there exist a measurable probability kernel 
    \[
         \widehat K:\widehat\Omega\times\states\times\borel{\states}\to[0,1],
    \]
    a bounded measurable function \(\widehat f:\widehat\Omega\times\states\to\mbR\), and a measurable family \(\widehat\lambda:\widehat\Omega\to\mcP(\states)\),  such that for \(\pr\)-almost every \(\omega\),
    \[
        K_\omega(\rho,B)
        =
        \widehat K_{\iota(\omega)}(\rho,B),
        \qquad
        \lambda_\omega
        =
        \widehat\lambda_{\iota(\omega)},
        \qquad
        f(\omega,\rho)
        =
        \widehat f(\iota(\omega),\rho),
    \]
    for all \(\rho\in\states\) and \(B\in\borel{\states}\).
    Moreover,
    \[
        \widehat K_{\hat\omega}^{(L)}(\rho,B)
        \ge
        \varepsilon\,\widehat\lambda_{\widehat\theta^L\hat\omega}(B)
    \]
    for \(\widehat\pr\)-almost every \(\hat\omega\in\widehat\Omega\),
    all \(\rho\in\states\), and all \(B\in\borel{\states}\).
\end{prop}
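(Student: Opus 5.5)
The plan is to factor each $\mcG_0^{\mathrm{inst}}$-measurable object through the coding map $\iota$, and then carry the Doeblin inequality over to $\widehat\Omega$ via $\widehat\pr=\pr\circ\iota^{-1}$. By \Cref{prop:instrument-coding-factor}, $\mcG_0^{\mathrm{inst}}=\iota^{-1}(\sigma(\pi_0))=\sigma(\omega\mapsto\mathbf I_\omega)$.

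First, the factorization for $\lambda$. The map $\omega\mapsto\lambda_\omega$ is $(\sigma(\mathbf I_\cdot),\borel{\mcP(\states)})$-measurable, and $\mcP(\states)$ is standard Borel. The Doob--Dynkin factorization lemma therefore yields a measurable $g:\mathsf{Instr}\to\mcP(\states)$ with $\lambda_\omega=g(\mathbf I_\omega)$ for every $\omega$. We then set $\widehat\lambda_{\hat\omega}:=g(\pi_0(\hat\omega))$, so that $\lambda_\omega=\widehat\lambda_{\iota(\omega)}$ for all $\omega$.

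Second, the factorization for $f$ and $K$. The function $f$ is $\sigma(\mathbf I_\cdot)\otimes\borel{\states}$-measurable. This $\sigma$-algebra is the preimage of $\mcJ\otimes\borel{\states}$ under $(\omega,\rho)\mapsto(\mathbf I_\omega,\rho)$, so Doob--Dynkin again gives $f(\omega,\rho)=h(\mathbf I_\omega,\rho)$ with $h$ bounded after truncation, and we put $\widehat f(\hat\omega,\rho):=h(\pi_0\hat\omega,\rho)$.

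For $K$ I would avoid the function-by-function approach. Applying Doob--Dynkin separately to each $B$ would produce a family that is only a kernel off $B$-dependent null sets. Instead I would use the current-instrument density realization of \Cref{rem:current-instrument-measurable-realization}. Both $\nu_\omega$ and $\mcT_{x;\omega}$ are $\mcG_0^{\mathrm{inst}}$ (resp. $\mcG_0^{\mathrm{inst}}\otimes\mcX$)-measurable, so the same lemma gives a measurable kernel $\hat\nu$ on $\mathsf{Instr}$ and a jointly measurable $\widehat\mcT:\mathsf{Instr}\times\Xi\to\mathsf{CP}(\matrices)$ with $\nu_\omega=\hat\nu_{\mathbf I_\omega}$ and $\mcT_{x;\omega}=\widehat\mcT_{x;\mathbf I_\omega}$.

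It is cleaner still to avoid dealing with the exceptional set of codes on which $\widehat\mcT$ need not integrate to a trace-preserving map. To do this, define
\[
    \widehat K_{\hat\omega}(\rho,B):=\int_\Xi \mathbf 1_B(\widehat\mcT_{x;\pi_0\hat\omega}\proj\rho)\,\tr{\widehat\mcT_{x;\pi_0\hat\omega}(\rho)}\,\hat\nu_{\pi_0\hat\omega}(\dee x)
\]
on the measurable set $G\subseteq\mathsf{Instr}$ of codes where the total mass $\int\tr{\widehat\mcT_{x;c}(\rho)}\hat\nu_c(\dee x)=1$ for all $\rho$. Off $G$ we set $\widehat K:=\delta_\rho$.

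We need $G$ to be measurable. The mass is affine in $\rho$, so by linearity $G$ is given by countably many conditions, namely those for basis elements. Measurability of $\widehat K$ and the kernel property then follow exactly as in \Cref{prop:projective-action-is-measurable,prop:Gamma-K-are-kernels}. Since $\mathbf I_\omega\in G$ for all $\omega$, we get $K_\omega=\widehat K_{\iota(\omega)}$ everywhere.

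Third, the transfer of the minorization. Composing gives $K^{(L)}_\omega=\widehat K^{(L)}_{\iota(\omega)}$, because $K_{\theta^j\omega}=\widehat K_{\widehat\theta^j\iota(\omega)}$ by the intertwining $\iota\circ\theta=\widehat\theta\circ\iota$. Likewise $\lambda_{\theta^L\omega}=\widehat\lambda_{\widehat\theta^L\iota(\omega)}$. Define
\[
    E:=\{\hat\omega:\widehat K^{(L)}_{\hat\omega}(\rho,B)\ge\varepsilon\widehat\lambda_{\widehat\theta^L\hat\omega}(B)\ \forall\rho,B\}.
\]
Then $\iota^{-1}(E)$ contains the full-measure set from \Cref{assumption_doeblin}, so $\widehat\pr(E)=1$.

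The main obstacle is the measurability of $E$, since it is defined by an uncountable quantifier over $(\rho,B)$. I would first reduce $B$ to a countable generating algebra of $\borel{\states}$: by monotone class, a measure inequality on an algebra extends to the generated $\sigma$-algebra. Then I would handle $\rho$ by noting that $(\hat\omega,\rho)\mapsto\widehat K^{(L)}_{\hat\omega}(\rho,B)-\varepsilon\widehat\lambda(B)$ is jointly measurable. Consequently
\[
    E^c=\bigcup_B\mathrm{proj}_{\widehat\Omega}\{\cdots<0\}
\]
is analytic and hence universally measurable. It suffices for the conclusion to find a measurable $\widehat\pr$-full set inside $E$, and one exists because a universally measurable set of full outer measure contains a Borel set of full measure. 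Alternatively, one can simply redefine $\widehat K$ outside such a set.
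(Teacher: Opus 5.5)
Your proof is correct and follows the paper's outline. Both arguments factor each current-instrument-measurable object through $\mathbf I=\pi_0\circ\iota$ by Doob--Dynkin, define the hatted objects by precomposing with $\pi_0$, and push the Doeblin bound forward using $\iota\circ\theta=\widehat\theta\circ\iota$ and $\widehat\pr=\pr\circ\iota^{-1}$. You differ from the paper in two technical steps.

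\emph{The kernel $K$.} The paper cites Doob--Dynkin together with Kallenberg's kernel disintegration result to obtain $\overline K$ on $\mathsf{Instr}$, with $K_\omega=\overline K_{\mathbf I_\omega}$ only for $\pr$-a.e.\ $\omega$. You instead factor the $\mcG_0^{\mathrm{inst}}$-measurable density realization of \Cref{rem:current-instrument-measurable-realization}, re-integrate it, and patch by $\delta_\rho$ off the measurable set $G$ of trace-preserving codes. This gives a genuine kernel on all of $\widehat\Omega$, with the pullback identity holding for every $\omega$.

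Your worry about the $B$-by-$B$ approach can be handled more cheaply. The $\mcP(\states)$-valued map $(\omega,\rho)\mapsto K_\omega(\rho,\cdot)$ is $\sigma(\mathbf I)\otimes\borel{\states}$-measurable, because evaluations generate the Borel structure of $\mcP(\states)$. Moreover, $\sigma(\mathbf I)\otimes\borel{\states}$ is exactly the $\sigma$-algebra generated by $(\omega,\rho)\mapsto(\mathbf I_\omega,\rho)$. A single application of Doob--Dynkin with the standard Borel target $\mcP(\states)$ then gives a factor that is automatically a probability kernel everywhere, so $G$ is unnecessary.

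\emph{The final transfer.} The paper passes directly from ``the inequality holds at $\iota(\omega)$ for $\pr$-a.e.\ $\omega$'' to ``it holds $\widehat\pr$-a.e.'' That step tacitly requires the set $E$ of good codes to be measurable, or an outer-measure argument. You reduce $B$ to a countable generating algebra by a monotone-class argument, then use analytic projections and universal measurability to extract a Borel full-measure subset of $E$. This supplies exactly the justification the paper omits.

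The paper's version is shorter. Yours is longer, but it closes that measurability gap and gives the pullback identities everywhere rather than only almost everywhere.
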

\begin{proof}
    Let
    \[
        \mathbf I:\Omega\to\mathsf{Instr},
        \qquad
        \mathbf I(\omega):=\mathbf I_\omega.
    \]
    By \Cref{prop:instrument-coding-factor},
    \[
        \sigma(\mathbf I)=\mcG_0^{\mathrm{inst}},
        \qquad
        \pi_0\circ\iota=\mathbf I.
    \]
    By \Cref{rem:current-instrument-dependence}, the canonical posterior-state kernel \(K_\omega\) depends only on the current one-step instrument.
    Since, by assumption, \(f\) and \(\omega\mapsto\lambda_\omega\) depend only on the current one-step instrument, by the Doob--Dynkin factorization lemma for measurable maps (\cite[Lemma~1.13]{Kallenberg_2021}) and the universal disintegration theorem for kernels (\cite[Corollary 1.26]{Kallenberg_2017}) on standard Borel spaces, there exist 
    \[
        \overline K:\mathsf{Instr}\times\states\times\borel{\states}\to[0,1],
        \qquad
        \overline\lambda:\mathsf{Instr}\to\mcP(\states),
        \qquad
        \overline f:\mathsf{Instr}\times\states\to\mbR,
    \]
    such that for \(\pr\)-almost every \(\omega\),
    \[
        K_\omega(\rho,B)=\overline K_{\mathbf I_\omega}(\rho,B),
        \qquad
        \lambda_\omega=\overline\lambda_{\mathbf I_\omega},
        \qquad
        f(\omega,\rho)=\overline f(\mathbf I_\omega,\rho),
    \]
    for all \(\rho\in\states\) and \(B\in\borel{\states}\).

    Now define
    \[
        \widehat K_{\hat\omega}:=\overline K_{\pi_0(\hat\omega)},
        \qquad
        \widehat\lambda_{\hat\omega}:=\overline\lambda_{\pi_0(\hat\omega)},
        \qquad
        \widehat f(\hat\omega,\rho):=\overline f(\pi_0(\hat\omega),\rho).
    \]
    Then, since \(\pi_0(\iota(\omega))=\mathbf I_\omega\), the required pullback identities follow.
    Finally, for \(\pr\)-almost every \(\omega\),
    \[
        \widehat K_{\iota(\omega)}^{(L)}(\rho,B)
        =
        K_\omega^{(L)}(\rho,B)
        \ge
        \varepsilon\,\lambda_{\theta^L\omega}(B)
        =
        \varepsilon\,\widehat\lambda_{\iota(\theta^L\omega)}(B)
        =
        \varepsilon\,\widehat\lambda_{\widehat\theta^L\iota(\omega)}(B).
    \]
    Since \(\widehat\pr=\pr\circ\iota^{-1}\), this implies
    \[
        \widehat K_{\hat\omega}^{(L)}(\rho,B)
        \ge
        \varepsilon\,\widehat\lambda_{\widehat\theta^L\hat\omega}(B)
    \]
    for \(\widehat\pr\)-almost every \(\hat\omega\in\widehat\Omega\), all \(\rho\in\states\), and all
    \(B\in\borel{\states}\).
\end{proof}

We will later use this coding to formulate the additional mixing assumptions and prove the Berry–Esseen estimate with deterministic normalization. 


\subsection{Posterior states: outcome-path and state-path viewpoints}
\label{subsection:posterior-state-viewpoints}


Having constructed the quenched law on the outcome records, we now identify the induced posterior-state process. 
One may view it either as a measurable function of the observed outcome path or as a time-inhomogeneous Markov chain on state-path space. 
The next three results make this correspondence precise and identify the two descriptions when the initial condition is a point mass.

\begin{prop}
\label{prop:posterior-markov-under-mbQ}
    Let \(\vartheta:\Omega\to\states\) be measurable, and fix \(\omega\in\Omega\).
    Then under \(\mbQ_{\vartheta;\omega}\), the posterior-state process \(\left(\rho_n^{\omega,\vartheta}\right)_{n\ge0} \)
    is a time-inhomogeneous Markov chain on \(\states\) with initial state
    \[
        \rho_0^{\omega,\vartheta}=\vartheta(\omega)
        \qquad
        \mbQ_{\vartheta;\omega}\text{-a.s.},
    \]
    and transition kernels
    \[
        K_\omega,\ K_{\theta\omega},\ K_{\theta^2\omega},\ \dots.
    \]
    That is, for every \(n\in\mbN_0\) and every \(B\in\borel{\states}\),
    \[
        \mbQ_{\vartheta;\omega}\!\left(
            \rho_{n+1}^{\omega,\vartheta}\in B
            \,\middle|\,
            \rho_0^{\omega,\vartheta},\dots,\rho_n^{\omega,\vartheta}
        \right)
        =
        K_{\theta^n\omega}\left(\rho_n^{\omega,\vartheta},B\right)
        \qquad
        \mbQ_{\vartheta;\omega}\text{-a.s.}
    \]
\end{prop}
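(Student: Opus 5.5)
The plan is to establish the stronger statement in which we condition on the full outcome history \(\sigma(X_1,\dots,X_n)\) rather than on \(\sigma(\rho_0^{\omega,\vartheta},\dots,\rho_n^{\omega,\vartheta})\), and then reduce to the smaller \(\sigma\)-algebra by the tower property. The initial condition is immediate, since \(\rho_0^{\omega,\vartheta}\equiv\vartheta(\omega)\) by definition. By \Cref{prop:recursive-posterior-measurable}, each \(\rho_j^{\omega,\vartheta}\) is a measurable function of \((X_1,\dots,X_j)\), so
\[
    \sigma\left(\rho_0^{\omega,\vartheta},\dots,\rho_n^{\omega,\vartheta}\right)\subseteq\sigma(X_1,\dots,X_n)=:\mcX_n.
\]

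Fix \(n\in\mbN_0\) and \(B\in\borel{\states}\). Write \(z=(x_1,\dots,x_n)\) and \(\eta=\theta^n\omega\). By construction,
\[
    \rho_{n+1}^{\omega,\vartheta}=F\left(\widehat\rho_n^{\omega,\vartheta}(z),X_{n+1}\right),
    \qquad
    F(\rho,x):=\mcT_{x;\eta}\proj\rho,
\]
where \(F\) is jointly measurable by \Cref{prop:projective-action-is-measurable}. Hence \(\mathbf 1_B(\rho_{n+1}^{\omega,\vartheta})=h(X_1,\dots,X_{n+1})\) for a bounded measurable function \(h\) on \(\Xi^{n+1}\).

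Next, take the conditional law of \(X_{n+1}\) given \(\mcX_n\) from \Cref{prop:construction-of-mbQ}: it is the kernel \(\kappa_{n,\vartheta}^\omega(X_1,\dots,X_n;\cdot)\). The standard disintegration property of regular conditional distributions, valid for jointly measurable bounded \(h\) by a monotone class argument starting from product sets, then gives
\[
    \mbE\left[h(X_1,\dots,X_{n+1})\mid\mcX_n\right]
    =\int_\Xi h(X_1,\dots,X_n,x)\,\kappa_{n,\vartheta}^\omega(X_1,\dots,X_n;\dee x).
\]
We evaluate the right-hand side at \(z\) and set \(\rho=\widehat\rho_n^{\omega,\vartheta}(z)\). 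Since \(\kappa_{n,\vartheta}^\omega(z,\cdot)=\Gamma_\eta(\rho,\cdot)\), the right-hand side becomes
\[
    \int_\Xi\mathbf 1_B\left(\mcT_{x;\eta}\proj\rho\right)\Gamma_\eta(\rho,\dee x).
\]
This equals \(K_\eta(\rho,B)\), by the pushforward identity \(K_\eta(\rho,\cdot)=\Gamma_\eta(\rho,F^{-1}(\cdot))\) from the proof of \Cref{prop:Gamma-K-are-kernels}. Thus
\[
    \mbQ_{\vartheta;\omega}\left(\rho_{n+1}^{\omega,\vartheta}\in B\mid\mcX_n\right)=K_{\theta^n\omega}\left(\rho_n^{\omega,\vartheta},B\right)\quad\text{a.s.}
\]

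The right-hand side is \(\sigma(\rho_n^{\omega,\vartheta})\)-measurable, because \((\rho\mapsto K_\eta(\rho,B))\) is measurable. Conditioning further on the smaller \(\sigma\)-algebra \(\sigma(\rho_0^{\omega,\vartheta},\dots,\rho_n^{\omega,\vartheta})\) via the tower property therefore yields the claimed identity.

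The main obstacle is the disintegration step. The Ionescu--Tulcea property in \Cref{prop:construction-of-mbQ} is stated only for events \(\{X_{n+1}\in A\}\), while here we need it for functions that depend jointly on the past and on \(X_{n+1}\). I would handle this by a \(\pi\)-\(\lambda\) argument. For \(h=\mathbf 1_{C\times A}\) with \(C\in\mcX^{\otimes n}\) and \(A\in\mcX\), the identity is exactly the stated conditional property. The class of bounded measurable \(h\) for which the identity holds is closed under bounded monotone limits and differences, so it extends to all bounded \(\mcX^{\otimes(n+1)}\)-measurable \(h\). Everything else in the argument is routine measurability bookkeeping already available from the earlier propositions.
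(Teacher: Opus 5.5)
Your proposal is correct and takes essentially the same route as the paper. You condition first on the outcome filtration $\sigma(X_1,\dots,X_n)$, disintegrate $\mathbf 1_B(\rho_{n+1}^{\omega,\vartheta})$ against the Ionescu--Tulcea kernel $\kappa_{n,\vartheta}^\omega$, identify the integral against $\Gamma_{\theta^n\omega}(\rho_n^{\omega,\vartheta},\cdot)$ as $K_{\theta^n\omega}(\rho_n^{\omega,\vartheta},B)$, and pass to $\sigma(\rho_0^{\omega,\vartheta},\dots,\rho_n^{\omega,\vartheta})$ by the tower property. The one addition is that you prove the disintegration step for jointly measurable functions of $(X_1,\dots,X_{n+1})$ by a monotone class argument starting from rectangles $C\times A$; the paper uses this step implicitly as part of the Ionescu--Tulcea construction.
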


\begin{proof}
    Let \(X_n:\Xi^{\mbN}\to\Xi\) denote the coordinate process on the outcome path space, and set
    \[
        \mcG_n:=\sigma(X_1,\dots,X_n),
        \qquad
        \mcH_n:=\sigma(\rho_0^{\omega,\vartheta},\dots,\rho_n^{\omega,\vartheta}).
    \]
    By construction, \(\rho_n^{\omega,\vartheta}\) is \(\mcG_n\)-measurable, and therefore \(\mcH_n\subseteq\mcG_n\).

    The initial condition is immediate from the definition:
    \[
        \rho_0^{\omega,\vartheta}=\vartheta(\omega)
        \qquad
        \mbQ_{\vartheta;\omega}\text{-a.s.}
    \]

    Now fix \(n\in\mbN_0\) and \(B\in\borel{\states}\). Since
    \[
        \rho_{n+1}^{\omega,\vartheta}
        =
        \mcT_{X_{n+1};\theta^n\omega}\proj \rho_n^{\omega,\vartheta},
    \]
    we have
    \[
        1_{\{\rho_{n+1}^{\omega,\vartheta}\in B\}}
        =
        1_B\!\left(\mcT_{X_{n+1};\theta^n\omega}\proj \rho_n^{\omega,\vartheta}\right).
    \]
    Using the recursive description of \(\mbQ_{\vartheta;\omega}\) from the Ionescu--Tulcea construction, we obtain
    \begin{equation*}
        \mbE_{\mbQ_{\vartheta;\omega}}\!\left[
            1_{\{\rho_{n+1}^{\omega,\vartheta}\in B\}}
            \,\middle|\,
            \mcG_n
        \right]
        =
        \int_\Xi
        1_B\!\left(\mcT_{x;\theta^n\omega}\proj \rho_n^{\omega,\vartheta}\right)\,
        \kappa_{n,\vartheta}^\omega(X_1,\dots,X_n;\dee x).
    \end{equation*}
    By the definition of \(\kappa_{n,\vartheta}^\omega\),
    \[
        \kappa_{n,\vartheta}^\omega(X_1,\dots,X_n;\dee x)
        =
        \Gamma_{\theta^n\omega}\left(\rho_n^{\omega,\vartheta},\dee x\right).
    \]
    Hence, the right-hand side equals
    \[
        \int_\Xi
        1_B\!\left(\mcT_{x;\theta^n\omega}\proj \rho_n^{\omega,\vartheta}\right)\,
        \Gamma_{\theta^n\omega}\left(\rho_n^{\omega,\vartheta},\dee x\right)
        =
        K_{\theta^n\omega}\!\left(\rho_n^{\omega,\vartheta},B\right).
    \]
    Therefore
    \[
        \mbE_{\mbQ_{\vartheta;\omega}}\!\left[
            1_{\{\rho_{n+1}^{\omega,\vartheta}\in B\}}
            \,\middle|\,
            \mcG_n
        \right]
        =
        K_{\theta^n\omega}\!\left(\rho_n^{\omega,\vartheta},B\right).
    \]

    Since \(\rho_n^{\omega,\vartheta}\) is \(\mcH_n\)-measurable and \(\rho\mapsto K_{\theta^n\omega}(\rho,B)\) is measurable, the random variable \(K_{\theta^n\omega}\!\left(\rho_n^{\omega,\vartheta},B\right)\) is \(\mcH_n\)-measurable. 
    Taking conditional expectation with respect to \(\mcH_n\) and using the tower property, we obtain
    \begin{align*}
        \mbE_{\mbQ_{\vartheta;\omega}}\!\left[
            1_{\{\rho_{n+1}^{\omega,\vartheta}\in B\}}
            \,\middle|\,
            \mcH_n
        \right]
        &\qquad=
        \mbE_{\mbQ_{\vartheta;\omega}}\!\left[
            \mbE_{\mbQ_{\vartheta;\omega}}\!\left[
                1_{\{\rho_{n+1}^{\omega,\vartheta}\in B\}}
                \,\middle|\,
                \mcG_n
            \right]
            \,\middle|\,
            \mcH_n
        \right]
        \\
        &\qquad=
        \mbE_{\mbQ_{\vartheta;\omega}}\!\left[
            K_{\theta^n\omega}\!\left(\rho_n^{\omega,\vartheta},B\right)
            \,\middle|\,
            \mcH_n
        \right]
        \\
        &\qquad=
        K_{\theta^n\omega}\!\left(\rho_n^{\omega,\vartheta},B\right).
    \end{align*}
    This is exactly the Markov property with transition kernel
    \(K_{\theta^n\omega}\).
\end{proof}

We now pass to the corresponding law on posterior-state path space.
Let
\[
    e_n:\states^{\mbN_0}\to\states,
    \qquad
    e_n((\rho_0,\rho_1,\rho_2,\dots)):=\rho_n,
    \qquad n\in\mbN_0,
\]
denote the coordinate maps, and let
\[
    \mcE_n:=\sigma(e_0,\dots,e_n),
    \qquad n\in\mbN_0,
\]
be the natural filtration on the state-path space
\[
    (\states^{\mbN_0},\borel{\states}^{\otimes\mbN_0}).
\]
The next proposition constructs this state-path law directly, for arbitrary measurable initial distributions on the state space.

\begin{prop}
\label{prop:construction-of-bbK}
    Let \(\omega\mapsto \zeta_\omega\in\mcP(\states)\)  be a measurable family of initial laws.
    Then there exists a unique measurable probability kernel \(\omega\longmapsto \mbK_{\zeta;\omega}\) from \((\Omega,\mcF)\) to \((\states^{\mbN_0},\borel{\states}^{\otimes\mbN_0})\)  such that the coordinate map \(e_0\) has law \(\zeta_\omega\) and, for every \(n\ge0\),
    \[
        \mbK_{\zeta;\omega}\!\left(
            e_{n+1}\in B \,\middle|\, e_0,\dots,e_n
        \right)
        =
        K_{\theta^n\omega}(e_n,B)
        \qquad
        \mbK_{\zeta;\omega}\text{-a.s.}
    \]
    for every \(B\in\borel{\states}\).
    Equivalently, for every \(n\ge0\) and every
    \(B_0,\dots,B_n\in\borel{\states}\),
    \begin{align*}
        &\mbK_{\zeta;\omega}
        \left(
            B_0\times\cdots\times B_n\times
            \states\times\states\times\cdots
        \right)
        \\
        &\qquad=
        \int_{B_0}\zeta_\omega(\dee\rho_0)
        \int_{B_1}K_\omega(\rho_0,\dee\rho_1)\cdots
        \int_{B_n}K_{\theta^{n-1}\omega}(\rho_{n-1},\dee\rho_n).
    \end{align*}
\end{prop}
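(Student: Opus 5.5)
The plan is to mirror the proof of \Cref{prop:mbQ-kernel-measurable} and apply the parameterized Ionescu--Tulcea theorem \cite[Corollary~2, Section~V.1]{neveu1965mathematical}. We take \(E_{-1}:=\Omega\) with \(\mcF\) as the parameter space, and \(E_t:=\states\) with \(\borel{\states}\) for \(t\ge0\). The initial kernel from \(\Omega\) to \(E_0\) is \(P^{-1}_0(\omega,B):=\zeta_\omega(B)\). For \(n\ge0\), the kernel from \(\Omega\times\states^{n+1}\) to \(E_{n+1}\) is
\[
    P_{n+1}(\omega,\rho_0,\dots,\rho_n;B):=K_{\theta^n\omega}(\rho_n,B).
\]

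The first step is to check that these are genuine probability kernels on the product spaces.

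\begin{itemize}
    \item \(P^{-1}_0\): the map \(\omega\mapsto\zeta_\omega(B)\) is measurable because \(\zeta\) is a measurable family.
    \item \(P_{n+1}\): for fixed \(B\), the map \((\omega,\rho_0,\dots,\rho_n)\mapsto K_{\theta^n\omega}(\rho_n,B)\) factors through the measurable map \((\omega,\rho_0,\dots,\rho_n)\mapsto(\theta^n\omega,\rho_n)\). It is then composed with \((\eta,\rho)\mapsto K_\eta(\rho,B)\), which is jointly measurable by \Cref{prop:Gamma-K-are-kernels}(3).
    \item Each \(B\mapsto K_{\theta^n\omega}(\rho_n,B)\) is a probability measure, again by \Cref{prop:Gamma-K-are-kernels}.
\end{itemize}

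The theorem then yields, for each \(\omega\), a probability measure \(\mbK_{\zeta;\omega}\) on \(\states^{\mbN_0}\) whose finite-dimensional distributions are given by the iterated integral in the statement. It also gives measurability of \(\omega\mapsto\int Y\,\dee\mbK_{\zeta;\omega}\) for every nonnegative measurable \(Y\). Taking \(Y=\mathbf 1_E\) shows that \(\omega\mapsto\mbK_{\zeta;\omega}\) is a measurable probability kernel.

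Next we establish the equivalence of the conditional-probability formulation with the cylinder formula. Suppose the cylinder formula holds. Then for \(C\in\mcE_n\) of product form \(B_0\times\cdots\times B_n\), one has
\[
    \mbK_{\zeta;\omega}\bigl(C\cap\{e_{n+1}\in B\}\bigr)=\mbE_{\mbK_{\zeta;\omega}}\bigl[\mathbf 1_C\,K_{\theta^n\omega}(e_n,B)\bigr].
\]
This follows by reading off the innermost integral. A \(\pi\)--\(\lambda\) argument extends the identity to all \(C\in\mcE_n\), which is exactly the stated conditional identity. Conversely, iterating the conditional identity with the tower property, and using that \(e_0\) has law \(\zeta_\omega\), recovers the cylinder formula by induction on \(n\).

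Uniqueness for each fixed \(\omega\) holds because cylinder sets form a \(\pi\)-system generating \(\borel{\states}^{\otimes\mbN_0}\). Any two measures with the prescribed initial law and conditional laws share all finite-dimensional distributions by the converse direction above, so they coincide. Uniqueness of the kernel then follows pointwise in \(\omega\).

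The only delicate step I expect is the joint measurability of the transition kernels in the parameter \(\omega\), namely that the time shift \(\theta^n\) is handled correctly. This is resolved by the measurable-composition argument in the first step.
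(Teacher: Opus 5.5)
Your proposal is correct and follows essentially the same route as the paper: the parameterized Ionescu--Tulcea theorem with initial kernel \(\zeta_\omega\) and transitions \(K_{\theta^n\omega}(\rho_n,\,\cdot\,)\), with joint measurability obtained by composing \((\omega,\rho_0,\dots,\rho_n)\mapsto(\theta^n\omega,\rho_n)\) with the jointly measurable map from \Cref{prop:Gamma-K-are-kernels}. Your explicit \(\pi\)--\(\lambda\) argument for the equivalence between the conditional-law and cylinder formulations, and for uniqueness, spells out what the paper attributes directly to the Ionescu--Tulcea construction.
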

\begin{proof}
    Since \(\states\) is a Borel subset of the finite-dimensional space \(\matrices\), it is a standard Borel space. 
    Hence \((\states^{\mbN_0},\borel{\states}^{\otimes\mbN_0})\) is also a standard Borel space.
    Define a kernel from \((\Omega,\mcF)\) to \((\states,\borel{\states})\) by \(P_0(\omega,B):=\zeta_\omega(B)\).
    For each \(n\ge0\), define a kernel from
    \[
        \left(\Omega\times\states^{n+1},
        \mcF\otimes\borel{\states}^{\otimes(n+1)}\right)
    \]
    to \((\states,\borel{\states})\) by
    \[
        P_{n+1}(\omega,\rho_0,\dots,\rho_n;B)
        :=
        K_{\theta^n\omega}(\rho_n,B),
        \qquad
        B\in\borel{\states}.
    \]
    By assumption, \(\omega\mapsto \zeta_\omega\) is measurable, so \(P_0\) is a measurable kernel. By \Cref{prop:Gamma-K-are-kernels}, for every \(B\in\borel{\states}\), the map \((\omega,\rho)\mapsto K_\omega(\rho,B)\) is \((\mcF\otimes\borel{\states})\)-measurable. Since \(\theta\) is measurable, it follows that for every \(n\ge0\),
    \[
        (\omega,\rho_0,\dots,\rho_n)
        \longmapsto
        K_{\theta^n\omega}(\rho_n,B)
    \]
    is \((\mcF\otimes\borel{\states}^{\otimes(n+1)})\)-measurable. 
    Thus each \(P_{n+1}\) is a measurable probability kernel.

    We now apply the parameterized Ionescu--Tulcea theorem with
    \[
        E_0:=\Omega,\qquad E_n:=\states\ \text{ for }n\ge1,
    \]
    and transition kernels
    \[
        P_0,\ P_1,\ P_2,\dots.
    \]
    This yields a unique measurable probability kernel \(\omega\mapsto \mbK_{\zeta;\omega}\) from \((\Omega,\mcF)\) to  \((\states^{\mbN_0},\borel{\states}^{\otimes\mbN_0})\)  whose finite-dimensional marginals are given by
    \begin{align*}
        &\mbK_{\zeta;\omega}
        \left(
            B_0\times\cdots\times B_n\times
            \states\times\states\times\cdots
        \right)
        \\
        &\qquad=
        \int_{B_0}P_0(\omega,\dee\rho_0)
        \int_{B_1}P_1(\omega,\rho_0;\dee\rho_1)\cdots
        \int_{B_n}P_n(\omega,\rho_0,\dots,\rho_{n-1};\dee\rho_n),
    \end{align*}
    which is exactly the stated formula.

    The conditional-distribution identity
    \[
        \mbK_{\zeta;\omega}\!\left(
            e_{n+1}\in B \,\middle|\, e_0,\dots,e_n
        \right)
        =
        K_{\theta^n\omega}(e_n,B)
        \qquad
        \mbK_{\zeta;\omega}\text{-a.s.}
    \]
    is the corresponding Markov property furnished by the Ionescu--Tulcea construction. Uniqueness is part of the same theorem.
\end{proof}

For point-mass initial data, the state-path law just constructed coincides with the pushforward of the quenched outcome law under the posterior-state map.

\begin{prop}
\label{prop:bbK-is-pushforward-of-mbQ}
    Let \(\vartheta:\Omega\to\states\) be a measurable initial state.
    Then, for \(\pr\)-almost every \(\omega\),
    \[
        \mbK_{\vartheta;\omega}
        =
        \mbQ_{\vartheta;\omega}
        \circ
        \left(
            \rho_0^{\omega,\vartheta},
            \rho_1^{\omega,\vartheta},
            \rho_2^{\omega,\vartheta},
            \dots
        \right)^{-1}.
    \]
\end{prop}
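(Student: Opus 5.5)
The plan is to show that the pushforward measure
\[
    \mbP_\omega:=\mbQ_{\vartheta;\omega}\circ R_{\omega,\vartheta}^{-1},
    \qquad
    R_{\omega,\vartheta}(\bar x):=\left(\rho_n^{\omega,\vartheta}(\bar x)\right)_{n\ge0},
\]
has the same finite-dimensional distributions as \(\mbK_{\vartheta;\omega}\). Since cylinder sets \(B_0\times\cdots\times B_n\times\states\times\cdots\) form a \(\pi\)-system generating \(\borel{\states}^{\otimes\mbN_0}\), the two measures then coincide by the \(\pi\)--\(\lambda\) theorem. The argument works for every fixed \(\omega\), so the ``\(\pr\)-almost every'' qualifier is automatically satisfied; the null-set convention only matters if one insists on the version of the kernels modified in \Cref{rem:current-instrument-measurable-realization}.

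First, measurability of \(R_{\omega,\vartheta}\). Each coordinate \(\rho_n^{\omega,\vartheta}\) is \(\mcX^{\otimes\mbN}\)-measurable by \Cref{prop:recursive-posterior-measurable}(2), so \(R_{\omega,\vartheta}\) is measurable into the product \(\sigma\)-algebra, and \(\mbP_\omega\) is a well-defined probability measure on \((\states^{\mbN_0},\borel{\states}^{\otimes\mbN_0})\).

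Next, the finite-dimensional distributions. We prove by induction on \(n\) that
\[
    \mbQ_{\vartheta;\omega}\left(\rho_0^{\omega,\vartheta}\in B_0,\dots,\rho_n^{\omega,\vartheta}\in B_n\right)
    =\int_{B_0}\delta_{\vartheta(\omega)}(\dee\rho_0)\int_{B_1}K_\omega(\rho_0,\dee\rho_1)\cdots\int_{B_n}K_{\theta^{n-1}\omega}(\rho_{n-1},\dee\rho_n).
\]
The case \(n=0\) holds because \(\rho_0^{\omega,\vartheta}=\vartheta(\omega)\) identically. For the inductive step, write the left side at level \(n+1\) as \(\mbE\bigl[\mathbf 1_{\{\rho_0\in B_0,\dots,\rho_n\in B_n\}}\,\mbQ_{\vartheta;\omega}(\rho_{n+1}\in B_{n+1}\mid\mcH_n)\bigr]\) and apply the Markov property of \Cref{prop:posterior-markov-under-mbQ}. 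This replaces the conditional probability by \(K_{\theta^n\omega}(\rho_n^{\omega,\vartheta},B_{n+1})\). To conclude, we need the more general statement, with the indicator of \(B_n\) replaced by an arbitrary bounded measurable function \(g(\rho_0,\dots,\rho_n)\). This version follows from the indicator version by the standard monotone-class extension, and it lets the induction close.

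The right side of the display is exactly the cylinder formula of \Cref{prop:construction-of-bbK} with \(\zeta_\omega=\delta_{\vartheta(\omega)}\). Uniqueness in that proposition, or simply the uniqueness of measures agreeing on a generating \(\pi\)-system, then gives \(\mbK_{\vartheta;\omega}=\mbP_\omega\).

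The main, though mild, obstacle is the monotone-class bookkeeping in the inductive step, that is, passing from rectangles to general bounded functions of \((\rho_0,\dots,\rho_n)\). A second point to check is that \(\mbK_{\vartheta;\omega}\) means \(\mbK_{\zeta;\omega}\) with \(\zeta_\omega=\delta_{\vartheta(\omega)}\), and that \(\omega\mapsto\delta_{\vartheta(\omega)}\) is a measurable family. This holds because \(\omega\mapsto\delta_{\vartheta(\omega)}(B)=\mathbf 1_B(\vartheta(\omega))\) is measurable for every \(B\in\borel{\states}\).
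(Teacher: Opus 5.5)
Your proposal is correct and follows essentially the same route as the paper: push $\mbQ_{\vartheta;\omega}$ forward under the posterior-path map, use the Markov property from \Cref{prop:posterior-markov-under-mbQ}, and conclude by the uniqueness of the Markov law in \Cref{prop:construction-of-bbK}. Your induction on cylinder probabilities, with its monotone-class step, spells out what the paper compresses into its appeal to uniqueness. Your remark that the identity in fact holds for every $\omega$ is also accurate.
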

\begin{proof}
    Fix \(\omega\) in the full-\(\pr\)-measure set on which  \Cref{prop:posterior-markov-under-mbQ} holds, and define a probability measure on \((\states^{\mbN_0},\borel{\states}^{\otimes\mbN_0})\) by
    \[
        \widetilde{\mbK}_{\vartheta;\omega}
        :=
        \mbQ_{\vartheta;\omega}
        \circ
        \left(
            \rho_0^{\omega,\vartheta},
            \rho_1^{\omega,\vartheta},
            \rho_2^{\omega,\vartheta},
            \dots
        \right)^{-1}.
    \]
    Since each map
    \[
        \rho_n^{\omega,\vartheta}:\Xi^{\mbN}\to\states
    \]
    is measurable by \Cref{prop:recursive-posterior-measurable}, this  pushforward is well defined.

    By construction,
    \[
        e_n\circ
        \left(
            \rho_0^{\omega,\vartheta},
            \rho_1^{\omega,\vartheta},
            \rho_2^{\omega,\vartheta},
            \dots
        \right)
        =
        \rho_n^{\omega,\vartheta},
        \qquad n\in\mbN_0.
    \]
    Hence the coordinate process \((e_n)_{n\ge0}\) under  \(\widetilde{\mbK}_{\vartheta;\omega}\) has the same law as the  posterior-state process \(\left(\rho_n^{\omega,\vartheta}\right)_{n\ge0}\) under \(\mbQ_{\vartheta;\omega}\).

    By \Cref{prop:posterior-markov-under-mbQ}, under \(\mbQ_{\vartheta;\omega}\)  the process \(\left(\rho_n^{\omega,\vartheta}\right)_{n\ge0}\) is a time-inhomogeneous Markov chain on \(\states\) with initial state
    \[
        \rho_0^{\omega,\vartheta}=\vartheta(\omega)
        \qquad
        \mbQ_{\vartheta;\omega}\text{-a.s.},
    \]
    and transition kernels
    \[
        K_\omega,\ K_{\theta\omega},\ K_{\theta^2\omega},\dots.
    \]
    Therefore, under \(\widetilde{\mbK}_{\vartheta;\omega}\), the  coordinate process \((e_n)_{n\ge0}\) satisfies
    \[
        \widetilde{\mbK}_{\vartheta;\omega}(e_0\in B)
        =
        \delta_{\vartheta(\omega)}(B),
        \qquad
        B\in\borel{\states},
    \]
    and, for every \(n\ge0\),
    \[
        \widetilde{\mbK}_{\vartheta;\omega}\!\left(
            e_{n+1}\in B \,\middle|\, e_0,\dots,e_n
        \right)
        =
        K_{\theta^n\omega}(e_n,B)
        \qquad
        \widetilde{\mbK}_{\vartheta;\omega}\text{-a.s.}
    \]
    for every \(B\in\borel{\states}\).

    Thus \(\widetilde{\mbK}_{\vartheta;\omega}\) satisfies the defining  property of \(\mbK_{\vartheta;\omega}\) in \Cref{prop:construction-of-bbK}. 
    By uniqueness in that proposition, we conclude that
    \[
        \widetilde{\mbK}_{\vartheta;\omega}
        =
        \mbK_{\vartheta;\omega}.
    \]
    This is exactly the claimed identity.
\end{proof}

\begin{remark}
\label{rem:outcome-vs-state-path}
    For a measurable initial state \(\vartheta:\Omega\to\states\) and a fixed \(\omega\in\Omega\), the posterior trajectory may be viewed either as the sequence of measurable maps \(\rho_n^{\omega,\vartheta}:\Xi^{\mbN}\to\states\) on the outcome-path space, or as the coordinate process \((e_n)_{n\ge0}\) on the state-path space \((\states^{\mbN_0},\mbK_{\vartheta;\omega})\).
    By \Cref{prop:bbK-is-pushforward-of-mbQ}, these two descriptions are equivalent.
    For a general measurable family of initial laws
    \[
        \omega\longmapsto \zeta_\omega\in\mcP(\states),
    \]
    the state-path law \(\mbK_{\zeta;\omega}\) from \Cref{prop:construction-of-bbK} is the natural object. 
    By contrast, if one defines an outcome law by mixing \(\mbQ_{\rho;\omega}\) against
    \(\zeta_\omega\), then the resulting law depends only on the barycenter
    \[
        \bar\rho_\zeta(\omega)
        :=
        \int_{\states}\rho\,\zeta_\omega(\dee\rho),
    \]
    by linearity of the cylinder formula. Thus allowing a general initial law gives a genuine extension for the posterior-state chain, but not for the observed outcome law.
\end{remark}


\section{Proofs of \texorpdfstring{\Cref{thm:s_exists}}{Theorem~1} and \texorpdfstring{\Cref{thm:birkhoff_ergodic}}{Theorem~2}}


We provide the proofs of our first two results in this section.
    

\subsection{Proof of \texorpdfstring{\Cref{thm:s_exists}}{Theorem~1}}


\rhosexists*
\begin{proof}
    We work throughout modulo \(\pr\)-almost sure equality.
    We also continue to work with the measurable density realization \((\omega,x)\mapsto \mcT_{x;\omega}\), \(\omega\mapsto \nu_\omega\) fixed once and for all in \Cref{prop:instrument-density-representation}, and with the associated posterior-state kernels \(K_\omega\) and non-selective channels \(\Phi_\omega\).

    Let \(\mfM\) be the set of measurable families
    \[
        \zeta=\{\zeta_\omega\}_{\omega\in\Omega},
        \qquad
        \zeta_\omega\in\mcP(\states),
    \]
    modulo \(\pr\)-almost sure equality, endowed with the metric
    \[
        d(\zeta,\widetilde\zeta)
        :=
        \operatorname*{ess\,sup}_{\omega\in\Omega}
        \norm{
            \zeta_\omega-\widetilde\zeta_\omega
        }_{\mathrm{TV}}.
    \]
    Since \(\mcP(\states)\) is complete for \(\norm{\,\cdot\,}_{\mathrm{TV}}\), the metric space \((\mfM,d)\) is complete.
    To see that \((\mfM,d)\) is complete, let \((\zeta^{(m)})_{m\ge1}\) be \(d\)-Cauchy. 
        Passing to a subsequence if necessary, we may assume
    \[
        d(\zeta^{(m+1)},\zeta^{(m)})\le 2^{-m}.
    \]
    Then for each \(m\) there exists a null set \(N_m\) such that
    \[
        \norm{
            \zeta^{(m+1)}_\omega-\zeta^{(m)}_\omega
        }_{\mathrm{TV}}
        \le 2^{-m}
        \qquad\text{for all }\omega\notin N_m.
    \]
    Hence, on \(\Omega\setminus N\), where \(N:=\bigcup_{m\ge1}N_m\), the sequence \((\zeta^{(m)}_\omega)_{m\ge1}\) is Cauchy in \((\mcP(\states),\norm{\,\cdot\,}_{\mathrm{TV}})\), and therefore converges to some \(\mu_\omega\in\mcP(\states)\). 
    Set \(\mu_\omega:=\delta_{\rho_\ast}\) for \(\omega\in N\), where \(\rho_\ast\in\states\) is the fixed reference state.
    For every \(B\in\borel{\states}\), the map \(\omega\mapsto\mu_\omega(B)\) is measurable on \(\Omega\setminus N\) as a pointwise limit of measurable functions, and its chosen extension is constant on \(N\).
    Thus \(\omega\mapsto\mu_\omega\) is measurable for the evaluation sigma-algebra on \(\mcP(\states)\).
     Finally, along the chosen subsequence,
    \[
        d(\zeta^{(m)},\mu)
        \le \sum_{k=m}^{\infty}2^{-k}
        =2^{1-m}\longrightarrow0.
    \]
    Since the original sequence is \(d\)-Cauchy, it also converges to \(\mu\).
    Thus \((\mfM,d)\) is complete.
   
    Define the pullback operator \(T:\mfM\to\mfM\) by
    \[
        (T\zeta)_\omega
        :=
        \zeta_{\theta^{-1}\omega}K_{\theta^{-1}\omega},
    \]
    i.e.
    \[
        (T\zeta)_\omega(B)
        =
        \int_{\states}
        K_{\theta^{-1}\omega}(\rho,B)\,
        \zeta_{\theta^{-1}\omega}(\dee\rho),
        \qquad
        B\in\borel{\states}.
    \]
    This is well defined on equivalence classes, because \(\theta\) preserves null sets. 
    Moreover, \(T\zeta\) is again measurable: for every \(B\in\borel{\states}\), the map \((\omega,\rho)\mapsto K_{\theta^{-1}\omega}(\rho,B)\) is measurable by \Cref{prop:Gamma-K-are-kernels} and the measurability of \(\theta^{-1}\), and therefore
    \[
        \omega\longmapsto
        \int_{\states}
        K_{\theta^{-1}\omega}(\rho,B)\,
        \zeta_{\theta^{-1}\omega}(\dee\rho)
    \]
    is measurable by the standard theorem on integration against kernels.
    By induction, for every \(n\ge1\), we also have that
    \[
        (T^n\zeta)_\omega
        =
        \zeta_{\theta^{-n}\omega}K_{\theta^{-n}\omega}^{(n)}.
    \]

    We first note that \(T\) is non-expansive. 
    Indeed, if \(Q\) is any Markov kernel on \((\states,\borel{\states})\) and \(\alpha, \beta\in\mcP(\states)\),
    then
    \[
        \norm{\alpha Q-\beta Q}_{\mathrm{TV}}
        \le
        \norm{\alpha-\beta}_{\mathrm{TV}}.
    \]
    To see this, let \(f:\states\to\mbR\) be bounded and measurable with
    \(\norm{f}_\infty\le1\). Then
    \[
        \int_{\states} f(\eta)\,(\alpha Q-\beta Q)(\dee\eta)
        =
        \int_{\states} (Qf)(\rho)\,(\alpha-\beta)(\dee\rho),
    \]
    where
    \[
        (Qf)(\rho):=\int_{\states} f(\eta)\,Q(\rho,\dee\eta)
    \]
    also satisfies \(\norm{Qf}_\infty\le1\). 
    Taking the supremum over all such \(f\) yields the claim. 
    Consequently,
    \[
        d(T\zeta,T\widetilde\zeta)\le d(\zeta,\widetilde\zeta).
    \]

    Next we use the Doeblin decomposition from \Cref{assumption_doeblin}. 
    For
    \(\pr\)-almost every \(\omega\),
    \[
        K_{\theta^{-L}\omega}^{(L)}(\rho,\,\cdot\,)
        =
        \varepsilon\,\lambda_\omega(\,\cdot\,)
        +
        (1-\varepsilon)\,R_{\theta^{-L}\omega}(\rho,\,\cdot\,).
    \]
    Hence, for \(\pr\)-almost every \(\omega\),
    \[
        (T^L\zeta)_\omega
        =
        \zeta_{\theta^{-L}\omega}K_{\theta^{-L}\omega}^{(L)}
        =
        \varepsilon\,\lambda_\omega
        +
        (1-\varepsilon)\,
        \zeta_{\theta^{-L}\omega}R_{\theta^{-L}\omega}.
    \]
    Therefore
    \begin{align*}
        \norm{
            (T^L\zeta)_\omega-(T^L\widetilde\zeta)_\omega
        }_{\mathrm{TV}}
        =
        (1-\varepsilon)
        \norm{
            \zeta_{\theta^{-L}\omega}R_{\theta^{-L}\omega}
            -
            \widetilde\zeta_{\theta^{-L}\omega}R_{\theta^{-L}\omega}
        }_{\mathrm{TV}} 
        \le
        (1-\varepsilon)
        \norm{
            \zeta_{\theta^{-L}\omega}
            -
            \widetilde\zeta_{\theta^{-L}\omega}
        }_{\mathrm{TV}}.
    \end{align*}
    Taking essential suprema gives
    \[
        d(T^L\zeta,T^L\widetilde\zeta)
        \le
        (1-\varepsilon)\,d(\zeta,\widetilde\zeta).
    \]
    Thus \(T^L\) is a strict contraction on the complete metric space \((\mfM,d)\).

    By the Banach fixed-point theorem, there exists a unique element of \(\mfM\), represented by a measurable family still denoted \(\mu=\{\mu_\omega\}_{\omega\in\Omega}\), such that \(T^L\mu=\mu\).
    Since
    \[
        T^L(T\mu)=T(T^L\mu)=T\mu,
    \]
    the family \(T\mu\) is also a fixed point of \(T^L\). 
    By uniqueness of the fixed point of \(T^L\) in \(\mfM\), we conclude that
    \[
        T\mu=\mu
        \qquad
        \text{in }\mfM.
    \]
    Equivalently, for \(\pr\)-almost every \(\omega\), we get \( \mu_\omega=\mu_{\theta^{-1}\omega}K_{\theta^{-1}\omega}\).
    Replacing \(\omega\) by \(\theta\omega\), we obtain \((K_\omega)^*\mu_\omega=\mu_{\theta\omega}\) for \(\pr\)-almost every \(\omega\).
    This proves existence and uniqueness of the equivariant family, up to \(\pr\)-almost sure equality.

    The quantitative pullback estimate now follows immediately. Let \(n\ge0\), and write
    \[
        n=qL+r,
        \qquad
        q=\lfloor n/L\rfloor,
        \qquad
        0\le r<L.
    \]
    Then
    \begin{equation*}
        d(T^n\zeta,\mu)
        =
        d(T^rT^{qL}\zeta,T^r\mu) 
        \le
        d(T^{qL}\zeta,\mu) 
        \le
        (1-\varepsilon)^q\,d(\zeta,\mu),
    \end{equation*}
    because \(T\) is non-expansive and \(T^r\mu=\mu\). Using
    \[
        (T^n\zeta)_\omega
        =
        \zeta_{\theta^{-n}\omega}K_{\theta^{-n}\omega}^{(n)},
    \]
    we obtain
    \[
        \esssup_{\omega}\,
        \norm{
            \zeta_{\theta^{-n}\omega}K_{\theta^{-n}\omega}^{(n)}
            -
            \mu_\omega
        }_{\mathrm{TV}}
        \le
        (1-\varepsilon)^{\lfloor n/L\rfloor}
        \operatorname*{ess\,sup}_{\omega}
        \norm{\zeta_\omega-\mu_\omega}_{\mathrm{TV}}.
    \]

    For \(m\in\mcP(\states)\), define its barycenter by
    \[
        b(m)
        :=
        \int_{\states}\eta\,m(\dee\eta)\in\states,
    \]
    where the integral is taken in the finite-dimensional vector space \(\matrices\).

    We claim that for every \(\omega\in\Omega\) and every \(\alpha\in\mcP(\states)\),
    \[
        b(\alpha K_\omega)=\Phi_\omega\left(b(\alpha)\right).
    \]
    Indeed, since all integrals are taken in a finite-dimensional vector space, Fubini's theorem applies coordinatewise, and
    \begin{equation*}
        b(\alpha K_\omega)
        =
        \int_{\states}\eta\,(\alpha K_\omega)(\dee\eta) 
        =
        \int_{\states}
        \left(
            \int_{\states}\eta\,K_\omega(\rho,\dee\eta)
        \right)\alpha(\dee\rho).
    \end{equation*}
    Now, by definition of \(K_\omega\),
    \[
        \int_{\states}\eta\,K_\omega(\rho,\dee\eta)
        =
        \int_\Xi
        \left(\mcT_{x;\omega}\proj\rho\right)\,
        \tr{\mcT_{x;\omega}(\rho)}\,
        \nu_\omega(\dee x).
    \]
    If \(\tr{\mcT_{x;\omega}(\rho)}>0\), then
    \[
        \left(\mcT_{x;\omega}\proj\rho\right)\,
        \tr{\mcT_{x;\omega}(\rho)}
        =
        \mcT_{x;\omega}(\rho).
    \]
    If \(\tr{\mcT_{x;\omega}(\rho)}=0\), then \(\mcT_{x;\omega}(\rho)\ge0\) has trace \(0\), hence
    \(\mcT_{x;\omega}(\rho)=0\), and the same identity still holds. Therefore
    \[
        \int_{\states}\eta\,K_\omega(\rho,\dee\eta)
        =
        \int_\Xi \mcT_{x;\omega}(\rho)\,\nu_\omega(\dee x)
        =
        \mfI_\omega(\Xi)(\rho)
        =
        \Phi_\omega(\rho).
    \]
    Hence
    \begin{equation*}
        b(\alpha K_\omega)
        =
        \int_{\states}\Phi_\omega(\rho)\,\alpha(\dee\rho) 
        =
        \Phi_\omega\!\left(
            \int_{\states}\rho\,\alpha(\dee\rho)
        \right) 
        =
        \Phi_\omega\left(b(\alpha)\right),
    \end{equation*}
    since \(\Phi_\omega\) is linear.

    Iterating this identity yields, for every \(n\ge0\),
    \[
        b(\alpha K_\omega^{(n)})
        =
        \Phi_\omega^{(n)}\left(b(\alpha)\right).
    \]

    Set
    \[
        \s(\omega)
        :=
        b(\mu_\omega)
        =
        \int_{\states}\eta\,\mu_\omega(\dee\eta).
    \]
    Since \(\eta\mapsto\eta\) is a bounded Borel map on the compact space \(\states\) and \(\omega\mapsto\mu_\omega\) is measurable, the map \(\omega\mapsto \s(\omega)\) is measurable. 
    Using the equivariance of \(\mu\), we obtain, for \(\pr\)-almost every \(\omega\),
    \[
        \s(\theta\omega)
        =
        b(\mu_{\theta\omega})
        =
        b(\mu_\omega K_\omega)
        =
        \Phi_\omega\left(b(\mu_\omega)\right)
        =
        \Phi_\omega(\s(\omega)).
    \]
    Hence \(\s\) is dynamically stationary.

    It remains to prove uniqueness of the dynamically stationary state, up to \(\pr\)-almost sure equality. 
    Let \(\widetilde\s:\Omega\to\states\) be any dynamically stationary state, and define
    \[
        \zeta_\omega:=\delta_{\widetilde\s(\omega)}.
    \]
    Then \(\zeta\in\mfM\). For every \(n\ge0\), we have, for \(\pr\)-almost every \(\omega\),
    \begin{equation*}
        b\left((T^n\zeta)_\omega\right)
        =
        b\left(
            \delta_{\widetilde\s(\theta^{-n}\omega)}
            K_{\theta^{-n}\omega}^{(n)}
        \right) 
        =
        \Phi_{\theta^{-n}\omega}^{(n)}
        \left(\widetilde\s(\theta^{-n}\omega)\right) 
        =
        \widetilde\s(\omega),
    \end{equation*}
    where the last equality follows by iterating the relation
    \[
        \widetilde\s(\theta\eta)=\Phi_\eta\left(\widetilde\s(\eta)\right).
    \]

    We next note that for any \(\alpha,\beta\in\mcP(\states)\),
    \[
        \norm{b(\alpha)-b(\beta)}_1
        \le
        \norm{\alpha-\beta}_{\mathrm{TV}}.
    \]
    Indeed, by duality of the trace norm,
    \begin{align*}
        \norm{b(\alpha)-b(\beta)}_1
        &=
        \sup_{\norm{H}_\infty\le1}
        \left|
            \tr{
                H\adj\int_{\states}\eta\,(\alpha-\beta)(\dee\eta)
            }
        \right| \\
        &=
        \sup_{\norm{H}_\infty\le1}
        \left|
            \int_{\states}\tr{H\adj\eta}\,(\alpha-\beta)(\dee\eta)
        \right| \\
        &\le
        \norm{\alpha-\beta}_{\mathrm{TV}},
    \end{align*}
    since \(|\tr{H\adj\eta}|\le \norm{H\adj}_\infty\norm{\eta}_1 = \norm{H}_\infty\norm{\eta}_1\le1\) for every \(\eta\in\states\).

    Therefore, for every \(n\ge0\),
    \begin{equation*}
        \esssup_{\omega}
        \norm{\widetilde\s(\omega)-\s(\omega)}_1
        \le
        \esssup_{\omega}
        \norm{
            (T^n\zeta)_\omega-\mu_\omega
        }_{\mathrm{TV}} 
        \le
        (1-\varepsilon)^{\lfloor n/L\rfloor}\,d(\zeta,\mu).
    \end{equation*}
    Since \(d(\zeta,\mu)\le2\), we obtain
    \[
        \esssup_{\omega}
        \norm{\widetilde\s(\omega)-\s(\omega)}_1
        \le
        2(1-\varepsilon)^{\lfloor n/L\rfloor}
        \qquad
        \text{for every }n\ge0.
    \]
    Letting \(n\to\infty\) yields
    \[
        \widetilde\s(\omega)=\s(\omega)
        \qquad
        \text{for \(\pr\)-almost every }\omega.
    \]
    Thus, the dynamically stationary state is unique up to \(\pr\)-almost sure equality.

    Finally, we prove the last estimate. Fix \(n\ge0\). 
    For each \(\rho\in\states\), define the constant family
    \[
        \zeta^\rho_\omega:=\delta_\rho,
        \qquad \omega\in\Omega.
    \]
    Then, for \(\pr\)-almost every \(\omega\),
    \[
        b\left((T^n\zeta^\rho)_\omega\right)
        =
        \Phi_{\theta^{-n}\omega}^{(n)}(\rho).
    \]
    Hence, for each fixed \(\rho\in\states\),
    \[
        \esssup_{\omega}
        \norm{
            \Phi_{\theta^{-n}\omega}^{(n)}(\rho)-\s(\omega)
        }_1
        \le
        \esssup_{\omega}
        \norm{
            (T^n\zeta^\rho)_\omega-\mu_\omega
        }_{\mathrm{TV}}
        \le
        2(1-\varepsilon)^{\lfloor n/L\rfloor}.
    \]

    Since \(\states\) is a compact metric space, let \(D\subseteq\states\) be a countable dense subset. For each \(\rho\in D\), choose a full-measure set \(\Omega_{n,\rho}\subseteq\Omega\) such that
    \[
        \norm{
            \Phi_{\theta^{-n}\omega}^{(n)}(\rho)-\s(\omega)
        }_1
        \le
        2(1-\varepsilon)^{\lfloor n/L\rfloor}
        \qquad
        \text{for all }\omega\in\Omega_{n,\rho}.
    \]
    Set
    \[
        \Omega_n:=\bigcap_{\rho\in D}\Omega_{n,\rho}.
    \]
    Then \(\pr(\Omega_n)=1\), and for every \(\omega\in\Omega_n\) and every
    \(\rho\in D\),
    \[
        \norm{
            \Phi_{\theta^{-n}\omega}^{(n)}(\rho)-\s(\omega)
        }_1
        \le
        2(1-\varepsilon)^{\lfloor n/L\rfloor}.
    \]
    For fixed \(\omega\in\Omega_n\), the map
    \[
        \rho\longmapsto
        \norm{
            \Phi_{\theta^{-n}\omega}^{(n)}(\rho)-\s(\omega)
        }_1
    \]
    is continuous on \(\states\), since \(\Phi_{\theta^{-n}\omega}^{(n)}:\matrices\to\matrices\) is linear on the finite-dimensional space \(\matrices\). Therefore the same bound extends from \(D\) to all \(\rho\in\states\). Hence
    \[
        \sup_{\rho\in\states}
        \norm{
            \Phi_{\theta^{-n}\omega}^{(n)}(\rho)-\s(\omega)
        }_1
        \le
        2(1-\varepsilon)^{\lfloor n/L\rfloor}
        \qquad
        \text{for every }\omega\in\Omega_n.
    \]
    Taking the essential supremum over \(\omega\) proves
    \[
        \esssup_{\omega}\,
        \sup_{\rho\in\states}
        \norm{
            \Phi_{\theta^{-n}\omega}^{(n)}(\rho)-\s(\omega)
        }_1
        \le
        2(1-\varepsilon)^{\lfloor n/L\rfloor}.
    \]
\end{proof}


\subsection{A pathwise ergodic theorem: proof of \texorpdfstring{\Cref{thm:birkhoff_ergodic}}{Theorem~2}}
\label{section:lln_for_trajectories}


Before we provide the proof of \Cref{thm:birkhoff_ergodic}, we record a corollary of the proof of \Cref{thm:s_exists} that will be useful later. 
The proof of the following corollary is contained in the proof of \Cref{thm:s_exists} above and is therefore omitted. 

\begin{cor}
\label{cor:doeblin-forgetting}
    Under \Cref{assumption_1,assumption_doeblin}, let \(\s:\Omega\to\states\) be the unique dynamically stationary state given by \Cref{thm:s_exists}, and set
    \[
        r_n:=2(1-\varepsilon)^{\lfloor n/L\rfloor},
        \qquad n\ge1.
    \]
    Then there exists a \(\theta\)-invariant full-measure set \(\Omega_\ast\subseteq\Omega\) such that for every \(\omega\in\Omega_\ast\), every \(n\ge1\), and every \(\rho\in\states\),
    \[
        \norm{
            \Phi_\omega^{(n)}(\rho)-\s(\theta^n\omega)
        }_1
        \le
        r_n.
    \]
    Equivalently, for every \(\omega\in\Omega_\ast\), every \(m\in\mbN_0\), every \(\ell\ge1\), and every \(\rho\in\states\),
    \[
        \norm{
            \Phi_{\theta^m\omega}^{(\ell)}(\rho)-\s(\theta^{m+\ell}\omega)
        }_1
        \le
        r_\ell.
    \]
\end{cor}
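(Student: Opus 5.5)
The plan is to read \Cref{cor:doeblin-forgetting} off the last estimate of \Cref{thm:s_exists}, with some care in choosing the exceptional null sets. That estimate says that for each fixed \(n\ge1\) there is a full-measure set \(\Omega_n\) such that
\[
    \sup_{\rho\in\states}\norm{\Phi^{(n)}_{\theta^{-n}\omega}(\rho)-\s(\omega)}_1\le r_n
    \qquad\text{for every }\omega\in\Omega_n .
\]
Here \(\Omega_n\) is built from a countable dense set of states, and continuity in \(\rho\) extends the bound to all of \(\states\).

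\textbf{Step 1: shift the index.} Set \(\Omega_n':=\theta^{-n}\Omega_n\). Since \(\theta\) is invertible and \(\pr\)-preserving, \(\pr(\Omega_n')=1\). For \(\omega\in\Omega_n'\), apply the bound at the point \(\theta^n\omega\in\Omega_n\). This gives \(\norm{\Phi^{(n)}_\omega(\rho)-\s(\theta^n\omega)}_1\le r_n\) for all \(\rho\in\states\).

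\textbf{Step 2: make the set \(\theta\)-invariant.} Intersect over the countably many \(n\ge1\) to get \(\Omega'':=\bigcap_{n\ge1}\Omega_n'\), which still has full measure. Then apply the null-set convention and define
\[
    \Omega_\ast:=\bigcap_{k\in\mbZ}\theta^{-k}\Omega'' .
\]
This set is measurable, \(\theta\)-invariant, and of full measure, and the inequality holds for every \(\omega\in\Omega_\ast\), every \(n\ge1\), and every \(\rho\in\states\).

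The one subtlety is that the dynamically stationary state \(\s\) is only defined up to null sets. I fix a single representative of \(\s\) throughout, namely \(\s(\omega)=b(\mu_\omega)\) for the chosen representative \(\mu\). Then each pointwise inequality refers to the same function, and Steps 1 and 2 remain valid.

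\textbf{Step 3: the equivalent form.} Take \(\omega\in\Omega_\ast\) and \(m\in\mbN_0\). By invariance, \(\theta^m\omega\in\Omega_\ast\). Applying the first form at \(\theta^m\omega\) with \(n=\ell\) gives
\[
    \norm{\Phi^{(\ell)}_{\theta^m\omega}(\rho)-\s(\theta^{m+\ell}\omega)}_1\le r_\ell .
\]
Conversely, taking \(m=0\) in this second form recovers the first.

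There is no real obstacle. The only point needing care is the bookkeeping that places the uncountably many \(\rho\) and the countably many \(n,m\) on one invariant full-measure set. The uniformity in \(\rho\) is already supplied by \Cref{thm:s_exists}, and countable intersections plus the \(\theta\)-saturation handle the indices.
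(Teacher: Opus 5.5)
Your proposal is correct and follows essentially the argument the paper intends. The paper omits the proof as being contained in that of \Cref{thm:s_exists}, whose final step produces, for each fixed \(n\), a full-measure set \(\Omega_n\) on which \(\sup_{\rho}\norm{\Phi^{(n)}_{\theta^{-n}\omega}(\rho)-\s(\omega)}_1\le r_n\); your shift by \(\theta^n\), countable intersection over \(n\), and \(\theta\)-saturation (which is the paper's null-set convention) is exactly the bookkeeping needed, and fixing the representative \(\s=b(\mu)\) handles the remaining ambiguity (for any other version of \(\s\), also intersect with the saturation of the set where the two versions agree).
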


We also need the following lemma. 

\begin{lemma}
\label{lem:conditional-mean-posterior}
Fix a measurable initial state \(\vartheta:\Omega\to\states\) and an environment \(\omega\in\Omega\).
Let
\[
    \mcG_n:=\sigma(X_1,\dots,X_n),
    \qquad n\ge0,
\]
be the natural outcome filtration on \(\Xi^{\mbN}\), where \(X_n:\Xi^{\mbN}\to\Xi\) denotes the \(n\)-th coordinate map. 
Then for all \(n\ge0\) and \(\ell\ge1\),
\[
    \mbE_{\mbQ_{\vartheta;\omega}}
    \left[
        \rho_{n+\ell}^{\omega,\vartheta}
        \,\middle|\,
        \mcG_n
    \right]
    =
    \Phi_{\theta^n\omega}^{(\ell)}
    \left(
        \rho_n^{\omega,\vartheta}
    \right)
    \qquad
    \mbQ_{\vartheta;\omega}\text{-a.s.}
\]
\end{lemma}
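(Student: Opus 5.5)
The plan is to prove the case \(\ell=1\) first and then obtain general \(\ell\) by induction using the tower property.

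\textbf{The case \(\ell=1\).} By \Cref{prop:construction-of-mbQ}, the conditional law of \(X_{n+1}\) given \(\mcG_n\) is \(\kappa_{n,\vartheta}^\omega(X_1,\dots,X_n;\dee x)=\Gamma_{\theta^n\omega}(\rho_n^{\omega,\vartheta},\dee x)\). Also \(\rho_{n+1}^{\omega,\vartheta}=\mcT_{X_{n+1};\theta^n\omega}\proj\rho_n^{\omega,\vartheta}\), where \(\rho_n^{\omega,\vartheta}\) is \(\mcG_n\)-measurable (\Cref{prop:recursive-posterior-measurable}). The map \((\rho,x)\mapsto \mcT_{x;\theta^n\omega}\proj\rho\) is bounded and jointly measurable by \Cref{prop:projective-action-is-measurable}. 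Applying the disintegration formula for conditional expectations entrywise, i.e.\ to the real and imaginary parts of each matrix coordinate, gives
\[
    \mbE_{\mbQ_{\vartheta;\omega}}\left[\rho_{n+1}^{\omega,\vartheta}\,\middle|\,\mcG_n\right]
    =\int_\Xi \left(\mcT_{x;\theta^n\omega}\proj\rho_n^{\omega,\vartheta}\right)\tr{\mcT_{x;\theta^n\omega}(\rho_n^{\omega,\vartheta})}\,\nu_{\theta^n\omega}(\dee x).
\]
The same computation as in the barycenter identity in the proof of \Cref{thm:s_exists} applies here. When the trace vanishes we have \(\mcT_{x;\theta^n\omega}(\rho)=0\), so the integrand equals \(\mcT_{x;\theta^n\omega}(\rho_n^{\omega,\vartheta})\) in all cases. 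The integral is therefore \(\Phi_{\theta^n\omega}(\rho_n^{\omega,\vartheta})\).

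\textbf{Induction on \(\ell\).} Assume the identity holds for \(\ell\), for all \(n\). Since \(\mcG_n\subseteq\mcG_{n+\ell}\), the tower property gives
\[
    \mbE\left[\rho_{n+\ell+1}\,\middle|\,\mcG_n\right]
    =\mbE\left[\mbE\left[\rho_{n+\ell+1}\,\middle|\,\mcG_{n+\ell}\right]\,\middle|\,\mcG_n\right]
    =\mbE\left[\Phi_{\theta^{n+\ell}\omega}(\rho_{n+\ell})\,\middle|\,\mcG_n\right].
\]
Because \(\Phi_{\theta^{n+\ell}\omega}\) is a fixed linear map on the finite-dimensional space \(\matrices\), it commutes with conditional expectation. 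The right-hand side therefore equals \(\Phi_{\theta^{n+\ell}\omega}\bigl(\Phi^{(\ell)}_{\theta^n\omega}(\rho_n)\bigr)=\Phi^{(\ell+1)}_{\theta^n\omega}(\rho_n)\), by the cocycle definition of \(\Phi^{(\ell+1)}_{\theta^n\omega}\).

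\textbf{Main obstacle.} The only delicate point is justifying the disintegration step for a matrix-valued integrand. This reduces to the scalar statement: for a bounded jointly measurable \(g\), one has \(\mbE[g(\rho_n,X_{n+1})\mid\mcG_n]=\int g(\rho_n,x)\,\kappa_n(\cdot,\dee x)\). I would establish this first for products \(g=1_C(\rho)1_A(x)\) using the defining property of \(\mbQ_{\vartheta;\omega}\), and then extend it by a monotone class argument. Integrability is automatic since states have bounded entries.
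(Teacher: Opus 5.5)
Your proposal is correct and follows the paper's proof essentially step for step. You prove the one-step identity from the conditional law \(\kappa_{n,\vartheta}^\omega=\Gamma_{\theta^n\omega}(\rho_n^{\omega,\vartheta},\cdot)\), using \((\mcT_{x;\theta^n\omega}\proj\rho)\,\tr{\mcT_{x;\theta^n\omega}(\rho)}=\mcT_{x;\theta^n\omega}(\rho)\) even when the trace vanishes, and then induct on \(\ell\) via the tower property, commuting the fixed linear map \(\Phi_{\theta^{n+\ell}\omega}\) with conditional expectation. The only cosmetic difference is that the paper tests against linear functionals \(\Lambda\) and bounded \(\mcG_n\)-measurable \(H\) rather than working entrywise, and your monotone-class argument makes explicit the disintegration step that the paper invokes as ``the defining conditional law of \(X_{n+1}\).''
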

\begin{proof}
    We continue to work with the measurable density realization \((\omega,x)\mapsto \mcT_{x;\omega}\), \(\omega\mapsto \nu_\omega\) fixed once and for all in \Cref{prop:instrument-density-representation}, together with the associated kernels \(\Gamma_\omega\), \(K_\omega\), quenched laws \(\mbQ_{\vartheta;\omega}\), and posterior processes \((\rho_n^{\omega,\vartheta})_{n\ge0}\).
    For brevity, write
    \[
        \rho_n:=\rho_n^{\omega,\vartheta},
        \qquad
        \kappa_n:=\kappa_{n,\vartheta}^\omega.
    \]
    Since \(\rho_n(\bar x)\in\states\) for every \(n\in\mbN\) and every \(\bar x\), we have
    \[
        \norm{\rho_n}_1=1
        \qquad
        \mbQ_{\vartheta;\omega}\text{-a.s.},
    \]
    so each \(\rho_n\) is Bochner integrable as an \(M_d(\mbC)\)-valued random variable.
    
    We first prove the one-step identity
    \begin{equation}
    \label{eq:one-step-cond-mean}
        \mbE_{\mbQ_{\vartheta;\omega}}[\rho_{n+1}\mid \mcG_n]
        =
        \Phi_{\theta^n\omega}(\rho_n)
        \qquad
        \mbQ_{\vartheta;\omega}\text{-a.s.}
    \end{equation}
    
    Since \(M_d(\mbC)\) is finite-dimensional, it suffices to test this identity
    against an arbitrary linear functional
    \[
        \Lambda:M_d(\mbC)\to\mbC.
    \]
    Let \(H\) be a bounded \(\mcG_n\)-measurable scalar random variable. Then
    \begin{align*}
        \mbE_{\mbQ_{\vartheta;\omega}}\!\left[H\,\Lambda(\rho_{n+1})\right]
        &=
        \mbE_{\mbQ_{\vartheta;\omega}}
        \left[
            H\,
            \Lambda\!\left(
                \mcT_{X_{n+1};\theta^n\omega}\proj\rho_n
            \right)
        \right].
    \end{align*}
    Using the defining conditional law of \(X_{n+1}\) under \(\mbQ_{\vartheta;\omega}\), we obtain
    \begin{align*}
        \mbE_{\mbQ_{\vartheta;\omega}}\!\left[H\,\Lambda(\rho_{n+1})\right]
        &=
        \mbE_{\mbQ_{\vartheta;\omega}}
        \left[
            H
            \int_\Xi
            \Lambda\!\left(
                \mcT_{x;\theta^n\omega}\proj\rho_n
            \right)\,
            \kappa_n(X_1,\dots,X_n;\dee x)
        \right].
    \end{align*}
    By definition of \(\kappa_n\),
    \[
        \kappa_n(X_1,\dots,X_n;\dee x)
        =
        \Gamma_{\theta^n\omega}(\rho_n,\dee x)
        =
        \tr{\mcT_{x;\theta^n\omega}(\rho_n)}\,\nu_{\theta^n\omega}(\dee x).
    \]
    Hence
    \begin{align*}
        \mbE_{\mbQ_{\vartheta;\omega}}\!\left[H\,\Lambda(\rho_{n+1})\right]
        &=
        \mbE_{\mbQ_{\vartheta;\omega}}
        \left[
            H
            \int_\Xi
            \Lambda\!\left(
                \mcT_{x;\theta^n\omega}\proj\rho_n
            \right)\,
            \tr{\mcT_{x;\theta^n\omega}(\rho_n)}\,
            \nu_{\theta^n\omega}(\dee x)
        \right].
    \end{align*}
    For fixed \(x\in\Xi\), set
    \[
        a_x:=\tr{\mcT_{x;\theta^n\omega}(\rho_n)}.
    \]
    If \(a_x>0\), then by definition of the projective update,
    \[
        \mcT_{x;\theta^n\omega}\proj\rho_n
        =
        \frac{\mcT_{x;\theta^n\omega}(\rho_n)}{a_x},
    \]
    and therefore
    \[
        \Lambda\!\left(\mcT_{x;\theta^n\omega}\proj\rho_n\right)\,a_x
        =
        \Lambda\!\left(\mcT_{x;\theta^n\omega}(\rho_n)\right).
    \]
    If \(a_x=0\), then \(\mcT_{x;\theta^n\omega}(\rho_n)\ge0\) has trace \(0\), hence it is the zero matrix. 
    Thus, the same identity still holds. 
    Therefore, for every \(x\in\Xi\),
    \[
        \Lambda\!\left(\mcT_{x;\theta^n\omega}\proj\rho_n\right)\,
        \tr{\mcT_{x;\theta^n\omega}(\rho_n)}
        =
        \Lambda\!\left(\mcT_{x;\theta^n\omega}(\rho_n)\right).
    \]
    Substituting this into the previous display gives
    \begin{align*}
        \mbE_{\mbQ_{\vartheta;\omega}}\!\left[H\,\Lambda(\rho_{n+1})\right]
        &=
        \mbE_{\mbQ_{\vartheta;\omega}}
        \left[
            H
            \int_\Xi
            \Lambda\!\left(\mcT_{x;\theta^n\omega}(\rho_n)\right)\,
            \nu_{\theta^n\omega}(\dee x)
        \right].
    \end{align*}
    Since \(\Lambda\) is linear and continuous, it commutes with integration:
    \[
        \int_\Xi
        \Lambda\!\left(\mcT_{x;\theta^n\omega}(\rho_n)\right)\,
        \nu_{\theta^n\omega}(\dee x)
        =
        \Lambda\!\left(
            \int_\Xi \mcT_{x;\theta^n\omega}(\rho_n)\,\nu_{\theta^n\omega}(\dee x)
        \right)
        =
        \Lambda\!\left(
            \mfI_{\theta^n\omega}(\Xi)(\rho_n)
        \right)
        =
        \Lambda\!\left(\Phi_{\theta^n\omega}(\rho_n)\right).
    \]
    Hence
    \[
        \mbE_{\mbQ_{\vartheta;\omega}}\!\left[H\,\Lambda(\rho_{n+1})\right]
        =
        \mbE_{\mbQ_{\vartheta;\omega}}
        \!\left[
            H\,\Lambda(\Phi_{\theta^n\omega}(\rho_n))
        \right].
    \]
    Since this holds for every bounded \(\mcG_n\)-measurable \(H\) and every linear functional \(\Lambda\), we obtain \eqref{eq:one-step-cond-mean}.

    \smallskip
    We now prove the general \(\ell\)-step identity by induction on \(\ell\).
    The case \(\ell=1\) is exactly \eqref{eq:one-step-cond-mean}. Assume the claim holds for some \(\ell\ge1\). 
    Then
    \begin{align*}
        \mbE_{\mbQ_{\vartheta;\omega}}
        \left[
            \rho_{n+\ell+1}
            \,\middle|\,
            \mcG_n
        \right]
        &=
        \mbE_{\mbQ_{\vartheta;\omega}}
        \left[
            \mbE_{\mbQ_{\vartheta;\omega}}
            \left[
                \rho_{n+\ell+1}
                \,\middle|\,
                \mcG_{n+\ell}
            \right]
            \,\middle|\,
            \mcG_n
        \right] \\
        &=
        \mbE_{\mbQ_{\vartheta;\omega}}
        \left[
            \Phi_{\theta^{n+\ell}\omega}(\rho_{n+\ell})
            \,\middle|\,
            \mcG_n
        \right].
    \end{align*}
    Now \(\Phi_{\theta^{n+\ell}\omega}:M_d(\mbC)\to M_d(\mbC)\) is a bounded linear map, depending only on the fixed environment \(\omega\). 
    Since \(M_d(\mbC)\) is finite-dimensional, bounded linear maps commute with Bochner conditional expectation. 
    Thus
    \[
        \mbE_{\mbQ_{\vartheta;\omega}}
        \left[
            \Phi_{\theta^{n+\ell}\omega}(\rho_{n+\ell})
            \,\middle|\,
            \mcG_n
        \right]
        =
        \Phi_{\theta^{n+\ell}\omega}
        \left(
            \mbE_{\mbQ_{\vartheta;\omega}}
            \left[
                \rho_{n+\ell}
                \,\middle|\,
                \mcG_n
            \right]
        \right).
    \]
    Applying the induction hypothesis,
    \[
        \mbE_{\mbQ_{\vartheta;\omega}}
        \left[
            \rho_{n+\ell}
            \,\middle|\,
            \mcG_n
        \right]
        =
        \Phi_{\theta^n\omega}^{(\ell)}(\rho_n),
    \]
    we conclude that
    \begin{align*}
        \mbE_{\mbQ_{\vartheta;\omega}}
        \left[
            \rho_{n+\ell+1}
            \,\middle|\,
            \mcG_n
        \right]
        &=
        \Phi_{\theta^{n+\ell}\omega}
        \left(
            \Phi_{\theta^n\omega}^{(\ell)}(\rho_n)
        \right)
        =
        \Phi_{\theta^n\omega}^{(\ell+1)}(\rho_n).
    \end{align*}
    This completes the induction.
\end{proof}

We are now ready to prove \Cref{thm:birkhoff_ergodic}.

\pathbirkhoffergodic*

\begin{proof}
    As always, we work with the fixed measurable density realization \((\omega,x)\mapsto \mcT_{x;\omega}\), \(\omega\mapsto \nu_\omega\) furnished by \Cref{prop:instrument-density-representation}.
    Fix a measurable initial state \(\vartheta:\Omega\to\states\), and let \(\rho_{\mathrm{ss}}\) be the unique dynamically stationary state given by \Cref{thm:s_exists}.
    Set
    \[
        r_n:=2(1-\varepsilon)^{\lfloor n/L\rfloor},
        \qquad n\ge1.
    \]
    
    Since \(\norm{\rho_{\mathrm{ss}}(\omega)}_1=1\) for all \(\omega\), the map \(\rho_{\mathrm{ss}}:\Omega\to\matrices\) is Bochner integrable.
    Fix a basis \(E_1,\dots,E_M\) of \(\matrices\), where \(M=d^2\), and let \(\ell_1,\dots,\ell_M:\matrices\to\mbC\) be the corresponding coordinate functionals. 
    Each \(\ell_j\) is continuous, so there exists \(C_j<\infty\) such that
    \[
        |\ell_j(X)|\le C_j\norm{X}_1
        \qquad\text{for all }X\in\matrices.
    \]
    Therefore
    \[
        |\ell_j(\rho_{\mathrm{ss}}(\omega))|
        \le C_j
        \qquad\text{for all }\omega\in\Omega,
    \]
    so each scalar function \(\omega\mapsto \ell_j(\rho_{\mathrm{ss}}(\omega))\) belongs to \(L^1(\pr)\). 
    Applying the scalar Birkhoff ergodic theorem to the real and imaginary parts of these finitely many coordinate functions, and using the ergodicity of \((\Omega,\mcF,\pr,\theta)\), we obtain a full-measure set \(\Omega_0'\subseteq\Omega\) such that for every \(\omega\in\Omega_0'\) and every \(j\),
    \[
        \frac1N\sum_{n=1}^N \ell_j(\rho_{\mathrm{ss}}(\theta^n\omega))
        \longrightarrow
        \int_\Omega \ell_j(\rho_{\mathrm{ss}}(\eta))\,\pr(\dee\eta)
        =
        \ell_j(\bar\rho_{\mathrm{ss}}).
    \]
    Hence
    \[
        \ell_j\!\left(
            \frac1N\sum_{n=1}^N \rho_{\mathrm{ss}}(\theta^n\omega)
            -
            \bar\rho_{\mathrm{ss}}
        \right)
        \longrightarrow 0
        \qquad
        (j=1,\dots,M).
    \]
    Since all norms on the finite-dimensional space \(\matrices\) are equivalent, coordinatewise convergence implies convergence in trace norm. 
    Thus
    \begin{equation}
    \label{eq:birkhoff-rhoss-short}
        \frac1N\sum_{n=1}^N \rho_{\mathrm{ss}}(\theta^n\omega)
        \longrightarrow
        \bar\rho_{\mathrm{ss}}
        \qquad\text{in trace norm}
    \end{equation}
    for every \(\omega\in\Omega_0'\).
    
    Let \(\Omega_\ast\) be the full-measure set from \Cref{cor:doeblin-forgetting}, and define
    \[
        \Omega_0:=\Omega_0'\cap\Omega_\ast.
    \]
    Then \(\Omega_0\) still has full \(\pr\)-measure. 
    Fix \(\omega\in\Omega_0\), and work on the probability space
    \[
        \left(\Xi^{\mbN},\mcX^{\otimes\mbN},\mbQ_{\vartheta;\omega}\right).
    \]
    For brevity, write
    \[
        \rho_n:=\rho_n^{\omega,\vartheta},
        \qquad
        \mcG_n:=\sigma(X_1,\dots,X_n).
    \]
    
    For each \(m\in\mbN\) and \(n\ge0\), define
    \[
        B_n^{(m)}
        :=
        \frac1m\sum_{\ell=1}^m \rho_{n+\ell},
    \qquad
        C_n^{(m)}
        :=
        \frac1m\sum_{\ell=1}^m
        \Phi_{\theta^n\omega}^{(\ell)}(\rho_n),
    \]
    and
    \[
        D_n^{(m)}
        :=
        B_n^{(m)}-C_n^{(m)}.
    \]
    By \Cref{lem:conditional-mean-posterior}, \(C_n^{(m)}\) is a version of
    \[
        \mbE_{\mbQ_{\vartheta;\omega}}
        \left[
            B_n^{(m)}
            \,\middle|\,
            \mcG_n
        \right].
    \]
    Also define the block average of the dynamically stationary states
    \[
        R_n^{(m)}(\omega)
        :=
        \frac1m\sum_{\ell=1}^m \rho_{\mathrm{ss}}(\theta^{n+\ell}\omega).
    \]
    Since \(\omega\in\Omega_\ast\), \Cref{cor:doeblin-forgetting} gives
    \[
        \norm{
            \Phi_{\theta^n\omega}^{(\ell)}(\rho_n)
            -
            \rho_{\mathrm{ss}}(\theta^{n+\ell}\omega)
        }_1
        \le
        r_\ell
        \qquad
        \mbQ_{\vartheta;\omega}\text{-a.s.}
    \]
    for every \(n\ge0\) and \(\ell\ge1\). 
    Hence, for every \(m\in\mbN\),
    \begin{equation}
    \label{eq:block-error-short-rewrite}
        \norm{C_n^{(m)}-R_n^{(m)}(\omega)}_1
        \le
        \delta_m
        :=
        \frac1m\sum_{\ell=1}^m r_\ell
        \qquad
        \mbQ_{\vartheta;\omega}\text{-a.s.}
    \end{equation}
    Since \(r_\ell\to0\), Cesàro's lemma yields
    \begin{equation}
    \label{eq:delta-m-short-rewrite}
        \delta_m\longrightarrow 0
        \qquad (m\to\infty).
    \end{equation}
    Moreover, \(B_n^{(m)}\) and \(C_n^{(m)}\) are convex combinations of states, so
    \[
        \norm{D_n^{(m)}}_1\le 2
        \qquad
        \mbQ_{\vartheta;\omega}\text{-a.s.}
    \]

    Choose a complex basis \(E_1,\dots,E_M\) of \(M_d(\mbC)\), where \(M=d^2\), and let \(\Lambda_1,\dots,\Lambda_M:M_d(\mbC)\to\mbC\) be the dual basis functionals, so that every \(A\in M_d(\mbC)\) admits the expansion
    \[
        A=\sum_{i=1}^M \Lambda_i(A)\,E_i.
    \]
    Since all norms are equivalent on \(M_d(\mbC)\), there exists \(C_0>0\) such that
    \[
        \norm{A}_1\le C_0\max_{1\le i\le M} |\Lambda_i(A)|
        \qquad
        \text{for all }A\in M_d(\mbC).
    \]
    
    For each \(m\in\mbN\) and each \(r\in\{0,\dots,m-1\}\), define
    \[
        X_j^{(m,r)}:=D_{(j-1)m+r}^{(m)},
        \qquad j\ge1,
    \]
    and
    \[
        \scrH_j^{(m,r)}:=\mcG_{jm+r},
        \qquad j\ge0.
    \]
    Then \(X_j^{(m,r)}\) is \(\scrH_j^{(m,r)}\)-measurable, and
    \[
        \scrH_{j-1}^{(m,r)}
        =
        \mcG_{(j-1)m+r}.
    \]
    Therefore
    \[
        \mbE_{\mbQ_{\vartheta;\omega}}
        \left[
            X_j^{(m,r)}
            \,\middle|\,
            \scrH_{j-1}^{(m,r)}
        \right]
        =0
        \qquad
        \mbQ_{\vartheta;\omega}\text{-a.s.}
    \]
    and so, for every \(i\in\{1,\dots,M\}\), the scalar sequence
    \[
        \Lambda_i\!\left(X_j^{(m,r)}\right)
    \]
    is a bounded martingale difference sequence. By the strong law for uniformly bounded martingale difference sequences, for each fixed \(m\), each \(r\in\{0,\dots,m-1\}\), and each \(i\in\{1,\dots,M\}\),
    \[
        \frac1J\sum_{j=1}^J \Lambda_i\!\left(X_j^{(m,r)}\right)
        \longrightarrow 0
        \qquad
        \mbQ_{\vartheta;\omega}\text{-a.s.}
    \]
    Intersecting over the finitely many pairs \((i,r)\), we obtain a full \(\mbQ_{\vartheta;\omega}\)-measure set \(E_{\omega,m}\subseteq\Xi^{\mbN}\) such that for every \(\bar x\in E_{\omega,m}\) and every \(r\in\{0,\dots,m-1\}\),
    \[
        \frac1J\sum_{j=1}^J X_j^{(m,r)}(\bar x)
        \longrightarrow 0
        \qquad
        \text{in trace norm.}
    \]
    
    For such \(\bar x\), if
    \[
        J_{N,r}
        :=
        \#\{\,0\le n\le N-1:\ n\equiv r \pmod m\,\},
    \]
    then
    \[
        \sum_{n=0}^{N-1} D_n^{(m)}(\bar x)
        =
        \sum_{r=0}^{m-1}\sum_{j=1}^{J_{N,r}} X_j^{(m,r)}(\bar x),
    \]
    and therefore, for \(N\ge m\),
    \begin{equation}
    \label{eq:D-cesaro-short-rewrite}
        \norm{
            \frac1N\sum_{n=0}^{N-1} D_n^{(m)}(\bar x)
        }_1
        \le
        \sum_{r=0}^{m-1}
        \frac{J_{N,r}}{N}
        \norm{
            \frac1{J_{N,r}}
            \sum_{j=1}^{J_{N,r}} X_j^{(m,r)}(\bar x)
        }_1
        \longrightarrow 0.
    \end{equation}
    
    Now define
    \[
        E_\omega:=\bigcap_{m=1}^\infty E_{\omega,m}.
    \]
    Then \(E_\omega\) still has full \(\mbQ_{\vartheta;\omega}\)-measure, and for every \(\bar x\in E_\omega\) and every \(m\in\mbN\), \eqref{eq:D-cesaro-short-rewrite} holds.
    
    Fix \(\bar x\in E_\omega\) and \(m\in\mbN\). By
    \eqref{eq:block-error-short-rewrite},
    \[
        \norm{
            \frac1N\sum_{n=0}^{N-1}
            \left(
                C_n^{(m)}(\bar x)-R_n^{(m)}(\omega)
            \right)
        }_1
        \le
        \delta_m
        \qquad (N\ge1).
    \]
    Combining this with \eqref{eq:D-cesaro-short-rewrite}, we obtain
    \begin{equation}
    \label{eq:block-to-ref-short-rewrite}
        \limsup_{N\to\infty}
        \norm{
            \frac1N\sum_{n=0}^{N-1} B_n^{(m)}(\bar x)
            -
            \frac1N\sum_{n=0}^{N-1} R_n^{(m)}(\omega)
        }_1
        \le
        \delta_m.
    \end{equation}
    
    For this fixed \(m\), we also have
    \[
        \frac1N\sum_{n=0}^{N-1} R_n^{(m)}(\omega)
        =
        \frac1m\sum_{\ell=1}^m
        \frac1N\sum_{n=0}^{N-1}
        \rho_{\mathrm{ss}}(\theta^{n+\ell}\omega).
    \]
    For each fixed \(\ell\ge1\),
    \[
        \norm{
            \frac1N\sum_{n=0}^{N-1}\rho_{\mathrm{ss}}(\theta^{n+\ell}\omega)
            -
            \frac1N\sum_{n=1}^{N}\rho_{\mathrm{ss}}(\theta^n\omega)
        }_1
        \le
        \frac{2\ell}{N},
    \]
    because every state has trace norm \(1\). Together with \eqref{eq:birkhoff-rhoss-short}, this yields
    \begin{equation}
    \label{eq:R-conv-short-rewrite}
        \frac1N\sum_{n=0}^{N-1} R_n^{(m)}(\omega)
        \longrightarrow
        \bar\rho_{\mathrm{ss}}
        \qquad
        \text{in trace norm.}
    \end{equation}
    
    Furthermore,
    \[
        \frac1N\sum_{n=0}^{N-1} B_n^{(m)}(\bar x)
        =
        \frac1m\sum_{\ell=1}^m
        \frac1N\sum_{n=0}^{N-1}\rho_{n+\ell}(\bar x).
    \]
    Hence
    \begin{align}
    \label{eq:orig-vs-block-short-rewrite}
        \norm{
            \frac1N\sum_{j=1}^{N}\rho_j(\bar x)
            -
            \frac1N\sum_{n=0}^{N-1} B_n^{(m)}(\bar x)
        }_1
        \le
        \frac1m\sum_{\ell=1}^m
        \norm{
            \frac1N\sum_{j=1}^{N}\rho_j(\bar x)
            -
            \frac1N\sum_{n=0}^{N-1}\rho_{n+\ell}(\bar x)
        }_1 
        \le
        \frac{2m}{N},
    \end{align}
    again because every posterior state has trace norm \(1\).
    
    Since \(\bar x\in E_\omega\), the bounds \eqref{eq:block-to-ref-short-rewrite}, \eqref{eq:R-conv-short-rewrite}, and \eqref{eq:orig-vs-block-short-rewrite} imply that for every \(m\in\mbN\),
    \[
        \limsup_{N\to\infty}
        \norm{
            \frac1N\sum_{n=1}^{N}\rho_n(\bar x)
            -
            \bar\rho_{\mathrm{ss}}
        }_1
        \le
        \delta_m.
    \]
    Letting \(m\to\infty\) and using \eqref{eq:delta-m-short-rewrite}, we conclude that
    \[
        \lim_{N\to\infty}
        \norm{
            \frac1N\sum_{n=1}^{N}\rho_n^{\omega,\vartheta}(\bar x)
            -
            \bar\rho_{\mathrm{ss}}
        }_1
        =0
    \]
    for every \(\bar x\in E_\omega\). Since \(E_\omega\) has full \(\mbQ_{\vartheta;\omega}\)-measure, the proof is complete.
\end{proof}


\section{Quenched CLT and Berry--Esseen theorems}
\label{section:CLT_with_rates}


We use the state-path viewpoint from \Cref{section:prelim}. 
For a measurable initial state \(\vartheta:\Omega\to\states\), the quenched law \(\mbK_{\vartheta;\omega}\in\mcP(\states^{\mbN_0})\) is the law of the posterior-state chain with initial state \(\vartheta(\omega)\) and one-step transition kernels
\[
    K_\omega,\ K_{\theta\omega},\ K_{\theta^2\omega},\dots.
\]
Let
\[
    e_n:\states^{\mbN_0}\to\states,
    \qquad
    e_n\left((\rho_0,\rho_1,\rho_2,\dots)\right):=\rho_n,
    \qquad n\in\mbN_0,
\]
denote the coordinate maps.
Let \(f:\Omega\times\states\to\mbR\) be bounded and measurable, and write \(f_\omega(\rho):=f(\omega,\rho)\).
On \(\states^{\mbN_0}\), define
\[
    S_N^\omega f
    :=
    \sum_{n=0}^{N-1} f_{\theta^n\omega}(e_n),
    \qquad
    \bar S_N^\omega f
    :=
    S_N^\omega f
    -
    \mbE_{\mbK_{\vartheta;\omega}}[S_N^\omega f].
\]


\subsection{Proof of \texorpdfstring{\Cref{thm:quenched_clt_posterior}}{Theorem~3}}
\label{section:Theorem_CLT}


For the proof of \Cref{thm:quenched_clt_posterior}, we work on the measurable $\theta$-invariant set $\Omega_*$ of full $\pr$-measure constructed in \Cref{lem:HW-hypotheses-posterior} below.
Fix $\omega\in\Omega_*$.
Under \(\mbK_{\vartheta;\omega}\), the coordinate process \((e_n)_{n\ge0}\) is a time-inhomogeneous Markov chain on \(\states\) with successive transition kernels
\[
    K_\omega,\ K_{\theta\omega},\ K_{\theta^2\omega},\dots.
\]
We therefore place ourselves in the sequential framework of \cite[Theorem~2.2]{HW25}
by setting
\[
    X_j:=\states,
    \qquad
    P_j:=K_{\theta^j\omega},
    \qquad
    \tilde f_j:=f_{\theta^j\omega},
    \qquad j\ge0.
\]

\begin{lemma}
\label{lem:HW-hypotheses-posterior}
    Make \Cref{assumption_1,assumption_doeblin}.
    Let $f:\Omega\times\states\to\mbR$ be bounded and measurable, and let $\vartheta:\Omega\to\states$ be a measurable initial state.
    There exists a measurable $\theta$-invariant set $\Omega_*\subseteq\Omega$ of full $\pr$-measure such that, for every $\omega\in\Omega_*$, the posterior-state chain under $\mbK_{\vartheta;\omega}$ satisfies the boundedness and exponential \(\phi\)-mixing hypotheses of \cite[Theorem~2.2]{HW25}. 
    Consequently, \cite[Theorem~2.2]{HW25} applies whenever
    \[
        \sigma_N(\omega)
        :=\sqrt{\Var_{\mbK_{\vartheta;\omega}}
            \left(S_N^\omega f\right)}
        \longrightarrow\infty.
    \]
\end{lemma}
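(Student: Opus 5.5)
The plan is to build \(\Omega_*\) from the full-measure set on which the Doeblin minorization of \Cref{assumption_doeblin} holds, and then to verify the hypotheses of \cite[Theorem~2.2]{HW25} directly from the resulting uniform two-sided minorization along the orbit.

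\textbf{Construction of \(\Omega_*\).} Let \(\Omega_1\) be the full-measure set on which
\[
    K_\omega^{(L)}(\rho,\cdot)\ge\varepsilon\lambda_{\theta^L\omega}
\]
holds for all \(\rho\). Measurability of \(\Omega_1\) can be arranged by testing against a countable generating algebra of \(\borel{\states}\) and a countable dense set of \(\rho\). Alternatively, one may simply pass to a measurable full-measure subset. Then set
\[
    \Omega_*:=\bigcap_{n\in\mbZ}\theta^{-n}\Omega_1,
\]
as in the null-set convention. This set is measurable, \(\theta\)-invariant and of full measure. For \(\omega\in\Omega_*\), every block \(K_{\theta^{j}\omega}^{(L)}\) with \(j\ge0\) satisfies the minorization with the same \(\varepsilon\).

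\textbf{Boundedness.} This hypothesis is immediate: \(\sup_j\norm{\tilde f_j}_\infty\le\norm{f}_\infty<\infty\), uniformly in \(\omega\).

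\textbf{Exponential \(\phi\)-mixing.} For the sequential chain \(P_j=K_{\theta^j\omega}\), the Dobrushin coefficient of each \(L\)-block is at most \(1-\varepsilon\). Arguing as in the proof of \Cref{thm:s_exists}, with the decomposition \(\varepsilon\lambda+(1-\varepsilon)R\), one gets for all \(k\) and \(n\)
\[
    \sup_{\rho,\rho'}\norm{\delta_\rho P_k\cdots P_{k+n-1}-\delta_{\rho'}P_k\cdots P_{k+n-1}}_{\mathrm{TV}}\le 2(1-\varepsilon)^{\lfloor n/L\rfloor}.
\]
The Markov property then gives, for \(A\in\sigma(e_0,\dots,e_k)\) with positive mass and \(B\in\sigma(e_j:j\ge k+n)\),
\[
    \bigl|\mbK_{\vartheta;\omega}(B\mid A)-\mbK_{\vartheta;\omega}(B)\bigr|\le\sup_{\rho,\rho'}\tfrac12\norm{\delta_\rho P^{(n)}_k-\delta_{\rho'}P^{(n)}_k}_{\mathrm{TV}}.
\]
This holds because both conditional laws of \(e_{k+n}\) are mixtures of \(\delta_\rho P^{(n)}_k\), and \(B\) depends on the future only through \(e_{k+n}\) and the subsequent kernels. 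Consequently
\[
    \phi(n)\le(1-\varepsilon)^{\lfloor n/L\rfloor}\le C\zeta^n,
\]
with constants independent of \(\omega\) and of \(\vartheta\).

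\textbf{Conclusion and main obstacle.} The last sentence of the lemma follows because the only remaining hypothesis of \cite[Theorem~2.2]{HW25} is the growth \(\sigma_N\to\infty\). I expect the main obstacle to be bookkeeping rather than mathematics: matching the precise form of the mixing and boundedness conditions required in \cite{HW25}, for instance whether they ask for \(\phi\)-mixing of the state process or a contraction bound on the sequential transfer operators, and ensuring that \(\Omega_*\) is genuinely measurable. If \cite{HW25} is phrased via operator contraction, I would instead state the bound
\[
    \norm{P_k\cdots P_{k+n-1}g-\textstyle\int g\,\dee(\cdot)}_\infty\le 2(1-\varepsilon)^{\lfloor n/L\rfloor}\norm{g}_\infty,
\]
which follows from the same displayed total-variation estimate.
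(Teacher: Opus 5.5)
Your proposal is correct and follows the paper's proof. You use the same $\theta$-invariant set $\bigcap_{n\in\mbZ}\theta^{-n}\Omega_1$, the same boundedness check and the same rate $(1-\varepsilon)^{1/L}$; the only difference is that you derive the exponential $\phi$-mixing bound yourself from the $L$-block Dobrushin contraction and the Markov property, whereas the paper cites \cite[Lemma~2.4]{HW25}.

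One small caution: testing the minorization on a countable dense set of states $\rho$ does not by itself make $\Omega_1$ measurable, because $\rho\mapsto K_\omega^{(L)}(\rho,B)$ is only measurable, not continuous. Rely instead on your alternative, a measurable full-measure set on which the minorization holds, which is exactly what ``$\pr$-almost every'' provides and what the paper uses.
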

\begin{proof}
    Let $\Omega_D\in\mcF$ be a set of full $\pr$-measure on which the minorization in \Cref{assumption_doeblin} holds simultaneously for every $\rho\in\states$ and $B\in\borel{\states}$.
    Set
    \[
        \Omega_*:=\bigcap_{k\in\mbZ}\theta^{-k}\Omega_D.
    \]
    By \Cref{assumption_1}, this set is measurable, $\theta$-invariant, and of full $\pr$-measure.
    Fix $\omega\in\Omega_*$.

    By construction, under $\mbK_{\vartheta;\omega}$ the coordinate process is a Markov chain on $\states$ with successive transition kernels $P_j=K_{\theta^j\omega}$, $j\ge0$.
    Its $L$-step transition kernel from time $j$ is $K_{\theta^j\omega}^{(L)}$.
    Since $\theta^j\omega\in\Omega_D$, for every $j\ge0$,
    \[
        K_{\theta^j\omega}^{(L)}(\rho,B)
        \ge\varepsilon\lambda_{\theta^{j+L}\omega}(B),
        \qquad
        \rho\in\states,\quad B\in\borel{\states}.
    \]
    This is the uniform minorization in \cite[Section~2.1]{HW25}, with $n_0=L$, $\gamma=\varepsilon$, and  $m_j=\lambda_{\theta^j\omega}$.
    Moreover, the functions $\tilde f_j(\rho)=f(\theta^j\omega,\rho)$ are real and measurable, and satisfy
    \[
        |\tilde f_j(\rho)|\le\|f\|_\infty,
        \qquad j\ge0,\quad\rho\in\states.
    \]
    
    Let $\phi_\omega(n)$ denote the $\phi$-mixing coefficients of the coordinate process under $\mbK_{\vartheta;\omega}$.
    By \cite[Lemma~2.4]{HW25}, the uniform $L$-step minorization established above implies
    \[
        \phi_\omega(n)\le C\delta^n,
        \qquad
        \delta=(1-\varepsilon)^{1/L}\in(0,1),
    \]
    where $C$ depends only on $L$ and $\varepsilon$.
    Thus the exponential $\phi$-mixing hypothesis of \cite[Theorem~2.2]{HW25} is satisfied.

    The additional hypothesis of \cite[Theorem~2.2]{HW25} is precisely $\sigma_N(\omega)\to\infty$.
\end{proof}

\quenchedcltposterior*

\begin{proof}
    We first establish a uniform estimate for the posterior kernels' convergence to the equivariant family.
    Let $(\mu_\omega)$ be the equivariant family from \Cref{thm:s_exists}.
    Intersecting the invariant full-measure set $\Omega_*$ from \Cref{lem:HW-hypotheses-posterior} with all integer translates of a measurable full-measure set on which equivariance holds, we may assume that \(\mu_{\theta^j\omega}K_{\theta^j\omega} = \mu_{\theta^{j+1}\omega}\) for all \(\omega\in\Omega_*\) and \(j\in\mbZ\). 
 
    For a probability kernel $Q$ on $\states$ and a bounded measurable function $g:\states\to\mbC$, write
    \[
        (Qg)(\rho):=\int_{\states}g(\eta)\,Q(\rho,\dee\eta).
    \]
    We claim that, for every $\omega\in\Omega_*$, every $n\ge1$, and every such $g$,
    \begin{equation}
    \label{eq:quenched-forgetting-composition}
        \left\|
            \left(K_{\theta^{n-1}\omega}\circ\cdots\circ K_\omega\right)g
            -\mu_{\theta^n\omega}(g)\mathbf1
        \right\|_\infty
        \le
        2(1-\varepsilon)^{\lfloor n/L\rfloor}\|g\|_\infty.
    \end{equation}
    Indeed, for every $\xi\in\Omega_*$, the Doeblin minorization gives a probability kernel $R_\xi$ such that
    \[
        K_\xi^{(L)}(\rho,\,\cdot\,)
        =\varepsilon\lambda_{\theta^L\xi}
         +(1-\varepsilon)R_\xi(\rho,\,\cdot\,).
    \]
    Consequently, for probability measures $\alpha,\beta$ on $\states$, the common minorizing term cancels and
    \[
        \|\alpha K_\xi^{(L)}-\beta K_\xi^{(L)}\|_{\mathrm{TV}}
        =(1-\varepsilon)\|\alpha R_\xi-\beta R_\xi\|_{\mathrm{TV}}
        \le(1-\varepsilon)\|\alpha-\beta\|_{\mathrm{TV}}.
    \]
    Write $n=qL+r$, where $q=\lfloor n/L\rfloor$ and $0\le r<L$.
    Applying this contraction to the $q$ complete blocks and using the nonexpansiveness of the remaining $r$ Markov transitions yields
    \[
        \|\alpha K_\omega^{(n)}-\beta K_\omega^{(n)}\|_{\mathrm{TV}}
        \le(1-\varepsilon)^q\|\alpha-\beta\|_{\mathrm{TV}}.
    \]
    By equivariance, \(\mu_\omega K_\omega^{(n)}=\mu_{\theta^n\omega}\), and taking $\alpha=\delta_\rho$ and $\beta=\mu_\omega$, we obtain
    \[
        \|\delta_\rho K_\omega^{(n)}
           -\mu_{\theta^n\omega}\|_{\mathrm{TV}}
        \le2(1-\varepsilon)^q.
    \]
    It follows that
    \[
    \begin{aligned}
        \left|(K_\omega^{(n)}g)(\rho)
              -\mu_{\theta^n\omega}(g)\right|
        \le\|g\|_\infty
           \|\delta_\rho K_\omega^{(n)}
              -\mu_{\theta^n\omega}\|_{\mathrm{TV}}
        \le2(1-\varepsilon)^q\|g\|_\infty.
    \end{aligned}
    \]
    Taking the supremum over $\rho\in\states$ proves \eqref{eq:quenched-forgetting-composition}.
    In particular, this gives the geometric estimate
    \[
        \left\|
            \left(K_{\theta^{n-1}\omega}\circ\cdots\circ K_\omega\right)g
            -\mu_{\theta^n\omega}(g)\mathbf1
        \right\|_\infty
        \le C\|g\|_\infty\delta^n,
    \]
    where we take
    \[
        C=\frac{2}{1-\varepsilon},
        \qquad
        \delta=(1-\varepsilon)^{1/L}\in(0,1).
    \]

    \medskip

    We next apply \cite[Theorem~D.2]{DH_PTRF}.
    Let $B_b(\states)$ denote the Banach space of bounded Borel measurable complex functions, equipped with the supremum norm.
    Define the finite complex kernel 
    \begin{equation}
    \label{eq:new_kernel}
        K_{\omega,z}(\rho,\dee\eta):= e^{zf_{\theta \omega}}(\eta)K_\omega(\rho,\dee\eta)
    \end{equation}
    so that \(K_{\omega,z}g = K_\omega(e^{zf_{\theta\omega}}g)\). 
    Denote
    \[
        K^{(n)}_{\omega,z}:=
            K_{\theta^{n-1}\omega,z} \circ \cdots \circ K_{\omega,z},
            \qquad
            K_{\omega,z}^{(0)}:=\mathrm{Id}.
    \]
    
    Now, let $B_b(\states)$ denote the Banach space of bounded Borel measurable complex functions, equipped with the supremum norm.
    Fix $\omega\in \Omega_*$ and for each $j\in\mbZ$ take
    \[
        B_j:=B_b(\states),
        \qquad
        A_j(z):=K_{\theta^{-j-1}\omega,z},
        \qquad 
        m_j:=\mu_{\theta^{-j}\omega}.
    \]
    We now verify the assumptions D1 in \cite[Appendix~D]{DH_PTRF}. 
    We first note that in the notation of \cite[Appendix~D]{DH_PTRF}, $m_j(g) = \nu_j(g)$, but we have use $\nu$ elsewhere. 
    First, note that each $B_j$ has dimension greater than $1$.
    Since $K$ are kernels, we also have that
    \[
        \norm{A_j}_{\infty \to\infty} = \norm{m_j}_{B^*_j}
        = 1\,
        \qquad m_j(\mathbf 1) = 1.
    \]
    Moreover,
    \[
        A_j\mathbf1=\mathbf1=\lambda_jh_{j+1}.
    \]
    Furthermore, using equivariance property, for each $g\in B_b(\states)$ we see that 
    \[
        m_{j+1}(A_jg)
        =
        \int_{\states}
        (K_{\theta^{-j-1}\omega}g)(\rho)\,
        \mu_{\theta^{-j-1}\omega}(\dee\rho)
        =
        \mu_{\theta^{-j}\omega}(g) = m_j(g).
    \]
    This verifies that \(A_j^*m_{j+1}=\lambda_jm_j\).
    The Condition~(i) in \cite[Assumption~D1]{DH_PTRF} holds trivially (in the notion of \cite[Appendix~D]{DH_PTRF} $\lambda_j = 1$ for each $j$). 
    For Condition~(ii), suppose that $m_j(g)=0$, then
    \[
        A_{j+n-1}\cdots A_jg
        =K_{\theta^{-j-n}\omega}^{(n)}g.
    \]
    Therefore, by the estimate above
    \[
        \|A_{j+n-1}\cdots A_jg\|_\infty
        =
         \|K_{\theta^{-j-n}\omega}^{(n)}g
      -\mu_{\theta^{-j}\omega}(g)\mathbf1\|_\infty
      \le 
      C\delta^n\|g\|_\infty. 
    \]

    As for condition~(iii), take the parameter space to be $\mbC$ and write
    \[
        M:=\sup_{\xi\in\Omega,\,\rho\in\states}
              |f(\xi,\rho)|<\infty.
    \]
    Then using 
    \[
        K_{\xi,z}
        =\sum_{k=0}^{\infty}\frac{z^k}{k!}
           K_\xi(f_{\theta\xi}^{\,k}\,\,\cdot\,),\qquad
        \|K_\xi(f_{\theta\xi}^{\,k}\,\,\cdot\,)\|_{\infty\to\infty}
        \le M^k,
    \]
    we see that the series converges in operator norm, uniformly in $\xi\in\Omega_*$ and on every bounded disc in $z$.
    Therefore $z\mapsto K_{\xi,z}$ is entire in operator norm, and
    \[
        \|K_{\xi,z}\|_{\infty\to\infty}\le e^{|z|M},
        \qquad
        \|K_{\xi,z}-K_\xi\|_{\infty\to\infty}
        \le e^{|z|M}-1.
    \]
    For any fixed $r>0$, the common neighborhood $U:=\{z\in\mbC:|z|<r\}$ consequently satisfies
    \[
        \sup_{j\in\mbZ}\sup_{z\in U}
        \|A_j(z)\|_{\infty\to\infty}
        \le e^{rM}<\infty.
    \]
    If $m_j(g) =0$, the estimate above gives us that 
    \[
        \|A_{j+n-1}(0)\cdots A_j(0)g\|_\infty
        = 
        \|K_{\theta^{-j-n}\omega}^{(n)}g\|_\infty
        \le
        C \delta^n \norm{g}_\infty.
    \]
    The exponential series shows that $z\mapsto K_{\omega,z}$ is entire in operator norm, and
    \[
        \|K_{\omega,z}\|_{\infty\to\infty}\le e^{|z|M},
        \qquad
        \|K_{\omega,z}-K_\omega\|_{\infty\to\infty}
        \le e^{|z|M}-1,
    \]
    uniformly in $\omega\in\Omega_*$, where
    \[
        M:=\sup_{\omega,\rho}|f(\omega,\rho)|<\infty.
    \]
    Since all these bounds are independent of the fixed $\omega\in\Omega_*$, all three conditions in \cite[Assumption~D1]{DH_PTRF} are satisfied. 

     \smallskip

     Now we apply \cite[Theorem~D.2]{DH_PTRF}.
     Choose $\delta_0 \in (\delta,1)$, by \cite[Theorem~D.2]{DH_PTRF} there are constants  $r_0,C_0>0$ and analytic families
     \[
        \lambda_j(z)\in\mbC\setminus\{0\},
        \qquad
        h_j^{(z)}\in B_b(\states),
        \qquad
        \nu_j^{(z)}\in B_b(\states)^*,
        \qquad j\in\mbZ,
    \]
    that are analytic on a common neighborhood of $\{z:|z|\le r_0\}$ and uniformly bounded:
    \[
        \sup_{j\in\mbZ,\,|z|\le r_0}
        \left(
            |\lambda_j(z)|
            +\|h_j^{(z)}\|_\infty
            +\|\nu_j^{(z)}\|_{B_b(\states)^*}
        \right)<\infty.
    \]
    We note that these constants and bounds can be chosen independently of $\omega\in\Omega_*$, since the bounds entering the perturbation argument are uniform in $\omega$.
    At $z = 0$ they satisfy
    \[
        \lambda_j(0)=1,\qquad
        h_j^{(0)}=\mathbf1,\qquad
        \nu_j^{(0)}(g)=m_j(g), \qquad \nu_j^{(z)}(\mathbf1) =\nu_j^{(z)}(h_j^{(z)})=1.
    \]
    Moreover, for every $g\in B_b(\states)$,
    \[
        A_j(z)h_j^{(z)}
        =\lambda_j(z)h_{j+1}^{(z)},
    \]
    and
    \[
        \nu_{j+1}^{(z)}(A_j(z)g)
        =\lambda_j(z)\nu_j^{(z)}(g).
    \]
    For every $j\in\mbZ$, $n\ge1$, $|z|\le r_0$, and $g\in B_b(\states)$, we have
    \[
        \left\|
            A_{j+n-1}(z)\cdots A_j(z)g
            -\left(\prod_{k=j}^{j+n-1}\lambda_k(z)\right)
             \nu_j^{(z)}(g)h_{j+n}^{(z)}
        \right\|_\infty
        \le C_0\delta_0^n\|g\|_\infty.
    \]
    In particular, taking $j=-n$ and recalling the definition
    of $A_j(z)$ gives
    \[
        \left\|
            K_{\omega,z}^{(n)}g
            -\left(\prod_{k=-n}^{-1}\lambda_k(z)\right)
             \nu_{-n}^{(z)}(g)h_0^{(z)}
        \right\|_\infty
        \le C_0\delta_0^n\|g\|_\infty.
    \]

    \smallskip

    We next identify the scalar factors as measurable functions of the environment.
    Choose $a\in(\delta_0,1)$. By uniform analyticity and the values at zero, we may decrease $r_0$ so that
    \[
        |\lambda_j(z)|\ge a,
        \qquad
        \|h_j^{(z)}-\mathbf1\|_\infty\le\tfrac12
    \]
    uniformly in $\omega\in\Omega_*$, $j\in\mbZ$,
    and $|z|\le r_0$.
    Fix $\rho_*\in\states$. The preceding spectral estimate, applied with $g=\mathbf1$, gives
    \begin{equation}
    \label{eq:quenched-measurable-eigenvalue}
        \lambda_\omega(z)
        :=\lim_{m\to\infty}
        \frac{[K_{\theta^{-m}\omega,z}^{(m+1)}
                    \mathbf1](\rho_*)}
             {[K_{\theta^{-m}\omega,z}^{(m)}
                    \mathbf1](\rho_*)}
        =\lambda_{-1}(z).
    \end{equation}
    Indeed, the two leading terms have the same eigenfunction factor, the denominator has modulus at least $a^m/4$ for all sufficiently large $m$, and the ratio converges uniformly with error $O((\delta_0/a)^m)$.
    Each ratio is measurable in $\omega$ and analytic in $z$.
    Thus $\lambda_\omega(z)$ is measurable in $\omega$ and analytic in $z$, with uniform bounds.
    Applying the same formula at $\theta^k\omega$ gives
    \[
        \lambda_{\theta^k\omega}(z)=\lambda_{-k-1}(z),
        \qquad k\in\mbZ,
    \]
    where the right-hand side uses the spectral data along the fixed orbit of $\omega$.
    Take
    \[
        \Lambda_{\omega,n}(z)
        :=\prod_{k=0}^{n-1}\lambda_{\theta^k\omega}(z),
        \qquad \Lambda_{\omega,0}(z):=1.
    \]
    Since $\nu_{-n}^{(z)}(\mathbf1)=1$, the spectral estimate  also gives the uniform limit
    \[
        h_\omega^{(z)}
        :=\lim_{n\to\infty}
          \Lambda_{\omega,n}(z)^{-1}
          K_{\omega,z}^{(n)}\mathbf1
        =h_0^{(z)}.
    \]
    Consequently, $h_\omega^{(z)}$ is uniformly bounded and analytic in $z$, and
    \begin{equation}
    \label{eq:quenched-twisted-one}
        \left\|K_{\omega,z}^{(n)}\mathbf1
             -\Lambda_{\omega,n}(z)h_\omega^{(z)}
        \right\|_\infty
        \le C_0\delta_0^n,
        \qquad
        |\Lambda_{\omega,n}(z)|\ge a^n.
    \end{equation}

    \medskip

    Having observed above, we now follow the second-moment argument in \cite[Section~3, proof of Theorem~2.2(ii)]{YH_JSP}.
    Take $\rho_0:=\vartheta(\omega)$ and let $M_{\omega,N}(z) = \mbE_{\mbK_{\vartheta;\omega}}[e^{zS_N^\omega f}]$. 
    Now, by \eqref{eq:new_kernel} and by \Cref{prop:construction-of-bbK} we have that 
     \[
        M_{\omega,N}(z)
        =e^{zf_\omega(\rho_0)}
          [K_{\omega,z}^{(N-1)}\mathbf1](\rho_0).
    \]
    Define
    \[
        F_{\omega,N}(z)
        :=\frac{M_{\omega,N}(z)}{\Lambda_{\omega,N}(z)}.
    \]
    For $N\ge2$, \eqref{eq:quenched-twisted-one} gives
    \[
        F_{\omega,N}(z)
            =\frac{e^{zf_\omega(\rho_0)}}
                   {\lambda_{\theta^{N-1}\omega}(z)}
              \left(h_\omega^{(z)}(\rho_0)
                    +r_{\omega,N}(z)\right),
                \qquad
                |r_{\omega,N}(z)|
                \le C_0(\delta_0/a)^{N-1},
    \]
    with an analytic remainder and all bounds are uniform in $\omega$, $N$ and in $\rho_0$ on the common disc. 
    Therefore, $F_{\omega,N}$ is uniformly bounded.
    If $N=1$, the same conclusions also follow directly from 
    \[
        F_{\omega,1}(z)
        =\frac{e^{zf_\omega(\rho_0)}}{\lambda_\omega(z)}.
    \]
    Put $\pi_\omega(z):=\log\lambda_\omega(z)$, choosing the analytic branch with $\pi_\omega(0)=0$.
    Since $F_{\omega,N}(0)=1$, Cauchy's estimates give $F_{\omega,N}'(0),F_{\omega,N}''(0)=O(1)$ uniformly.
    Differentiating $M_{\omega,N}=\Lambda_{\omega,N}F_{\omega,N}$ twice at zero therefore yields
    \[
        \begin{aligned}
            \Var_{\mbK_{\vartheta;\omega}}(S_N^\omega f)
            &=M_{\omega,N}''(0)-\left(M_{\omega,N}'(0)\right)^2\\
            &=\sum_{j=0}^{N-1}\pi_{\theta^j\omega}''(0)
              +F_{\omega,N}''(0)-\left(F_{\omega,N}'(0)\right)^2\\
            &=\sum_{j=0}^{N-1}\pi_{\theta^j\omega}''(0)+O(1).
        \end{aligned}
    \]
    Since $\omega\mapsto\pi_\omega''(0)$ is measurable and uniformly bounded, Birkhoff's ergodic theorem gives
    \[
        \lim_{N\to\infty}\frac1N
        \Var_{\mbK_{\vartheta;\omega}}(S_N^\omega f)
        =\int_\Omega\pi_\omega''(0)\,\dee\pr(\omega)
        =:\Sigma^2
    \]
    for $\pr$-almost every $\omega$.
    This constant is real and nonnegative, being a limit of normalized variances.
    By ergodicity, $\Sigma$ is independent of $\omega$.
    Moreover, the scalar factors $\lambda_\omega(z)$ were constructed independently of $\vartheta$, and the error in the variance identity is uniform in the initial state $\rho_0$. 
    Hence, the same $\Sigma$ applies to every measurable initial state $\vartheta$.

    Finally, suppose $\Sigma>0$ and put $\sigma_N(\omega):= \|\bar S_N^\omega f\|_{L^2(\mbK_{\vartheta;\omega})}$.
    Then $\sigma_N(\omega)/\sqrt N\to\Sigma$. By \Cref{lem:HW-hypotheses-posterior} and \cite[Theorem~2.2(i)]{HW25}, the standardized  Berry--Esseen error is
    $O(\sigma_N(\omega)^{-1})=O(N^{-1/2})$.
    The stated CLT follows from this bound and $\sigma_N(\omega)/\sqrt N\to\Sigma$.
    
\end{proof}


\subsection{Proof of \texorpdfstring{\Cref{thm:quenched_berry_esseen_posterior}}{Theorem~4}}


Throughout this subsection, we work under the hypotheses of \Cref{thm:quenched_berry_esseen_posterior}.
We use the instrument process $(\mathbf I_{\theta^j\omega})_{j\in\mbZ}$ and its past and
future $\sigma$-algebras from \Cref{subsection:canonical-coding-instrument-process}.
In particular, \Cref{assumption_mix} gives
\[
    \sum_{n=1}^{\infty}\phi^{\mathrm{inst}}(n)<\infty,
\]
and $K_\omega$ and $f_\omega$ depend only on the current one-step instrument in the sense of \Cref{def:depends-only-current-instrument}.

First, by applying the results in \cite[Section 3.1]{YH_JSP}, we get the following result.

\begin{lemma}
\label{lem:quenched-variance-rate}
In the circumstances of \Cref{thm:quenched_berry_esseen_posterior}, there exists a  deterministic constant $\Sigma\ge0$ such that, for every measurable initial state $\vartheta$, for $\pr$-almost every $\omega$,
\[
    \left|
        \frac1N\Var_{\mbK_{\vartheta;\omega}}(S_N^\omega f)
        -\Sigma^2
        \right|
    =O(N^{-1/2}\log N).
\]
\end{lemma}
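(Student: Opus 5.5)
We build on the variance decomposition from the proof of \Cref{thm:quenched_clt_posterior}. That argument never used ergodicity before its last step; only the Doeblin bounds uniform in \(\omega\) were needed. It gives, uniformly in the initial state and in \(\omega\in\Omega_*\),
\[
    \Var_{\mbK_{\vartheta;\omega}}(S_N^\omega f)
    =\sum_{j=0}^{N-1}\pi''_{\theta^j\omega}(0)+O(1),
\]
where \(\pi_\omega(z)=\log\lambda_\omega(z)\) and \(\lambda_\omega\) is the measurable limit \eqref{eq:quenched-measurable-eigenvalue}. The lemma therefore reduces to a rate in the ergodic theorem for the bounded function \(g(\omega):=\pi_\omega''(0)\). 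We set \(\Sigma^2:=\int_\Omega g\,\dee\pr\) and aim to show that
\[
    \frac1N\sum_{j=0}^{N-1}\bigl(g(\theta^j\omega)-\Sigma^2\bigr)=O(N^{-1/2}\log N)
    \qquad \text{for \(\pr\)-a.e. }\omega .
\]
Nonnegativity of \(\Sigma^2\) is automatic, since it is a limit of normalized variances. \(\Sigma^2\) is the integral of \(g\) and is independent of \(\vartheta\), so it is the same constant as in \Cref{thm:quenched_clt_posterior} whenever that theorem applies.

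The key structural step is to show that \(g\) is well approximated by functions of finitely many instrument coordinates. By \Cref{prop:descent-to-coded-shift}, \(K_{\theta^{-k}\omega}\) and \(f_{\theta^{-k}\omega}\) depend only on \(\mathbf I_{\theta^{-k}\omega}\). Hence the ratio
\[
    \frac{[K^{(m+1)}_{\theta^{-m}\omega,z}\mathbf1](\rho_*)}{[K^{(m)}_{\theta^{-m}\omega,z}\mathbf1](\rho_*)}
\]
defining \(\lambda_\omega(z)\) is measurable with respect to \(\sigma(\mcG_j^{\mathrm{inst}}:-m\le j\le 1)\), because of the twist by \(f_{\theta\cdot}\). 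It converges to \(\lambda_\omega(z)\) uniformly on \(|z|\le r_0\), with error \(O((\delta_0/a)^m)\), uniformly in \(\omega\). By Cauchy's estimates the second derivatives of the logarithms inherit the same exponential rate. So there are functions \(g_m\), measurable with respect to \(\mcF^{\mathrm{inst},-}_1\cap\mcF^{\mathrm{inst},+}_{-m}\), with \(\|g-g_m\|_\infty\le C\eta^m\) for some \(\eta<1\). In particular \(g\) is \(\mcF^{\mathrm{inst},-}_1\)-measurable, and it is an exponentially good local approximation of a stationary \(\phi\)-mixing process.

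With this structure, we follow \cite[Section~3.1]{YH_JSP}. Write \(\bar g=g-\Sigma^2\) and \(\mcA_k:=\mcF^{\mathrm{inst},-}_{k+1}\). Since \(\sum_n\phi^{\mathrm{inst}}(n)<\infty\) and \(\bar g\) is bounded, approximable and centred, the standard bound \(|\mbE[\bar g\circ\theta^{n}\mid\mcA_0]|\le 2\phi^{\mathrm{inst}}(n-m)\|g\|_\infty+2C\eta^m\), optimized over \(m\), makes the Gordin series
\[
    \chi:=\sum_{n\ge1}\mbE[\bar g\circ\theta^n\mid\mcA_0]
\]
converge in \(L^\infty\). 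We then get \(\bar g=D+\chi-\chi\circ\theta\) with \(D\) a bounded reverse or forward martingale difference with respect to the filtration \(\mcA_k\). The coboundary contributes \(O(1/N)\). For \(\sum_{j<N}D\circ\theta^j\), Azuma--Hoeffding gives tail bounds \(\exp(-t^2/(2N\|D\|_\infty^2))\). Taking \(t=c\sqrt N\log N\) along a dyadic or polynomial subsequence, Borel--Cantelli, and boundedness of increments to fill the gaps, we obtain the almost sure bound \(O(\sqrt N\log N)\) for the martingale part. Dividing by \(N\) and adding the \(O(1/N)\) error from the variance identity gives the claim on a full-measure set, which we may take \(\theta\)-invariant by the null-set convention.

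The main obstacle is the measurability and uniformity bookkeeping in the approximation step. We must check that the spectral data of \cite[Theorem~D.2]{DH_PTRF} along each orbit coincide with the pointwise-in-\(\omega\) limit defining \(\lambda_\omega\), so that \(g_m\) is a genuine measurable function of the coded instrument block. The constants \(r_0,a,\delta_0,C_0\) must be uniform in \(\omega\), which the earlier proof asserts. If the martingale route runs into trouble with the non-invertibility of the filtration indexing, the fallback is to bound \(\mbE|\sum_{j<N}\bar g\circ\theta^j|^{4}=O(N^2)\) directly, using \(\phi\)-mixing covariance inequalities with the block approximation \(g_m\), \(m\approx\log N\). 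Combined with a maximal inequality and Borel--Cantelli along \(N=2^k\), this also yields the \(O(N^{-1/2}\log N)\) rate.
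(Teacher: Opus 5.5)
Your proposal is correct and follows essentially the paper's route. That route has four steps: the identity $\Var_{\mbK_{\vartheta;\omega}}(S_N^\omega f)=\sum_{j<N}\pi''_{\theta^j\omega}(0)+O(1)$ taken from the proof of \Cref{thm:quenched_clt_posterior} before ergodicity is used; exponentially accurate approximations of $\pi''_\omega(0)$ by functions of the instrument block $\sigma(\mcG_j^{\mathrm{inst}}:-m\le j\le 1)$, obtained from the finite ratios and Cauchy's estimates; a Gordin martingale--coboundary decomposition made summable by $\sum_n\phi^{\mathrm{inst}}(n)<\infty$; and Azuma--Hoeffding with Borel--Cantelli.

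Three cosmetic repairs are needed. With your forward filtration $\mcA_k=\mcF^{\mathrm{inst},-}_{k+1}$, the decomposition should read $\bar g=D+\chi\circ\theta^{-1}-\chi$ with $D=\bar g+\chi-\chi\circ\theta^{-1}$; your sign does not give a martingale difference. Borel--Cantelli can be applied over all $N$, because $\sum_N e^{-c(\log N)^2}<\infty$; along a dyadic subsequence, bounded increments would not fill the gaps at scale $\sqrt N\log N$. Finally, take the current-instrument dependence of $K$ and $f$ directly from the hypotheses of \Cref{thm:quenched_berry_esseen_posterior}, since \Cref{prop:descent-to-coded-shift} also requires it for $\lambda_\omega$, which that theorem does not assume.
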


    \begin{proof}
        The spectral argument in the preceding proof of \Cref{thm:quenched_clt_posterior}, before its application of Birkhoff's theorem, does not require ergodicity.
        It gives a bounded real function $q(\omega):=\pi_\omega''(0)$ such that
        \[
            \Var_{\mbK_{\vartheta;\omega}}(S_N^\omega f)
            =\sum_{j=0}^{N-1}q(\theta^j\omega)+O(1),
        \]
        uniformly in the initial state.
        
        Now we adapt the approximation and martingale argument of \cite[Section~3.1]{YH_JSP}.
        Let $R_m(\omega,z)$ denote the finite ratio in \eqref{eq:quenched-measurable-eigenvalue}, and put $q_m(\omega):=(\log R_m(\omega,\cdot))''(0)$.
        By current-instrument dependence, $q_m$ is measurable with respect to $\sigma(\mcG_j^{\mathrm{inst}}:-m\le j\le1)$.
        The uniform convergence of the ratios and Cauchy's estimates give, for some $\tau\in(0,1)$,
        \[
            \|q-q_m\|_{L^\infty(\pr)}\le C\tau^m.
        \]
        
        Choose the $\mcF_1^{\mathrm{inst},-}$-measurable version of $q$ obtained from the approximations $q_m$.
        Put $Y_j:=q\circ\theta^{j-1}-\mbE_{\pr} q$, for $j\in\mbZ$.
        Then $Y_j$ is $\mcF_j^{\mathrm{inst},-}$-measurable.
        For $k\ge3$, take $m=\lfloor(k-1)/2\rfloor$.
        The preceding approximation and the conditional-expectation bound for $\phi$-mixing give
        \[
            \norm{
                \mbE_{\pr}[Y_k\mid\mcF_0^{\mathrm{inst},-}]
                }_{L^\infty(\pr)}
            \le C\left(\tau^m+\phi^{\mathrm{inst}}(k-1-m)\right).
        \]
        Hence these norms are summable by \Cref{assumption_mix}.
        Consequently,
        \[
            U_j:=\sum_{k\ge1}
                \mbE_{\pr}[Y_{j+k}\mid\mcF_j^{\mathrm{inst},-}],
            \qquad
            D_j:=Y_j+U_j-U_{j-1}
        \]
        are uniformly bounded, and $(D_j)$ is a martingale-difference sequence with respect to $(\mcF_j^{\mathrm{inst},-})_{j\in\mbZ}$.
        Since
        \[
            \sum_{j=1}^N Y_j
            =\sum_{j=1}^N D_j+U_0-U_N,
        \]
        Azuma--Hoeffding's inequality and the Borel--Cantelli lemma yield
        \[
            \sum_{j=0}^{N-1}q(\theta^j\omega)-N\mbE_{\pr} q
            =O(\sqrt N\log N)
        \]
        almost surely.
        Combining this with the variance identity proves the claim with $\Sigma^2:=\mbE_{\pr} q$.
        This constant is independent of the initial state and is nonnegative, being a limit of normalized variances.
    \end{proof}

Next, we need the following lemma. 

\begin{lemma}
\label{lem:change-of-normalization}
    Let $(S_N)$ be a centered sequence of random variables such that $\sigma_N:=\norm{S_N}_{L^2(\mbP)}\to \infty$. Suppose that $|\sigma_N^2/N-1|\leq CN^{-1/2}\log(N)$ and that 
    $$
    \sup_{t\in\mbR}\left|F_N(t)-\Phi(t)\right|=O(N^{-1/2})
    $$
    where $F_N(t)=\mbP(S_N/\sigma_N\leq t)$. Set $G_N(t)=\mbP(S_N/\sqrt N\leq t)$. Then 
    $$
    \sup_{t\in\mbR}\left|G_N(t)-\Phi(t)\right|=O(N^{-1/2}(\log(N))^{3/2}).
    $$
\end{lemma}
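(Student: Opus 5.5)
The plan is to reduce $G_N$ to $F_N$ by a deterministic rescaling of the argument, and then control the difference $\Phi(t a_N)-\Phi(t)$ using a smoothness estimate for the Gaussian distribution function. Set $a_N:=\sqrt N/\sigma_N$. Then
\[
    G_N(t)=\mbP\!\left(S_N/\sigma_N\le t\sqrt N/\sigma_N\right)=F_N(ta_N).
\]
Consequently,
\[
    \sup_{t}|G_N(t)-\Phi(t)|
    \le
    \sup_{s}|F_N(s)-\Phi(s)|+\sup_{t}|\Phi(ta_N)-\Phi(t)|.
\]
By hypothesis, the first term is $O(N^{-1/2})$, and it is absorbed into the claimed rate. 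The whole task is therefore to bound the second term by $O(N^{-1/2}(\log N)^{3/2})$.

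The first step is to estimate $|a_N-1|$. The assumption $|\sigma_N^2/N-1|\le CN^{-1/2}\log N$ gives $|a_N^{-2}-1|\le CN^{-1/2}\log N$. For $N$ large this quantity is at most $1/2$, so $a_N\in[1/2,2]$ and
\[
    |a_N-1|
    =\frac{|1-a_N^{-2}|\,a_N^2}{1+a_N}
    \le C'N^{-1/2}\log N.
\]

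The second step is the standard bound $\sup_t|\Phi(at)-\Phi(t)|\le c|a-1|$, valid for $a$ in a compact subinterval of $(0,\infty)$. It follows from the mean value theorem: $|\Phi(at)-\Phi(t)|\le |a-1|\,|t|\,\varphi(\xi t)$ for some $\xi$ between $1$ and $a$, and $\sup_t|t|e^{-\xi^2t^2/2}\le 1/\xi\le 2$. This actually yields the sharper bound $O(N^{-1/2}\log N)$, which implies the stated $O(N^{-1/2}(\log N)^{3/2})$ rate.

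The argument contains no real obstacle. The only care needed is to restrict to $N$ large enough that $a_N$ lies in a fixed compact interval; finitely many small $N$ are absorbed into the implicit constant, since all distribution-function differences are bounded by $1$. If one preferred to avoid the mean value argument, one could instead split $|t|\le\sqrt{\log N}$ from $|t|>\sqrt{\log N}$. On the first region the Lipschitz bound $|t||a_N-1|\varphi$ gives $(\log N)^{3/2}N^{-1/2}$, and on the second region Gaussian tails are $O(N^{-1/2})$; this is presumably the source of the exponent $3/2$ in the statement.
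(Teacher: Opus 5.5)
Your proof is correct. It shares the paper's first step, \(G_N(t)=F_N(ta_N)\) with \(a_N=\sqrt N/\sigma_N\), which reduces everything to bounding \(\sup_t|\Phi(a_Nt)-\Phi(t)|\).

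The difference lies in how that term is bounded. The paper uses only the crude Lipschitz estimate \(|\Phi(a_Nt)-\Phi(t)|\le|t|\,|a_N-1|\), which needs just \(\|\varphi\|_\infty\le1\). It then has to split:
\begin{itemize}
\item for \(|t|\ge c\sqrt{\log N}\), a Gaussian tail bound gives \(1-\Phi(|t|/2)\le Ce^{-t^2/8}\le CN^{-c^2/8}\);
\item for \(|t|<c\sqrt{\log N}\), the Lipschitz bound gives \(c\sqrt{\log N}\cdot CN^{-1/2}\log N\).
\end{itemize}
As you guessed, this split is exactly where the exponent \(3/2\) comes from.

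Your mean-value argument instead uses the decay of \(\varphi\) to absorb the factor \(|t|\) uniformly in \(t\). One wording fix is needed: \(\xi\) depends on \(t\), so \(\sup_t|t|e^{-\xi^2t^2/2}\le1/\xi\) should not be written as if \(\xi\) were fixed. Argue instead through the uniform lower bound \(\xi\ge\min\{1,a_N\}\ge1/2\) and the monotonicity of \(\varphi\) in \(|x|\). This gives \(|t|\varphi(\xi t)\le|t|\varphi(t/2)\le2\sup_s|s|\varphi(s)\), hence \(\sup_t|\Phi(a_Nt)-\Phi(t)|\le c\,|a_N-1|\).

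What your route buys is a strictly better rate, \(O(N^{-1/2}\log N)\). It shows that the extra \(\sqrt{\log N}\) in the lemma, and therefore in the quenched Berry--Esseen bound with deterministic normalization \(\Sigma\sqrt N\), is an artifact of the split rather than something intrinsic. The paper's route is slightly cruder, but it needs nothing beyond boundedness of \(\varphi\) and a Gaussian tail estimate.
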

    \begin{proof}
    We have 
    $$
    \Phi(t\sqrt N/\sigma_N)-CN^{-1/2}
    \leq G_N(t)
    =F_N(t\sqrt N/\sigma_N)\leq \Phi(t\sqrt N/\sigma_N)+CN^{-1/2}.
    $$
    Thus, it is enough to show that 
    $$
    \sup_{t\in\mbR}|\Phi(t\sqrt N/\sigma_N)-\Phi(t)|=O(N^{-1/2}(\log(N))^{3/2}).
    $$

    Since $\sigma_N^2/N\to1$, we have $\sqrt N/\sigma_N\geq1/2$ for all sufficiently large $N$.
    If $|t|\geq c(\log(N))^{1/2}$, then
    \[
        |\Phi(t\sqrt N/\sigma_N)-\Phi(t)|
        \leq 1-\Phi(|t|/2)
        \leq C e^{-t^2/8}
        \leq C N^{-c^2/8}.
    \]
    Hence the required estimate holds if $c$ is sufficiently large.
    If $|t|< c(\log(N))^{1/2}$ then we take $u_N=\frac{N}{\sigma_N^2}$.
    Since $\sigma_N^2/N\to1$, the assumed variance estimate gives $|u_N-1|\leq CN^{-1/2}\log N$ for all sufficiently large $N$.
    Therefore, 
    $$
    |\Phi(t\sqrt N/\sigma_N)-\Phi(t)|\leq |t||\sqrt N/\sigma_N-1|=|t|\left|\frac{u_N-1}{\sqrt{u_N}+1}\right|\leq C|t|N^{-1/2}\log(N).
    $$
    This completes the proof.
\end{proof}

\Cref{thm:quenched_berry_esseen_posterior} follows by combining the above results.

\quenchedberryesseenposterior*
\begin{proof}
    By \Cref{lem:quenched-variance-rate}, the variance limit exists and equals a deterministic constant $\Sigma^2$,
    independent of the initial state.
    Fix a measurable initial state $\vartheta$ and work on  a common measurable set $\Omega_0$ of full $\pr$-measure
    on which \Cref{lem:quenched-variance-rate,lem:HW-hypotheses-posterior} hold.
    Suppose $\Sigma>0$, and put $T_N:=\bar S_N^\omega f/\Sigma$.
    The first lemma above gives
    \[
        \left|
        \frac{\norm{T_N}_{L^2(\mbK_{\vartheta;\omega})}^2}{N}-1
        \right|
        =O(N^{-1/2}\log N),
    \]
    and hence $\norm{T_N}_{L^2(\mbK_{\vartheta;\omega})}\to\infty$.
    By \Cref{lem:HW-hypotheses-posterior} and \cite[Theorem~2.2(i)]{HW25}, the standardized
    Berry--Esseen error for $T_N$ is $O(N^{-1/2})$.
    Applying \Cref{lem:change-of-normalization} to $T_N$  under $\mbK_{\vartheta;\omega}$ therefore gives the asserted bound.

    Whenever \Cref{thm:quenched_clt_posterior} applies,  the constants $\Sigma$ coincide by uniqueness of the variance limit.
\end{proof}


\section{Annealed limit theorems: proof of \texorpdfstring{\Cref{thm:annealed_alpha_mixing}}{Theorem~5}}

    Let $(\mu_\omega)_{\omega\in\Omega}$ be the measurable equivariant family of probability measures from \Cref{thm:s_exists}. Thus $\mu_\omega K_\omega=\mu_{\theta\omega}$, $\pr$-almost surely. 
    Recall that $\kappa=\bar\kappa\circ\Pi^{-1}$, where $\Pi(\omega,y)=(\iota(\omega),y)$.
    We therefore compute probabilities of coded events using the original base and the quenched laws $\bar\kappa_\omega$.

    By \eqref{eq:quenched-forgetting-composition}, there exist constants $C>0$ and $\delta\in(0,1)$ such that
    \[
        \left\|
            K_\omega^{(n)}g-\mu_{\theta^n\omega}(g)\mathbf1
        \right\|_\infty
        \le C\delta^n\|g\|_\infty
    \]
    for every $n\ge1$, every $\omega\in\Omega_*$, and every bounded measurable function $g:\states\to\mbC$.

    Fix $\rho_*\in\states$ and put
    \[
        \mu_\omega^{(r)}
        :=\delta_{\rho_*}K_{\theta^{-r}\omega}^{(r)}.
    \]
    Applying the preceding estimate at $\theta^{-r}\omega$ gives
    \begin{equation}
    \label{mu approx}
        \|\mu_\omega^{(r)}-\mu_\omega\|_{\mathrm{TV}}
        \le C\delta^r.
    \end{equation}
    By the current-instrument dependence of $K_\omega$, the probability measure $\mu_\omega^{(r)}$ depends only on the instruments at times $-r,\ldots,-1$.
    Thus \eqref{mu approx} approximates $\mu_\omega$ by a function of a finite block of past instruments.

    For $k\le\ell$, denote by $\kappa_{\omega,k,\ell}$ the marginal of $\bar\kappa_\omega$ on state coordinates $k,\ldots,\ell$.
    Its finite-dimensional distribution is
    \[
        \kappa_{\omega,k,\ell}
           (\dee\rho_k,\ldots,\dee\rho_\ell)
        =
        \mu_{\theta^k\omega}(\dee\rho_k)
        \prod_{j=k}^{\ell-1}
           K_{\theta^j\omega}(\rho_j,\dee\rho_{j+1}).
    \]
    In particular, under the natural identification of coordinates,
    \[
        \kappa_{\omega,k,\ell}
        =\kappa_{\theta^k\omega,0,\ell-k}.
    \]

    For $A\in\mcH_{k,\ell}$, put
    \[
        A_\omega:=\{y\in\mcY:(\iota(\omega),y)\in A\},
        \qquad
        p_A(\omega):=\bar\kappa_\omega(A_\omega).
    \]
    Then $0\le p_A\le1$ and
    $\kappa(A)=\mbE_{\pr}p_A$.
    For integers $a\le b$, also write
    \[
        \mcF_{a,b}^{\mathrm{inst}}
        :=\sigma(\mcG_j^{\mathrm{inst}}:a\le j\le b).
    \]
    
    Using \eqref{mu approx}, we obtain the following result.

    \begin{lemma}
    \label{ApLemma}
        There exist constants $C>0$ and $\eta\in(0,1)$ such that, for every $r\ge1$,
        \[
            \sup_{k\le\ell}\sup_{A\in\mcH_{k,\ell}}
            \left\|
                p_A-\mbE_{\pr}\!\left[
                    p_A\mid\mcF_{k-r,\ell+r}^{\mathrm{inst}}
                \right]
            \right\|_{L^\infty(\pr)}
            \le C\eta^r.
        \]
\end{lemma}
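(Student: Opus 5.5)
The plan is to construct, for each $A\in\mcH_{k,\ell}$ and $r\ge1$, an explicit $\mcF_{k-r,\ell+r}^{\mathrm{inst}}$-measurable function $p_A^{(r)}$ with $\|p_A-p_A^{(r)}\|_{L^\infty(\pr)}\le C\delta^r$. This suffices: conditional expectation is an $L^\infty$-contraction that fixes $\mcF_{k-r,\ell+r}^{\mathrm{inst}}$-measurable functions, so
\[
    \|p_A-\mbE_{\pr}[p_A\mid\mcF_{k-r,\ell+r}^{\mathrm{inst}}]\|_{L^\infty(\pr)}
    \le\|p_A-p_A^{(r)}\|_{L^\infty(\pr)}+\|\mbE_{\pr}[p_A^{(r)}-p_A\mid\mcF_{k-r,\ell+r}^{\mathrm{inst}}]\|_{L^\infty(\pr)}
    \le 2C\delta^r,
\]
and we may take $\eta=\delta$.

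The natural candidate for $p_A^{(r)}$ replaces the equivariant initial law $\mu_{\theta^k\omega}$ in the finite-dimensional formula for $\kappa_{\omega,k,\ell}$ by the truncated law $\mu^{(r)}_{\theta^k\omega}=\delta_{\rho_*}K^{(r)}_{\theta^{k-r}\omega}$. First, since $A\in\mcH_{k,\ell}$, the section $A_\omega$ depends on $\omega$ only through $(\mathbf I_{\theta^j\omega})_{k\le j\le\ell}$ and on $y$ only through $(y_k,\dots,y_\ell)$. Hence
\[
    p_A(\omega)=\int \mathbf 1_{A}\big((\mathbf I_{\theta^j\omega})_{j},(\rho_j)_{j}\big)\,\mu_{\theta^k\omega}(\dee\rho_k)\prod_{j=k}^{\ell-1}K_{\theta^j\omega}(\rho_j,\dee\rho_{j+1}),
\]
where we regard $\mathbf 1_A$ as a measurable function of the finitely many coordinates, which is possible by Doob--Dynkin. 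Define $p_A^{(r)}$ by the same integral with $\mu_{\theta^k\omega}$ replaced by $\mu^{(r)}_{\theta^k\omega}$. By \eqref{mu approx} applied at $\theta^k\omega$ (valid on the invariant full-measure set $\Omega_*$), and since the integrand $\rho_k\mapsto\int\mathbf 1_A\prod K$ is bounded by $1$, we get $|p_A(\omega)-p_A^{(r)}(\omega)|\le\|\mu^{(r)}_{\theta^k\omega}-\mu_{\theta^k\omega}\|_{\mathrm{TV}}\le C\delta^r$, uniformly in $k,\ell,A$.

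It remains to check that $p_A^{(r)}$ is $\mcF_{k-r,\ell}^{\mathrm{inst}}\subseteq\mcF_{k-r,\ell+r}^{\mathrm{inst}}$-measurable. The kernels involved are $K_{\theta^j\omega}$ for $k-r\le j\le\ell-1$. By the current-instrument dependence of $K$ (\Cref{prop:descent-to-coded-shift}, or \Cref{rem:current-instrument-dependence}), $K_{\theta^j\omega}=\overline K_{\mathbf I_{\theta^j\omega}}$ for a measurable kernel $\overline K$ on $\mathsf{Instr}\times\states$. Therefore $p_A^{(r)}(\omega)=G((\mathbf I_{\theta^j\omega})_{k-r\le j\le\ell})$, where $G$ is a finite iterated integral of a jointly measurable bounded integrand against measurable kernels. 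The standard kernel-integration measurability theorem, as used in \Cref{prop:Gamma-K-are-kernels}, then gives measurability of $G$, and hence the required measurability of $p_A^{(r)}$.

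The main obstacle is null-set bookkeeping. The identity $K_\omega=\overline K_{\mathbf I_\omega}$ holds only $\pr$-a.s., so $p_A^{(r)}$ agrees with an $\mcF^{\mathrm{inst}}_{k-r,\ell}$-measurable function only off a null set, which must be independent of $A$, $k$, $\ell$ and $r$. We handle this by intersecting over all $\theta$-translates, a countable family, to obtain one invariant full-measure set; since only $L^\infty(\pr)$ norms appear, this suffices. One also needs the constant in \eqref{mu approx} to be uniform, which holds by \eqref{eq:quenched-forgetting-composition}.
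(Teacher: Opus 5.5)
Your proposal is correct and follows essentially the same route as the paper: you replace the equivariant initial law by the truncated law $\mu^{(r)}_{\theta^k\omega}=\delta_{\rho_*}K^{(r)}_{\theta^{k-r}\omega}$ and bound the error by $C\delta^r$ via \eqref{mu approx}, using that the integrand lies in $[0,1]$. You then note that the truncated probability is $\mcF^{\mathrm{inst}}_{k-r,\ell}$-measurable by current-instrument dependence, and conclude with the triangle inequality and $L^\infty$-contractivity of conditional expectation, taking $\eta=\delta$. Your extra Doob--Dynkin and null-set bookkeeping is harmless, and largely unnecessary given the $\mcG_0^{\mathrm{inst}}$-measurable density realization fixed in \Cref{rem:current-instrument-measurable-realization}.
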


\begin{proof}
    Fix  $k\le\ell$, $A\in\mcH_{k,\ell}$, and $r\ge1$.
    Define the probability measure
    \[
        \kappa_{\omega,k,\ell}^{(r)}
            (\dee\rho_k,\ldots,\dee\rho_\ell)
        :=
        \mu_{\theta^k\omega}^{(r)}(\dee\rho_k)
        \prod_{j=k}^{\ell-1}
            K_{\theta^j\omega}(\rho_j,\dee\rho_{j+1}),
    \]
    where $\mu_{\theta^k\omega}^{(r)}=\delta_{\rho_*}K_{\theta^{k-r}\omega}^{(r)}$.
    Regarding $A_\omega$, defined above, as an event on the state coordinates $k,\ldots ,\ell$ we define $p_{A,r}(\omega) :=\kappa_{\omega,k,\ell}^{(r)}(A_\omega)$. 

    For $\omega\in\Omega$, set
    \[
        h_{A,\omega}(\rho_k)
            :=
            \int_{\states^{\ell-k}}
                \mathbf1_{A_\omega}(\rho_k,\ldots,\rho_\ell)
                \prod_{j=k}^{\ell-1}
                    K_{\theta^j\omega}(\rho_j,\dee\rho_{j+1}).
    \]
    Then we have that $0\le h_{A,\omega}\le 1$ and that 
    \[
        p_A(\omega)
            =\int_{\states}h_{A,\omega}\,\dee\mu_{\theta^k\omega},
            \qquad
            \text{ and }
            \qquad 
         p_{A,r}(\omega)
            =\int_{\states}h_{A,\omega}\,
                \dee\mu_{\theta^k\omega}^{(r)}.
    \]
    Thus, by \eqref{mu approx} we see that 
    \begin{align*}
        \left|p_A(\omega)-p_{A,r}(\omega)\right|
        & = \left|
                \int_{\states}h_{A,\omega}\,
                    \dee\left(\mu_{\theta^k\omega}
                  -\mu_{\theta^k\omega}^{(r)}\right)
            \right|
        \\
        &\le 
            \norm{
                \mu_{\theta^k\omega}-\mu_{\theta^k\omega}^{(r)}
            }_{\mathrm{TV}}\le C\delta^r,
    \end{align*}
    for $\pr$-almost every $\omega$.
    Therefore, 
    \[
        \norm{p_A-p_{A,r}}_{L^\infty(\pr)}
        \le C\delta^r.
    \]
    This bound is uniform in the block length and in $A$, because $0\le h_{A,\omega}\le1$ in every case.

    By current-instrument dependence, $p_{A,r}$ is measurable with respect to $\mcF_{k-r,\ell}^{\mathrm{inst}}$:
    the transitions use instruments from $k-r$ through $\ell-1$, and the event $A$ uses instrument coordinates from $k$ through $\ell$.
    In particular, $p_{A,r}$ is $\mcF_{k-r,\ell+r}^{\mathrm{inst}}$-measurable.
    Hence
    \[
    \begin{aligned}
        \left\|
            p_A-\mbE_{\pr}\!\left[
                p_A\mid\mcF_{k-r,\ell+r}^{\mathrm{inst}}
            \right]
         \right\|_{L^\infty(\pr)}
        &\le
        \|p_A-p_{A,r}\|_{L^\infty(\pr)}
        +
        \left\|
            \mbE_{\pr}\!\left[
                p_{A,r}-p_A
                \mid\mcF_{k-r,\ell+r}^{\mathrm{inst}}
            \right]
        \right\|_{L^\infty(\pr)}
        \\
        &\le 2C\delta^r.
    \end{aligned}
    \]
    This proves the claim, with $\eta=\delta$ and a suitable constant $C$.
\end{proof}

We now prove \Cref{thm:annealed_alpha_mixing}.

\annealedalphamixing*

\begin{proof}
    First, take finite-block events $A\in\mcH_{k,\ell}$ and $B\in\mcH_{k_1,\ell_1}$, where
    $k_1\ge\ell+m$.
    Integrating \eqref{eq:quenched-forgetting-composition} over  $\omega$, we obtain
    \[
        \left|
        \kappa(A\cap B)-\mbE_{\pr}[p_Ap_B]
        \right|
        \le C\delta^m\kappa(A).
    \]
    On the other hand, for $m\ge3$ and taking $r=\lfloor m/3\rfloor$. 
    Applying \Cref{ApLemma} twice, and using the definition of $\alpha^{\mathrm{inst}}$, we obtain
    \[
        \left|
            \mbE_{\pr}[p_Ap_B]-\kappa(A)\kappa(B)
        \right|
        \le 2C\eta^r+\alpha^{\mathrm{inst}}(r).
    \]
    Combining these estimates gives
    \[
        |\kappa(A\cap B)-\kappa(A)\kappa(B)|
        \le
        C\delta^m+2C\eta^{\lfloor m/3\rfloor}
        +\alpha^{\mathrm{inst}}(\lfloor m/3\rfloor).
    \]
    
    Now take $\zeta:=\max\{\delta,\eta^{1/3}\}\in(0,1)$.
    The first two terms are bounded by $C_1\zeta^m$  for a suitable constant $C_1$.
    A monotone-class argument extends the above estimate to $A\in\mcH_{-\infty,\ell}$ and $B\in\mcH_{\ell+m,\infty}$.
    Taking suprema proves the assertion for $m\ge3$, and enlarging $C_1$ handles $m=1,2$.
\end{proof}


\section*{Acknowledgments}
 LP acknowledges support from Villum Fonden Grant No. 25452 and Grant No. 60842, as well as QMATH Center of Excellence Grant No. 10059. LP was also supported by the Danish e-infrastructure Consortium (DeiC) grant 5260-00014B. 


\begin{appendix}


\section{Examples}
\label{Section:examples}


    In this section, we provide examples of disordered instruments satisfying \Cref{assumption_doeblin}. 
    The conditions in \Cref{assumption_1,assumption_mix} hold, for example, in a two-sided i.i.d.\ environment when each instrument is a measurable function of the current coordinate. 
    We therefore focus on verifying \Cref{assumption_doeblin}. 
    The periodic regeneration models are treated separately and do not satisfy \Cref{assumption_mix}.
    We organize the examples into broad classes and present special cases of each. 
    Some examples are adapted from \cite{luba_clt}.

    Throughout, we write the Doeblin minorization as
    \[
        K_\omega^{(L)}(\rho,\,\cdot\,)\ge b\,\widetilde\lambda_\omega.
    \]
    Here $L\ge1$ is the block length, $b\in(0,1]$ is the minorization mass, and $\widetilde\lambda_\omega$ is a probability measure independent of the initial state $\rho$.
    Both $L$ and $b$ are independent of $\omega$.
    This gives \Cref{assumption_doeblin} by setting \(\lambda_\omega=\widetilde\lambda_{\theta^{-L}\omega}\) and choosing $0<\varepsilon<\min\{b,1\}$.
    All parameter families, including the family $\widetilde\lambda_\omega$, are assumed measurable, and environmental conditions are understood to hold almost surely.


\subsection{Basis transition dynamics}


    Recall that $d\ge2$ and let $(e_i)_{i=1}^d$ be an orthonormal basis.
    Take $\rho^{(i)} = \ket{e_i}\bra{e_i}$ and define $\rho_{i} = \bra{e_i}\rho \ket{e_i}$ for any $\rho\in\mbS_d$. 
    For a random column-stochastic matrix \(\Omega\ni\omega\mapsto Q_\omega = (q_{ki}(\omega))\), consider either of the instruments 
    \begin{equation}
    \label{eq:examples-fine}
        \mcT_{(k,i);\omega}(X)= \left(\sqrt{q_{ki}(\omega)}\,\ket{e_k}\bra{e_i}\right) \, X \, \left(\sqrt{q_{ki}(\omega)}\,\ket{e_k}\bra{e_i}\right)\adj
    \end{equation}
    or
    \begin{equation}
    \label{eq:examples-coarse}
        \widehat{\mcT}_{k;\omega}(X)
        =
        \sum_i\mcT_{(k,i);\omega}(X)
        =
        \left(\sum_iq_{ki}(\omega)X_{ii}\right)\rho^{(k)}.
    \end{equation}

    The first instrument is perfect and records the pair $(k,i)$, with outcome space $\mcA=\{1,\ldots,d\}^2$.
    The second records only $k$ and is obtained by summing over the unobserved index $i$. 
    It is not necessarily perfect, and its outcome space is $\widehat{\mcA}=\{1,\ldots,d\}$. Although their visible records differ, both instruments have the same posterior-state kernel:
    \begin{equation}
    \label{eq:examples-classical-kernel}
         K_\omega(\rho,\,\cdot\,)
         =\sum_k\left(Q_\omega p(\rho)\right)_k\delta_{\rho^{(k)}},
         \qquad p(\rho)=(\rho_{11},\ldots,\rho_{dd})^{\mathsf T}.
    \end{equation}

    The following proposition supports the claims made in the examples in this subsection. 

    \begin{prop}
    \label{prop:posterior-finite-state-reduction}
        Suppose \eqref{eq:examples-classical-kernel} holds with the basis states specified above. 
        For an integer $L\ge1$, set
        \[
         M_\omega=Q_{\theta^{L-1}\omega}\cdots Q_\omega,
         \qquad h_\omega(k)=\min_i M_\omega(k,i),
         \qquad b_\omega=\sum_k h_\omega(k).
        \]
        If $b_\omega\ge b_*>0$ uniformly, then
        \[
         K_\omega^{(L)}(\rho,\,\cdot\,)\ge b_*\widetilde\lambda_\omega,
         \qquad
         \widetilde\lambda_\omega
         =\frac1{b_\omega}\sum_k h_\omega(k)\delta_{\rho^{(k)}}.
        \]
        More generally, if $M_\omega(k,i)\ge b\ell_\omega(k)$ for all $k,i$, where $b>0$ and $\ell_\omega$ is a probability vector, then
        \[
            K_\omega^{(L)}(\rho,\,\cdot\,)
            \ge b\sum_k\ell_\omega(k)\delta_{\rho^{(k)}}
            \qquad\text{for every }\rho\in\mbS_d.
        \]
    \end{prop}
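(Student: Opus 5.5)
The plan is to reduce the $L$-step posterior kernel to the finite classical chain on basis projections, then read off the minorization from the entries of the product matrix $M_\omega$.

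\textbf{Step 1 (one step lands on basis states).} By \eqref{eq:examples-classical-kernel}, for every $\rho$ the measure $K_\omega(\rho,\cdot)$ is supported on $\{\rho^{(1)},\dots,\rho^{(d)}\}$. Moreover $p(\rho^{(i)})=e_i$, so from a basis state
\[
    K_\omega(\rho^{(i)},\cdot)=\sum_k Q_\omega(k,i)\,\delta_{\rho^{(k)}} .
\]
Thus after the first step the chain is the classical Markov chain on $\{1,\dots,d\}$ with column-stochastic transition matrices $Q_{\theta^j\omega}$.

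\textbf{Step 2 (the $L$-step kernel).} I would prove by induction on $n\ge1$ that
\[
    K_\omega^{(n)}(\rho,\cdot)=\sum_k\bigl(Q_{\theta^{n-1}\omega}\cdots Q_\omega\,p(\rho)\bigr)_k\,\delta_{\rho^{(k)}} .
\]
The base case $n=1$ is \eqref{eq:examples-classical-kernel}. For the inductive step, use the composition convention $K^{(n+1)}_\omega(\rho,B)=\int K_{\theta^n\omega}(\eta,B)\,K^{(n)}_\omega(\rho,\dee\eta)$. The integral against a finite sum of point masses $\delta_{\rho^{(i)}}$ reduces to a finite sum, and then Step 1 gives exactly multiplication by $Q_{\theta^n\omega}$. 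Taking $n=L$ yields $K_\omega^{(L)}(\rho,\cdot)=\sum_k (M_\omega p(\rho))_k\,\delta_{\rho^{(k)}}$.

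\textbf{Step 3 (minorization).} Since $p(\rho)$ is a probability vector, $\rho_{ii}\ge0$ and $\sum_i\rho_{ii}=1$. Therefore
\[
    (M_\omega p(\rho))_k=\sum_i M_\omega(k,i)\,\rho_{ii}\ge \min_i M_\omega(k,i)=h_\omega(k).
\]
This gives $K_\omega^{(L)}(\rho,\cdot)\ge\sum_k h_\omega(k)\,\delta_{\rho^{(k)}}=b_\omega\widetilde\lambda_\omega\ge b_*\widetilde\lambda_\omega$ as measures, since $b_\omega\ge b_*$. Here $\widetilde\lambda_\omega$ is a probability measure because $b_\omega>0$, and it is measurable in $\omega$ because $h_\omega$ and $b_\omega$ are measurable functions of the measurable $Q$'s.

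For the general statement, the same bound gives $(M_\omega p(\rho))_k\ge b\,\ell_\omega(k)\sum_i\rho_{ii}=b\,\ell_\omega(k)$ directly.

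The only mildly delicate point is the induction in Step 2. One has to use the paper's convention for the order of kernel composition, and the fact that distinct basis projections $\rho^{(k)}$ are distinct points, so point masses are never merged. Neither issue is a real obstacle.
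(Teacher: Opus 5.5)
Your proposal is correct and follows essentially the same route as the paper. Both arguments use $p(\rho^{(i)})$ equal to the $i$-th coordinate vector to obtain $K_\omega^{(L)}(\rho,\cdot)=\sum_k (M_\omega p(\rho))_k\,\delta_{\rho^{(k)}}$, then bound each coefficient using that $p(\rho)$ is a probability vector. The paper leaves your Step 2 induction implicit, and your remark about distinct $\rho^{(k)}$ is not actually needed, since the lower bound holds term by term either way.
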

    \begin{proof}
        Since $(\rho^{(i)})_{jj} = \bra{e_j}\rho^{(i)}\ket{e_j} = |\bra{e_j}\ket{e_i}|^2 = \delta_{i,j}$, we have that $p(\rho^{(i)})=\mathbf e_i$, the $i$-th standard coordinate vector in $\mbR^d$. 
        Therefore 
        \[
            K_\omega^{(L)}(\rho,\,\cdot\,)
            =
            \sum_k\left(M_\omega p(\rho)\right)_k\delta_{\rho^{(k)}}.
        \]
       By the hypothesis above, each coefficient is at least $h_\omega(k)$ in the first case and at least $b\ell_\omega(k)$ in the second, since the coordinates of $p(\rho)$ sum to one: $\sum_i\rho_{ii}=1$.
    \end{proof}

    Applying the criterion in \Cref{prop:posterior-finite-state-reduction} to specific choices of the transition matrix $Q_\omega$ yields the following families of instruments satisfying \Cref{assumption_doeblin}.

    \paragraph{Special cases.}
        Applying \Cref{prop:posterior-finite-state-reduction} to the transition matrices in \Cref{tab:examples-basis-transitions} gives instruments satisfying \Cref{assumption_doeblin}. 
        Each choice admits both realizations \eqref{eq:examples-fine} and \eqref{eq:examples-coarse}.
        The first four rows in the table below use the family
        \begin{equation}
        \label{eq:examples-reset-mixture}
            Q_\omega=(1-\alpha_\omega)I
            +\alpha_\omega v_\omega\mathbf1^{\mathsf T},
            \qquad
            0<\alpha_*\le\alpha_\omega\le1,
        \end{equation}
        where $v_\omega$ is a probability vector.
        Write $\mathbf e_i$ for the $i$-th standard coordinate vector, and let $C$ be the cyclic permutation matrix defined by
        \[
            C\mathbf e_i=\mathbf e_{i+1}\quad(1\le i<d),
            \qquad
            C\mathbf e_d= \mathbf e_1.
        \]
        All parameter and transition-probability bounds below hold uniformly in $\omega$.
        For each row, the displayed lower bound verifies
        \[
            M_\omega(k,i)\ge b\,\ell_\omega(k)
            \qquad\text{for every }k,i.
        \]
        The proposition therefore gives
        \[
            K_\omega^{(L)}(\rho,\,\cdot\,)
            \ge b\,\widetilde\lambda_\omega,
            \qquad
            \widetilde\lambda_\omega
            =\sum_k\ell_\omega(k)\delta_{\rho^{(k)}},
            \qquad \rho\in\mbS_d.
        \]
        
    \begingroup
        \setlength{\tabcolsep}{4pt}
        \renewcommand{\arraystretch}{1.2}
        
        \begin{xltabular}{\linewidth}{
            @{}>{\raggedright\arraybackslash}X c c c c@{}
        }
        \caption{Doeblin bounds for basis transition instruments.}
        \label{tab:examples-basis-transitions}\\
                \toprule
            Model and assumptions
            & $L$
            & \shortstack{Lower bound\\for $M_\omega(k,i)$}
            & $b$
            & $\ell_\omega(k)$\\
            \midrule
            
            \textit{Reset mixture.}
            \eqref{eq:examples-reset-mixture}, with arbitrary $v_\omega$.
            & $1$
            & $\alpha_*v_\omega(k)$
            & $\alpha_*$
            & $v_\omega(k)$\\
            \addlinespace
            
            \textit{Uniform label noise.}
            \eqref{eq:examples-reset-mixture}, with
            $v_\omega=\mathbf1/d$.
            & $1$
            & $\alpha_*/d$
            & $\alpha_*$
            & $1/d$\\
            \addlinespace
            
            \textit{Absorption.}
            \eqref{eq:examples-reset-mixture}, with
            $v_\omega=\mathbf e_1$.
            & $1$
            & $\alpha_*\delta_{k1}$
            & $\alpha_*$
            & $\delta_{k1}$\\
            \addlinespace
            
            \textit{Exact replacement.}
            \eqref{eq:examples-reset-mixture}, with
            $\alpha_\omega=1$ and $v_\omega=\mathbf e_{r(\omega)}$.
            & $1$
            & $\delta_{k,r(\omega)}$
            & $1$
            & $\delta_{k,r(\omega)}$\\
            \midrule
            
            \textit{Complete transitions.}
            $q_{ki}(\omega)\ge\delta>0$ for every $k,i$.
            & $1$
            & $\delta$
            & $d\delta$
            & $1/d$\\
            \addlinespace
            
            \textit{Cyclic keep--switch.}
            $Q_\omega=a_\omega I+(1-a_\omega)C$,
            $a_\omega\in[\eta,1-\eta]$, $0<\eta\le1/2$.
            & $d-1$
            & $\eta^{d-1}$
            & $d\eta^{d-1}$
            & $1/d$\\
            \addlinespace
            
            \textit{Fair three-state cycle.}
            $d=3$ and $Q_\omega=(I+C)/2$.
            & $2$
            & $1/4$
            & $3/4$
            & $1/3$\\
       
            \bottomrule
        \end{xltabular}
    \endgroup

    The first five entrywise bounds follow directly from $Q_\omega$.
    In the cyclic case, every displacement is achieved by $j$ switches and $d-1-j$ stays for some $0\le j\le d-1$.
    Each such walk has probability at least $\eta^{d-1}$, giving the displayed bound.
    For the fair three-state cycle, $Q^2=(I+2C+C^2)/4$, so every entry is at least $1/4$.
    The cyclic length $d-1$ is minimal: for $1\le n<d-1$, every row of the $n$-step transition matrix contains a zero.
    Testing the basis-state inputs therefore forces the mass of any common minorant at each basis state to vanish.

    The same finite-state construction also gives instruments whose transition probabilities are specified through a directed graph.
    We present a general graph construction and then specialize it to Cayley graphs.

    \paragraph{Instruments from directed graphs.}
        Let $\mcG$ be a fixed directed graph on $\{1,\ldots,d\}$.
        Assume that there exists an integer $r\ge1$ such that, for every ordered pair of vertices $i,k$, including $i=k$, there is a directed walk from $i$ to $k$ consisting of exactly $r$ edges, where vertices and edges may be repeated.
        Choose a measurable family of column-stochastic matrices $Q_\omega=(q_{ki}(\omega))$ such that
        \[
            q_{ki}(\omega)\ge\eta>0
            \qquad\text{whenever }i\longrightarrow k
            \text{ is an edge of }\mcG,
        \]
        uniformly in $\omega$.
        As in \eqref{eq:examples-fine} and
        \eqref{eq:examples-coarse}, define
        \begin{align*}
            \mcT_{(k,i);\omega}(X)
            &=q_{ki}(\omega)X_{ii}\rho^{(k)},\\
            \widehat{\mcT}_{k;\omega}(X)
            &=\left(\sum_iq_{ki}(\omega)X_{ii}\right)\rho^{(k)}.
        \end{align*}
        The first instrument records the transition $(i\to k)$; the second records only its destination $k$.
        Both have the posterior-state kernel \eqref{eq:examples-classical-kernel}.
        For any initial vertex $i$ and terminal vertex $k$, choose an $r$-step walk
        \[
            i=i_0\longrightarrow i_1\longrightarrow\cdots
            \longrightarrow i_r=k.
        \]
        Its contribution to the transition product gives
        \[
            M_\omega(k,i)
            \ge
            \prod_{j=0}^{r-1}
            q_{i_{j+1},i_j}(\theta^j\omega)
            \ge\eta^r.
        \]
        Hence \Cref{prop:posterior-finite-state-reduction} yields
        \[
            K_\omega^{(r)}(\rho,\,\cdot\,)
            \ge d\eta^r\,\widetilde\lambda,
            \qquad
            \widetilde\lambda
            =\frac1d\sum_{k=1}^d\delta_{\rho^{(k)}}.
        \]
        Thus both realizations satisfy  \Cref{assumption_doeblin} with block length $L=r$  and minorization mass $b=d\eta^r$.

    \paragraph{Instruments from finite Cayley graphs.}
        Let $G$ be a finite group with $d=|G|\ge2$, and index the orthonormal basis by $g\in G$, writing
        $\rho^{(g)}=\ket{e_g}\bra{e_g}$.
        Let $S\subseteq G$ generate $G$ and contain its identity.
        The moves
        \[
            g\longrightarrow sg,\qquad s\in S,
        \]
        define the directed left Cayley graph, with a loop at each vertex supplied by the identity.
        For each $g\in G$, choose a measurable probability vector $(p_\omega(s\mid g))_{s\in S}$ satisfying
        \[
            p_\omega(s\mid g)\ge\eta>0,
            \qquad
            \sum_{s\in S}p_\omega(s\mid g)=1,
        \]
        uniformly in $\omega$ and $g$.
        Define the transition matrix by
        \[
            q_{h,g}(\omega)
            =\sum_{s\in S}
              p_\omega(s\mid g)\mathbf1_{\{h=sg\}}.
        \]
        The outcome spaces are $\mcA=G\times G$ for the perfect realization and $\widehat{\mcA}=G$ for the coarse realization. Set $\mcT_{(h,g);\omega}=0$ whenever $hg^{-1}\notin S$.
        The remaining perfect branches and the coarse branches are, respectively,
        \[
            \mcT_{(sg,g);\omega}(X)
            =p_\omega(s\mid g)X_{gg}\rho^{(sg)},
            \qquad s\in S,\quad g\in G,
        \]
       and
        \[
            \widehat{\mcT}_{h;\omega}(X)
            =\left(\sum_{g\in G}q_{h,g}(\omega)X_{gg}\right)
              \rho^{(h)}.
        \]
        Since $G$ is finite and $S$ generates $G$, the directed Cayley graph is strongly connected.
        Let $r$ be the directed diameter of this Cayley graph.
        Then every ordered pair of vertices is connected by a directed walk of length at most $r$.
        Since the identity belongs to $S$, shorter walks can be extended to exactly $r$ steps by holding at their terminal vertex.
        The preceding graph argument therefore gives
        \[
            K_\omega^{(r)}(\rho,\,\cdot\,)
            \ge |G|\eta^r\,\widetilde\lambda,
            \qquad
            \widetilde\lambda
            =\frac1{|G|}\sum_{h\in G}\delta_{\rho^{(h)}}.
        \]
        Thus both realizations satisfy \Cref{assumption_doeblin} with block length $L=r$ and minorization mass $b=|G|\eta^r$.

    \medskip
        
    We next show that the preceding constructions admit continuum outcome realizations with the same posterior-state kernels, and hence the same block lengths and minorization masses.

    \paragraph{Continuum outcomes.}
        The same criterion applies without changing the posterior state space.
        Let $(\Xi,\mcX)$ be a standard Borel outcome space, and let $\nu_\omega$ be a reference probability measure in the density representation of the instrument, as in \Cref{prop:instrument-density-representation}.
        Let measurable sets $E_{ki}\subseteq\Omega\times\Xi$ have sections
        \[
            E_{ki,\omega}:=\{x\in\Xi:(\omega,x)\in E_{ki}\}
        \]
        that partition $\Xi$ up to $\nu_\omega$-null sets.
        Suppose that
        \[
            \mcT_{x;\omega}(X)
            =w_{ki}(\omega,x)X_{ii}\rho^{(k)}
            \qquad (x\in E_{ki,\omega}),
        \]
        where the weights $w_{ki}$ are nonnegative and jointly measurable.
        Set
        \[
            q_{ki}(\omega)
            =\int_{E_{ki,\omega}}
              w_{ki}(\omega,x)\,\nu_\omega(\dee x).
        \]
        Trace preservation gives $\sum_kq_{ki}(\omega)=1$.
        Each positive-probability outcome in $E_{ki,\omega}$ has posterior $\rho^{(k)}$, so
        \[
            K_\omega(\rho, \,\cdot\,)
            =\sum_{k,i}\rho_{ii}
              \left(
                  \int_{E_{ki,\omega}}
                  w_{ki}(\omega,x)\,\nu_\omega(\dee x)
              \right)\delta_{\rho^{(k)}}
            =\sum_k(Q_\omega p(\rho))_k\delta_{\rho^{(k)}}.
        \]
        Thus \eqref{eq:examples-classical-kernel} holds, and \Cref{prop:posterior-finite-state-reduction} applies.
        These branches admit the perfect realization
        \[
            V_{x;\omega}
            =\sqrt{w_{ki}(\omega,x)}\,\ket{e_k}\bra{e_i}
            \qquad (x\in E_{ki,\omega}).
        \]
        In particular, every preceding choice of $Q_\omega$ has a continuum realization with the same block length $L$ and minorization mass $b$.
        For this construction, take
        \[
            \Xi=\{1,\ldots,d\}^2\times[0,1],
            \qquad
            \mcX=2^{\{1,\ldots,d\}^2}
                 \otimes\mcB([0,1]),
        \]
        with
        \[
            E_{ki,\omega}=\{(k,i)\}\times[0,1].
        \]
        Choose the reference probability measure
        \[
            \nu_\omega=\nu
            :=\left(\frac1{d^2}\sum_{k,i=1}^d\delta_{(k,i)}\right)
              \otimes\operatorname{Leb}_{[0,1]},
        \]
        independently of $\omega$.
        For $x=(k,i,t)$, set
        \[
            V_{x;\omega}
            =d\sqrt{q_{ki}(\omega)}\,\ket{e_k}\bra{e_i},
            \qquad
            \mcT_{x;\omega}(X)
            =V_{x;\omega}XV_{x;\omega}\adj.
        \]
        Then $w_{ki}(\omega,x)=d^2q_{ki}(\omega)$, and
        \[
            \int_{E_{ki,\omega}}
            w_{ki}(\omega,x)\,\nu_\omega(\dee x)
            =q_{ki}(\omega),
        \]
        as required.


\subsection{Common posterior criterion}
\label{subsec:examples-preparation}


    Several outcome records can lead to the same posterior state.
    Their probabilities can therefore be added before taking a uniform lower bound. 
    For a finite alphabet $\mcA$ and a word $b=(b_1,\ldots,b_L)\in\mcA^L$, write
    \[
         \mcT_{b;\omega}^{(L)}
         :=\mcT_{b_L;\theta^{L-1}\omega}\circ\cdots\circ\mcT_{b_1;\omega}.
    \]

    \begin{prop}
    \label{prop:posterior-block-atom}
        Fix $L\ge1$ and pairwise disjoint sets $U_1,\ldots,U_s\subseteq\mcA^L$.
        Suppose that, for almost every $\omega$, there are states
        $\tau_{j;\omega}\in\states$, positive semidefinite matrices
        $F_{b;\omega}$, and numbers $c_{j;\omega}\ge0$ such that
        \[
            \mcT_{b;\omega}^{(L)}(X)
            =\tr{F_{b;\omega}X}\tau_{j;\omega}
            \qquad (X\in\matrices,\ b\in U_j),
        \]
        and
        \[
            A_{j;\omega}:=\sum_{b\in U_j}F_{b;\omega}
            \ge c_{j;\omega}I.
        \]
        Assume that
        \[
            b_\omega:=\sum_{j=1}^s c_{j;\omega}\ge b_* > 0,
        \]
        where $b_*$ is deterministic, and define
        \[
             \widetilde\lambda_\omega
             :=\frac{1}{b_\omega}
               \sum_{j=1}^s c_{j;\omega}\delta_{\tau_{j;\omega}}
             \in\mcP(\states).
        \]
        Then
        \[
             K_\omega^{(L)}(\rho,B)
             \ge b_* \widetilde\lambda_\omega(B),
             \qquad \rho\in\states,\quad B\in\borel{\states}.
        \]
        In particular, \Cref{assumption_doeblin} holds with
        $\lambda_\omega=\widetilde\lambda_{\theta^{-L}\omega}$.
    \end{prop}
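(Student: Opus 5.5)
The plan is to write the $L$-step posterior kernel for a finite alphabet as a sum over words, and then keep only the words in $\bigcup_j U_j$.
First fix $\omega$ in the full-measure set where the hypotheses hold. Iterating the one-step definition, as in the unnormalized branch identity from the proof of \Cref{prop:construction-of-mbQ}, gives
\[
    K_\omega^{(L)}(\rho,B)=\sum_{b\in\mcA^L}\mathbf 1_B\!\left(\mcT^{(L)}_{b;\omega}\proj\rho\right)\tr{\mcT^{(L)}_{b;\omega}(\rho)} .
\]
Here $\mcT^{(L)}_{b;\omega}\proj\rho$ denotes normalization of $\mcT^{(L)}_{b;\omega}(\rho)$. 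This is legitimate because the successive normalizations multiply out, and words of zero weight contribute nothing. I would justify this by induction on $L$, using the Markov composition $K^{(L)}_\omega=K_{\theta^{L-1}\omega}\circ\cdots\circ K_\omega$ with the atomic density $\nu_\omega$ equal to counting measure. A word whose intermediate trace vanishes has total product weight zero, so the reference state $\rho_\ast$ never matters.

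All terms are nonnegative, so I would drop every word outside $\bigcup_j U_j$. For $b\in U_j$ the hypothesis gives $\mcT^{(L)}_{b;\omega}(\rho)=\tr{F_{b;\omega}\rho}\,\tau_{j;\omega}$. Whenever this weight is positive, the posterior is exactly $\tau_{j;\omega}$ and the weight is $\tr{F_{b;\omega}\rho}$. Summing over $b\in U_j$, using disjointness of the $U_j$, gives
\[
    K_\omega^{(L)}(\rho,B)\ge\sum_{j=1}^s \mathbf 1_B(\tau_{j;\omega})\,\tr{A_{j;\omega}\rho}\ge\sum_{j}c_{j;\omega}\,\delta_{\tau_{j;\omega}}(B).
\]
The last inequality holds because $A_{j;\omega}\ge c_{j;\omega}I$ and $\tr{\rho}=1$ imply $\tr{A_{j;\omega}\rho}\ge c_{j;\omega}$.

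The right-hand side equals $b_\omega\widetilde\lambda_\omega(B)$, which is at least $b_*\widetilde\lambda_\omega(B)$. Relabeling via $\lambda_\omega=\widetilde\lambda_{\theta^{-L}\omega}$, as described at the start of the appendix, gives \Cref{assumption_doeblin}, with $\varepsilon$ chosen in $(0,\min\{b_*,1\})$. Measurability of $\omega\mapsto\widetilde\lambda_\omega$ follows from the standing measurability convention on parameter families.

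The main obstacle is the word-expansion identity, specifically the handling of normalization and zero-probability branches. I would check that identity carefully. The rest is a direct positivity estimate.
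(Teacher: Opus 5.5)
Your proof is correct and is the paper's argument: keep only the disjoint word groups $U_j$, note that every positive-probability word in $U_j$ ends at $\tau_{j;\omega}$ with total weight $\tr{A_{j;\omega}\rho}\ge c_{j;\omega}$, and sum over $j$. The one addition is that you verify the word expansion of $K^{(L)}_\omega$ explicitly, including the zero-weight branches, where the paper uses it implicitly; choosing the atomic decomposition for that step is harmless, because $K_\omega$ does not depend on which density realization is used.
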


    A related criterion applies to general outcome spaces.
    Let $(\Xi,\mcX)$ be a standard Borel outcome space, and use the density representation from \Cref{prop:instrument-density-representation}.
    For $b=(b_1,\ldots,b_L)\in\Xi^L$, use the same composition with the density maps $\mcT_{b_\ell;\theta^{\ell-1}\omega}$, and put
    \[
        \nu_\omega^{(L)}
        :=\nu_\omega\otimes\nu_{\theta\omega}\otimes\cdots
          \otimes\nu_{\theta^{L-1}\omega}.
    \]
    Let $E\subseteq\Omega\times\Xi^L$ be measurable and write
    \[
        E_\omega:=\{b\in\Xi^L:(\omega,b)\in E\}.
    \]
    Suppose that, for almost every $\omega$ and $\nu_\omega^{(L)}$-almost every $b\in E_\omega$,
    \[
        \mcT_{b;\omega}^{(L)}(X)
        =\tr{F_{b;\omega}X}\tau_{b;\omega}
        \qquad (X\in\matrices),
        \qquad
        F_{b;\omega}\ge c_{b;\omega}I\ge0,
    \]
    where $\tau_{b;\omega}\in\states$ and the families $F_{b;\omega}$, $\tau_{b;\omega}$, and $c_{b;\omega}$ are jointly measurable in $(\omega,b)$.
    Assume that
    \[
        b_\omega
        :=\int_{E_\omega}c_{b;\omega}\,
          \nu_\omega^{(L)}(\dee b)
        \ge b_*>0,
    \]
    where $b_*$ is deterministic, and define
    \[
        \widetilde\lambda_\omega
        :=\frac1{b_\omega}
          \int_{E_\omega}
          c_{b;\omega}\delta_{\tau_{b;\omega}}\,
          \nu_\omega^{(L)}(\dee b).
    \]
    Then
    \[
        K_\omega^{(L)}(\rho,B)
        \ge b_*\widetilde\lambda_\omega(B),
        \qquad
        \rho\in\states,\quad B\in\borel{\states}.
    \]
    In particular, \Cref{assumption_doeblin} holds with $\lambda_\omega=\widetilde\lambda_{\theta^{-L}\omega}$.

    We prove the finite-alphabet criterion and the general-outcome variant together.
    
    \begin{proof}
        Fix $\rho\in\states$ and $B\in\borel{\states}$. 
        In the finite-alphabet case, every word $b\in U_j$ with positive probability ends at $\tau_{j;\omega}$.
        The total probability of the words in $U_j$ is
        \[
            \sum_{b\in U_j}\tr{F_{b;\omega}\rho}
            =\tr{A_{j;\omega}\rho}
            \ge c_{j;\omega}.
        \]
        Since the groups are disjoint,
        \[
            K_\omega^{(L)}(\rho,B)
            \ge\sum_{j=1}^s
                c_{j;\omega}\mathbf1_B(\tau_{j;\omega})
            =b_\omega\widetilde\lambda_\omega(B)
            \ge b_*\widetilde\lambda_\omega(B).
        \]
        For general outcomes, the same argument gives
        \[
            \begin{aligned}
            K_\omega^{(L)}(\rho,B)
            &\ge\int_{E_\omega}
                \mathbf1_B(\tau_{b;\omega})
                \tr{F_{b;\omega}\rho}\,
                \nu_\omega^{(L)}(\dee b)\\
            &\ge\int_{E_\omega}
                \mathbf1_B(\tau_{b;\omega})c_{b;\omega}\,
                \nu_\omega^{(L)}(\dee b)\\
            &=b_\omega\widetilde\lambda_\omega(B)
            \ge b_*\widetilde\lambda_\omega(B).
            \end{aligned}
        \]
        In both cases, the normalized measures have mass one, and their measurability follows from the assumed measurability of the data.
    \end{proof}
    
    \paragraph{Special cases.}
        \Cref{tab:posterior-preparation} applies \Cref{prop:posterior-block-atom} with $L=1$, taking each selected outcome as a separate group.
        In the first three rows, select the reset outcomes; in the last row, select all outcomes.
        Thus $A_{a;\omega}=F_{a;\omega}$, and the required bound is $F_{a;\omega}\ge c_{a;\omega}I$.
        Write $\mcR_\tau(X)=\tr{X}\tau$ and suppress environmental dependence within the table.
        Each row uses its own finite outcome alphabet.
        In the first three rows, $\mcB$ denotes a finite nonempty set of additional outcome labels, disjoint from the reset labels.
        All prepared states belong to $\states$; unlisted branches may be any completely positive maps completing the instrument.
        The last column gives the total minorization mass $b_\omega$ and the probability measure $\widetilde\lambda_\omega$.
        Whenever $b_\omega\ge b_*>0$ uniformly, the proposition gives
        \[
            K_\omega(\rho,\,\cdot\,)
            \ge b_*\widetilde\lambda_\omega
            \qquad\text{for every }\rho\in\states,
        \]
        and hence \Cref{assumption_doeblin}.
        
    \begingroup
        \setlength{\tabcolsep}{4pt}
        \renewcommand{\arraystretch}{1.2}
        
        \begin{xltabular}{\linewidth}{
            @{}>{\raggedright\arraybackslash}X
            >{\raggedright\arraybackslash}p{0.19\linewidth}
            >{\raggedright\arraybackslash}p{0.19\linewidth}@{}
        }
        \caption{One-step applications of the common posterior criterion.}
        \label{tab:posterior-preparation}\\
        \toprule
        Special case and branches
        & Effect bound
        & $b_\omega$ and $\widetilde\lambda_\omega$\\
        \midrule
        \endfirsthead
            
            \textit{One or several visible resets.}\par
            $\mcA=J\sqcup\mcB$, where $J$ is a finite nonempty set.\par
            $\mcT_j=c_j\mcR_{\tau_j}$ for $j\in J$,
            $c_j\ge0$, $\sum_{j\in J}c_j\le1$.\par
            A singleton $J$ gives a single recorded reset outcome.
            & $F_j=c_jI$.
            & $\displaystyle b_\omega=\sum_{j\in J}c_j$,
              \par $\displaystyle
              \widetilde\lambda_\omega
              =\frac{1}{b_\omega}\sum_{j\in J}c_j\delta_{\tau_j}$.\\
            \addlinespace
            
            \textit{Reset with unitary motion or damping.}\par
            $\mcA=\{r\}\sqcup\mcB$.\par
            $\mcT_r=c\mcR_\tau$,
            $\mcT_a=(1-c)\mcJ_a$ for $a\in \mcB$, $0<c\le1$,
            where $(\mcJ_a)_{a\in \mcB}$ is an instrument.
            Take $\mcJ_a(X)=q_aU_aXU_a^*$ for a probability
            vector $(q_a)_a$ and unitaries $U_a$, or use a resolved
            amplitude-damping instrument.
            & $F_r=cI$.
            & $b_\omega=c$,
              \par $\widetilde\lambda_\omega=\delta_\tau$.\\
            \addlinespace
            
            \textit{Heralded Gibbs preparation.}\par
            $\mcA=\{r\}\sqcup\mcB$.\par
            $\mcT_r=c\mcR_\tau$, $0<c\le1$,
            $\tau=e^{-\beta H}/\tr{e^{-\beta H}}$,
            with $H=H^*$ and $\beta\ge0$.
            The reset is recorded.
            & $F_r=cI$.
            & $b_\omega=c$,
              \par $\widetilde\lambda_\omega=\delta_\tau$.\\
            \addlinespace
            
            \textit{Noisy POVM followed by preparation.}\par
            Take $\mcA=\{1,\ldots,N\}$, with $N\ge1$.\par
            $\mcT_a(X)=\tr{F_aX}\tau_a$,
            $F_a=\beta q_aI+(1-\beta)G_a$,
            where $(G_a)_a$ is a POVM, $(q_a)_a$ is a probability
            vector, and $0<\beta<1$ is fixed.
            & $F_a\ge\beta q_aI$;
              take $c_a=\beta q_a$.
            & $b_\omega=\beta$,
              \par $\displaystyle
              \widetilde\lambda_\omega=\sum_aq_a\delta_{\tau_a}$.\\
            
            \bottomrule
        \end{xltabular}
    \endgroup

    We next apply \Cref{prop:posterior-block-atom} to finite preparation instruments, grouping outcome records with a common final posterior.

    \paragraph{Finite preparation.}
        Take the outcome alphabet $\mcA=\{1,\ldots,N\}$, where $N\ge1$.
        Let $\mcT_{a;\omega}(X)=\tr{F_{a;\omega}X}\tau_{a;\omega}$, $1\le a\le N$, for any POVM $(F_{a;\omega})_a$ and states $\tau_{a;\omega}$. 
        Write $\Phi_\omega=\sum_a\mcT_{a;\omega}$. 
        The prepared states $\tau_{a;\omega}$ may have overlapping supports.
         For each final outcome $b\in\{1,\ldots,N\}$, define
        \[
            U_b
            :=\{u=(u_1,\ldots,u_L)\in\{1,\ldots,N\}^L:u_L=b\}.
        \]
        Every positive-probability word in $U_b$ ends at $\tau_{b;\theta^{L-1}\omega}$, independently of the initial state and the preceding outcomes.
        We therefore add the probabilities of all words in $U_b$ to obtain their contribution to the posterior-state kernel.
        Thus the total probability of the words in $U_b$ is
        \[
            \tr{
                F_{b;\theta^{L-1}\omega}
                \left(
                    \Phi_{\theta^{L-2}\omega}\circ\cdots\circ\Phi_\omega
                \right)(\rho)
            }
            =\tr{A_{b;\omega}^{(L)}\rho},
        \]
        where
        \[
            A_{b;\omega}^{(L)}
            :=\left(
                \Phi_{\theta^{L-2}\omega}\circ\cdots\circ\Phi_\omega
            \right)^*
            (F_{b;\theta^{L-1}\omega}).
        \]
        The channel product is taken as the identity for $L=1$.
        Consequently,
        \[
            K_\omega^{(L)}(\rho,\,\cdot\,)
            =\sum_{b=1}^N
              \tr{A_{b;\omega}^{(L)}\rho}\,
              \delta_{\tau_{b;\theta^{L-1}\omega}}.
        \]
        In \Cref{prop:posterior-block-atom}, we may therefore take
        \[
            c_{b;\omega}
            =\lambda_{\min}(A_{b;\omega}^{(L)}).
        \]

        This class includes noisy detector readout followed by feedback preparation, with $F_{a;\omega}=\sum_iR_\omega(a|i)\ket{e_i}\bra{e_i}$, where $R_\omega(a|i)\ge0$ and $\sum_aR_\omega(a|i)=1$.
        To apply \Cref{prop:posterior-block-atom}, it remains to establish a uniform positive lower bound on
        \[
            b_\omega
            :=\sum_{b=1}^N\lambda_{\min}(A_{b;\omega}^{(L)}).
        \]
        The following contraction condition guarantees such a bound for a sufficiently large block length $L$.
        
        If the channels satisfy
        $\|\Phi_\omega(\rho)-\Phi_\omega(\sigma)\|_1
        \le q\|\rho-\sigma\|_1$ uniformly for some $0\le q<1$, then
        \[
             b_\omega
             =
             \sum_b\lambda_{\min}(A_{b;\omega}^{(L)})
             \ge 1-Nq^{L-1}.
        \]
        Indeed, compare each final-label probability with its value at a fixed reference input; its variation is at most $q^{L-1}$, since $|\tr{F_b(\rho-\sigma)}|\le\frac12\|\rho-\sigma\|_1$
        for $0\le F_b\le I$.
        Choose $L\ge2$ such that $Nq^{L-1}<1$ and set $b_*:=1-Nq^{L-1}>0$.
        Then \Cref{prop:posterior-block-atom} applies with $c_{b;\omega}=\lambda_{\min}(A_{b;\omega}^{(L)})$, and hence \Cref{assumption_doeblin} holds.
        This implication uses the finite set of prepared posteriors.
        For example, a common preparation component $\tau_{a;\omega}=h_\omega\zeta_\omega+(1-h_\omega)\sigma_{a;\omega}$, with $0<h_*\le h_\omega\le1$ and states $\zeta_\omega,\sigma_{a;\omega}$, gives $q=1-h_*$ even for singular effects.

    \medskip
    
    The following example applies the finite preparation construction to a tetrahedral qubit measurement, giving an explicit two-step minorization bound for arbitrary prepared states.
    
    \paragraph{Tetrahedral measurement and feedback.}
    \label{eg:posterior-tetrahedral}
        We now work in dimension $d=2$, with outcome alphabet $\mcA=\{1,2,3,4\}$.
        Let $n_1,\ldots,n_4$ be the unit vertices of a regular tetrahedron, let $O_\omega$ be measurable rotations, and let $\boldsymbol\sigma = (\sigma_1,\sigma_2,\sigma_3)$ denote the Pauli vector.
        Choose arbitrary measurable maps $\omega\mapsto s_{a;\omega}\in\mbR^3$, $a=1,\ldots,4$, with $\norm{s_{a;\omega}}_{\mbR^3}\le1$.
        Let $m_{a;\omega}=O_\omega n_a$ be the rotated tetrahedral directions. 
        The vectors $s_{a;\omega}$ specify the states prepared after each outcome and need not be aligned with the measurement directions $m_{a;\omega}$.
        Set
        \[
             F_{a;\omega}=\tfrac14(I+m_{a;\omega}\cdot\boldsymbol\sigma),
             \qquad
             \tau_{a;\omega}=\tfrac12(I+s_{a;\omega}\cdot\boldsymbol\sigma),
        \]
        where
        \[
            m\cdot\boldsymbol\sigma
            :=\sum_{j=1}^3m_j\sigma_j,
            \qquad m\in\mbR^3.
        \]
        Define the preparation instrument by
        \[
            \mcT_{a;\omega}(X)
            =\tr{F_{a;\omega}X}\tau_{a;\omega},
            \qquad a=1,\ldots,4.
        \]
        For every such choice of the rotations $O_\omega$ and prepared states $\tau_{a;\omega}$, this instrument satisfies \Cref{assumption_doeblin} with block length $L=2$ and minorization mass $b_*=2/3$.
        
        To see this, write
        \[
             v_\omega=\tfrac14\sum_a s_{a;\omega},\qquad
             M_\omega=\tfrac14\sum_a s_{a;\omega}m_{a;\omega}^{\mathsf T}.
        \]
        The total channel acts on Bloch vectors by $r\mapsto v_\omega+M_\omega r$.
        Consequently, the effects obtained by grouping two-step words by their last outcome are
        \[
             A_{b;\omega}^{(2)}
             =\tfrac14\left[
               (1+m_{b;\theta\omega}\cdot v_\omega)I
               +(M_\omega^{\mathsf T}m_{b;\theta\omega})
                                 \cdot\boldsymbol\sigma\right].
        \]
        The tetrahedral inner products give the Frobenius-norm estimate
        \[
             \|M_\omega\|_{\mathrm F}^2
             =\tfrac1{12}\sum_a\norm{s_{a;\omega}}_{\mbR^3}^2
                   -\tfrac1{48}\norm{\sum_a s_{a;\omega}}_{\mbR^3}^2
             \le\tfrac13.
        \]
        Using the tetrahedral identities at $\theta\omega$ and the Cauchy--Schwarz inequality, we obtain
        \begin{align*}
             \sum_b\lambda_{\min}(A_{b;\omega}^{(2)})
             =1-\tfrac14\sum_b\norm{M_\omega^{\mathsf T}m_{b;\theta\omega}}_{\mbR^3}
             \ge1-\sqrt{\tfrac13\|M_\omega\|_{\mathrm F}^2}
             \ge\tfrac23.
        \end{align*}
        We now apply \Cref{prop:posterior-block-atom} with $L=2$, the disjoint groups
        \[
            U_b=\{(a,b):a=1,\ldots,4\},
            \qquad b=1,\ldots,4,
        \]
        the common posteriors $\tau_{b;\theta\omega}$, and
        \[
            c_{b;\omega}
            =\lambda_{\min}(A_{b;\omega}^{(2)}).
        \]
        Since
        $A_{b;\omega}^{(2)}\ge c_{b;\omega}I$
        and $\sum_b c_{b;\omega}\ge2/3$, the proposition gives
        \[
            K_\omega^{(2)}(\rho,\,\cdot\,)
            \ge \tfrac23\,\widetilde\lambda_\omega,
            \qquad
            \widetilde\lambda_\omega
            =\frac{\sum_b c_{b;\omega}\delta_{\tau_{b;\theta\omega}}}
                   {\sum_b c_{b;\omega}}.
        \]
        If the four prepared states are distinct almost surely, no one-step minorization is possible: each atom has  the singular effect $F_{a;\omega}$, whose probability has infimum zero.
        In this case the shortest length is exactly two. 
        In particular, choosing $s_{a;\omega}=m_{a;\omega}$ gives the L\"uders instrument of the tetrahedral POVM:
        \[
            \mcT_{a;\omega}(X)
            =\sqrt{F_{a;\omega}}X\sqrt{F_{a;\omega}}.
        \]
        Indeed, $\tau_{a;\omega}$ is then a rank-one projection and $F_{a;\omega}=\tau_{a;\omega}/2$.
        This instrument is perfect, and its four prepared states are distinct, so its minimal block length is two.

        \begin{remark}
            Individual perfect branches with input-independent posteriors have rank-one effects, which cannot dominate $cI$ with $c>0$ when $d\ge2$.
            Grouping records with the same posterior can nevertheless produce a positive definite effect, as this example illustrates.
        \end{remark}    


\subsection{Resolved-jump instruments}
\label{subsec:examples-resolved-jump-network}


    Let $d\ge2$ and fix an orthonormal basis $(e_i)_{i=1}^d$ of $\mbC^d$.
    As before, write \(\rho^{(i)}=\ket{e_i}\bra{e_i}\) and \(X_{ii}=\inner*{e_i}{Xe_i}\).
    Let
    \[
        \mcA_{\mathrm J}
        :=\{(k,i):1\le k,i\le d,\ k\ne i\},
    \]
    and let $\mcA_0$ be a finite set disjoint from $\mcA_{\mathrm J}$.
    The outcome alphabet is $\mcA=\mcA_0\cup\mcA_{\mathrm J}$.

    For $\alpha\in\mcA_0$ and $(k,i)\in\mcA_{\mathrm J}$, define the Kraus operators
    \[
        D_{\alpha;\omega}
        :=\operatorname{diag}
           (d_{\alpha1;\omega},\ldots,d_{\alpha d;\omega}),
        \qquad
        J_{ki;\omega}
        :=\sqrt{w_{ki;\omega}}\,\ket{e_k}\bra{e_i}.
    \]
    Assume that the coefficients $d_{\alpha i;\omega}\in\mbC$ and $w_{ki;\omega}\ge0$ are measurable in $\omega$ and that, almost surely,
    \[
        \sum_{\alpha\in\mcA_0}|d_{\alpha i;\omega}|^2
        +\sum_{k\ne i}w_{ki;\omega}=1,
        \qquad i=1,\ldots,d.
    \]
    We shall call $(k,i)$ a jump outcome: the operator $J_{ki;\omega}$ maps $e_i$ to a multiple of $e_k$ and annihilates the other basis vectors.
    Each jump is individually resolved, meaning that its outcome records both the source $i$ and the destination $k$.
    The diagonal outcomes $\alpha\in\mcA_0$ are called no-jump outcomes, since
    \[
        D_{\alpha;\omega}e_i=d_{\alpha i;\omega}e_i
    \]
    and thus they do not transfer a basis vector to a different basis direction.

    Define the instrument branches by
    \[
        \mcT_{a;\omega}(X)
        :=
        \begin{cases}
            D_{\alpha;\omega}XD_{\alpha;\omega}\adj,
            & a=\alpha\in\mcA_0,\\[2pt]
            J_{ki;\omega}XJ_{ki;\omega}\adj
            =w_{ki;\omega}X_{ii}\rho^{(k)},
            & a=(k,i)\in\mcA_{\mathrm J}.
        \end{cases}
    \]
    The normalization gives
    \[
        \sum_{\alpha\in\mcA_0}
            D_{\alpha;\omega}\adj D_{\alpha;\omega}
        +\sum_{(k,i)\in\mcA_{\mathrm J}}
            J_{ki;\omega}\adj J_{ki;\omega}
        =I.
    \]
    Consequently, these branches form a perfect instrument, whose total channel is
    \[
        \Phi_\omega:=\sum_{a\in\mcA}\mcT_{a;\omega}.
    \]

    Set $w_{ii;\omega}=0$ and define
    \[
        W_\omega:=(w_{ki;\omega})_{k,i=1}^d,
        \qquad
        N_\omega
        :=\operatorname{diag}\left(
            \sum_{\alpha\in\mcA_0}|d_{\alpha1;\omega}|^2,
            \ldots,
            \sum_{\alpha\in\mcA_0}|d_{\alpha d;\omega}|^2
        \right).
    \]
    Then $Q_\omega:=N_\omega+W_\omega$ is column-stochastic and describes the population evolution:
    \[
        p(\Phi_\omega(\rho))=Q_\omega p(\rho),
        \qquad
        p(\rho):=(\rho_{11},\ldots,\rho_{dd})^{\mathsf T},
        \qquad \rho\in\states.
    \]
    Every positive-probability jump into $k$ has posterior  $\rho^{(k)}$, independently of the initial state.
    A no-jump outcome, however, may change the conditional populations, retain coherences, and have a posterior that depends on the initial state.

    \paragraph{Terminal-jump criterion.}
        We apply \Cref{prop:posterior-block-atom} by grouping words whose last outcome is a jump according to that jump’s destination. 
        Fix $L\ge1$ and define the pairwise disjoint sets
        \[
            U_k
            :=\{u\in\mcA^L:
                 u_L=(k,i)\text{ for some }i\ne k\},
            \qquad k=1,\ldots,d.
        \]
        Every positive-probability word in $U_k$ has posterior $\rho^{(k)}$.
        More precisely, writing \(F_{u;\omega}:=\left(\mcT_{u;\omega}^{(L)}\right)^*(I)\ge0\), we have
        \[
            \mcT_{u;\omega}^{(L)}(X)
            =\tr{F_{u;\omega}X}\rho^{(k)},
            \qquad X\in\matrices,\quad u\in U_k.
        \]
        Set
        \[
            R_\omega^{(L)}
            :=W_{\theta^{L-1}\omega}
              Q_{\theta^{L-2}\omega}\cdots Q_\omega,
        \]
        where the product of the $Q$ matrices is the identity when $L=1$.
        Summing over all outcomes preceding the final jump gives
        \[
            \sum_{u\in U_k}\tr{F_{u;\omega}\rho}
            =
            \sum_{i=1}^d w_{ki;\theta^{L-1}\omega}
                \tr{\rho^{(i)}\Phi_\omega^{(L-1)}(\rho)}
            =
            \sum_{j=1}^d R_\omega^{(L)}(k,j)\rho_{jj},
            \qquad \rho\in\states.
        \]
        Thus the grouped effect is
        \[
            A_{k;\omega}
            :=\sum_{u\in U_k}F_{u;\omega}
            =\sum_{j=1}^d R_\omega^{(L)}(k,j)\rho^{(j)}.
        \]
        Since this matrix is diagonal, we may take
        \[
            c_{k;\omega}
            =h_\omega(k)
            :=\min_{1\le j\le d}R_\omega^{(L)}(k,j).
        \]
        Consequently, if
        \[
            b_\omega:=\sum_{k=1}^d h_\omega(k)\ge b_*>0
        \]
        almost surely for some deterministic $b_*$, then  \Cref{prop:posterior-block-atom} gives
        \[
            K_\omega^{(L)}(\rho,\cdot)
            \ge b_*\widetilde\lambda_\omega,
            \qquad
            \widetilde\lambda_\omega
            :=\frac{1}{b_\omega}
              \sum_{k=1}^d h_\omega(k)\delta_{\rho^{(k)}},
            \qquad \rho\in\states.
        \]
        In particular, \Cref{assumption_doeblin} holds with $\lambda_\omega=\widetilde\lambda_{\theta^{-L}\omega}$ and any $0<\varepsilon<\min\{b_*,1\}$.

    We next apply the terminal-jump criterion to a monitored qubit relaxation model, obtaining a two-step minorization and showing that the block length is minimal.
    
    \paragraph{Monitored generalized amplitude damping.}
        We specialize the preceding construction to $d=2$, with $e_1$ and $e_2$ representing the ground and excited states, respectively.
        Let $\mcA_0=\{\mathrm g,\mathrm e\}$ and take
        \[
            \begin{aligned}
            D_{\mathrm g;\omega}
            &=\sqrt{p_\omega}
              \operatorname{diag}(1,\sqrt{1-\gamma_\omega}),
            &
            J_{12;\omega}
            &=\sqrt{p_\omega\gamma_\omega}
              \ket{e_1}\bra{e_2},\\
            D_{\mathrm e;\omega}
            &=\sqrt{1-p_\omega}
              \operatorname{diag}(\sqrt{1-\gamma_\omega},1),
            &
            J_{21;\omega}
            &=\sqrt{(1-p_\omega)\gamma_\omega}
              \ket{e_2}\bra{e_1}.
            \end{aligned}
        \]
    Its total channel is the generalized amplitude-damping channel.
    A monitoring realization records the initial and final energies of a fresh qubit ancilla during an excitation-exchange interaction.
    Here $p_\omega$ is the ancilla ground-state population; the usual thermal interpretation with nonnegative inverse temperature corresponds to $p_\omega\ge1/2$ (\cite{Khatri_2020}).

    Assume that $p_\omega$ and $\gamma_\omega$ are measurable and satisfy, almost surely,
    \[
        0<p_*\le\tfrac12,
        \qquad p_\omega\in[p_*,1-p_*],
        \qquad
        0<\gamma_*\le\gamma_\omega<1,
    \]
    where $p_*$ and $\gamma_*$ are deterministic.
    The population and jump matrices are, respectively, 
    \[
        Q_\omega
        =
        \begin{pmatrix}
            1-(1-p_\omega)\gamma_\omega
            & p_\omega\gamma_\omega\\
            (1-p_\omega)\gamma_\omega
            & 1-p_\omega\gamma_\omega
        \end{pmatrix},
        \qquad
        W_\omega
        =
        \begin{pmatrix}
            0 & p_\omega\gamma_\omega\\
            (1-p_\omega)\gamma_\omega & 0
        \end{pmatrix}.
    \]
    We apply the terminal-jump criterion with $L=2$. Write $p_j=p_{\theta^j\omega}$ and $\gamma_j=\gamma_{\theta^j\omega}$.
    Since $\gamma_0<1$, the row minima of $R_\omega^{(2)}=W_{\theta\omega}Q_\omega$ are
    \[
        h_\omega(1)
        =p_1\gamma_1(1-p_0)\gamma_0,
        \qquad
        h_\omega(2)
        =(1-p_1)\gamma_1p_0\gamma_0.
    \]
    Consequently,
    \[
        b_\omega
        :=h_\omega(1)+h_\omega(2)
        =\gamma_0\gamma_1
          \left[p_1(1-p_0)+(1-p_1)p_0\right]
        \ge 2p_*(1-p_*)\gamma_*^2
        =:b_*>0.
    \]
    Here the bracket is minimized on $[p_*,1-p_*]^2$ when $p_0=p_1=p_*$ or $p_0=p_1=1-p_*$.
    The criterion above, therefore, gives
    \[
        K_\omega^{(2)}(\rho,\,\cdot\,)
        \ge b_*\widetilde\lambda_\omega,
        \qquad \rho\in\states,
    \]
    with
    \[
        \widetilde\lambda_\omega
        :=
        \frac{
            h_\omega(1)\delta_{\rho^{(1)}}
            +h_\omega(2)\delta_{\rho^{(2)}}
        }{b_\omega}.
    \]
    Thus \Cref{assumption_doeblin} holds. Since each $h_\omega(k)\ge p_*^2\gamma_*^2$, one may alternatively use the fixed uniform minorizer:
    \[
        K_\omega^{(2)}(\rho,\,\cdot\,)
        \ge
        2p_*^2\gamma_*^2
        \frac{\delta_{\rho^{(1)}}+\delta_{\rho^{(2)}}}{2}.
    \]
    The minimal block length is exactly two.
    Indeed, basis inputs force any one-step common lower measure to be supported on $\{\rho^{(1)},\rho^{(2)}\}$.
    For
    \[
        \rho_t=(1-t)\rho^{(1)}+t\rho^{(2)},
        \qquad 0<t<1,
    \]
    both no-jump posteriors are positive definite.
    Hence only the jump outcomes contribute to the two basis atoms, giving
    \[
        K_\omega(\rho_t,\{\rho^{(1)}\})
        =p_\omega\gamma_\omega t,
        \qquad
        K_\omega(\rho_t,\{\rho^{(2)}\})
        =(1-p_\omega)\gamma_\omega(1-t).
    \]
    Taking $t\downarrow0$ and $t\uparrow1$, respectively, shows that a common lower measure must assign zero mass to both atoms. Thus, no one-step minorization is possible.
  

\subsection{Synchronizing channel words}
\label{subsec:examples-synchronizing}


    This construction gives exact block lengths greater than one even for a constant (i.e., a deterministic) instrument. 
    The visible outcomes select channels; a suitable composition of these channels has output independent of the initial state, although the individual channels need not have this property.

    Let $\mcA$ be a finite outcome alphabet.
    For each $a\in\mcA$, let $\Psi_{a;\omega}$ be a measurable family of quantum channels, and let $(p_{a;\omega})_{a\in\mcA}$ be a measurable family of probability vectors.
    Define the instrument branches by
    \[
        \mcT_{a;\omega}(X)
        :=p_{a;\omega}\Psi_{a;\omega}(X),
        \qquad a\in\mcA,\quad X\in\matrices.
    \]
    These maps are completely positive, and their sum preserves trace.
    Since each $\Psi_{a;\omega}$ preserves trace, outcome $a$ has probability $p_{a;\omega}$, independently of the initial state.
    When this probability is positive, the posterior is $\Psi_{a;\omega}(\rho)$.
    Thus the recorded outcome selects a channel to apply to the state.

    \paragraph{Synchronizing words.}
        Fix $L\ge1$.
        For a word $u=(u_1,\ldots,u_L)\in\mcA^L$, put
        \[
            c_{u;\omega}
            :=\prod_{r=1}^L p_{u_r;\theta^{r-1}\omega},
            \qquad
            \Psi_{u;\omega}^{(L)}
            :=\Psi_{u_L;\theta^{L-1}\omega}
              \circ\cdots\circ\Psi_{u_1;\omega}.
        \]
        Then \(\mcT_{u;\omega}^{(L)}(X) =c_{u;\omega}\Psi_{u;\omega}^{(L)}(X)\); in particular, the word $u$ has probability $c_{u;\omega}$, independently of the initial state.
        Suppose there is a set $U\subseteq\mcA^L$ such that, for every $u\in U$ and almost every $\omega$,
        \[
            \Psi_{u;\omega}^{(L)}(X)
            =\tr{X}\tau_{u;\omega},
            \qquad X\in\matrices,
        \]
        where $\tau_{u;\omega}\in\states$ is measurable in $\omega$.
        We call these words synchronizing: their channel compositions send every initial state to the same terminal state $\tau_{u;\omega}$.
        For each $u\in U$, we therefore have
        \[
            \mcT_{u;\omega}^{(L)}(X)
            =\tr{c_{u;\omega}I X}\tau_{u;\omega}.
        \]
        We apply \Cref{prop:posterior-block-atom} with the singleton groups $\{u\}$ and effects $F_{u;\omega}=c_{u;\omega}I$.
        If
        \[
            b_\omega:=\sum_{u\in U}c_{u;\omega}\ge b_*>0
        \]
        almost surely for some deterministic $b_*$, the criterion gives
        \begin{equation}
        \label{eq:examples-synchronizing-minorizer}
            K_\omega^{(L)}(\rho,\,\cdot\,)
            \ge b_*\widetilde\lambda_\omega,
            \qquad
            \widetilde\lambda_\omega
            :=\frac{1}{b_\omega}
              \sum_{u\in U}c_{u;\omega}\delta_{\tau_{u;\omega}},
            \qquad \rho\in\states.
        \end{equation}
        Thus \Cref{assumption_doeblin} holds with block length $L$.
        If all selected words have the same terminal state $\tau_\omega$, then $\widetilde\lambda_\omega=\delta_{\tau_\omega}$.
        This criterion gives a sufficient block length.  In the special cases below, we also prove that every shorter length fails, thereby identifying the minimal block length.    

    \paragraph{Special cases.}
        We apply the synchronizing-word criterion to the three constructions in \Cref{tab:examples-synchronizing}.
        In each row, the selected words have a common terminal state $\tau$, so the minorizing probability measure is $\widetilde\lambda_\omega=\delta_\tau$.
        The channels are fixed, while their selection probabilities may depend measurably on $\omega$.
        Taking these probabilities constant gives constant instruments.
        
        For the countdown construction, we use channels induced by maps on a finite set.
        Fix an orthonormal basis $(e_i)_{i=1}^d$, write
        $\rho^{(i)}=\ket{e_i}\bra{e_i}$, and let
        \[
            f_a:\{1,\ldots,d\}\longrightarrow\{1,\ldots,d\},
            \qquad a\in\mcA.
        \]
        Define
        \[
            \Psi_a(X)
            :=\sum_{i=1}^d X_{ii}\rho^{(f_a(i))}.
        \]
        These are channels with Kraus operators $\ket{e_{f_a(i)}}\bra{e_i}$.
        The instrument records the channel label $a$; the index $i$ is unobserved.
        If $f_a$ is the identity map, then $\Psi_a$ is the dephasing channel
        \[
            \Delta_e(X)=\sum_{i=1}^d X_{ii}\rho^{(i)}.
        \]
        For a word $u=(u_1,\ldots,u_L)$, if the composition $f_{u_L}\circ\cdots\circ f_{u_1}$ is constant with value $k_u$, then
        \[
            \Psi_{u_L}\circ\cdots\circ\Psi_{u_1}(X)
            =\tr{X}\rho^{(k_u)}.
        \]
        Thus $u$ is a synchronizing word with terminal state $\rho^{(k_u)}$.
        All probability bounds hold uniformly in $\omega$.
        In each row, $p_*$ is a deterministic positive constant, and the displayed mass gives
        \[
            K_\omega^{(L)}(\rho,\,\cdot\,)
            \ge b_*\delta_\tau,
            \qquad \rho\in\states.
        \]
        The stated minimal block lengths are justified below.

        \begingroup
        \setlength{\tabcolsep}{4pt}
        \renewcommand{\arraystretch}{1.2}

            \begin{xltabular}{\linewidth}{
                @{}>{\raggedright\arraybackslash}X
                >{\raggedright\arraybackslash}p{.16\linewidth}
                >{\centering\arraybackslash}p{.18\linewidth}
                >{\centering\arraybackslash}p{.16\linewidth}@{}
            }
            \caption{Synchronizing channel constructions and their
            minimal Doeblin block lengths.}
            \label{tab:examples-synchronizing}\\
                \toprule
                Construction and assumptions
                & Synchronizing words
                & \shortstack{Minimal $L$\\and target $\tau$}
                & \shortstack{Guaranteed\\mass $b_*$}\\
                \midrule
        
                \textit{Countdown.}\par
                $\mcA=\{a,b\}$, $d=L_0+1$, $L_0\ge2$.
                Index the basis by $0,\ldots,L_0$ and use the
                finite-state channels above with
                \[
                    f_a(i)=\max\{i-1,0\},
                    \qquad f_b(i)=i.
                \]
                Take $p_{a;\omega}\ge p_*>0$ and
                $p_{b;\omega}=1-p_{a;\omega}$.
                & $a^{L_0}$
                & $L=L_0$,
                  \par $\tau=\rho^{(0)}$
                & $p_*^{L_0}$\\
                \addlinespace
        
                \textit{Complementary dephasing.}\par
                $\mcA=\{a,b\}$, $d\ge2$.
                Take $\Psi_a=\Delta_e$ and $\Psi_b=\Delta_g$,
                where the two orthonormal bases satisfy
                $\left|\inner*{e_i}{g_j}\right|^2=1/d$.
                Let $p_{a;\omega}=p_\omega$ and
                $p_{b;\omega}=1-p_\omega$, with
                $p_\omega\in[p_*,1-p_*]$ and
                $0<p_*\le1/2$.
                & $ab$ and $ba$
                & $L=2$,
                  \par $\tau=I/d$
                & $2p_*(1-p_*)$\\
                \addlinespace
        
                \textit{Local subsystem reinitialization.}\par
                $\mcA=\{1,\ldots,m\}$, $m\ge2$, and
                $\mcH=\bigotimes_{j=1}^m\mbC^{d_j}$,
                with $d_j\ge2$.
                The channel $\Psi_j$ traces out subsystem $j$
                and inserts a fixed full-rank state $\tau_j$
                in its original tensor position.
                Take $\sum_jp_{j;\omega}=1$ and
                $p_{j;\omega}\ge p_*>0$.
                & All $m!$ permutations of $1,\ldots,m$
                & $L=m$,
                  \par $\displaystyle\tau=\bigotimes_{j=1}^m\tau_j$
                & $m!p_*^m$\\
        
                \bottomrule
            \end{xltabular}
        \endgroup
    
    For complementary dephasing, mutual unbiasedness gives
    \[
        \Delta_g\Delta_e(X)=\Delta_e\Delta_g(X)
        =\tr{X}\frac{I}{d}.
    \]
    The synchronizing words $ab$ and $ba$ have total probability
    \[
        b_\omega
        =p_\omega(1-p_{\theta\omega})
         +(1-p_\omega)p_{\theta\omega}
        \ge2p_*(1-p_*),
    \]
    since the expression $p(1-q)+(1-p)q$ is minimized on $[p_*,1-p_*]^2$ at equal endpoints.
    Thus the criterion gives the displayed two-step bound.
    To prove minimality, compare $I/d$, fixed by both channels, with
    \[
        \rho=\tfrac12\left(
            \ket{e_1}\bra{e_1}
            +\ket{g_1}\bra{g_1}
        \right).
    \]
    Its two possible one-step posteriors are
    \[
        \Delta_e(\rho)
        =\tfrac12\left(\ket{e_1}\bra{e_1}+I/d\right),
        \qquad
        \Delta_g(\rho)
        =\tfrac12\left(\ket{g_1}\bra{g_1}+I/d\right).
    \]
    Neither equals $I/d$.
    The two initial states therefore have disjoint one-step posterior supports, proving that the minimal block length is exactly two.

    \medskip

    In the countdown construction, the word $a^{L_0}$ sends every basis index to zero.
    Consequently,
    \[
        \Psi_a^{L_0}(X)=\tr{X}\rho^{(0)}.
    \]
    This word has probability
    \[
        \prod_{r=0}^{L_0-1}p_{a;\theta^r\omega}
        \ge p_*^{L_0},
    \]
    so the synchronizing-word criterion gives
    \[
        K_\omega^{(L_0)}(\rho,\,\cdot\,)
        \ge p_*^{L_0}\delta_{\rho^{(0)}},
        \qquad \rho\in\states.
    \]
    To prove minimality, fix $1\le n<L_0$ and compare the initial states $\rho^{(0)}$ and $\rho^{(L_0)}$.
    Both channels fix $\rho^{(0)}$.
    Starting from $\rho^{(L_0)}$, each occurrence of $a$ decreases the basis index by one, while $b$ leaves it unchanged.
    After $n$ outcomes, the index is therefore at least $L_0-n>0$.
    Hence
    \[
        K_\omega^{(n)}(\rho^{(0)},\,\cdot\,)
        =\delta_{\rho^{(0)}},
        \qquad
        K_\omega^{(n)}(\rho^{(L_0)},\{\rho^{(0)}\})=0.
    \]
    A measure dominated by both posterior laws must therefore be zero.
    Thus no positive common minorant exists at any length  $n<L_0$, and the minimal block length is exactly $L_0$.
    In particular, for every prescribed integer $L_0\ge2$, taking $d=L_0+1$ and constant probabilities
    $p_a=p_b=1/2$ gives a constant instrument whose minimal Doeblin block length is $L_0$, with guaranteed minorization mass $2^{-L_0}$.

    \medskip

    Reinitialization channels acting on different subsystems commute.
    Once every subsystem has been reinitialized, the output is the fixed product state $\tau=\bigotimes_{j=1}^m\tau_j$, independently of the initial state.
    More precisely, for every permutation $\pi\in\mfS_m$,
    \[
        \Psi_{\pi(m)}\circ\cdots\circ\Psi_{\pi(1)}(X)
        =\tr{X}\tau.
    \]
    All permutation words therefore contribute to the same posterior atom, with total probability
    \[
        b_\omega
        =\sum_{\pi\in\mfS_m}
          \prod_{r=1}^m p_{\pi(r);\theta^{r-1}\omega}
        \ge m!p_*^m.
    \]
    The synchronizing-word criterion gives
    \[
        K_\omega^{(m)}(\rho,\,\cdot\,)
        \ge m!p_*^m\delta_\tau,
        \qquad \rho\in\states.
    \]
    To prove minimality, compare the initial state $\tau$  with a pure product state
    \[
        \rho=\bigotimes_{j=1}^m\ket{v_j}\bra{v_j},
        \qquad \|v_j\|_{\mbC^{d_j}}=1.
    \]
    Every channel fixes $\tau$.
    After any word of length $n<m$, at least one subsystem of the second input has not been selected.
    Its marginal remains pure, whereas the corresponding marginal $\tau_j$ of the target is full rank on a space of dimension $d_j\ge2$.
    Thus no such word sends $\rho$ to $\tau$, and
    \[
        K_\omega^{(n)}(\tau,\,\cdot\,)=\delta_\tau,
        \qquad
        K_\omega^{(n)}(\rho,\{\tau\})=0.
    \]
    These posterior laws have no positive common minorant.
    Therefore, the minimal block length is exactly $m$.


\subsection{Periodic regeneration}
\label{subsec:examples-periodic}


    We now use a periodic environment to make a reset available once per period.
    Fix $m\ge2$ and equip $\Omega=\{0,\ldots,m-1\}$ with the uniform probability measure and the transformation
    \[
        \theta(j)=j+1\pmod m.
    \]
    We call the environmental state $j$ the current phase.
    All instruments below act on $\mbC^d$, with $d\ge2$.

    At each phase $0\le j<m-1$, choose a finite outcome alphabet $\mcU_j$ and an instrument
    $(\mcS_u^{(j)})_{u\in\mcU_j}$.
    At phase $m-1$, choose a density matrix $\tau$, a channel $\Psi$, and a constant $0<\alpha<1$.
    Define the two active branches by
    \[
        \mcT_{r;m-1}(X)=\alpha\tr{X}\tau,
        \qquad
        \mcT_{c;m-1}(X)=(1-\alpha)\Psi(X).
    \]
    Thus $r$ is a recorded reset outcome with probability  $\alpha$, while $c$ applies the channel $\Psi$ with probability $1-\alpha$.
    Take the outcome space to be 
    \[
        \mcA
        :=\left(
            \bigsqcup_{j=0}^{m-2}\{j\}\times\mcU_j
          \right)\sqcup\{r,c\}.
    \]
    At phase $j<m-1$, put
    \[
        \mcT_{(j,u);j}:=\mcS_u^{(j)},
        \qquad u\in\mcU_j.
    \]
    Set every inactive branch at each phase to zero.
    These maps form a normalized instrument at every phase.

    A block of $m$ observations starting at phase $j$  encounters phase $m-1$ exactly once, at observation
    $m-j$.
    Whatever state has been reached before that observation,  the reset outcome $r$ has probability $\alpha$ and prepares $\tau$.
    After this reset, the remaining $j$ observations begin at phase zero and have posterior law $K_0^{(j)}(\tau,\,\cdot\,)$.
    Summing over all records before and after the reset gives
    \begin{equation}
    \label{eq:examples-periodic-minorizer}
        K_j^{(m)}(\rho,\,\cdot\,)
        \ge\alpha\widetilde\lambda_j,
        \qquad
        \widetilde\lambda_j
        :=K_0^{(j)}(\tau,\,\cdot\,),
        \qquad 0\le j<m,
    \end{equation}
    where $K_0^{(0)}(\tau,\,\cdot\,):=\delta_\tau$.
    In particular, the reset is at the end of the block only when the initial phase is zero.
    Thus \Cref{assumption_doeblin} holds with block length $L=m$ and minorization mass $\alpha$.
    The instruments at the other phases may have state-dependent outcome probabilities.

    \paragraph{Special case: minimal block length $m$.}
            Fix an orthonormal basis $(e_i)_{i=1}^d$, put  $\rho^{(i)}:=\ket{e_i}\bra{e_i}$, and define the dephasing channel
        \[
            \Delta(X):=\sum_{i=1}^d\rho^{(i)}X\rho^{(i)}.
        \]
        At each phase $0\le j<m-1$, take a single outcome:
        \[
            \mcU_j=\{u\},
            \qquad
            \mcS_u^{(j)}=\Delta.
        \]
        At the reset phase, take $\Psi=\Delta$, keeping the prescribed reset state $\tau$ and probability $\alpha$.
        Since $\Delta^2=\Delta$, the minorizing measures in \eqref{eq:examples-periodic-minorizer} are
        \[
            \widetilde\lambda_0=\delta_\tau,
            \qquad
            \widetilde\lambda_j=\delta_{\Delta(\tau)},
            \qquad 1\le j<m.
        \]
        Thus $L=m$ is sufficient, with minorization mass $\alpha$.

        To prove minimality, start at phase zero and compare the initial states $\rho^{(1)}$ and $\rho^{(2)}$, which are distinct because $d\ge2$.
        For every $1\le n<m$, the first $n$ observations occur before the reset phase.
        Each applies $\Delta$, which fixes both basis states.
        Consequently,
        \[
            K_0^{(n)}(\rho^{(1)},\,\cdot\,)
            =\delta_{\rho^{(1)}},
            \qquad
            K_0^{(n)}(\rho^{(2)},\,\cdot\,)
            =\delta_{\rho^{(2)}}.
        \]
        These two laws have disjoint supports and therefore admit no positive common minorant.
        Since phase zero has environmental probability $1/m>0$, \Cref{assumption_doeblin} cannot hold at any length $n<m$.
        The minimal block length is therefore exactly $m$. 
        This realizes every prescribed $m\ge2$ for any fixed system dimension $d\ge2$.
        This example is also covered by the synchronizing-channel construction in \Cref{subsec:examples-synchronizing}.

    \medskip 
    
    In the following example, the outcome probabilities at the phases before the reset depend on the input state.

    \paragraph{Special case: a minimal block length $m$.}
       We work in dimension $d=2$. At every phase $0\le j<m-1$, take $\mcU_j=\{+,-\}$ and define
        \[
            D_+
            :=\frac12
              \begin{pmatrix}
                  \sqrt3&0\\
                  0&1
              \end{pmatrix},
            \qquad
            D_-
            :=\frac12
              \begin{pmatrix}
                  1&0\\
                  0&\sqrt3
              \end{pmatrix},
        \]
        with instrument branches
        \[
            \mcS_\pm^{(j)}(X):=D_\pm XD_\pm^*.
        \]
        Their outcome probabilities depend on the input:
        \[
            \tr{\mcS_+^{(j)}(\rho)}
            =\tfrac34\rho_{11}+\tfrac14\rho_{22},
            \qquad
            \tr{\mcS_-^{(j)}(\rho)}
            =\tfrac14\rho_{11}+\tfrac34\rho_{22}.
        \]
        Consequently, these branches cannot be written as scalar multiples of trace-preserving channels.
        This instrument therefore cannot be written in the form considered in \Cref{subsec:examples-synchronizing}.
        At the reset phase, take $\tau=I/2$ and  $\Psi=\operatorname{id}$, so that
        \[
            \mcT_{r;m-1}(X)=\alpha\tr{X}\frac I2,
            \qquad
            \mcT_{c;m-1}(X)=(1-\alpha)X.
        \]
        The general periodic bound gives
        \[
            K_j^{(m)}(\rho,\,\cdot\,)
            \ge\alpha\widetilde\lambda_j,
            \qquad
            \widetilde\lambda_j
            =K_0^{(j)}(I/2,\,\cdot\,),
            \qquad 0\le j<m.
        \]
        Thus $L=m$ is sufficient.

        To prove minimality, start at phase zero and compare the initial states $I/2$ and $\ket{e_1}\bra{e_1}$.
        For every $1\le n<m$, no reset phase has yet been encountered.
        Every possible record therefore applies a product of the matrices $D_+$ and $D_-$.
        Both matrices are invertible, so these products preserve the rank of the input state.
        Consequently, every posterior obtained from $I/2$ has rank two, while every posterior obtained from $\ket{e_1}\bra{e_1}$ has rank one.
        The two posterior laws therefore have disjoint supports and admit no positive common minorant.
        Since phase zero has environmental probability $1/m>0$, \Cref{assumption_doeblin} fails at every length $n<m$. 

   \begin{remark}
        The finite cyclic environment considered here is ergodic but not mixing.
        These examples therefore verify \Cref{assumption_doeblin}, but do not satisfy the environmental mixing condition in \Cref{assumption_mix}.
   \end{remark}


\end{appendix}

\section*{Data availability statement}
No data was used in this research.

\section*{Conflict of interest}
The author declare that they have  no conflict of interest. 
\setlength{\bibitemsep}{2pt}
\printbibliography[heading=bibintoc,title={References}]

@Article{Attal_2014,
  author    = {Attal, Stéphane and Guillotin-Plantard, Nadine and Sabot, Christophe},
  journal   = {Annales Henri Poincaré},
  title     = {Central Limit Theorems for Open Quantum Random Walks and Quantum Measurement Records},
  year      = {2014},
  issn      = {1424-0661},
  month     = mar,
  number    = {1},
  pages     = {15--43},
  volume    = {16},
  doi       = {10.1007/s00023-014-0319-3},
  publisher = {Springer Science and Business Media LLC},
}

@Article{tristant_2025,
  author    = {Amini, Nina H. and Benoist, Tristan and Bompais, Maël and Pellegrini, Clément},
  journal   = {Journal of Mathematical Physics},
  title     = {Asymptotic stability and ergodic properties of quantum trajectories under imperfect measurement},
  year      = {2026},
  issn      = {1089-7658},
  month     = Aug,
  number    = {8},
  volume    = {67},
  doi       = {10.1063/5.0315396},
  publisher = {AIP Publishing},
}

@article{AP06,
    title = {{From Repeated to Continuous Quantum Interactions}},
    year = {2006},
    journal = {Annales Henri Poincar{\'{e}}},
    author = {Attal, Stéphane and Pautrat, Yan},
    number = {1},
    pages = {59--104},
    volume = {7},
    url = {https://doi.org/10.1007/s00023-005-0242-8},
    doi = {10.1007/s00023-005-0242-8},
    issn = {1424-0661}
}

@article{BBB13,
  author  = {Michel Bauer and Tristan Benoist and Denis Bernard},
  title   = {Repeated Quantum Non-Demolition Measurements: Convergence and Continuous Time Limit},
  journal = {Annales Henri Poincar\'e},
  volume  = {14},
  pages   = {639--679},
  year    = {2013},
  doi     = {10.1007/s00023-012-0204-x},
}

@article{BJPP18,
  author  = {Tristan Benoist and Vojkan Jak\v{s}i\'c and Yan Pautrat and Claude-Alain Pillet},
  title   = {On Entropy Production of Repeated Quantum Measurements I. General Theory},
  journal = {Communications in Mathematical Physics},
  volume  = {357},
  pages   = {77--123},
  year    = {2018},
  doi     = {10.1007/s00220-017-2947-1},
}

@article{BFPP19,
  author  = {Tristan Benoist and Martin Fraas and Yan Pautrat and Cl{\'e}ment Pellegrini},
  title   = {Invariant measure for quantum trajectories},
  journal = {Probability Theory and Related Fields},
  volume  = {174},
  pages   = {307--334},
  year    = {2019},
  doi     = {10.1007/s00440-018-0862-9},
}

@Article{Benoist_2021,
  author    = {Benoist, T. and Cuneo, N. and Jakšić, V. and Pillet, C-A.},
  journal   = {Journal of Statistical Physics},
  title     = {On Entropy Production of Repeated Quantum Measurements II. Examples},
  year      = {2021},
  issn      = {1572-9613},
  month     = feb,
  number    = {3},
  volume    = {182},
  doi       = {10.1007/s10955-021-02725-1},
  publisher = {Springer Science and Business Media LLC},
}

@article{BFP23,
  author  = {Tristan Benoist and Jan-Luka Fatras and Cl{\'e}ment Pellegrini},
  title   = {Limit theorems for quantum trajectories},
  journal = {Stochastic Processes and their Applications},
  volume  = {164},
  pages   = {288--310},
  year    = {2023},
  doi     = {10.1016/j.spa.2023.07.014},
}

@book{BG09,
    title = {{Quantum Trajectories and Measurements in Continuous Time: The Diffusive Case}},
    year = {2009},
    booktitle = {Lecture Notes in Physics},
    author = {Barchielli, Alberto and Gregoratti, Matteo},
    publisher = {Springer Berlin Heidelberg},
    url = {http://dx.doi.org/10.1007/978-3-642-01298-3},
    isbn = {9783642012983},
    doi = {10.1007/978-3-642-01298-3},
    issn = {1616-6361}
}

@article{BGM04,
    title = {{Stochastic Schr{\"{o}}dinger equations}},
    year = {2004},
    journal = {Journal of Physics A: Mathematical and General},
    author = {Bouten, Luc and Guta, Madalin and Maassen, Hans},
    number = {9},
    month = {2},
    pages = {3189--3209},
    volume = {37},
    publisher = {IOP Publishing},
    url = {http://dx.doi.org/10.1088/0305-4470/37/9/010},
    doi = {10.1088/0305-4470/37/9/010},
    issn = {1361-6447}
}

@Article{BGP25,
  author    = {Benoist, Tristan and Greggio, Linda and Pellegrini, Clément},
  journal   = {Annales Henri Poincaré},
  title     = {Exponentially Fast Selection of Sectors for Quantum Trajectories Beyond Non-demolition Measurements},
  year      = {2025},
  issn      = {1424-0661},
  month     = Nov,
  doi       = {10.1007/s00023-025-01641-4},
  publisher = {Springer Science and Business Media LLC},
}

@Article{BHP25,
  author    = {Benoist, Tristan and Hautecœur, Arnaud and Pellegrini, Clément},
  journal   = {Journal of Functional Analysis},
  title     = {Quantum trajectories. Spectral gap, quasi-compactness \& limit theorems},
  year      = {2025},
  issn      = {0022-1236},
  month     = sep,
  number    = {5},
  pages     = {110932},
  volume    = {289},
  doi       = {10.1016/j.jfa.2025.110932},
  publisher = {Elsevier BV},
}

@Article{BJM14,
  author    = {Bruneau, Laurent and Joye, Alain and Merkli, Marco},
  journal   = {Journal of Mathematical Physics},
  title     = {Repeated interactions in open quantum systems},
  year      = {2014},
  issn      = {1089-7658},
  month     = jun,
  number    = {7},
  volume    = {55},
  doi       = {10.1063/1.4879240},
  publisher = {AIP Publishing},
}

@article{BPS24,
  author  = {Tristan Benoist and Cl{\'e}ment Pellegrini and Anna Szczepanek},
  title   = {Dark Subspaces and Invariant Measures of Quantum Trajectories},
  journal = {arXiv preprint arXiv:2409.18655},
  year    = {2024},
  doi     = {10.48550/arXiv.2409.18655},
  publisher = {arXiv},
  URL       ={hhttps://arxiv.org/abs/2409.18655}
}

@Article{Beck_1957,
  author    = {Beck, Anatole and Schwartz, J. T.},
  journal   = {Proceedings of the American Mathematical Society},
  title     = {A vector-valued random ergodic theorem},
  year      = {1957},
  issn      = {1088-6826},
  month     = dec,
  number    = {6},
  pages     = {1049--1059},
  volume    = {8},
  doi       = {10.1090/s0002-9939-1957-0098162-6},
  publisher = {American Mathematical Society (AMS)},
}

@article{Bur+13,
    title = {{Ergodic and mixing quantum channels in finite dimensions}},
    year = {2013},
    journal = {New Journal of Physics},
    author = {Burgarth, D and Chiribella, G and Giovannetti, V and Perinotti, P and Yuasa, K},
    number = {7},
    month = {7},
    pages = {073045},
    volume = {15},
    url = {https://iopscience.iop.org/article/10.1088/1367-2630/15/7/073045},
    doi = {10.1088/1367-2630/15/7/073045},
    issn = {1367-2630}
}

@book{Car93,
    title = {{An Open Systems Approach to Quantum Optics: Lectures Presented at the Universit{\'{e}} Libre de Bruxelles October 28 to November 4, 1991}},
    year = {1993},
    booktitle = {Lecture Notes in Physics Monographs},
    author = {Carmichael, Howard},
    publisher = {Springer Berlin Heidelberg},
    url = {http://dx.doi.org/10.1007/978-3-540-47620-7},
    isbn = {9783540476207},
    doi = {10.1007/978-3-540-47620-7},
    issn = {0940-7677}
}

@article{ChiribellaDArianoPerinotti2009,
  author  = {Chiribella, Giulio and D'Ariano, Giacomo Mauro and Perinotti, Paolo},
  title   = {Realization schemes for quantum instruments in finite dimensions},
  journal = {Journal of Mathematical Physics},
  volume  = {50},
  number  = {4},
  pages   = {042101},
  year    = {2009},
  doi     = {10.1063/1.3105923}
}

@Article{Choi_1975,
  author    = {Choi, Man-Duen},
  journal   = {Linear Algebra and its Applications},
  title     = {Completely positive linear maps on complex matrices},
  year      = {1975},
  issn      = {0024-3795},
  month     = jun,
  number    = {3},
  pages     = {285--290},
  volume    = {10},
  doi       = {10.1016/0024-3795(75)90075-0},
  publisher = {Elsevier BV},
}

@Book{Cinlar_2011,
  author    = {Çinlar, Erhan},
  publisher = {Springer New York},
  title     = {Probability and Stochastics},
  year      = {2011},
  isbn      = {9780387878591},
  doi       = {10.1007/978-0-387-87859-1},
  issn      = {0072-5285},
  journal   = {Graduate Texts in Mathematics},
}

@book{Dav76,
    title = {{Quantum theory of open systems}},
    year = {1976},
    author = {Davies, E B},
    edition = {1},
    publisher = {Academic Press},
    isbn = {0122061500}
}

@Article{DH_PTRF,
  author    = {Dolgopyat, Dmitry and Hafouta, Yeor},
  journal   = {Probability Theory and Related Fields},
  title     = {Berry Esseen theorems for sequences of expanding maps},
  year      = {2025},
  issn      = {},
  month     = aug,
  number    = {1},
  pages     = {1075-1119},
  volume    = {193},
  doi       = {https://doi.org/10.1007/s00440-025-01368-7},
  publisher = {Springer},
}

@Article{Davies_Lewis_1970,
  author    = {Davies, E. B. and Lewis, J. T.},
  journal   = {Communications in Mathematical Physics},
  title     = {An operational approach to quantum probability},
  year      = {1970},
  issn      = {1432-0916},
  month     = sep,
  number    = {3},
  pages     = {239--260},
  volume    = {17},
  doi       = {10.1007/bf01647093},
  publisher = {Springer Science and Business Media LLC},
}

@Article{traj,
  author    = {Ekblad, Owen and Moreno-Nadales, Eloy and Pathirana, Lubashan and Schenker, Jeffrey},
  journal   = {Annales Henri Poincaré},
  title     = {Asymptotic Purification of Quantum Trajectories under Disordered Generalized Measurements},
  year      = {2025},
  issn      = {1424-0661},
  month     = nov,
  doi       = {10.1007/s00023-025-01638-z},
  publisher = {Springer Science and Business Media LLC},
}

@Article{qtlln,
  author    = {Ekblad, Owen and Moreno-Nadales, Eloy and Pathirana, Lubashan},
  journal   = {Journal of Physics A: Mathematical and Theoretical},
  title     = {Ergodic theorems for quantum trajectories under disordered generalized measurements},
  year      = {2026},
  issn      = {1751-8121},
  month     = Jun,
  number    = {22},
  pages     = {225305},
  volume    = {59},
  doi       = {10.1088/1751-8121/ae7165},
  publisher = {IOP Publishing},
}

@article{Gis84,
    title = {{Quantum Measurements and Stochastic Processes}},
    year = {1984},
    journal = {Physical Review Letters},
    author = {Gisin, N},
    number = {19},
    month = {5},
    pages = {1657--1660},
    volume = {52},
    publisher = {American Physical Society (APS)},
    url = {http://dx.doi.org/10.1103/physrevlett.52.1657},
    doi = {10.1103/physrevlett.52.1657},
    issn = {0031-9007}
}

@Article{Gue+07,
  author    = {Guerlin, Christine and Bernu, Julien and Deléglise, Samuel and Sayrin, Clément and Gleyzes, Sébastien and Kuhr, Stefan and Brune, Michel and Raimond, Jean-Michel and Haroche, Serge},
  journal   = {Nature},
  title     = {Progressive field-state collapse and quantum non-demolition photon counting},
  year      = {2007},
  issn      = {1476-4687},
  month     = aug,
  number    = {7156},
  pages     = {889--893},
  volume    = {448},
  doi       = {10.1038/nature06057},
  publisher = {Springer Science and Business Media LLC},
}

@Article{YHMD,
  author    = {Hafouta, Yeor},
  journal   = {Annales de l’Institut Henri Poincaré, Probabilités et Statistiques},
  title     = {Nonconventional moderate deviations theorems and exponential concentration inequalities},
  year      = {2020},
  issn      = {0246-0203},
  month     = feb,
  number    = {1},
  volume    = {56},
  doi       = {10.1214/19-aihp967},
  publisher = {Institute of Mathematical Statistics},
}

@Article{YH_JSP,
  author    = {Hafouta, Yeor},
  journal   = {Journal of Statistical Physics},
  title     = {On the Asymptotic Moments and Edgeworth Expansions for Some Processes in Random Dynamical Environment},
  year      = {2020},
  issn      = {1572-9613},
  month     = may,
  number    = {4},
  pages     = {945--971},
  volume    = {179},
  doi       = {10.1007/s10955-020-02568-2},
  publisher = {Springer Science and Business Media LLC},
}

@Article{HafMS1,
  author    = {Hafouta, Yeor},
  journal   = {arXiv preprint arXiv:2510.07757},
  title     = {Statistical properties of Markov shifts (part I)},
  year      = {2025},
  copyright = {arXiv.org perpetual, non-exclusive license},
  doi       = {10.48550/ARXIV.2510.07757},
  publisher = {arXiv},
  URL       = {https://arxiv.org/abs/2510.07757},
}

@Article{HafMS2,
  author    = {Hafouta, Yeor},
  journal   = {arXiv preprint arXiv:2510.24244},
  title     = {Statistical properties of Markov shifts: part II-LLT},
  year      = {2025},
  copyright = {arXiv.org perpetual, non-exclusive license},
  doi       = {10.48550/ARXIV.2510.24244},
  publisher = {arXiv},
  URL       ={https://arxiv.org/abs/2510.24244},
}

@Article{YH26,
  author    = {Hafouta, Yeor},
  journal   = {arXiv preprint arXiv:2601.00467},
  title     = {Effective geometric ergodicty for Markov chains in random environment},
  year      = {2026},
  copyright = {arXiv.org perpetual, non-exclusive license},
  doi       = {10.48550/ARXIV.2601.00467},
  publisher = {arXiv},
  URL       = {https://arxiv.org/abs/2601.00467},
}

@Book{HR06,
  author    = {Haroche, Serge and Raimond, Jean-Michel},
  editor    = {Jean-Michel Raimond},
  publisher = {Oxford University Press},
  title     = {Exploring the quantum},
  year      = {2013},
  address   = {Oxford},
  edition   = {First published in paperback},
  isbn      = {9780198509141},
  month     = Aug,
  note      = {Hier auch später erschienene, unveränderte Nachdrucke},
  series    = {Oxford graduate texts},
  doi       = {10.1093/acprof:oso/9780198509141.001.0001},
  pagetotal = {605},
  ppn_gvk   = {1611279070},
  subtitle  = {Atoms, cavities, and photons},
}

@Book{HK,
  author    = {Hafouta, Yeor and Kifer, Yuri},
  publisher = {WORLD SCIENTIFIC},
  title     = {Nonconventional Limit Theorems and Random Dynamics},
  year      = {2018},
  isbn      = {9789813235014},
  month     = may,
  doi       = {10.1142/10849},
}

@book{Hol01,
    title = {{Statistical Structure of Quantum Theory}},
    year = {2001},
    booktitle = {Lecture Notes in Physics Monographs},
    author = {Holevo, Alexander S},
    publisher = {Springer Berlin Heidelberg},
    url = {http://dx.doi.org/10.1007/3-540-44998-1},
    isbn = {9783540449980},
    doi = {10.1007/3-540-44998-1},
    issn = {0940-7677}
}

@article{Holevo1998RN,
  author  = {Holevo, A. S.},
  title   = {Radon--Nikodym derivatives of quantum instruments},
  journal = {Journal of Mathematical Physics},
  volume  = {39},
  number  = {3},
  pages   = {1373--1387},
  year    = {1998},
  doi     = {10.1063/1.532385}
}

@Article{HW25,
  author    = {Hafouta, Yeor and Williams, Brenden},
  journal   = {arXiv preprint arXiv:2510.15323},
  title     = {Limit theorems for inhomogeneous $\phi$-mixing Markov chains},
  year      = {2025},
  copyright = {arXiv.org perpetual, non-exclusive license},
  doi       = {10.48550/ARXIV.2510.15323},
  publisher = {arXiv},
  URL       = {https://arxiv.org/abs/2510.15323},
}

@Article{Jamio_kowski_1972,
  author    = {Jamiołkowski, A.},
  journal   = {Reports on Mathematical Physics},
  title     = {Linear transformations which preserve trace and positive semidefiniteness of operators},
  year      = {1972},
  issn      = {0034-4877},
  month     = dec,
  number    = {4},
  pages     = {275--278},
  volume    = {3},
  doi       = {10.1016/0034-4877(72)90011-0},
  publisher = {Elsevier BV},
}

@Book{Kallenberg_2017,
  author    = {Kallenberg, Olav},
  publisher = {Springer International Publishing},
  title     = {Random Measures, Theory and Applications},
  year      = {2017},
  isbn      = {9783319415987},
  doi       = {10.1007/978-3-319-41598-7},
  issn      = {2199-3149},
  journal   = {Probability Theory and Stochastic Modelling},
}

@Book{Kallenberg_2021,
  author    = {Kallenberg, Olav},
  publisher = {Springer International Publishing},
  title     = {Foundations of Modern Probability},
  year      = {2021},
  isbn      = {9783030618711},
  doi       = {10.1007/978-3-030-61871-1},
  issn      = {2199-3149},
  journal   = {Probability Theory and Stochastic Modelling},
}

@Article{kummerer2003ergodic,
  author    = {K{\"u}mmerer, B and Maassen, H},
  journal   = {Journal of Physics A: Mathematical and General},
  title     = {An ergodic theorem for quantum counting processes},
  year      = {2003},
  issn      = {0305-4470},
  month     = feb,
  number    = {8},
  pages     = {2155--2161},
  volume    = {36},
  doi       = {10.1088/0305-4470/36/8/312},
  publisher = {IOP Publishing},
}

@article{KM04,
  author  = {Burkhard K{\"u}mmerer and Hans Maassen},
  title   = {A pathwise ergodic theorem for quantum trajectories},
  journal = {Journal of Physics A: Mathematical and General},
  volume  = {37},
  number  = {49},
  pages   = {11889--11896},
  year    = {2004},
  doi     = {10.1088/0305-4470/37/49/008},
}

@Article{Khatri_2020,
  author    = {Khatri, Sumeet and Sharma, Kunal and Wilde, Mark M.},
  journal   = {Physical Review A},
  title     = {Information-theoretic aspects of the generalized amplitude-damping channel},
  year      = {2020},
  issn      = {2469-9934},
  month     = Jul,
  number    = {1},
  pages     = {012401},
  volume    = {102},
  doi       = {10.1103/physreva.102.012401},
  publisher = {American Physical Society (APS)},
}

@Article{KV,
  author    = {Kifer, Yuri and Varadhan, S. R. S.},
  journal   = {The Annals of Probability},
  title     = {Nonconventional limit theorems in discrete and continuous time via martingales},
  year      = {2014},
  issn      = {0091-1798},
  month     = mar,
  number    = {2},
  volume    = {42},
  doi       = {10.1214/12-aop796},
  publisher = {Institute of Mathematical Statistics},
}

@InBook{MK06,
  author    = {Maassen, Hans and Kümmerer, Burkhard},
  pages     = {252--261},
  publisher = {Institute of Mathematical Statistics},
  title     = {Purification of quantum trajectories},
  year      = {2006},
  isbn      = {0940600641},
  booktitle = {Dynamics \& Stochastics},
  doi       = {10.1214/lnms/1196285826},
  issn      = {0749-2170},
}

@Book{neveu1965mathematical,
  author    = {Neveu, Jacques},
  publisher = {Holden-Day, Inc., San Francisco, Calif.-London-Amsterdam},
  title     = {Mathematical foundations of the calculus of probability},
  year      = {1965},
  note      = {Translated by Amiel Feinstein},
  mrnumber  = {198505},
  pages     = {xiii+223},
}

@article{NP09,
  author  = {Ion Nechita and Cl{\'e}ment Pellegrini},
  title   = {Quantum trajectories in random environment: the statistical model for a heat bath},
  journal = {Confluentes Mathematici},
  volume  = {1},
  number  = {2},
  pages   = {249--289},
  year    = {2009},
  doi     = {10.1142/S1793744209000109},
}

@Article{Ozawa_1984,
  author    = {Ozawa, Masanao},
  journal   = {Journal of Mathematical Physics},
  title     = {Quantum measuring processes of continuous observables},
  year      = {1984},
  issn      = {1089-7658},
  month     = jan,
  number    = {1},
  pages     = {79--87},
  volume    = {25},
  doi       = {10.1063/1.526000},
  publisher = {AIP Publishing},
}

@article{Ozawa1985Posteriori,
  author  = {Ozawa, Masanao},
  title   = {Conditional probability and a posteriori states in quantum mechanics},
  journal = {Publications of the Research Institute for Mathematical Sciences},
  volume  = {21},
  number  = {2},
  pages   = {279--295},
  year    = {1985},
  doi     = {10.2977/prims/1195179625}
}

@Article{luba_clt,
  author    = {Pathirana, Lubashan},
  journal   = {arXiv preprint arXiv:2603.28893},
  title     = {Central Limit Theorems for Outcome Records in Disordered Quantum Trajectories},
  year      = {2026},
  copyright = {Creative Commons Attribution 4.0 International},
  doi       = {10.48550/ARXIV.2603.28893},
  publisher = {arXiv},
  URL       = {https://arxiv.org/abs/2603.28893},
}

@Article{Pel08,
  author    = {Pellegrini, Clément},
  journal   = {The Annals of Probability},
  title     = {Existence, uniqueness and approximation of a stochastic Schrödinger equation: The diffusive case},
  year      = {2008},
  issn      = {0091-1798},
  month     = nov,
  number    = {6},
  volume    = {36},
  doi       = {10.1214/08-aop391},
  publisher = {Institute of Mathematical Statistics},
}

@Article{RR04,
  author    = {Roberts, Gareth O. and Rosenthal, Jeffrey S.},
  journal   = {Probability Surveys},
  title     = {General state space Markov chains and MCMC algorithms},
  year      = {2004},
  issn      = {1549-5787},
  month     = Jan,
  volume    = {1},
  doi       = {10.1214/154957804100000024},
  publisher = {Institute of Mathematical Statistics},
}

@Article{Sayrin_2011,
  author    = {Sayrin, Clément and Dotsenko, Igor and Zhou, Xingxing and Peaudecerf, Bruno and Rybarczyk, Théo and Gleyzes, Sébastien and Rouchon, Pierre and Mirrahimi, Mazyar and Amini, Hadis and Brune, Michel and Raimond, Jean-Michel and Haroche, Serge},
  journal   = {Nature},
  title     = {Real-time quantum feedback prepares and stabilizes photon number states},
  year      = {2011},
  issn      = {1476-4687},
  month     = aug,
  number    = {7362},
  pages     = {73--77},
  volume    = {477},
  doi       = {10.1038/nature10376},
  publisher = {Springer Science and Business Media LLC},
}

@Book{erg_walter,
  author    = {Peter Walters},
  publisher = {Springer New York},
  title     = {An Introduction to Ergodic Theory},
  year      = {1982},
  isbn      = {9781461257752},
  doi       = {10.1007/978-1-4612-5775-2},
  issn      = {0072-5285},
  journal   = {Graduate Texts in Mathematics},
}

@Book{Watrous_2018,
  author    = {Watrous, John},
  publisher = {Cambridge University Press},
  title     = {The Theory of Quantum Information},
  year      = {2018},
  isbn      = {9781107180567},
  month     = apr,
  doi       = {10.1017/9781316848142},
}

@book{WM09,
    title = {{Quantum Measurement and Control}},
    year = {2009},
    author = {Wiseman, Howard M and Milburn, Gerard J},
    month = {11},
    publisher = {Cambridge University Press},
    url = {http://dx.doi.org/10.1017/cbo9780511813948},
    isbn = {9781107424159},
    doi = {10.1017/cbo9780511813948}
}

@article{BJM08,
    title = {{Random Repeated Interaction Quantum Systems}},
    year = {2008},
    journal = {Communications in Mathematical Physics},
    author = {Bruneau, Laurent and Joye, Alain and Merkli, Marco},
    number = {2},
    pages = {553--581},
    volume = {284},
    url = {https://doi.org/10.1007/s00220-008-0580-8},
    doi = {10.1007/s00220-008-0580-8},
    issn = {1432-0916}
}

@article{NP12,
    title = {{Random repeated quantum interactions and random invariant states}},
    year = {2012},
    journal = {Probability Theory and Related Fields},
    author = {Nechita, Ion and Pellegrini, Clément},
    number = {1},
    pages = {299--320},
    volume = {152},
    url = {https://doi.org/10.1007/s00440-010-0323-6},
    doi = {10.1007/s00440-010-0323-6},
    issn = {1432-2064}
}

@article{PS23,
    title = {{Law of large numbers and central limit theorem for ergodic quantum processes}},
    year = {2023},
    journal = {Journal of Mathematical Physics},
    author = {Pathirana, Lubashan and Schenker, Jeffrey},
    number = {8},
    month = {8},
    volume = {64},
    publisher = {AIP Publishing},
    url = {http://dx.doi.org/10.1063/5.0153483},
    doi = {10.1063/5.0153483},
    issn = {1089-7658}
}


\end{document}